\def\ps@pprintTitle{%
    \let\@oddhead\@empty
    \let\@evenhead\@empty
    \let\@oddfoot\@empty%
    \let\@evenfoot\@oddfoot}
  \numberwithin{equation}{section}
  \setlist{nosep,listparindent=\parindent}
  \setlist[enumerate,1]{label=\upshape{(\roman*)}}
  \tikzstyle{vec}=[circle,inner sep=1pt,outer sep=-1pt,fill]
  \tikzstyle{border}=[thick]
  \tikzstyle{favborder}=[border,dotted]
  \tikzstyle{exclborder}=[border,dashed]
\newcommand*{\defeq}{\coloneqq}
\newcommand*{\cset}[3][]{\set[#1]{#2:#3}}
\newcommand*{\naturals}{\mathbb{N}}
\newcommand*{\reals}{\mathbb{R}}
\newcommand*{\compl}[1]{{#1}^c}
\newcommand*{\powset}[1]{2^{#1}}
\newcommand*{\union}{\cup}
\newcommand*{\Union}{\bigcup}
\newcommand*{\intersection}{\cap}
\newcommand*{\Intersection}{\bigcap}
\newcommand*{\dedunion}{\uplus}
\newcommand*{\dedUnion}{\biguplus}
\newcommand*{\reckunion}{\sqcupplus}
\newcommand*{\reckUnion}{\bigsqcupplus}
\DeclarePairedDelimiter{\group}{(}{)}
\DeclarePairedDelimiter{\set}{\{}{\}}
\DeclarePairedDelimiter{\card}{\lvert}{\rvert}
\DeclareMathOperator{\phull}{posi}
\newcommand*{\shull}[1]{\overline{#1}}
\DeclareMathOperator{\lhull}{span}
\DeclareMathOperator{\close}{cl}
\newcommand{\cls}[1]{\close_{#1}}
\DeclareMathOperator{\extend}{ext}
\newcommand{\ext}[1]{\extend_{#1}}
\DeclareMathOperator{\average}{avg}
\newcommand*{\conj}{\wedge}
\newcommand*{\then}{\Rightarrow}
\newcommand*{\neht}{\Leftarrow}
\renewcommand*{\iff}{\Leftrightarrow}
\newcommand*{\after}{\circ}
\newcommand*{\pspace}{\varOmega}
\newcommand*{\someEs}{\mathcal{E}}
\newcommand*{\othersomeEs}{\mathcal{F}}
\newcommand*{\Gs}[2][]{\mathcal{G}_{#1}(#2)}
\newcommand*{\linGs}{\mathcal{L}}
\newcommand*{\someGs}{\mathcal{K}}
\newcommand*{\ray}[1]{\bar{#1}}
\newcommand*{\cones}{\mathrm{C}}
\newcommand*{\lineals}{\mathrm{L}}
\newcommand*{\geqrvf}[1]{\mathrel{\geq_{#1}}}
\newcommand*{\gtrrvf}[1]{\mathrel{>_{#1}}}
\newcommand*{\lessrvf}[1]{\mathrel{<_{#1}}}
\newcommand*{\eqrvf}[1]{\mathrel{=_{#1}}}
\DeclarePairedDelimiter{\Adelimhelper}{\langle}{\rangle}
\newcommand*{\Adelim}[3][]{\Adelimhelper[#1]{#2;#3}}
\newcommand*{\A}{\mathcal{A}} 
\newcommand*{\otherA}{\mathcal{B}}
\newcommand*{\As}{\mathbf{A}}
\newcommand*{\someAs}{\mathbf{B}}
\newcommand*{\somemoreAs}{\mathbf{C}}
\newcommand*{\ncAs}{\mathbb{A}}
\newcommand*{\dcAs}{\ncAs\mkern-5mu\vphantom{ncAs}^{\smash[b]{+}}}
\newcommand*{\somencAs}{\mathbb{B}}
\newcommand*{\maxsomeAs}{\hat{\someAs}}
\newcommand*{\maxsomemoreAs}{\hat{\somemoreAs}}
\newcommand*{\maxncAs}{\hat{\ncAs}}
\newcommand*{\maxdcAs}{\maxncAs\mkern-7mu\vphantom{ncAs}^{\smash[b]{+}}}
\newcommand*{\D}{\mathcal{D}}
\newcommand*{\otherD}{\mathcal{C}}
\newcommand*{\Ds}{\mathbf{D}}
\newcommand*{\ncDs}{\mathbb{D}}
\newcommand*{\maxncDs}{\hat{\ncDs}}
\newcommand*{\M}{\mathcal{M}}
\newcommand*{\otherM}{\mathcal{N}}
\newcommand*{\Ms}{\mathbf{M}}
\newcommand*{\ncMs}{\mathbb{M}}
\newcommand*{\somencMs}{\mathbb{K}}
\newcommand*{\maxncMs}{\hat{\ncMs}}
\newcommand*{\bgM}{\mathcal{S}}
\newcommand*{\dotbgM}{\Adelim{\linGs_\gtrdot}{\linGs_\lessdot}}
\newcommand*{\zeroM}{\mathcal{O}}
\newcommand*{\aff}[1]{\underline#1\vphantom{#1}}
\newcommand*{\affdcAs}{\aff\ncAs\mkern-7mu\vphantom{ncAs}^{\smash[b]{+}}}
\newcommand*{\fif}[1]{\undertilde#1\vphantom{#1}}
\newcommand*{\fifdcAs}{\fif\ncAs\mkern-7mu\vphantom{ncAs}^{\smash[b]{+}}}
\newcommand*{\fav}{\closedsucc}
\newcommand*{\nfav}{\not\fav}
\newcommand*{\rej}{\prec}
\newcommand*{\nrej}{\nprec}
\newcommand*{\acc}{\succeq}
\newcommand*{\nacc}{\nsucceq}
\newcommand*{\rejnacc}{{\rej,\nacc}}
\newcommand*{\accnrej}{{\acc,\nrej}}
\newcommand*{\indiff}{\simeq}
\newcommand*{\nindiff}{\nsimeq}
\newcommand*{\accnindiff}{{\acc,\nindiff}}
\newcommand*{\unres}{\smile}
\newcommand*{\confus}{{\acc,\rej}}
\newcommand*{\incomp}{\shortparallel}
\newcommand*{\srej}{\vartriangleleft}
\newcommand*{\sacc}{\trianglerighteq}
\newcommand*{\pr}{P}
\newcommand*{\otherpr}{Q}
\newcommand*{\prs}{\mathcal{P}}
\newcommand*{\lpr}{{\underline{\pr}}}
\newcommand*{\upr}{{\overline{\pr}}}
\newcommand*{\margs}[1]{\mathcal{G}_{#1}}
\newcommand*{\permuts}{\varPi}
\newcommand*{\transfos}{\mathcal{T}}
\newcommand*{\avg}[1]{\average_{#1}}
\newcommand*{\invars}{\mathcal{I}}
\newcommand{\demph}[1]{\textbf{\emph{#1}}}
\theoremstyle{plain}
\newtheoremstyle{theorem}
  {}
  {}
  {}
  {}
  {\itshape\bfseries}
  {.}
  {.5em}
  {}
\theoremstyle{theorem}
\newenvironment{axiom}[2][]
  {\inneraxiom[#1]\quad}
  {\endinneraxiom}
\newtheorem{theorem}{Theorem}[section]
\newtheoremstyle{proposition}
  {}
  {}
  {}
  {}
  {\itshape}
  {.}
  {.5em}
  {}
\theoremstyle{proposition}
\newenvironment{condition}[2][]
  {\innercondition[#1]\quad}
  {\endinnercondition}
\newtheorem{proposition}[theorem]{Proposition}
\newtheorem{corollary}[theorem]{Corollary}
\newtheorem{lemma}{Lemma}
\begin{document}
\begin{frontmatter}
  \title{Accept \& Reject Statement-Based Uncertainty Models} 
  \author[systems,cwi]{Erik Quaeghebeur\corref{cor}\fnref{thanks}}
  \ead{Erik.Quaeghebeur@cwi.nl}
  \cortext[cor]{Corresponding author}
  \fntext[thanks]{
    This work was first submitted while Erik Quaeghebeur was a visiting scholar at the Department of Information and Computing Sciences of Utrecht University.
    Revision of this work was carried out while Erik Quaeghebeur was an ERCIM “Alain Bensoussan” Fellow at the Centrum Wiskunde \& Informatica, a program receiving funding from the European Union Seventh Framework Programme (FP7/2007-2013) under grant agreement n° 246016.}
  \author[systems]{Gert de Cooman}
  \ead{Gert.deCooman@UGent.be}
  \author[systems]{Filip Hermans}
  \address[systems]{SYSTeMS Research Group, Ghent University, Technologiepark 914, 9052 Zwijnaarde, Belgium}
  \address[cwi]{Centrum Wiskunde \& Informatica, Postbus 94079, 1090~GB~~Amsterdam, The Netherlands}

  \begin{abstract}
    We develop a framework for modelling and reasoning with uncertainty based on accept and reject statements about gambles.
    It generalises the frameworks found in the literature based on statements of acceptability, desirability, or favourability and clarifies their relative position.
    Next to the statement-based formulation, we also provide a translation in terms of preference relations, discuss---as a bridge to existing frameworks---a number of simplified variants, and show the relationship with prevision-based uncertainty models.
    We furthermore provide an application to modelling symmetry judgements.
  \end{abstract}

  \begin{keyword}
    statements \sep accept \sep reject \sep acceptability \sep indifference \sep desirability \sep favourability \sep preference \sep prevision 
  \end{keyword}
\end{frontmatter}

\section{Introduction}\label{sec:intro}
\subsection{What \& why}
Probability theory and the statistical tools built on it help us deal with uncertainty, be it caused by the lack of information about or by the variability of some phenomenon.
Probability theory provides a mathematical framework for modelling uncertainty that is centred around the quantification of the uncertainty.
It also provides rules for reasoning under uncertainty, i.e., deductive inference: how to go from an assessment, such as probability values for some events, to conclusions and decisions, e.g., the expected value of some quantity or the selection of an optimal action.

In this paper, we present a mathematical framework for modelling uncertainty that generalises probability theory and that is centred around the categorisation of gambles---i.e., quantities about whose value we are uncertain---into acceptable and rejected ones.
Crudely speaking, gambles whose expectation is assessed to be non-negative are acceptable and those whose expectation is assessed to be negative are rejected.
Our mathematical framework includes rules for reasoning under uncertainty, but now the assessments we start from consist of sets of acceptable and rejected gambles.
We do not discuss how the assessments are obtained from domain experts or experimental data and restrict ourselves to the unconditional case.

Why did we develop this theory?
The main goal is to unify many of the existing generalisations of probability theory that are explicitly or implicitly based on the assumption that gamble values can be expressed in a linear precise utility.
Of these generalisations, some essentially express uncertainty using a \emph{strict} (partial) order of a set of gambles of interest, which is useful for decision making applications, and some use a \emph{non-strict} (partial) order, which is practical when expressing indifference judgements such as those arising when modelling symmetry assumptions.
Because of the unifying character of our theory, we can combine the strengths of each type of theory and also express the relative position of the models representable within each of the theories.
A consequence is of course that our theory is more expressive than the theories we unify.

We have already mentioned that the representation in our theory consists of a pair of gamble sets.
This type of representation differs markedly and is far less common than the typical, functional one, in which one works with functions that map events and gambles to probability and expectation values.
It is closely related to, but less popular than the preference order one, where, roughly speaking, events are ranked according to likelihood and gambles according to expectation.
Our experience with working with representations using sets of gambles has convinced us that it deserves more attention: the formulation of theory and derivation of results is mostly simplified to intuitive applications of basic set theory, linear algebra, and convex analysis.
A side goal of this paper is therefore to bring attention to this representation by putting it centre stage.

Although it is not the focus of this paper, it is useful to point out another advantage of the theory we develop:
Because it generalises theories using different representations of uncertainty, the elicitation statements that go along with these representations---probabilities for events, preferences between gambles, etc.---can all be incorporated.

\subsection{Literature context}
We have drawn much inspiration from the foundational works of the theories of imprecise probability \citep{Keynes-1921,Koopman-1940-ams,Good-1952,Smith-1961,Dempster-1967,Suppes-1974,Shafer-1976,Levi-1980,Walley-1991}, which all generalise probability theory in a similar way.
In these theories, one can specify lower and upper probabilities and expectations instead of just unique, precise probability and expectation values.
The introduction of these new concepts is justified by giving them a clear interpretation and by the fact that often not enough information is available to fix unique, precise values.
Our theory possesses the expressiveness to deal with imprecision, as these theories of imprecise probability do.

Probability measures, the models of classical probability theory, correspond to complete preference orders.
This means that all events or gambles can be compared by their unique probability or expectation value.
This is no longer the case in theories of imprecise probabilities, where events and gambles may be uncomparable and whose models may correspond to partial preference orders.
Sometimes strict preference is taken as basic \citep[see, e.g.,][]{Seidenfeld-Schervish-Kadane-1990-decwoord,Walley-1991,Walley-2000-towards} and sometimes non-strict preference is \citep[see, e.g.,][]{Williams-1974,Walley-1991}; the other relation may then be derived from it \citep[see, e.g.,][]{Fishburn-1986}.
We can associate both a strict and a non-strict preference order to each of the models in our theory.
But now neither is basic and each can essentially be specified separately, but both are related to each other in a natural way.
This is not the only way to generalise things; \citet{Seidenfeld-Schervish-Kadane-1995-preference,Seidenfeld-Schervish-Kadane-2010} use strict preference, choose their axioms in view of characterising the preference order in terms of sets of functionals and move to preference between sets of gambles in their coherent choice function approach.

In this paper, we represent uncertainty using sets of gambles.
Although this is as of yet uncommon in the literature, we are not the first.
\Citet[§14]{Smith-1961} uses them as a useful intermediate representation in the course of a proof in the exposition of his theory of imprecise probabilities.
In the work of \citet{Williams-1974,Williams-1975}, they become more prominent---he talks about sets of acceptable bets---, but he still keeps a focus on (non-linear) expectation-type models.
\Citet[§IV]{Seidenfeld-Schervish-Kadane-1990-decwoord} talk about ‘favorable’ gambles while presenting results about the relationship between convex sets of probability measures and strict preference orders.
It is \citet{Walley-1991,Walley-2000-towards}---talking about sets of desirable gambles---who seems to have been the first to discuss them in their own right.
Compared to these authors, our contribution lies in considering a pair of sets instead of just one set, proposing an intuitive axiom of how both sets interact, and developing the resulting theory.
Adding an extra set is what allows us to simultaneously represent strict and non-strict preferences.

In our framework, our first main axiom defines which assessments are irrational and which are not.
The two other main axioms are generative in the sense that we can associate an extension operator with each of them that modifies assessments in such a way that the result satisfies the axiom.
In spirit, this is completely analogous to the approach taken by \citet{DeFinetti-1937,DeFinetti-1974/1975} in his development of probability theory and \citet{Walley-1991} in his development of imprecise probability theory: they call their irrationality axiom `avoiding sure loss', their (single) generative principle `coherence', and the associated extension operator `the fundamental theorem of probability' and `natural extension', respectively.

\looseness-1 The basic setup and the terminology we use in our paper shows the influence of the subjectivist school of \citeauthor{DeFinetti-1974/1975}.
We do not want this to be construed as a constraint.
The results of this paper---its mathematical models and techniques---are applicable under different interpretations, analogously to the role measure theory takes in probability theory.
In this context, the `agent' that provides us with an assessment can, e.g., be seen as a person giving a `subjective' opinion or an algorithm that transforms observations such as sample sequences into `objective' opinions.
We do not discuss how these opinions are elicited or constructed.

\subsection{Overview}\label{sec:overview}
As stated, the prime goal of this paper is to present a mathematical framework for modelling uncertainty.
The basic conceptual set-up and mathematical notation is given in Section~\ref{sec:setup}.
Our presentation starts from the basics and there is no prerequisite knowledge of (imprecise) probability theory, although of course this does not hurt.
However, throughout, a basic familiarity with set theory, linear algebra, and convex analysis is assumed.
Once the framework is established in Section~\ref{sec:framework}, we do make the connection with preference relations in Section~\ref{sec:relations} and (imprecise) probability frameworks in Sections \ref{sec:simplified-frameworks} and~\ref{sec:previsions}.
Also here specific prerequisite knowledge is useful but not necessary.
Before concluding with some final remarks, musings and topics for further investigation in Section~\ref{sec:conclusions}, we present a small theoretical application of our framework in Section~\ref{sec:xch}, i.e., we show how it can be effectively used to model symmetry judgements.
To improve the readability of the paper, we have gathered the proofs in an appendix.

How do we build up our framework?
We start by giving a detailed description of the nature of the assessments we consider, the pairs of sets of accepted and rejected gambles (Section~\ref{sec:accept+reject}).
In three subsequent sections, we introduce the main rationality criteria of our framework: No Confusion, which determines the assessments that are irrational (Section~\ref{sec:no-confusion}); Deductive Closure, which makes explicit the consequences of assuming values are expressed in a linear precise utility scale (Section~\ref{sec:dedcls}); No Limbo, which states that gambles whose acceptance would lead to an irrational model must be rejected.
The latter two axioms are accompanied by operators that extend an assessment to a full fledged model that incorporates both the assessments and the requirements of these axioms.
At this point the conceptual groundwork is complete.

The central question then becomes: given an assessment, what are the properties of the model obtained by extending it, i.e., by performing deductive inference?
In particular: which assessments give rise to rational models?
To be able to provide general answers to these questions, we first perform an order-theoretic analysis of the sets of all assessments and models (Section~\ref{sec:order}), where assessments are ordered according to the resolve they encode, i.e., the amount of accepted and rejected gambles they contain.
Both general answers are given (in Section~\ref{sec:dominating-models}) and answers for the important specific case in which a background model---e.g., expressing that uniformly negative gambles should be rejected and uniformly positive ones should be accepted---is assumed (in Section~\ref{sec:background}).
This analysis of the framework ends with a characterisation of the important specific case with Indifference to Status Quo, i.e., in which the zero gamble is assumed to be accepted.

From this point onward the focus shifts away from our framework itself to its relationships with other frameworks.
This is worked out in most detail for its relationship to \citeauthor{DeFinetti-1974/1975}'s \cite{DeFinetti-1974/1975} theory of (coherent linear) previsions (in Section~\ref{sec:linprevs}) and \citeauthor{Walley-1991}'s \cite{Walley-1991} theory of coherent lower previsions (in Section~\ref{sec:lowprevs}).
This shows how---the in practice important---(imprecise) probabilistic models are specific instances of the models of our framework.
For readers interested in preference order frameworks, we give a translation of
the concepts of our framework and the main characterisation result in terms of gamble relations in Section~\ref{sec:relations}.
For particular applications, it may well be that a framework more general than (imprecise) probability theory is appropriate, but for which the full generality of our framework is nevertheless overkill.
Therefore, we present a number of simplified variants:
The first two of these still preserve the ability to express two separate preference relations (Sections~\ref{sec:accept-favour} and~\ref{sec:favindiff}).
The last two lose this ability and have been included to make correspondences with the frameworks based on sets of gambles that we have encountered in the literature (Section~\ref{sec:favourability} and~\ref{sec:acceptability}).

\subsection{Basic set-up and notation}\label{sec:setup}
We consider an agent faced with uncertainty, e.g., about the outcome of some experiment.
We assume it is possible to construct a \emph{possibility space}~$\pspace$ of mutually exclusive \emph{elementary events}, e.g., a set of different experimental outcomes, one of which is guaranteed to occur.
The possibility space~$\pspace$ can be infinite, but---apart from the Axiom of Choice---we do not consider anything specific for infinite-dimensional spaces such as their topology or the conglomerability of uncertainty models \citep[see, e.g.,][\S6.8]{Walley-1991}.
The paper is best read using intuition about finite-dimensional spaces.

Formally, a \demph{gamble} is a real-valued function on the possibility space; as suggested by its name, it represents a---positive or negative---pay-off that is uncertain in the sense that it depends on the unknown outcome.
These pay-offs are assumed to be expressed in units of a linear precise utility.
The set of all gambles $\Gs{\pspace}$, combined with point-wise  addition of gambles and point-wise multiplication with real numbers constitutes a real vector space.
We give the name \emph{status quo} to the origin of this space, the zero gamble~$0$.
We assume the agent is interested in a linear subspace of gambles $\linGs\subseteq\Gs{\pspace}$.

\begin{wrapfigure}[5]{r}{0pt}
  \begin{tikzpicture}[scale=2]
    \draw[->] (-.5,0) coordinate (xl) -- (1,0) coordinate (xu);
    \draw[->] (0,-.3) coordinate (yl) -- (0,.8) coordinate (yu);
    \coordinate[vec,label={above right:$f$}] (f) at (.5,.5);
    \coordinate[rectangle,inner sep=0pt,fill,minimum width=1pt,minimum height=4pt,label={below:$f(\omega)$}] (fx) at (f |- xu);
    \coordinate[rectangle,inner sep=0pt,fill,minimum width=4pt,minimum height=1pt,label={left:$f(\varpi)$}] (fy) at (f -| yu);
    \draw[dotted] (f) -- (fx) (f) -- (fy);
  \end{tikzpicture}
\end{wrapfigure}
We illustrate these concepts for a possibility space ${\pspace\defeq\set{\omega,\varpi}}$, and take the linear space of gambles to be $\linGs\defeq\Gs{\pspace}$, the two-dimensional plane.
A gamble $f$ is a vector with two components, $f(\omega)$ and $f(\varpi)$.
This example format will be used throughout the paper.
The examples should not give the impression that particular basis we choose for the gamble space is fundamentally relevant; however, our choice to have the positive orthant upper-right and the negative orthant lower-left connects with existing experience and intuition.

The following concepts and notation prove convenient.
First, those concerning operations on (sets of) gambles: let~$f$ be a gamble in~$\linGs$ and let $\someGs$ and $\someGs'$ be subsets of $\linGs$, then
\begin{enumerate}[label=(\alph*)]
  \item the \emph{complement} of $\someGs$ relative to $\linGs$ is $\compl{\someGs}\defeq\linGs\setminus\someGs$,
  \item the \emph{negation} of $\someGs$ is $-\someGs\defeq\cset{-g}{g\in\someGs}$,
  \item the \emph{ray} through~$f$ is
    $\ray{f}\defeq\cset{\lambda\cdot f}{\lambda\in\reals_{>0}}$,
  \item the \emph{positive scalar hull} of $\someGs$ is
    $\shull{\someGs}\defeq\Union_{f\in\someGs}\ray{f}$,
  \item the \emph{Minkowski sum} of $\someGs$ and $\someGs'$ is
    $\someGs+\someGs'\defeq\cset{g+h}{g\in\someGs \conj h\in\someGs'}$, so in particular $\someGs+\emptyset=\emptyset$ and, using negation, $\someGs-\someGs'=\someGs+(-\someGs')$,
  \item the \demph{positive linear hull} of $\someGs$ is
    $
      \phull\someGs \defeq \Union\cset[\big]{\sum_{g\in\someGs''}\ray{g}}{\someGs''\subseteq\someGs \conj \card{\someGs''}\in\naturals},
    $
    where $\sum_{g\in\someGs''}\ray{g}$ is a Minkowski sum of rays, and so the set of all convex cones in~$\linGs$ is $\cones\defeq\cset{\someGs\subseteq\linGs}{\phull\someGs=\someGs}$,
  \item the \demph{linear span} of $\someGs$ is $\lhull{\someGs}\defeq\phull(\someGs\union-\someGs\union\{0\})$, the smallest linear space including $\someGs$, and so the set of all linear subspaces of~$\linGs$ is $\lineals\defeq\cset{\someGs\subseteq\linGs}{\lhull\someGs=\someGs}$.
\end{enumerate}
Secondly, concepts and notation for the comparison of gambles, i.e., vector inequalities: let~$Q$ be a real-valued operator on~$\linGs$ and let~$f$ and~$g$ be gambles in $\linGs$, then
\begin{enumerate}
  \item $f\geqrvf{Q}g$ if and only if $Q\group{f-g}\geq0$,
  \item $f\gtrrvf{Q}g$ if and only if $Q\group{f-g}>0$,
  \item $f\eqrvf{Q}g$ if and only if $f\geqrvf{Q}g$ and $g\geqrvf{Q}f$,
  \item $f\geq g$ if and only if $f\geqrvf{\inf}g$,
  \item $f>g$ if and only if $f\geq g$ and $f\neq g$,
  \item $f\gtrdot g$ if and only if $f\gtrrvf{\inf}g$.
\end{enumerate}
With each of these gamble relations---let us denote them generically by $\mathrel{\Box}$---we can associate a specific subset of~$\linGs$, $\linGs_{\mathrel{\Box}}\defeq\cset{f\in\linGs}{f\mathrel{\Box}0}$.

\section{Accept \& Reject Statement-Based Uncertainty Models}\label{sec:framework}
In this section, we introduce the objects used to represent assessments and models for uncertainty, and the basic axioms that order them.
This gives rise to a number of important characterisations and provides us with a simple but powerful framework that leaves room for problem-specific a priori assumptions.

\subsection{Accepting \& Rejecting Gambles, and the Resulting Assessments}\label{sec:accept+reject}
We envisage an elicitation procedure where the agent is asked to state whether he accepts or rejects different gambles, or remains unresolved.
The \demph{acceptability} of a gamble~$f$ on~$\pspace$ implies a commitment to engage in the following transaction:
\begin{enumerate}
  \item the experiment's outcome $\omega\in\pspace$ is determined,
  \item the agent gets the---possibly negative---pay-off $f(\omega)$.
\end{enumerate}
By \demph{rejecting} a gamble, the agent expresses that he considers accepting that gamble unreasonable, or, in other words, deems the gamble unacceptable.
Such statements are, for example, relevant when combining his statements with those of another agent: when a gamble is accepted by one, but rejected by the other, there is a clear conflict.
The agent is not forced to state either acceptance or rejection for a given gamble, but may choose to remain unresolved, e.g., because of a lack of information about the experiment.
Note the difference in nature of these two statement types:
Acceptability is operationally linked to gamble pay-offs.
For rejection this link is only indirect, namely by referral to acceptability.

The set of gambles the agent finds acceptable is denoted by~$\A_\acc\subseteq\linGs$; the set of gambles he rejects is similarly denoted  by~$\A_\rej\subseteq\linGs$.
Combined, they form his \demph{assessment} $\A\defeq\Adelim{\A_\acc}{\A_\rej}$; the set of all assessments is $\As\defeq\powset{\linGs}\times\powset{\linGs}$.

\begin{wrapfigure}{r}{0pt}
  \begin{tikzpicture}[scale=1.5]
    \draw[->] (-1,0) coordinate (xl) -- (1,0) coordinate (xu);
    \draw[->] (0,-.7) coordinate (yl) -- (0,1) coordinate (yu);
    \node (a1) at (30:.7) {$\oplus$};
    \node (a2) at (60:.7) {$\oplus$};
    \node (r1) at (-30:.7) {$\ominus$};
    \node (r2) at (120:.7) {$\ominus$};
  \end{tikzpicture}
  \hspace{1em}
  \begin{tikzpicture}[scale=1.5]
    \draw[->] (-1,0) coordinate (xl) -- (1,0) coordinate (xu);
    \draw[->] (0,-.7) coordinate (yl) -- (0,1) coordinate (yu);
    \node (a) at (45:.7) {$\oplus$};
    \node (r1) at (-45:.7) {$\ominus$};
    \node (r2) at (135:.7) {$\ominus$};
  \end{tikzpicture}
  \hspace{1em}
  \begin{tikzpicture}[scale=1.5]
    \draw[->] (-1,0) coordinate (xl) -- (1,0) coordinate (xu);
    \draw[->] (0,-.7) coordinate (yl) -- (0,1) coordinate (yu);
    \node (a1) at (30:.7) {$\oplus$};
    \node (a2) at (135:.7) {$\oplus$};
    \node (r1) at (60:.7) {$\ominus$};
    \node (r2) at (150:.7) {$\ominus$};
  \end{tikzpicture}
\end{wrapfigure}
We represent finite assessments graphically in our two-dimensional example format using $\oplus$ for accepted gambles and $\ominus$ for rejected ones.
On the right, we give three examples, which will be extended further on.
Each of these examples has been chosen to exhibit interesting behaviour at some later point in this discussion, so the accepted and rejected gambles have not been chosen arbitrarily.
One thing we can already indicate here is that the choice of rejected gambles in the leftmost example will effectively mean that relative losses above a certain threshold are deemed unacceptable.

\subsection{Unresolved Gambles, Confusing Gambles, and No Confusion}\label{sec:no-confusion}
Based on the statements that have been made about a gamble~$f$, it can fall into one of four categories.
It can be only accepted, only rejected, neither accepted nor rejected, or both accepted and rejected.

When the gamble is neither accepted nor rejected, it is called \demph{unresolved}; the set of unresolved gambles is $\A_\unres\defeq\compl{\group{\A_\acc\union\A_\rej}}$.
When the gamble is both accepted and rejected, it is said to be \demph{confusing}; the set of confusing gambles is $\A_\confus\defeq\A_\acc\intersection\A_\rej$.
Similarly, we denote the set of non-confusing acceptable gambles by $\D_\accnrej\coloneqq\D_\acc\setminus\D_\rej$ and the set of non-confusing rejected gambles by $\D_\rejnacc\coloneqq\D_\rej\setminus\D_\acc$.

To formalise the meaning attached to rejection---that acceptance is unreasonable---, we must judge confusion to be a situation that has to be avoided.
This corresponds to the following rationality axiom:
\begin{axiom}[No Confusion]{NC}\label{eq:no-confusion}
  $\A_\confus=\A_\acc\intersection\A_\rej=\emptyset$.
\end{axiom}
\noindent
The set of assessments without confusion is $\ncAs\defeq\cset{\A\in\As}{\A_\confus=\emptyset}$.
Assessments $\A$ in $\ncAs$ partition the space~$\linGs$ of all gambles of interest into $\set{\A_\acc,\A_\rej,\A_\unres}$.
(We allow partition elements to be empty.)

Although sources of confusion should ideally be investigated, it is possible to automatically remove confusion from assessments in a number of ways, of which we mention only a few here:
\begin{proposition}\label{prop:removing-confusion-from-assessments}
  Confusion can be removed from an assessment $\A$ in $\As$ by removing the confusing gambles from the accepted gambles, from the rejected gambles, or from both.
  Formally:
  $
  \set[\big]{
    \Adelim{\A_\accnrej}{\A_\rej},
    \Adelim{\A_\acc}{\A_\rejnacc},
    \Adelim{\A_\accnrej}{\A_\rejnacc}
  }\subseteq\ncAs$.
\end{proposition}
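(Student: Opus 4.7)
The plan is to verify, for each of the three modified assessments, that its acceptance and rejection sets have empty intersection, which is exactly the content of Axiom~NC. This reduces to three direct applications of elementary set-theoretic identities involving set difference and intersection; no structural feature of $\linGs$ is used.

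Concretely, I would unfold the definitions in the order listed. First, for $\Adelim{\A_\accnrej}{\A_\rej}$, the confusing set is $(\A_\acc\setminus\A_\rej)\intersection\A_\rej$, which is empty since the first operand excludes every element of $\A_\rej$. Secondly, for $\Adelim{\A_\acc}{\A_\rejnacc}$, the symmetric argument gives $\A_\acc\intersection(\A_\rej\setminus\A_\acc)=\emptyset$. Thirdly, for $\Adelim{\A_\accnrej}{\A_\rejnacc}$, the confusing set $(\A_\acc\setminus\A_\rej)\intersection(\A_\rej\setminus\A_\acc)$ is empty because it is contained in either of the two previous (empty) intersections. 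Invoking the definition of $\ncAs$ then yields the stated inclusion.

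There is no real obstacle here: the proposition is a bookkeeping observation that justifies the terminology introduced immediately before. The only thing one might want to flag explicitly is that the construction does not claim uniqueness or minimality of the confusion-removal procedure — it merely exhibits three canonical options, and the set $\ncAs$ may of course contain further, more conservative repairs of $\A$. That caveat is already anticipated by the hedging phrase ``in a number of ways, of which we mention only a few here'' in the text preceding the statement, so no additional commentary inside the proof is needed.
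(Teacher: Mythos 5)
Your proposal is correct and matches the paper's own proof, which likewise just checks that each of the three modified pairs has an empty intersection of accepted and rejected gambles, e.g.\ $(\A_\acc\setminus\A_\rej)\intersection\A_\rej=\emptyset$, and concludes membership in $\ncAs$ by definition. Nothing further is needed.
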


In our graphical examples, given that only a small finite number of statements have been made, all gambles are unresolved apart from the ones involved in these statements.
The resolved ones belong to either $\D_\accnrej$ or $\D_\rejnacc$, i.e., are either accepted or rejected.

\subsection{Indifference, Favourability, and Uncomparability}\label{sec:derived}
Given an assessment $\A$ in $\As$, we can introduce three other types of statements an agent can make about gambles by considering both a gamble and its (point-wise) negation.
Making a statement about one gamble in such a pair does not restrict the statements that can be made about the other.
However, useful interpretations can be given to certain combinations of statements.

We say that the agent is \demph{indifferent} about a gamble~$f$ if he finds both it and its negation $-f$ acceptable.
The set of indifferent gambles is $\A_\indiff\defeq\A_\acc\intersection-\A_\acc$.

We say that the agent finds a gamble~$f$ \demph{favourable} if he finds it acceptable, but rejects its negation~$-f$.
The reason for this terminology is that in such a configuration the agent accepts the gamble~$f$ and moreover effectively states non-indifference about it---unless he is in a state of confusion, of course.
The set of favourable gambles is~$\A_\fav\defeq\A_\acc\intersection-\A_\rej$.
The zero gamble cannot be favourable without being confusing because $-0=0$.

We say that a gamble~$f$ is \demph{indeterminate} if neither it nor its negation~$-f$ are acceptable, so that no predetermined operational procedure is associated to it.
The set of indeterminate gambles is~$\A_\incomp\defeq\compl{(\A_\acc\union-\A_\acc)}$.

In our graphical examples, no indifferent or favourable gambles appear; these categories will be illustrated later.
Of course, again given that only a small finite number of statements has been made, most pairs of gambles related by negation consist of unresolved gambles and these gambles are therefore indeterminate.
In the middle example, there is however one pair of rejected gambles related by negation; the gambles involved, even though resolved, are also called indeterminate because no accept statement is involved, so no commitments are determined.

\subsection{Deductive Extension and Deductive Closure}\label{sec:dedcls}
Based on the assumption that the gamble pay-offs are expressed in a linear precise utility scale, statements of acceptance imply other statements, generated by positive scaling and combination: if~$f$ is judged acceptable, then $\lambda\cdot f$ should be as well for all real~$\lambda>0$; if~$f$ and~$g$ are judged acceptable, then $f+g$ should be as well.
This is called \demph{deductive extension}.
Deductive extension can be succinctly expressed using the positive linear hull operator~$\phull$, which generates convex cones and was introduced in Section~\ref{sec:setup}.
The assumptions about the utility scale have no \emph{direct} consequences for reject statements; their \emph{indirect} impact will be derived in Section~\ref{sec:nolimbo}.

So, starting from an assessment~$\A$ in~$\As$, its deductive extension $\ext{\Ds}\A\defeq\Adelim{\phull\A_\acc}{\A_\rej}$, which we call a \emph{deductively closed} assessment, can be derived.
Deductively closed assessments~$\D$ satisfy the following rationality axiom:
\begin{axiom}[Deductive Closure]{DC}\label{eq:deduction}
    $\ext{\Ds}\D=\D$ \quad or, equivalently, \quad $\D_\acc\in\cones$,
\end{axiom}
\noindent
which can also be expressed as the combination of 
\begin{axiom}[Positive Scaling]{PS}\label{eq:scaling}
  $\lambda>0 \conj f\in\D_\acc \then \lambda\cdot f\in\D_\acc$
  \quad or, equivalently, \quad
  $\reals_{>}\cdot\D_\acc\subseteq\D_\acc$
\end{axiom}
\noindent
and
\begin{axiom}[Combination]{C}\label{eq:combination}
  $f,g\in\D_\acc \then f+g\in\D_\acc$
  \quad or, equivalently, \quad
  $\D_\acc+\D_\acc\subseteq\D_\acc$.
\end{axiom}
\noindent
The subset of $\As$ consisting of all deductively closed assessments is---not surprisingly---denoted by~$\Ds$ and those without confusion by $\ncDs\defeq\Ds\intersection\ncAs$.
Not all assessments without confusion remain so after deductive extension; those that do are called \emph{deductively closable} and form the set $\dcAs\defeq\cset{\A\in\ncAs}{\ext{\Ds}\A\in\ncDs}$, where we have made use of the fact that $\ext{\Ds}$ never removes statements and therefore cannot remove confusion.

It is useful to have an explicit criterion on hand to test whether an assessment is deductively closable or not:
\begin{theorem}\label{thm:dcAs-zerocrit}
  An assessment~$\A$ in~$\As$ is deductively closable---i.e., $\A\in\dcAs$---if and only if\/ $0\notin\A_\rej-\phull\A_\acc$.
\end{theorem}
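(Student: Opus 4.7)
The plan is to reduce the theorem to a chain of two straightforward equivalences by unpacking definitions, since the real content is conceptual rather than computational: deductive extension only enlarges the set of accepted gambles (leaving $\A_\rej$ untouched) and automatically produces a deductively closed set, so closability collapses to a single non-intersection condition.

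First I would verify the equivalence $\A\in\dcAs \iff \phull\A_\acc\intersection\A_\rej=\emptyset$. Starting from the definition $\dcAs=\cset{\A\in\ncAs}{\ext{\Ds}\A\in\ncDs}$, note that $\ext{\Ds}\A=\Adelim{\phull\A_\acc}{\A_\rej}$ already satisfies \ref{eq:deduction}, since $\phull$ is idempotent and hence $\phull\A_\acc\in\cones$. Consequently $\ext{\Ds}\A\in\ncDs$ is equivalent to $\ext{\Ds}\A$ being confusion-free, i.e.\ $\phull\A_\acc\intersection\A_\rej=\emptyset$. The remaining requirement $\A\in\ncAs$, namely $\A_\acc\intersection\A_\rej=\emptyset$, is subsumed by this because $\A_\acc\subseteq\phull\A_\acc$ (each $f\in\A_\acc$ lies on its own ray $\ray{f}$).

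Second I would handle the equivalence $\phull\A_\acc\intersection\A_\rej=\emptyset \iff 0\notin\A_\rej-\phull\A_\acc$, which is immediate from the definition of the Minkowski difference: $0\in\A_\rej-\phull\A_\acc$ means there exist $r\in\A_\rej$ and $p\in\phull\A_\acc$ with $r-p=0$, which happens precisely when the two sets meet. Chaining both equivalences yields the statement.

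I do not expect a real obstacle; the only bookkeeping is making sure the empty-set edge cases behave consistently (e.g.\ when $\A_\acc=\emptyset$ or $\A_\rej=\emptyset$ the Minkowski-difference convention $\someGs+\emptyset=\emptyset$ recorded in Section~\ref{sec:setup} makes both sides of the equivalence trivially true), and that the inclusion $\A_\acc\subseteq\phull\A_\acc$ used to absorb the no-confusion requirement is explicitly invoked.
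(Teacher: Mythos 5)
Your proposal is correct and follows essentially the same route as the paper: unpack the definition of $\dcAs$ to the single condition $\phull\A_\acc\intersection\A_\rej=\emptyset$ (your observation that $\A\in\ncAs$ is subsumed via $\A_\acc\subseteq\phull\A_\acc$ is a detail the paper leaves implicit in ``by definition''), and then translate this into $0\notin\A_\rej-\phull\A_\acc$. The only cosmetic difference is that you inline the Minkowski-sum argument that the paper factors out as Lemma~\ref{lem:minkowski-intersection-basic}.
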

\noindent
This criterion is a feasibility problem.
When the assessment consists of only a finite number of statements, the feasible space $\A_\rej-\phull\A_\acc$ is a union of convex cones $\Union_{f\in\A_\rej}(\set{f}-\phull\A_\acc)$ and the problem becomes a disjunctive linear program.
It reduces to a plain linear program when, for example, $\A_\rej$ is also convex.

Again, it is possible to automatically remove confusion from deductively closed assessments, but there is less flexibility than for assessments because not all modified assessments suggested in Proposition~\ref{prop:removing-confusion-from-assessments} are deductively closable:
\begin{proposition}\label{prop:removing-confusion-from-deductions}
  While ensuring the resulting assessment is still deductively closed, confusion can be removed from a deductively closed assessment $\D$ in $\Ds$, by removing the confusing gambles from the rejected gambles or by removing them from both the accepted and rejected gambles and then taking the deductive extension.
  So formally we have:
  $
    \set{
      \Adelim{\D_\acc}{\D_\rejnacc},
      \ext{\Ds}\Adelim{\D_\accnrej}{\D_\rejnacc}
    }\subseteq\ncDs.
  $
\end{proposition}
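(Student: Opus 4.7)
The plan is to verify, for each of the two candidate modifications, both conditions defining $\ncDs$: deductive closure (i.e., the accept side is a convex cone) and absence of confusion (i.e., the intersection of accept and reject sides is empty). The key structural fact I will lean on throughout is that $\D_\acc$ is already a convex cone, since $\D\in\Ds$.

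For the first candidate $\Adelim{\D_\acc}{\D_\rejnacc}$, deductive closure is immediate because the accept side is unchanged and hence still a convex cone. For absence of confusion, I simply compute $\D_\acc\intersection\D_\rejnacc=\D_\acc\intersection(\D_\rej\setminus\D_\acc)=\emptyset$ directly from the definition of $\D_\rejnacc$. So this case reduces to two one-liners.

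For the second candidate $\ext{\Ds}\Adelim{\D_\accnrej}{\D_\rejnacc}$, I first note that the deductive extension unfolds as $\Adelim{\phull\D_\accnrej}{\D_\rejnacc}$, whose accept side is a convex cone by definition of $\phull$, so Deductive Closure holds. The crucial step is verifying No Confusion: I need $\phull\D_\accnrej\intersection\D_\rejnacc=\emptyset$. Here I use the fact that $\D_\accnrej\subseteq\D_\acc$ together with $\D_\acc\in\cones$ (Axiom~\ref{eq:deduction}) to conclude $\phull\D_\accnrej\subseteq\phull\D_\acc=\D_\acc$. Combined with the disjointness $\D_\acc\intersection\D_\rejnacc=\emptyset$ established in the first case, this gives the desired empty intersection.

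The only place where there is anything subtle to watch is precisely this last step: one might worry that applying $\phull$ to $\D_\accnrej$ could reintroduce a gamble that lies in $\D_\rej$. The point is that $\phull$ produces only gambles already in the convex cone $\D_\acc$, and the culprits to avoid are elements of $\D_\rej\setminus\D_\acc$, which are by construction outside $\D_\acc$ altogether. This is also why the proposition insists on stripping confusion from the reject side (replacing $\D_\rej$ by $\D_\rejnacc$) rather than merely mirroring all three options of Proposition~\ref{prop:removing-confusion-from-assessments}: using $\D_\rej$ itself on the reject side would leave the confusing gambles in $\D_\acc\intersection\D_\rej$ available, and $\phull\D_\accnrej$ could well still contain them.
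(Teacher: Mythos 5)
Your proof is correct and follows essentially the same route as the paper's: both cases reduce to the observation that $\D_\acc\intersection\D_\rejnacc=\emptyset$ by definition of $\D_\rejnacc$, and for the second candidate that $\phull\D_\accnrej\subseteq\phull\D_\acc=\D_\acc$, with deductive closure being immediate in each case. Your closing remark on why the third option of Proposition~\ref{prop:removing-confusion-from-assessments} is omitted is a sound additional observation, but not needed for the claim itself.
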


The set of indifferent gambles~$\D_\indiff=\D_\acc\intersection-\D_\acc$ of a deductively closed assessment~$\D$ in~$\Ds$ is the negation invariant part of the convex cone~$\D_\acc$, so it is either empty or a linear space, the cone's so-called \emph{lineality space}.
The lineality space can be used in a couple of useful results:
\begin{proposition}\label{prop:lineality}
  Given a deductively closed assessment $\D$ in~$\Ds$ with a non-empty set of indifferent gambles $\D_\indiff$, denote the set of non-indifferent acceptable gambles by $\D_\accnindiff\coloneqq\D_\acc\setminus\D_\indiff$, then
  \begin{enumerate}
    \item\label{item:lineality-isq} there is indifference to status quo: $0\in\D_\indiff$,
    \item\label{item:lineality-invariance}
      the set $\D_\accnindiff$ is invariant under Minkowski addition with the indifferent gambles: $\D_\accnindiff+\D_\indiff=\D_\accnindiff$, and
    \item\label{item:lineality-cone}
       the set $\D_\accnindiff$ is a cone: $\D_\accnindiff\in\cones$.
  \end{enumerate}
\end{proposition}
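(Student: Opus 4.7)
The three parts are essentially successive applications of the Combination axiom (\ref{eq:combination}) and Positive Scaling (\ref{eq:scaling}), together with proofs by contradiction using the definition of $\D_\indiff$. The main obstacle, if any, is being careful about the bookkeeping — in particular, keeping track of which subset a gamble lives in and why — but there is no deep issue.

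For part~\ref{item:lineality-isq}, I would pick any $f\in\D_\indiff$, which is possible by the standing non-emptiness assumption. By definition both $f$ and $-f$ lie in $\D_\acc$, and Combination (\ref{eq:combination}) yields $0=f+(-f)\in\D_\acc$. Since $-0=0$, we also have $0\in-\D_\acc$, so $0\in\D_\indiff$.

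For part~\ref{item:lineality-invariance}, I would prove the two inclusions separately. For $\supseteq$, note that by part~\ref{item:lineality-isq} the zero gamble lies in $\D_\indiff$, so every $f\in\D_\accnindiff$ can be written as $f+0\in\D_\accnindiff+\D_\indiff$. For $\subseteq$, take $f\in\D_\accnindiff$ and $g\in\D_\indiff$. By Combination $f+g\in\D_\acc$, so it suffices to show $f+g\notin\D_\indiff$. Suppose, for contradiction, that $-(f+g)\in\D_\acc$. Since $g\in\D_\indiff\subseteq\D_\acc$, a second application of Combination gives $-(f+g)+g=-f\in\D_\acc$; combined with $f\in\D_\acc$, this forces $f\in\D_\indiff$, contradicting $f\in\D_\accnindiff$.

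For part~\ref{item:lineality-cone}, I would verify closure under Positive Scaling and under Combination separately, which together give $\phull\D_\accnindiff\subseteq\D_\accnindiff$ (the reverse inclusion is automatic). For scaling: if $f\in\D_\accnindiff$ and $\lambda>0$, then $\lambda f\in\D_\acc$ by \ref{eq:scaling}; if moreover $\lambda f\in\D_\indiff$, applying \ref{eq:scaling} with factor $1/\lambda$ to $-\lambda f\in\D_\acc$ yields $-f\in\D_\acc$, contradicting $f\notin\D_\indiff$. For combination: if $f,g\in\D_\accnindiff$, then $f+g\in\D_\acc$; if $f+g\in\D_\indiff$, then $-(f+g)\in\D_\acc$ and, combining with $g\in\D_\acc$ via \ref{eq:combination}, we obtain $-f\in\D_\acc$, again contradicting $f\in\D_\accnindiff$. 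Hence $\D_\accnindiff\in\cones$.
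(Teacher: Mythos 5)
Your proof is correct and takes essentially the same route as the paper's: each claim is settled by Positive Scaling and Combination together with a contradiction argument showing that the offending gamble would otherwise be indifferent. The only cosmetic difference is that you argue element-wise (and derive $0\in\D_\indiff$ directly from Combination), whereas the paper phrases the same cancellation steps set-wise, invoking the linearity of $\D_\indiff$ and its Minkowski-intersection lemma.
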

\noindent
Results such as Claim~\ref{item:lineality-cone} in which some set is found to be a cone are useful, because they alert us to the fact that, in principle, working with these sets should often amount to solving convex optimisation problems.
Decomposition results such as Claim~\ref{item:lineality-invariance}, where some set is the Minkowski sum of a linear space and another set, may allow for a lower-dimensional representation of that set, much like a cylinder can be characterised by its axis and some cylindrical section.
The idea is to use a complement of the lineality space in $\linGs$:
For example, if $\someGs\subseteq\linGs$ is a linear space such that $\linGs=\D_\indiff\oplus\someGs$, where $\oplus$ denotes the direct sum of linear spaces, then we can use $\D_\accnindiff\intersection\someGs$ as a lower-dimensional representation of~$\D_\accnindiff$.

\begin{wrapfigure}{r}{0pt}
  \begin{tikzpicture}[scale=1.5]
    \draw[->] (-1,0) coordinate (xl) -- (1,0) coordinate (xu);
    \draw[->] (0,-.7) coordinate (yl) -- (0,1) coordinate (yu);
    \node (a1) at (30:.7) {$\oplus$};
    \node (a2) at (60:.7) {$\oplus$};
    \node (r1) at (-30:.7) {$\ominus$};
    \node (r2) at (120:.7) {$\ominus$};
    \begin{pgfonlayer}{background}
      \draw[border] (0,0) -- (intersection of 0,0--a1 and xu--{xu|-yu}) coordinate (a1away);
      \draw[border] (0,0) -- (intersection of 0,0--a2 and yu--{xu|-yu}) coordinate (a2away);
      \fill[lightgray] (0,0) -- (a1away) |- (a2away) --cycle;
    \end{pgfonlayer}
  \end{tikzpicture}
  \hspace{1em}
  \begin{tikzpicture}[scale=1.5]
    \draw[->] (-1,0) coordinate (xl) -- (1,0) coordinate (xu);
    \draw[->] (0,-.7) coordinate (yl) -- (0,1) coordinate (yu);
    \node (a) at (45:.7) {$\oplus$};
    \node (r1) at (-45:.7) {$\ominus$};
    \node (r2) at (135:.7) {$\ominus$};
    \begin{pgfonlayer}{background}
      \draw[border] (0,0) -- (intersection of 0,0--a and xu--{xu|-yu});
    \end{pgfonlayer}
  \end{tikzpicture}
  \hspace{1em}
  \begin{tikzpicture}[scale=1.5]
    \draw[->] (-1,0) coordinate (xl) -- (1,0) coordinate (xu);
    \draw[->] (0,-.7) coordinate (yl) -- (0,1) coordinate (yu);
    \node (a1) at (30:.7) {$\oplus$};
    \node (a2) at (135:.7) {$\oplus$};
    \node (r1) at (60:.7) {$\ominus$};
    \node (r2) at (150:.7) {$\ominus$};
    \begin{pgfonlayer}{background}
      \draw[border] (0,0) -- (intersection of 0,0--a1 and xu--{xu|-yu}) coordinate (a1away);
      \draw[border] (0,0) -- (intersection of 0,0--a2 and yu--{xu|-yu}) coordinate (a2away);
      \fill[lightgray] (0,0) -- (a1away) |- (a2away) --cycle;
    \end{pgfonlayer}
  \end{tikzpicture}
\end{wrapfigure}
Applying deductive extension~$\ext{\Ds}$ to the three example assessments given in Section~\ref{sec:accept+reject} results in the deductively closed assessments pictured on the right.
The area filled light grey is the set of accepted gambles generated by deductive  extension, black lines indicate included border rays.
The conical nature of these sets is immediately apparent, as well as the fact that working with accepted gambles, or with the rays they generate, amounts to the same thing.
Because these deductively closed sets result from direct, finite assessments, the cones are all closed; this is not necessarily so in general, e.g., in Section~\ref{sec:previsions} we will see how even finitary prevision assessments generate infinite acceptability assessments that lead to non-closed cones.
Also visible is the fact that the set of rejected gambles is not affected by deductive closure.

We see that the assessment in the third example is not deductively closable, because after deductive extension one rejected gamble becomes confusing.
Both procedures for removing confusion in Proposition~\ref{prop:removing-confusion-from-deductions} lead to the same deductively closed assessment: the one resulting from removing the rejection statement for the single confusing gamble.

\subsection{Limbo, Reckoning Extension, No Limbo, and Models}\label{sec:nolimbo}
Deductive Closure does have more of an impact than is apparent at first sight.
Consider a deductively closed assessment~$\D$ in~$\Ds$ that is the deductive extension of the agent's assessment.
Furthermore consider an unresolved gamble~$f$, i.e., $f\in\D_\unres$.
What happens if the agent makes a statement about this gamble to augment his assessment?

Were~$f$ to be rejected, then we would be interested in $\ext{\Ds}\Adelim{\D_\acc}{\D_\rej\union\set{f}}$, which is just $\Adelim{\D_\acc}{\D_\rej\union\set{f}}$.
Consequently, there is no increase in confusion.
On the other hand, were~$f$ to be accepted, then we would have to focus on $\ext{\Ds}\Adelim{\D_\acc\union\set{f}}{\D_\rej}$, which is equal to $\Adelim{\D_\acc\union(\ray{f}+\D_\acc)\union\ray{f}}{\D_\rej}$.
This new deductively closed assessment may exhibit an increase in confusion:
\begin{proposition}\label{prop:limbo}
  Given a deductively closed assessment $\D$ in $\Ds$ and an unresolved gamble $f$ in $\D_\unres$, then
  \begin{enumerate}
    \item\label{item:limbo-condition}
      there is no increase in confusion, i.e.,
      $
        \group[\big]{\ext{\Ds}\Adelim{\D_\acc\union\set{f}}{\D_\rej}}_\confus
        \subseteq \D_\confus,
      $
      if and only if
      $
        f \notin \shull{\D_\rejnacc}\union\group{\shull{\D_\rejnacc}-\D_\acc},
      $
    \item\label{item:limbo-disjoint}
      the set $\D_\acc$ is disjoint with both  $\shull{\D_\rejnacc}$ and $\shull{\D_\rejnacc}-\D_\acc$.
  \end{enumerate}
\end{proposition}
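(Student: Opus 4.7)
The plan is to unpack $\ext{\Ds}\Adelim{\D_\acc\union\set{f}}{\D_\rej}$ explicitly and then compare its confusion set with $\D_\confus$. Since $\D$ is deductively closed (Axiom~\ref{eq:deduction}), $\D_\acc$ is a convex cone, so the positive linear hull of $\D_\acc\union\set{f}$ equals $\D_\acc\union\ray{f}\union(\ray{f}+\D_\acc)$, while the rejected part is left untouched by $\ext{\Ds}$. Intersecting with $\D_\rej$ therefore yields the new confusion set
\[
  \D_\confus\union\bigl(\ray{f}\intersection\D_\rej\bigr)\union\bigl((\ray{f}+\D_\acc)\intersection\D_\rej\bigr),
\]
so Claim~\ref{item:limbo-condition} amounts to asking when the two extra pieces are already contained in $\D_\confus=\D_\acc\intersection\D_\rej$.

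For the first piece, I would leverage that $f$ is unresolved, so $f\notin\D_\acc$; the cone property then gives $\ray{f}\intersection\D_\acc=\emptyset$, which collapses $\ray{f}\intersection\D_\rej\subseteq\D_\acc$ to $\ray{f}\intersection\D_\rej=\emptyset$, equivalently $\ray{f}\intersection\D_\rejnacc=\emptyset$, equivalently $f\notin\shull{\D_\rejnacc}$. For the second piece the same pattern applies: $(\ray{f}+\D_\acc)\intersection\D_\rej\subseteq\D_\acc$ is equivalent to $(\ray{f}+\D_\acc)\intersection\D_\rejnacc=\emptyset$. I would then rewrite a would-be witness $\lambda f+g=r$ (with $\lambda>0$, $g\in\D_\acc$, $r\in\D_\rejnacc$) as $f=(1/\lambda)r-(1/\lambda)g$, using that $\D_\acc$ is stable under positive scaling to keep $(1/\lambda)g$ inside $\D_\acc$; this shows the condition is equivalent to $f\notin\shull{\D_\rejnacc}-\D_\acc$. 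Conjoining both conditions gives the stated characterisation.

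Claim~\ref{item:limbo-disjoint} is then a short corollary of deductive closure, independent of $f$. For $\D_\acc\intersection\shull{\D_\rejnacc}=\emptyset$: any $\lambda r\in\D_\acc$ with $\lambda>0$ and $r\in\D_\rejnacc$ would, by the cone property, force $r\in\D_\acc$, contradicting $r\notin\D_\acc$. Disjointness with $\shull{\D_\rejnacc}-\D_\acc$ then follows from Axiom~\ref{eq:combination}: any shared element written as $g=s-g'$ with $g,g'\in\D_\acc$ and $s\in\shull{\D_\rejnacc}$ would give $s=g+g'\in\D_\acc+\D_\acc\subseteq\D_\acc$, contradicting the first disjointness.

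The main obstacle is not deep; it is the bookkeeping in Claim~\ref{item:limbo-condition}, translating cleanly between the geometric conditions ``$\ray{f}$ meets $\D_\rej$'' and ``$\ray{f}+\D_\acc$ meets $\D_\rejnacc$'' on one side and the compact algebraic conditions $f\in\shull{\D_\rejnacc}$ and $f\in\shull{\D_\rejnacc}-\D_\acc$ on the other. The key leverage throughout is that $\D_\acc$ is a convex cone, so positive scaling and Minkowski sum with $\D_\acc$ each preserve $\D_\acc$, which is precisely what permits the free rescaling of $\lambda$ and absorption of $g$ when passing between the two formulations.
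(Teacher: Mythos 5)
Your proposal is correct and follows essentially the same route as the paper's proof: the same decomposition $\phull(\D_\acc\union\set{f})=\D_\acc\union\ray{f}\union(\ray{f}+\D_\acc)$, the same reduction of the two extra confusion pieces to the conditions $\ray{f}\intersection\D_\rejnacc=\emptyset$ and $(\ray{f}+\D_\acc)\intersection\D_\rejnacc=\emptyset$ (hence $f\notin\shull{\D_\rejnacc}\union(\shull{\D_\rejnacc}-\D_\acc)$), and the same cone/combination argument for the disjointness claim. The only difference is cosmetic: where the paper packages the bookkeeping into small auxiliary lemmas on Minkowski sums, scalar hulls and set differences, you chase explicit witnesses, which amounts to the same computations.
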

\noindent
We say that the gambles in
$
  \group[\big]{
    \shull{\D_\rejnacc}\union\group{\shull{\D_\rejnacc}-\D_\acc}
  } \setminus \D_\rej
$
are in \demph{limbo} and we call this set the limbo of~$\D$.
We use this imagery because there is no real choice for the gambles in this set: although they are not rejected yet, the only thing to do is to reject them, if an increase in confusion is to be avoided. 
Proposition~\ref{prop:limbo} tells us that under Deductive Closure gambles in limbo have exactly the same effect as gambles in $\D_\rej$: considering them as acceptable increases confusion.
So---echoing the meaning attached to reject statements---, accepting them would be unreasonable.

When the deductively closed assessment we start from satisfies No Confusion, it holds that $\D_\rejnacc=\D_\rej$ and the expression for limbo simplifies:
\begin{corollary}\label{cor:limbo-noconfusion}
  Given a deductively closed assessment without confusion $\D$ in $\ncDs$ and an unresolved gamble $f$ in $\D_\unres$, then
  \begin{enumerate}
    \item\label{item:cor:limbo-noconfusion:condition}
      no confusion is created, i.e.,
      $
        \group[\big]{\ext{\Ds}\Adelim{\D_\acc\union\set{f}}{\D_\rej}}_\confus
        = \emptyset,
      $
      if and only if
      $
        f \notin \shull{\D_\rej}\union\group{\shull{\D_\rej}-\D_\acc},
      $
    \item\label{item:cor:limbo-noconfusion:disjoint}
      the set $\D_\acc$ is disjoint with both $\shull{\D_\rej}$ and $\shull{\D_\rej}-\D_\acc$, and
    \item
      the limbo of\/ $\D$ is
      $
        \group[\big]{
          \shull{\D_\rej}\union\group{\shull{\D_\rej}-\D_\acc}
        } \setminus \D_\rej.
      $
  \end{enumerate}
\end{corollary}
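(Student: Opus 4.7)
The plan is to derive this corollary as an immediate specialisation of Proposition~\ref{prop:limbo} once we exploit the No Confusion hypothesis. The key observation is that for $\D$ in $\ncDs$ we have $\D_\confus = \D_\acc \intersection \D_\rej = \emptyset$, hence $\D_\rejnacc = \D_\rej \setminus \D_\acc = \D_\rej$. Substituting $\D_\rej$ for $\D_\rejnacc$ throughout the statement of Proposition~\ref{prop:limbo} yields the three claims of the corollary.

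More concretely, for Claim~\ref{item:cor:limbo-noconfusion:condition}, I would start from Proposition~\ref{prop:limbo}\ref{item:limbo-condition}, which states that confusion does not increase iff $f \notin \shull{\D_\rejnacc} \union (\shull{\D_\rejnacc} - \D_\acc)$. Since $\D_\confus = \emptyset$ and since the $\subseteq$-direction of that proposition actually gives us that the new confusion set is a subset of the old one, which is empty, ``no increase in confusion'' is equivalent to ``no confusion is created'' here, i.e.\ the new confusion set equals $\emptyset$. Replacing $\D_\rejnacc$ by $\D_\rej$ then gives the stated criterion. Claim~\ref{item:cor:limbo-noconfusion:disjoint} follows in the same way from Proposition~\ref{prop:limbo}\ref{item:limbo-disjoint} by the same substitution. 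The third claim is nothing but the definition of the limbo of $\D$ introduced right after Proposition~\ref{prop:limbo}, again with $\D_\rejnacc$ replaced by $\D_\rej$.

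The only substantive point to verify carefully is that $\D_\rejnacc = \D_\rej$ under No Confusion; this is transparent from the set-theoretic identity $\D_\rej \setminus \D_\acc = \D_\rej \setminus (\D_\acc \intersection \D_\rej)$. There is no real obstacle: the corollary is a bookkeeping consequence of the preceding proposition, and the argument is essentially a one-line substitution, together with the observation that ``no increase'' and ``no creation'' of confusion coincide when the starting assessment is already confusion-free.
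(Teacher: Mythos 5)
Your proposal is correct and matches the paper's own (implicit) justification: the paper derives the corollary exactly by noting that No Confusion gives $\D_\rejnacc=\D_\rej$ and then substituting into Proposition~\ref{prop:limbo} and the definition of limbo, with ``no increase in confusion'' collapsing to ``no confusion is created'' because $\D_\confus=\emptyset$. No gaps.
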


Starting from a deductively closed assessment~$\D$ in~$\Ds$, additionally rejecting the gambles that are in its limbo---i.e., those that would lead to an increase in confusion if added instead to $\D_\acc$---results in its \demph{reckoning extension}
$
  \ext{\Ms}\D \defeq \Adelim[\big]{
    \D_\acc
  }{
    \D_\rej \union \shull{\D_\rejnacc}
            \union (\shull{\D_\rejnacc} - \D_\acc)
  },
$
which we call a \demph{model}.
Models~$\M$ are deductively closed assessments that satisfy the following rationality axiom:
\begin{axiom}[No Limbo]{NL}\label{eq:no-limbo}
  $\ext{\Ms}\M=\M$
  \quad or, equivalently, \quad
  $
    \shull{\M_\rejnacc} \union (\shull{\M_\rejnacc} - \M_\acc)
    \subseteq \M_\rej.
  $
\end{axiom}
\noindent
The subset of $\As$ consisting of all models is denoted by~$\Ms$ and those without confusion by $\ncMs\defeq\Ms\intersection\ncAs$.
By definition, reckoning extension cannot increase or create confusion for deductively closed assessments.
This means that for an assessment $\D$ that is deductively closed and avoids confusion, its reckoning extension $\ext{\Ms}\D$ is a model without confusion:
\begin{proposition}\label{prop:reckoning-for-closed-noconf}
  Given a deductively closed assessment without confusion $\D$ in $\ncDs$, then its reckoning extension is a model without confusion:
  $
    \ext{\Ms}\D =
    \Adelim{\D_\acc}{\shull{\D_\rej}\union\group{\shull{\D_\rej}-\D_\acc}}
    \in \ncMs.
  $
\end{proposition}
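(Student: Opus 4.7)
The claim has two parts to dispatch: the explicit formula for $\ext{\Ms}\D$ and the membership $\ext{\Ms}\D\in\ncMs$. The plan is to first simplify the defining expression of $\ext{\Ms}\D$ using the No Confusion hypothesis, and then verify separately that the resulting assessment satisfies Deductive Closure, No Confusion, and No Limbo.

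For the formula, I would invoke $\A_\confus=\emptyset$ directly: since $\D_\rej\intersection\D_\acc=\emptyset$, we have $\D_\rejnacc=\D_\rej$, which takes care of the replacement in both Minkowski-type terms of the original definition of $\ext{\Ms}$. To eliminate the leftover $\D_\rej$ summand in the union, I would note that every $f\in\D_\rej$ equals $1\cdot f\in\ray{f}\subseteq\shull{\D_\rej}$, so $\D_\rej\subseteq\shull{\D_\rej}$ and hence $\D_\rej\union\shull{\D_\rej}\union(\shull{\D_\rej}-\D_\acc)=\shull{\D_\rej}\union(\shull{\D_\rej}-\D_\acc)$, giving the claimed formula.

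For the membership $\ext{\Ms}\D\in\ncMs=\Ds\intersection\ncAs\intersection\{\A:\text{NL holds}\}$, I would handle the three axioms in turn. Deductive Closure is immediate because $\M_\acc=\D_\acc\in\cones$ is unchanged. No Confusion, i.e., $\M_\acc\intersection\M_\rej=\emptyset$, is exactly Corollary~\ref{cor:limbo-noconfusion}\,(\ref{item:cor:limbo-noconfusion:disjoint}), which asserts that $\D_\acc$ is disjoint from both $\shull{\D_\rej}$ and $\shull{\D_\rej}-\D_\acc$. For No Limbo, since No Confusion has just been established, $\M_\rejnacc=\M_\rej$, so I must show $\shull{\M_\rej}\union(\shull{\M_\rej}-\M_\acc)\subseteq\M_\rej$. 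Using distributivity of $\shull{\cdot}$ over unions, together with the idempotency $\shull{\shull{\D_\rej}}=\shull{\D_\rej}$ and the observation that $\shull{\D_\rej}-\D_\acc$ is already positively-scaling invariant (because both $\shull{\D_\rej}$ and the convex cone $\D_\acc$ are), I get $\shull{\M_\rej}=\M_\rej$. For the second term, $\shull{\M_\rej}-\M_\acc=(\shull{\D_\rej}-\D_\acc)\union((\shull{\D_\rej}-\D_\acc)-\D_\acc)$; the first component is already in $\M_\rej$, and the second equals $\shull{\D_\rej}-(\D_\acc+\D_\acc)\subseteq\shull{\D_\rej}-\D_\acc\subseteq\M_\rej$, where the inclusion uses $\D_\acc+\D_\acc\subseteq\D_\acc$ from the Combination axiom.

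The main obstacle is keeping the direction of containment correct in the Minkowski set arithmetic for the NL step, in particular noticing that $\D_\acc+\D_\acc\subseteq\D_\acc$ yields $\shull{\D_\rej}-(\D_\acc+\D_\acc)\subseteq\shull{\D_\rej}-\D_\acc$ (and not the reverse). The other steps are routine unwindings of the definitions and appeals to the cone structure of $\D_\acc$ guaranteed by Deductive Closure.
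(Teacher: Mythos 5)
Your proof is correct and takes essentially the same route as the paper: the simplified formula via $\D_\rejnacc=\D_\rej$ and $\D_\rej\subseteq\shull{\D_\rej}$, and No Confusion via Corollary~\ref{cor:limbo-noconfusion}\ref{item:cor:limbo-noconfusion:disjoint}. The only difference is that you verify Deductive Closure and No Limbo explicitly---your Minkowski computation, including the inclusion $\shull{\D_\rej}-(\D_\acc+\D_\acc)\subseteq\shull{\D_\rej}-\D_\acc$ and the positive-scaling invariance of $\shull{\D_\rej}-\D_\acc$, is sound---whereas the paper treats these checks as immediate from the definition of reckoning extension.
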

We can take this result one step further, starting again from an assessment that does not need to be deductively closed:
\begin{corollary}\label{cor:dedclostomod}
  Any deductively closable assessment $\A$ can be extended to a model without confusion.
  Formally: if $\A\in\dcAs$, then $\ext{\Ms}(\ext{\Ds}\A)\in\ncMs$.
\end{corollary}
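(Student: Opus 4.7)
The plan is to observe that this corollary is essentially an immediate composition of the definition of $\dcAs$ with Proposition~\ref{prop:reckoning-for-closed-noconf}, so there is no real obstacle and the proof is a short chain of two steps.

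First, I would unpack the hypothesis: by definition of $\dcAs$ (given in Section~\ref{sec:dedcls} as $\dcAs=\cset{\A\in\ncAs}{\ext{\Ds}\A\in\ncDs}$), assuming $\A\in\dcAs$ immediately yields that $\ext{\Ds}\A$ is a deductively closed assessment without confusion, i.e., $\ext{\Ds}\A\in\ncDs$.

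Second, I would invoke Proposition~\ref{prop:reckoning-for-closed-noconf}, which asserts that applying reckoning extension to any element of $\ncDs$ produces a model without confusion in $\ncMs$. Instantiating this with $\D\defeq\ext{\Ds}\A$ gives $\ext{\Ms}(\ext{\Ds}\A)\in\ncMs$, which is exactly the desired conclusion. The proof is therefore nothing more than a two-line diagram chase through the definitions and the preceding proposition, and there is no genuinely hard step; the substance has already been absorbed into Proposition~\ref{prop:reckoning-for-closed-noconf} and the definition of $\dcAs$.
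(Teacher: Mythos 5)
Your proof is correct and matches the paper's (implicit) reasoning exactly: the paper states this corollary immediately after Proposition~\ref{prop:reckoning-for-closed-noconf} precisely because it follows by unpacking the definition of $\dcAs$ and instantiating that proposition with $\D\defeq\ext{\Ds}\A$. Nothing is missing.
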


When one wants to automatically remove confusion from models, they may be treated as deductively closed assessments, meaning that Proposition~\ref{prop:removing-confusion-from-deductions} provides the appropriate answers.
In any case, however they are constructed, models without confusion have some useful additional properties:
\begin{proposition}\label{prop:model+hulls}
  Given a model without confusion $\M$ in $\ncMs$, then
  \begin{enumerate}
    \item\label{item:prop:model+hulls:shullrej}
      the structure of the set of rejected gambles is described by $\shull{\M_\rej}=\M_\rej= \M_\rej\union(\M_\rej-\M_\acc)$,
    \item\label{item:prop:model+hulls:favexpr}
      the set of favourable gambles is $\M_\fav=\group{\M_\acc-\M_\rej} \intersection \M_\acc$, and
    \item\label{item:prop:model+hulls:faviscone}
      the set of favourable gambles is a cone: $\M_\fav\in\cones$.
  \end{enumerate}
\end{proposition}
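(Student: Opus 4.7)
The plan is to chain the three claims in order, using each as a stepping stone for the next. For Claim~\ref{item:prop:model+hulls:shullrej}, I would first observe that $\M\in\ncMs$ gives $\M_\rejnacc=\M_\rej$, so the No Limbo axiom reads $\shull{\M_\rej}\union(\shull{\M_\rej}-\M_\acc)\subseteq\M_\rej$. The reverse inclusion $\M_\rej\subseteq\shull{\M_\rej}$ is immediate (take $\lambda=1$), yielding the first equality $\shull{\M_\rej}=\M_\rej$. The second equality then follows from $\M_\rej-\M_\acc\subseteq\shull{\M_\rej}-\M_\acc\subseteq\M_\rej$ together with the trivial $\M_\rej\subseteq\M_\rej\union(\M_\rej-\M_\acc)$.

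For Claim~\ref{item:prop:model+hulls:favexpr}, both inclusions need work. The $\supseteq$ direction is straightforward: if $f\in\M_\acc$ admits a decomposition $f=a-r$ with $a\in\M_\acc$ and $r\in\M_\rej$, then $-f=r-a\in\M_\rej-\M_\acc\subseteq\M_\rej$ by Claim~\ref{item:prop:model+hulls:shullrej}, whence $f\in\M_\acc\intersection(-\M_\rej)=\M_\fav$. The $\subseteq$ direction is the main obstacle, because the naive decompositions $f=f-0$ and $f=0-(-f)$ both fail under the minimal structure available: neither $0\in\M_\rej$ nor $0\in\M_\acc$ is guaranteed for a model. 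The trick I would use is to pick any $k\in(0,1)$ and write
\begin{equation*}
  f=(1-k)f-(-kf),
\end{equation*}
where $(1-k)f\in\M_\acc$ by positive scaling in the cone $\M_\acc$, and $-kf=k\cdot(-f)\in\M_\rej$ by the scalar-hull invariance just established.

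Claim~\ref{item:prop:model+hulls:faviscone} then reduces to verifying closure of $\M_\fav$ under positive scaling and Minkowski addition. Scaling is immediate: for $f\in\M_\fav$ and $\lambda>0$, both $\lambda f\in\M_\acc$ (cone property) and $-\lambda f=\lambda\cdot(-f)\in\M_\rej$ (by Claim~\ref{item:prop:model+hulls:shullrej}). For addition, given $f,g\in\M_\fav$, one has $f+g\in\M_\acc$ by additive closure of the cone, while $-(f+g)=(-f)-g\in\M_\rej-\M_\acc\subseteq\M_\rej$ by Claim~\ref{item:prop:model+hulls:shullrej}, using $-f\in\M_\rej$ and $g\in\M_\acc$. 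Together these give $\M_\fav\in\cones$.
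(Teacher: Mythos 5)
Your proposal is correct and follows essentially the same route as the paper: claim (i) from No Limbo together with No Confusion, claim (ii) via the positive-scaling split of $f$ into an acceptable part and a negated rejected part (the paper simply fixes $k=\tfrac12$), and claim (iii) by verifying Positive Scaling and Combination. The only cosmetic difference is that your cone verification in (iii) works directly with $\M_\fav=\M_\acc\intersection-\M_\rej$ and claim (i), whereas the paper verifies the cone property for the expression $\group{\M_\acc-\M_\rej}\intersection\M_\acc$ obtained in (ii); both arguments rest on the same structural facts.
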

\noindent
Adding anything favourable to something acceptable sweetens the deal to something favourable:
\begin{corollary}[Sweetened Deals]\label{cor:sweetened-deals}
  Given a model without confusion $\M$ in $\ncMs$, then $\M_\acc+\M_\fav=\M_\fav$.
\end{corollary}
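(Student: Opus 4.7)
My plan is to prove the equality by two inclusions, both of which become short computations once the simplified description of $\M_\rej$ for no-confusion models, namely $\shull{\M_\rej} = \M_\rej = \M_\rej \union (\M_\rej - \M_\acc)$ from Proposition~\ref{prop:model+hulls}(\ref{item:prop:model+hulls:shullrej}), is in hand.

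For the inclusion $\M_\acc + \M_\fav \subseteq \M_\fav$ I would fix arbitrary $a \in \M_\acc$ and $f \in \M_\fav$. Unfolding $\M_\fav = \M_\acc \intersection -\M_\rej$ gives $f \in \M_\acc$ and $-f \in \M_\rej$. The Combination axiom (Axiom~\ref{eq:combination}) then delivers $a + f \in \M_\acc$, while Proposition~\ref{prop:model+hulls}(\ref{item:prop:model+hulls:shullrej}) yields $-(a+f) = -f - a \in \M_\rej - \M_\acc \subseteq \M_\rej$; hence $a+f \in \M_\acc \intersection -\M_\rej = \M_\fav$.

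For the reverse inclusion $\M_\fav \subseteq \M_\acc + \M_\fav$, the tempting one-line argument $f = 0 + f$ is not available because a general model need not satisfy $0 \in \M_\acc$, so I would use the symmetric split $f = \tfrac{1}{2}f + \tfrac{1}{2}f$ instead. Positive Scaling (Axiom~\ref{eq:scaling}) gives $\tfrac{1}{2}f \in \M_\acc$, and applying the same scaling to $-f \in \M_\rej$ combined with $\shull{\M_\rej} = \M_\rej$ from Proposition~\ref{prop:model+hulls}(\ref{item:prop:model+hulls:shullrej}) gives $-\tfrac{1}{2}f \in \M_\rej$, so $\tfrac{1}{2}f \in \M_\fav$ and thus $f \in \M_\acc + \M_\fav$. (The case $\M_\fav = \emptyset$ is vacuous since $\M_\acc + \emptyset = \emptyset$.) No real obstacle is anticipated; the only point to watch is exactly that $0 \in \M_\acc$ need not hold, which is why the halving trick appears instead of the one-line argument that would apply under Indifference to Status Quo.
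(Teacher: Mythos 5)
Your proof is correct, but it takes a somewhat different route from the paper's. The paper proves the equality by the set-level sandwich $\M_\fav=\M_\fav+\M_\fav\subseteq\M_\fav+\M_\acc\subseteq\M_\fav$, routing the substantive inclusion through the alternative expression $\M_\fav=\group{\M_\acc-\M_\rej}\intersection\M_\acc$ of Proposition~\ref{prop:model+hulls}\ref{item:prop:model+hulls:favexpr} together with the conicity of $\M_\fav$ (Proposition~\ref{prop:model+hulls}\ref{item:prop:model+hulls:faviscone}) and of $\M_\acc$. You instead argue element-wise from the raw definition $\M_\fav=\M_\acc\intersection-\M_\rej$, using only Proposition~\ref{prop:model+hulls}\ref{item:prop:model+hulls:shullrej} (i.e.\ $\shull{\M_\rej}=\M_\rej$ and $\M_\rej-\M_\acc\subseteq\M_\rej$) plus Combination~\eqref{eq:combination} and Positive Scaling~\eqref{eq:scaling}: for $a+f$ you check acceptability via Combination and rejection of $-(a+f)$ via the No-Limbo consequence, which is a direct verification the paper's chain packages algebraically. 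Your reverse inclusion via the halving $f=\tfrac12f+\tfrac12f$ is the same idea as the paper's opening step $\M_\fav=\M_\fav+\M_\fav$, just written out explicitly, and your care about $0\notin\M_\acc$ (the corollary is stated for $\ncMs$, not $\ncMs_\zeroM$) matches the paper, whose proof likewise never invokes Indifference to Status Quo. Net effect: your argument is marginally more elementary, bypassing claims \ref{item:prop:model+hulls:favexpr} and \ref{item:prop:model+hulls:faviscone} of the proposition, while the paper's is a more compact reuse of structure it has already established.
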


\begin{wrapfigure}{r}{0pt}
  \begin{tikzpicture}[scale=1.5]
    \draw[->] (-1,0) coordinate (xl) -- (1,0) coordinate (xu);
    \draw[->] (0,-.7) coordinate (yl) -- (0,1) coordinate (yu);
    \node (a1) at (30:.7) {$\oplus$};
    \node (a2) at (60:.7) {$\oplus$};
    \node (r1) at (-30:.7) {$\ominus$};
    \node (r2) at (120:.7) {$\ominus$};
    \begin{pgfonlayer}{background}
      \draw[border] (0,0) -- (intersection of 0,0--a1 and xu--{xu|-yu}) coordinate (a1away);
      \draw[border] (0,0) -- (intersection of 0,0--a2 and yu--{xu|-yu}) coordinate (a2away);
      \fill[lightgray] (0,0) -- (a1away) |- (a2away) --cycle;
      \draw[border] (0,0) -- (intersection of 0,0--r1 and xu--{xu|-yu}) coordinate (r1away);
      \draw[border] (0,0) -- (intersection of 0,0--r2 and yu--{xu|-yu}) coordinate (r2away);
      \fill[gray] (0,0) -- (r1away) |- (xl|-yl) |- (r2away) --cycle;
      \draw[favborder] (0,0) -- (a1away) (0,0) -- (a2away);
    \end{pgfonlayer}
  \end{tikzpicture}
  \hspace{1em}
  \begin{tikzpicture}[scale=1.5]
    \draw[->] (-1,0) coordinate (xl) -- (1,0) coordinate (xu);
    \draw[->] (0,-.7) coordinate (yl) -- (0,1) coordinate (yu);
    \node (a) at (45:.7) {$\oplus$};
    \node (r1) at (-45:.7) {$\ominus$};
    \node (r2) at (135:.7) {$\ominus$};
    \begin{pgfonlayer}{background}
      \draw[border] (0,0) -- (intersection of 0,0--a and xu--{xu|-yu});
      \draw[border] (0,0) -- (intersection of 0,0--r1 and yl--{xu|-yl}) coordinate (r1away);
      \draw[border] (0,0) -- (intersection of 0,0--r2 and yu--{xu|-yu}) coordinate (r2away);
      \fill[gray] (0,0) -- (r1away) |- (xl|-yl) |- (r2away) --cycle;
      \draw[border,white] (0,0) -- (intersection of 0,0--a and yl--{yl-|xl});
      \draw[exclborder] (0,0) -- (intersection of 0,0--a and yl--{yl-|xl});
    \end{pgfonlayer}
  \end{tikzpicture}
  \hspace{1em}
  \begin{tikzpicture}[scale=1.5]
    \draw[->] (-1,0) coordinate (xl) -- (1,0) coordinate (xu);
    \draw[->] (0,-.7) coordinate (yl) -- (0,1) coordinate (yu);
    \node (a1) at (30:.7) {$\oplus$};
    \node (a2) at (135:.7) {$\oplus$};
    \node (r1) at (60:.7) {$\ominus$};
    \node (r2) at (150:.7) {$\ominus$};
    \begin{pgfonlayer}{background}
      \draw[border] (0,0) -- (intersection of 0,0--a1 and xu--{xu|-yu}) coordinate (a1away);
      \draw[border] (0,0) -- (intersection of 0,0--a2 and yu--{xu|-yu}) coordinate (a2away);
      \fill[lightgray] (0,0) -- (a1away) |- (a2away) --cycle;
      \draw[border] (0,0) -- (intersection of 0,0--r2 and xl--{xl|-yl}) coordinate (r2away);
      \draw[exclborder] (0,0) -- (intersection of 0,0--a2 and yl--{xu|-yl}) coordinate (a2anotherway);
      \fill[gray] (0,0) -- (a2anotherway) |- (xl|-yl) |- (r2away) --cycle;
      \draw[favborder] (0,0) -- (a1away);
    \end{pgfonlayer}
  \end{tikzpicture}
\end{wrapfigure}
Applying reckoning extension~$\ext{\Ms}$ to the three deductively closed example assessments given in Section~\ref{sec:dedcls} results in the models depicted on the right.
The area filled dark grey is the set of rejected gambles implied by No Limbo, dashed black lines emphasise excluded rays.

The first two examples illustrate that a model's set of rejected gambles need not be convex.
In the first case, we can interpret this as follows: the agent effectively finds accepting gambles with a gain to loss ratio below a certain threshold---here smaller than one---unreasonable.
In the second example this set is even disconnected---arguably due to a degenerate setup.

The last example illustrates that reckoning extension only acts on the unconfused parts of a model's set of rejected gambles.
Making abstraction of such confusing gambles, it is visible that reckoning extension transfers the practical equivalence of working with gambles or rays from the set of accepted gambles to the set of rejected gambles.

In the first example, all acceptable rays turn out to be favourable as well; for the border rays, this is indicated by dotting them.
In the second example, there are no favourable gambles.
In the third example, all acceptable rays are favourable except for one---undotted---border ray; note how the opposing ray---a border ray of the set of rejected gambles---is not included in the set of rejected gambles.

\subsection{Order-Theoretic Considerations: Resolve, Intersection Structures, and Closure Operators}\label{sec:order}
(\Citet{Davey-Priestley-1990} provide a good supporting reference for the material in this section.)

The typical set-theoretic operations---e.g., union~$\Union$, intersection~$\Intersection$, and set difference~$\setminus$---are extended to assessments by component-wise application.
Pairs of assessments can be compared component-wise using their set-theoretic  inclusion relationship.
We say that an assessment~$\A$ is at most as \demph{resolved} as an assessment~$\otherA$ if the former's components are included in those of the latter: $\A\subseteq\otherA$ if and only if $\A_\acc \subseteq \otherA_\acc$ and $\A_\rej \subseteq \otherA_\rej$.
The resolve terminology is based on the fact that more resolved assessments contain less unresolved gambles.
We work with component-wise inclusion of assessments and not with inclusion of sets of resolved gambles because we will gratefully exploit the additional mathematical structure it provides.

We also introduce terminology for the comparison of assessments based on one component only:
We say that an assessment~$\A$ is at most as \emph{committal} as an assessment~$\otherA$ if the former's set of acceptable gambles is included in the latter's: $\A_\acc\subseteq\otherA_\acc$.
Effectively, more committal assessments contain more commitments to engage in transactions of the type described in Section~\ref{sec:accept+reject}.
We furthermore say that an assessment~$\A$ is at most as \emph{restrictive} as an assessment~$\otherA$ if the former's set of rejected gambles is included in the latter's: $\A_\rej\subseteq\otherA_\rej$.
More restrictive assessments restrict the potential commitments to a smaller set.

\begin{wrapfigure}{r}{0pt}
  $
    \begin{tikzpicture}[scale=1.5,baseline=-.55ex]
      \draw[->] (-.8,0) coordinate (xl) -- (1,0) coordinate (xu);
      \draw[->] (0,-.7) coordinate (yl) -- (0,1) coordinate (yu);
      \useasboundingbox (xl|-yl) rectangle (xu|-yu);
      \node (a) at (45:.7) {$\oplus$};
      \node (r1) at (-45:.7) {$\ominus$};
      \node (r2) at (135:.7) {$\ominus$};
      \begin{pgfonlayer}{background}
        \draw[border] (0,0) -- (intersection of 0,0--a and xu--{xu|-yu});
        \draw[border] (0,0) -- (intersection of 0,0--r1 and yl--{xu|-yl}) coordinate (r1away);
        \draw[border] (0,0) -- (intersection of 0,0--r2 and xl--{xl|-yu}) coordinate (r2away);
        \fill[gray] (0,0) -- (r1away) |- (xl|-yl) |- (r2away) --cycle;
        \draw[border,white] (0,0) -- (intersection of 0,0--a and yl--{yl-|xl});
        \draw[exclborder] (0,0) -- (intersection of 0,0--a and yl--{yl-|xl});
      \end{pgfonlayer}
    \end{tikzpicture}
    \;\subset\;
    \begin{tikzpicture}[scale=1.5,baseline=-.55ex]
      \draw[->] (-.8,0) coordinate (xl) -- (1,0) coordinate (xu);
      \draw[->] (0,-.7) coordinate (yl) -- (0,1) coordinate (yu);
      \useasboundingbox (xl|-yl) rectangle (xu|-yu);
      \node (a1) at (30:.7) {$\oplus$};
      \node (a2) at (60:.7) {$\oplus$};
      \node (r1) at (-30:.7) {$\ominus$};
      \node (r2) at (120:.7) {$\ominus$};
      \begin{pgfonlayer}{background}
        \draw[border] (0,0) -- (intersection of 0,0--a1 and xu--{xu|-yu}) coordinate (a1away);
        \draw[border] (0,0) -- (intersection of 0,0--a2 and yu--{xu|-yu}) coordinate (a2away);
        \fill[lightgray] (0,0) -- (a1away) |- (a2away) --cycle;
        \draw[border] (0,0) -- (intersection of 0,0--r1 and xu--{xu|-yu}) coordinate (r1away);
        \draw[border] (0,0) -- (intersection of 0,0--r2 and yu--{xu|-yu}) coordinate (r2away);
        \fill[gray] (0,0) -- (r1away) |- (xl|-yl) |- (r2away) --cycle;
        \draw[favborder] (0,0) -- (a1away) (0,0) -- (a2away);
      \end{pgfonlayer}
    \end{tikzpicture}
    \;\lower1ex\hbox{\shortstack{$\nsubseteq$\\$\nsupseteq$}}\;
    \begin{tikzpicture}[scale=1.5,baseline=-.55ex]
      \draw[->] (-.8,0) coordinate (xl) -- (1,0) coordinate (xu);
      \draw[->] (0,-.7) coordinate (yl) -- (0,1) coordinate (yu);
      \useasboundingbox (xl|-yl) rectangle (xu|-yu);
      \node (a1) at (30:.7) {$\oplus$};
      \node (a2) at (135:.7) {$\oplus$};
      \node (r1) at (60:.7) {$\ominus$};
      \node (r2) at (150:.7) {$\ominus$};
      \begin{pgfonlayer}{background}
        \draw[border] (0,0) -- (intersection of 0,0--a1 and xu--{xu|-yu}) coordinate (a1away);
        \draw[border] (0,0) -- (intersection of 0,0--a2 and xl--{xl|-yu}) coordinate (a2away);
        \fill[lightgray] (0,0) -- (a1away) |- (yu) -| (a2away) --cycle;
        \draw[border] (0,0) -- (intersection of 0,0--r2 and xl--{xl|-yl}) coordinate (r2away);
        \draw[exclborder] (0,0) -- (intersection of 0,0--a2 and yl--{xu|-yl}) coordinate (a2anotherway);
        \fill[gray] (0,0) -- (a2anotherway) |- (xl|-yl) |- (r2away) --cycle;
        \draw[favborder] (0,0) -- (a1away);
      \end{pgfonlayer}
    \end{tikzpicture}
  $
\end{wrapfigure}
On the right, the resolvedness relation is illustrated using the example models we encountered at the end of Section~\ref{sec:nolimbo}:
The middle model is more resolved than the left one, but neither more nor less resolved than the right one.
This is a consequence of the fact that the models increase qua commitments from left to right, but that the middle model is more restrictive than the outer ones.

Under the `at most as resolved as'-relation $\subseteq$, the set of assessments constitutes a complete lattice $(\As,\subseteq)$, where the union operator $\Union$ plays the role of supremum and the intersection operator $\Intersection$ that of infimum.
Its bottom is $\bot\defeq\Adelim{\emptyset}{\emptyset}$ and its top is $\top\defeq\Adelim{\linGs}{\linGs}$.

The derived posets $(\ncAs,\subseteq)$, $(\dcAs,\subseteq)$, $(\Ds,\subseteq)$, $(\ncDs,\subseteq)$, and $(\ncMs,\subseteq)$ all have an interesting order-theoretic nature, as they are \demph{intersection structures}:
\begin{proposition}\label{prop:intersection}
  The sets $\ncAs$, $\dcAs$, $\Ds$, $\ncDs$, and\/ $\ncMs$ are closed under arbitrary non-empty intersections:
    let $\someAs$ denote any one of these sets and let $\somemoreAs\subseteq\someAs$, then $\Intersection\somemoreAs\in\someAs$.
\end{proposition}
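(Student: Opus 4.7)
My plan is to exploit that intersection of assessments is defined componentwise, so that each defining axiom can be verified componentwise. Throughout, let $\somemoreAs\subseteq\someAs$ be non-empty and write $\otherA\defeq\Intersection\somemoreAs$, so that $\otherA_\acc=\Intersection_{\A\in\somemoreAs}\A_\acc$ and $\otherA_\rej=\Intersection_{\A\in\somemoreAs}\A_\rej$. For $\ncAs$ I would pick any $\A\in\somemoreAs$ (which exists by hypothesis) and observe $\otherA_\acc\intersection\otherA_\rej\subseteq\A_\acc\intersection\A_\rej=\emptyset$. For $\Ds$ I would invoke the classical fact that the family $\cones$ of convex cones is closed under arbitrary intersections, so $\otherA_\acc\in\cones$. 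Closure of $\ncDs=\ncAs\intersection\Ds$ follows by combining both.

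For $\dcAs$, rather than chasing the definition through $\ext{\Ds}$, I would invoke the zero criterion of Theorem~\ref{thm:dcAs-zerocrit}: it suffices to verify $0\notin\otherA_\rej-\phull\otherA_\acc$. Suppose on the contrary that $r\in\otherA_\rej$ and $r=\sum_i\lambda_i a_i$ for some $\lambda_i>0$ and $a_i\in\otherA_\acc$. Picking any $\A\in\somemoreAs$, we have $r\in\A_\rej$ and each $a_i\in\A_\acc$, hence $\sum_i\lambda_i a_i\in\phull\A_\acc$, contradicting Theorem~\ref{thm:dcAs-zerocrit} applied to $\A$.

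For $\ncMs$, closure under No Confusion and Deductive Closure has already been handled; only No Limbo remains. Since $\otherA_\acc\intersection\otherA_\rej=\emptyset$ gives $\otherA_\rejnacc=\otherA_\rej$, the required inclusion is $\shull{\otherA_\rej}\union(\shull{\otherA_\rej}-\otherA_\acc)\subseteq\otherA_\rej$. For every $\M\in\somemoreAs$, monotonicity of the positive scalar hull and of Minkowski difference gives $\shull{\otherA_\rej}\subseteq\shull{\M_\rej}$ and $\shull{\otherA_\rej}-\otherA_\acc\subseteq\shull{\M_\rej}-\M_\acc$; No Limbo for $\M$ (together with $\M_\rejnacc=\M_\rej$) places both sets inside $\M_\rej$, and intersecting over $\M\in\somemoreAs$ places them inside $\otherA_\rej$.

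The only case that is not immediate from direct componentwise closure is $\dcAs$, because deductive closability involves $\phull$ and one generally has $\phull(\Intersection_\A\A_\acc)\subsetneq\Intersection_\A\phull\A_\acc$, so membership of $\ext{\Ds}\A$ in $\ncDs$ does not chain naively through intersection. The zero criterion is precisely what is needed here: it repackages deductive closability as a pointwise feasibility condition that transfers from any single element of $\somemoreAs$ to the intersection.
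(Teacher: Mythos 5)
Your proof is correct and follows essentially the same route as the paper's: No Confusion and No Limbo are preserved by exactly the monotonicity-of-$\shull$-and-Minkowski-difference argument you give, and Deductive Closure holds because arbitrary intersections of convex cones are convex cones. The only point where you go beyond the paper's write-up is the $\dcAs$ case (left implicit there); your appeal to Theorem~\ref{thm:dcAs-zerocrit} is sound, though it can be shortened, since $\Intersection\somemoreAs$ is componentwise included in any member $\A$ of $\somemoreAs$, so the condition $\phull\A_\acc\intersection\A_\rej=\emptyset$ is inherited directly and the failure of $\phull$ to commute with intersections is not actually an obstacle.
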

\noindent
All posets $(\ncAs,\subseteq)$, $(\dcAs,\subseteq)$, $(\Ds,\subseteq)$, $(\ncDs,\subseteq)$, and $(\ncMs,\subseteq)$ are therefore complete infimum-semi-lattices where~$\Intersection$ plays the role of infimum.
They have a common bottom~$\bot$, but only $(\Ds,\subseteq)$ has a top, $\top$.
The others have multiple maximal elements, respectively forming the sets~$\maxncAs$, $\maxdcAs$, $\maxncDs$, and $\maxncMs$.
Given $\someAs\subseteq\As$, then the \emph{set of maximal elements} of $(\someAs,\subseteq)$ is
$
  \maxsomeAs \defeq
    \cset[\big]{\A\in\someAs}{(\forall\otherA\in\someAs)\A\nsubset\otherA},
$
with $\A\subset\otherA$ if and only if $\A\subseteq\otherA$ and $\A\neq\otherA$.
The explicit form of these maximal elements can be found:
\begin{proposition}\label{prop:maximalncasss}
  The set of maximal assessments without confusion is
  $
    \maxncAs = \cset{\Adelim{\someGs}{\linGs\setminus\someGs}}{\someGs\subseteq\linGs}.
  $
  So an assessment without confusion $\A$ in $\ncAs$ is maximal if and only if it has no unresolved gambles: $\A_\unres=\emptyset$.
\end{proposition}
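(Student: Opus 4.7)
The plan is to prove the equivalent characterisation on the right-hand sentence first, and then derive the set-builder form as an immediate consequence. So I would prove: $\A\in\ncAs$ is maximal if and only if $\A_\unres = \emptyset$.

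For the ``if'' direction, suppose $\A\in\ncAs$ satisfies $\A_\unres = \emptyset$, i.e., $\A_\acc\cup\A_\rej = \linGs$. Take any $\otherA\in\ncAs$ with $\A\subseteq\otherA$. Any candidate gamble $f\in\otherA_\acc\setminus\A_\acc$ would have to lie in $\A_\rej\subseteq\otherA_\rej$ (since $\A_\unres=\emptyset$), producing $f\in\otherA_\acc\cap\otherA_\rej$ and contradicting $\otherA\in\ncAs$; the symmetric argument works for $\otherA_\rej\setminus\A_\rej$. Hence $\otherA = \A$, so $\A$ is maximal.

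For the ``only if'' direction, suppose $\A\in\ncAs$ has some $f\in\A_\unres$, so $f\notin\A_\acc$ and $f\notin\A_\rej$. Form $\otherA\defeq\Adelim{\A_\acc\cup\set{f}}{\A_\rej}$. Then $\otherA_\acc\cap\otherA_\rej = (\A_\acc\cap\A_\rej)\cup(\set{f}\cap\A_\rej) = \emptyset$ since both pieces are empty, so $\otherA\in\ncAs$; and $\A\subsetneq\otherA$. Hence $\A$ is not maximal, which is the contrapositive of what was needed.

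Finally, to deduce the set-builder form: any assessment of the shape $\Adelim{\someGs}{\linGs\setminus\someGs}$ lies in $\ncAs$ because $\someGs$ and its complement are disjoint, and it has $\A_\unres = \linGs\setminus(\someGs\cup(\linGs\setminus\someGs)) = \emptyset$, so by the ``if'' direction it is maximal. Conversely, every $\A\in\maxncAs$ has $\A_\unres=\emptyset$ and $\A_\acc\cap\A_\rej=\emptyset$, which together force $\A_\rej = \linGs\setminus\A_\acc$, so $\A$ has the stated form with $\someGs = \A_\acc$. I do not anticipate any real obstacle here: the argument is purely set-theoretic bookkeeping, with the only subtlety being the care needed in the direction that adds $f$ to $\A_\acc$ (as opposed to $\A_\rej$) to enlarge the assessment while preserving No Confusion.
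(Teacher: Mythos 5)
Your proof is correct, but it takes a more elementary route than the paper. You argue directly inside $\ncAs$: maximality fails whenever an unresolved gamble exists (add it to the accept component, which preserves No Confusion since it lies outside $\A_\rej$), and maximality holds whenever $\A_\unres=\emptyset$ (any strict enlargement would have to accept a rejected gamble or reject an accepted one, creating confusion); the set-builder form then drops out. The paper instead factors the argument through an abstract order-theoretic lemma (Lemma~\ref{lem:maximalsubmaximal}): it exhibits, for every $\otherD\in\ncAs$, the dominating assessment $\otherD\union\Adelim{\otherD_\unres}{\emptyset}$, which lies in the class $\cset{\Adelim{\someGs}{\linGs\setminus\someGs}}{\someGs\subseteq\linGs}$ whose elements are pairwise incomparable, and concludes $\maxncAs=\ncAs\intersection\maxsomeAs$. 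What your version buys is self-containment and brevity for this one statement; what the paper's version buys is reuse, since the same lemma with a different choice of dominating element also yields Proposition~\ref{prop:maximalncmods} and the accept-favour analogues (Propositions~\ref{prop:accfav-maximalncasss} and~\ref{prop:accfav-maximalncmods}), where a direct one-gamble extension would need extra care because deductive closure is in play. One small remark: at the level of $\ncAs$ there is actually no asymmetry in your ``only if'' step---since $f\in\A_\unres$ lies outside both components, adding it to $\A_\rej$ would preserve No Confusion just as well---so the subtlety you flag only becomes real in the later, deductively closed setting.
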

\begin{proposition}\label{prop:maximalncmods}
  The set of all maximal models without confusion is
  $
    \maxncMs
    = \maxncAs\intersection\ncDs
    = \cset{\Adelim{\someGs}{\linGs\setminus\someGs}}{\someGs\in\cones}.
  $
  It moreover coincides with the sets of all maximal deductively closed or closable assessments without confusion: $\maxncMs=\maxncDs=\maxdcAs$.
\end{proposition}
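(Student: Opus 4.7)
My plan is to establish the explicit characterization first and then extract the chain of equalities from it. Starting from Proposition~\ref{prop:maximalncasss}, every maximal assessment without confusion has the form $\Adelim{\someGs}{\linGs\setminus\someGs}$, and such an assessment lies in $\ncDs$ precisely when $\someGs\in\cones$; this gives the middle equality $\maxncAs\intersection\ncDs=\cset{\Adelim{\someGs}{\linGs\setminus\someGs}}{\someGs\in\cones}$ immediately. To see that each such pair is a model without confusion, I would check No Limbo by hand: because $\someGs\in\cones$, both $\shull{\linGs\setminus\someGs}\subseteq\linGs\setminus\someGs$ and $(\linGs\setminus\someGs)-\someGs\subseteq\linGs\setminus\someGs$ hold via short cone arguments (if $\lambda f\in\someGs$ with $\lambda>0$ or $\lambda f - g\in\someGs$ with $g\in\someGs$ and $\lambda>0$, the cone property forces $f\in\someGs$). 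Since this assessment is already maximal in $\ncAs\supseteq\ncMs$, it is automatically maximal in $\ncMs$, yielding one inclusion of the first equality.

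For the reverse inclusion $\maxncMs\subseteq\cset{\Adelim{\someGs}{\linGs\setminus\someGs}}{\someGs\in\cones}$, the main point is to show $\M_\unres=\emptyset$ for any $\M\in\maxncMs$. Suppose for contradiction that $f\in\M_\unres$. Then $\Adelim{\M_\acc}{\M_\rej\union\set{f}}$ preserves Deductive Closure (we have not touched $\M_\acc$) and preserves No Confusion (since $f\notin\M_\acc$), so it lies in $\ncDs$. By Proposition~\ref{prop:reckoning-for-closed-noconf}, its reckoning extension lies in $\ncMs$ and dominates $\M$ strictly, because $f$ appears among its rejected gambles while $f\notin\M_\rej$. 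This contradicts maximality of~$\M$, so $\M_\unres=\emptyset$; together with No Confusion this gives $\M_\rej=\linGs\setminus\M_\acc$, and $\M_\acc\in\cones$ by Deductive Closure.

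The remaining equalities $\maxncMs=\maxncDs=\maxdcAs$ then follow from the inclusions $\ncMs\subseteq\ncDs\subseteq\dcAs$ combined with the extension operators $\ext{\Ds}$ and $\ext{\Ms}$, both of which only increase resolve while staying inside $\ncMs\subseteq\ncDs$ when applied to elements of $\dcAs$. Concretely, any $\A\in\maxdcAs$ must coincide with $\ext{\Ms}(\ext{\Ds}\A)\in\ncMs$, since otherwise $\A$ would be strictly dominated within $\dcAs$ by Corollary~\ref{cor:dedclostomod}; and any $\M\in\maxncMs$ cannot be dominated by an element of $\dcAs$ or $\ncDs$ without running into the same contradiction via $\ext{\Ms}\after\ext{\Ds}$. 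The main obstacle is the maximality-forces-no-unresolved-gambles step in the second paragraph; its subtlety is that rejection statements, unlike acceptance statements, can always be appended safely to a deductively closed assessment without provoking deductive overspill into $\M_\acc$, after which Proposition~\ref{prop:reckoning-for-closed-noconf} takes care of the No Limbo closure at no extra cost.
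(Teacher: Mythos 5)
Your proof is correct, but it assembles the pieces differently from the paper. The paper never verifies No Limbo for the explicit-form assessments: it first derives $\maxncMs=\maxncDs=\maxdcAs$ from the fixed-point behaviour of $\ext{\Ds}$ and $\ext{\Ms}$ on maximal elements, and then identifies $\maxncDs$ with $\ncDs\intersection\maxncAs$ by invoking Lemma~\ref{lem:maximalsubmaximal} with the witness $\otherD\union\Adelim{\emptyset}{\otherD_\unres}$, i.e., by rejecting \emph{all} unresolved gambles at once; the explicit form then drops out of Proposition~\ref{prop:maximalncasss} and Deductive Closure. You instead work on $\maxncMs$ directly: you check No Limbo for $\Adelim{\someGs}{\linGs\setminus\someGs}$ with $\someGs\in\cones$ by a hands-on cone computation, and you show that a maximal model has no unresolved gambles by rejecting a \emph{single} unresolved gamble and re-closing with Proposition~\ref{prop:reckoning-for-closed-noconf}---essentially a pared-down version of the paper's Lemma~\ref{lem:ext-with-unres}, which the paper only needs later, for Proposition~\ref{prop:inf-by-max}. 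Both routes are sound: the paper's buys economy (no limbo computation, and Lemma~\ref{lem:maximalsubmaximal} is reused for Propositions~\ref{prop:maximalncasss}, \ref{prop:accfav-maximalncasss} and~\ref{prop:accfav-maximalncmods}), while yours is more self-contained and makes explicit why maximality forces every gamble to be resolved. One small completion for your last paragraph: your ``concrete'' clauses give $\maxdcAs\subseteq\maxncMs$ and $\maxncMs\subseteq\maxncDs\intersection\maxdcAs$, but not the remaining direction $\maxncDs\subseteq\maxncMs$; it follows by the same one-line pattern you already use (for $\D\in\maxncDs$ we have $\D\subseteq\ext{\Ms}\D\in\ncMs\subseteq\ncDs$, so maximality forces $\D=\ext{\Ms}\D\in\ncMs$, and maximality in $\ncDs\supseteq\ncMs$ transfers to $\ncMs$), so state it explicitly rather than leaving it under the general remark about extension operators.
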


\begin{wrapfigure}{r}{0pt}
  $
    \begin{tikzpicture}[scale=1.5,baseline=-.7ex]
    \draw[->] (-.7,0) coordinate (xl) -- (1,0) coordinate (xu);
    \draw[->] (0,-.7) coordinate (yl) -- (0,1) coordinate (yu);
      \node (a1) at (135:.7) {$\oplus$};
      \node (a2) at (-45:.7) {$\oplus$};
      \node (r1) at (-30:.7) {$\ominus$};
      \node (r2) at (120:.7) {$\ominus$};
      \begin{pgfonlayer}{background}
        \draw[border] (0,0) -- (intersection of 0,0--a1 and xl--{xl|-yu}) coordinate (a1away);
        \draw[border] (0,0) -- (intersection of 0,0--a2 and yl--{xu|-yl}) coordinate (a2away);
        \fill[lightgray] (0,0) -- (a1away) |- (xu|-yu) |- (a2away) --cycle;
      \end{pgfonlayer}
    \end{tikzpicture}
    \intersection
    \begin{tikzpicture}[scale=1.5,baseline=-.7ex]
    \draw[->] (-.7,0) coordinate (xl) -- (1,0) coordinate (xu);
    \draw[->] (0,-.7) coordinate (yl) -- (0,1) coordinate (yu);
      \node (a1) at (0:.7) {$\oplus$};
      \node (a2) at (90:.7) {$\oplus$};
      \node (r1) at (-30:.7) {$\ominus$};
      \node (r2) at (120:.7) {$\ominus$};
      \begin{pgfonlayer}{background}
        \draw[border] (0,0) -- (intersection of 0,0--a1 and xu--{xu|-yu}) coordinate (a1away);
        \draw[border] (0,0) -- (intersection of 0,0--a2 and yu--{xu|-yu}) coordinate (a2away);
        \fill[lightgray] (0,0) -- (a1away) |- (a2away) --cycle;
        \draw[border] (0,0) -- (intersection of 0,0--r1 and xu--{xu|-yu}) coordinate (r1away);
        \draw[border] (0,0) -- (intersection of 0,0--r2 and yu--{xu|-yu}) coordinate (r2away);
        \fill[gray] (0,0) -- (r1away) |- (xl|-yl) |- (r2away) --cycle;
      \end{pgfonlayer}
    \end{tikzpicture}
    =
    \begin{tikzpicture}[scale=1.5,baseline=-.7ex]
    \draw[->] (-.7,0) coordinate (xl) -- (1,0) coordinate (xu);
    \draw[->] (0,-.7) coordinate (yl) -- (0,1) coordinate (yu);
      \node (a1) at (0:.7) {$\oplus$};
      \node (a2) at (90:.7) {$\oplus$};
      \node (r1) at (-30:.7) {$\ominus$};
      \node (r2) at (120:.7) {$\ominus$};
      \begin{pgfonlayer}{background}
        \draw[border] (0,0) -- (intersection of 0,0--a1 and xu--{xu|-yu}) coordinate (a1away);
        \draw[border] (0,0) -- (intersection of 0,0--a2 and yu--{xu|-yu}) coordinate (a2away);
        \fill[lightgray] (0,0) -- (a1away) |- (a2away) --cycle;
      \end{pgfonlayer}
    \end{tikzpicture}
  $
\end{wrapfigure}
The poset $(\Ms,\subseteq)$ is not an intersection structure.
This can be shown using the graphical counterexample on the right.
The resulting intersection is a deductively closed assessment, but not a model; the corresponding model here---which can be obtained by applying reckoning extension---is actually the second intersection factor.
Nevertheless, the poset's bottom is $\bot$ and its top is~$\top$, the same as for $(\As,\subseteq)$ and $(\Ds,\subseteq)$.

\begin{wrapfigure}{l}{0pt}
  \begin{tikzpicture}
    \matrix[column sep={3em,between origins},row sep={3ex,between origins},inner sep=1pt,fill=white] {
      && \node (mmd) {$\maxncMs$};\\
      & \node (mncass) {$\maxncAs$};\\
      \node (top) {$\set\top$}; && \node (ncmd) {$\ncMs$};\\
      \\
      \node (md) {$\Ms$}; \\
      && \node (nccp) {$\ncDs$};\\
      \\
      \node (cp) {$\Ds$};\\
      && \node (dcass) {$\dcAs\mspace{-7mu}$};\\
      & \node (ncass) {$\ncAs$}; \\
      \node (ass) {$\As$}; \\
    };
    \path (ass) edge node[pos=.5,inner sep=1pt,fill=white] {\small$\ext{\Ds}$} (cp);
    \path (ass) edge (ncass);
    \path (ncass) edge (dcass);
    \path (ncass) edge[dashed] (mncass);
    \path (dcass) edge node[pos=.5,inner sep=1pt,fill=white] {\small$\ext{\Ds}$} (nccp);
    \path (cp) edge (nccp);
    \path (cp) edge node[pos=.5,inner sep=1pt,fill=white] {\small$\ext{\Ms}$} (md);
    \path (md) edge[dashed] (top);
    \path (md) edge (ncmd);
    \path (nccp) edge node[pos=.5,inner sep=1pt,fill=white] {\small$\ext{\Ms}$} (ncmd);
    \path (ncmd) edge[dashed] (mmd);
    \path (mncass) edge (mmd);
  \end{tikzpicture}
\end{wrapfigure}
To summarise the relationships between the sets of assessments we have encountered, we give the Hasse diagram of their inclusion-based partial ordering on the left:
Full lines link sets related by inclusion; sets become smaller as we go towards the top of the page in the diagram.
We have also indicated which sets are direct images of a superset under deductive closure $\ext{\Ds}$ or reckoning extension $\ext{\Ms}$.
Dashed lines lead to sets of maximal elements.

Moving to the right in the diagram corresponds to verifying that an assessment is without confusion or, one step further, is deductively closable.
Moving straight up corresponds to verifying that an assessment is deductively closed or a model, respectively, or, from an inference viewpoint, to applying deductive closure $\ext{\Ds}$ and reckoning extension $\ext{\Ms}$, respectively.
What we will typically wish to do is move from the bottom left---some assessment in ~$\As$---to the top right---some model without confusion in $\ncMs$---; many of the results below characterise when this is possible and how it can done.

Below, we will most often not be working in the Hasse diagram pictured here, but in daughter diagrams where $\As$ is replaced by a subset of interest.
In Section~\ref{sec:simplified-frameworks}, these subsets will be characterised by one or more interesting simplifying conditions.
In the remainder of this section, these subsets are defined by replacing the bottom~$\bot$ with some other, non-trivial assessment.

Let $\someAs_\A\defeq\cset{\otherA\in\someAs}{\A\subseteq\otherA}$ be the subset of assessments in $\someAs\subseteq\As$ dominating the assessment~$\A$ in~$\As$.
With every intersection structure $(\someAs,\subseteq)$ there can be associated an operator $\cls{\someAs}$ from $\As$ to $\someAs\union\set{\top}$, defined for any assessment~$\A$ in~$\As$ by $\cls{\someAs}\A\defeq\Intersection\someAs_\A$.
For a correct understanding of this definition, recall that $\Intersection\emptyset=\top$. 
An operator $\cls{*}$ on $\As$ is a \demph{closure operator} if for all $\A$ and $\otherA$ in $\As$ it is
\begin{itemize}
  \item extensive: $\A\subseteq\cls{*}\A$,
  \item idempotent: $\cls{*}(\cls{*}\A)=\cls{*}\A$, and
  \item increasing: $\A\subseteq\otherA\then\cls{*}\A\subseteq\cls{*}\otherA$.
\end{itemize}
The operators we have defined satisfy these criteria:
\begin{proposition}\label{prop:closure}
  Given an intersection structure $(\someAs,\subseteq)$, with $\someAs\subseteq\As$, then
  \begin{enumerate}
    \item\label{item:prop:closure:cls}
      $\cls{\someAs}$ is a closure operator, and
    \item\label{item:prop:closure:id}
      $\cls{\someAs}\A=\A$ if and only if~$\A\in\someAs\union\set{\top}$.
  \end{enumerate}
\end{proposition}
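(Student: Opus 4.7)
The plan is to verify the three closure-operator properties directly from the definition $\cls{\someAs}\A = \Intersection\someAs_\A$, and then deduce the fixed-point characterisation from them. Throughout, the main subtlety is the degenerate case $\someAs_\A = \emptyset$, where the convention $\Intersection\emptyset = \top$ must be used consistently; this is the only real obstacle, since otherwise the argument is a straightforward exercise in intersection structures.

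For Claim~\ref{item:prop:closure:cls}, I would establish the three properties in turn. \emph{Extensivity}: every $\otherA \in \someAs_\A$ satisfies $\A \subseteq \otherA$ by definition, so $\A \subseteq \Intersection\someAs_\A = \cls{\someAs}\A$; if $\someAs_\A$ is empty, $\cls{\someAs}\A = \top$ and extensivity is trivial. \emph{Increasing}: if $\A \subseteq \otherA$, then any assessment dominating $\otherA$ also dominates $\A$, so $\someAs_\otherA \subseteq \someAs_\A$ and hence $\Intersection\someAs_\A \subseteq \Intersection\someAs_\otherA$, i.e.\ $\cls{\someAs}\A \subseteq \cls{\someAs}\otherA$ (again using $\Intersection\emptyset = \top$ if $\someAs_\otherA$ is empty). \emph{Idempotence}: if $\someAs_\A \neq \emptyset$, then the intersection-structure hypothesis on $\someAs$ gives $\cls{\someAs}\A \in \someAs$, so $\cls{\someAs}\A \in \someAs_{\cls{\someAs}\A}$, whence $\cls{\someAs}(\cls{\someAs}\A) \subseteq \cls{\someAs}\A$; combined with extensivity, equality follows. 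If instead $\someAs_\A = \emptyset$, then $\cls{\someAs}\A = \top$, and $\someAs_\top$ is either empty or equals $\{\top\}$, which in both cases gives $\cls{\someAs}\top = \top$.

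For Claim~\ref{item:prop:closure:id}, the forward direction splits on whether $\someAs_\A$ is empty: if it is, $\A = \cls{\someAs}\A = \top$; if not, $\A = \cls{\someAs}\A = \Intersection\someAs_\A \in \someAs$ by the intersection-structure property. Conversely, if $\A \in \someAs$ then $\A \in \someAs_\A$, so $\cls{\someAs}\A \subseteq \A$, and with extensivity we obtain $\cls{\someAs}\A = \A$; and if $\A = \top$, the idempotence calculation above already gave $\cls{\someAs}\top = \top$.

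The only nontrivial ingredient is the intersection-structure hypothesis, which is needed exactly at the point where we claim $\Intersection\someAs_\A \in \someAs$ when $\someAs_\A$ is non-empty; everything else is manipulation of set inclusions together with the convention on the empty intersection. I would therefore state the empty-intersection convention explicitly at the beginning of the proof to keep the case analysis clean.
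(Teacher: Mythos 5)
Your proof is correct and takes essentially the same approach as the paper: direct verification from the definition $\cls{\someAs}\A=\Intersection\someAs_\A$, using the intersection-structure hypothesis exactly where $\Intersection\someAs_\A\in\someAs$ is needed and the convention $\Intersection\emptyset=\top$ for the degenerate case. The only (immaterial) difference is the order of the claims: the paper establishes the fixed-point characterisation first and gets idempotence from it, whereas you prove idempotence directly and then deduce the characterisation.
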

\noindent
A closure operator relative to an intersection structure $(\someAs,\subseteq)$ effects the most conservative inference relative to~$\someAs$ in the sense that it generates the least resolved dominating assessment in~$\someAs$ or---if there is no such dominating assessment---returns $\top$.

With the intersection structures encountered above, there correspond the closure operators $\cls{\As}$, $\cls{\ncAs}$, $\cls{\dcAs}$, $\cls{\Ds}$, $\cls{\ncDs}$, and $\cls{\ncMs}$.
For each of these, we can give a constructive formulation for practical calculations:
\begin{proposition}\label{prop:cls-formulae}
  Next to the general result of Proposition~\ref{prop:closure}, we have that
  \begin{enumerate}
    \item\label{item:prop:cls-formulae:ncAs}
      $\cls{\ncAs}$ returns $\top$ outside of\/ $\ncAs$,
    \item\label{item:prop:cls-formulae:dcAs}
      $\cls{\dcAs}$, $\cls{\ncDs}$, and $\cls{\ncMs}$ return $\top$ outside of\/ $\dcAs$,
    \item\label{item:prop:cls-formulae:Ds}
      $\cls{\Ds}=\ext{\Ds}$,
    \item\label{item:prop:cls-formulae:ncDs}
      $\cls{\ncDs}=\ext{\Ds}$ on $\dcAs$, and
    \item\label{item:prop:cls-formulae:ncMs}
      $\cls{\ncMs}=\ext{\Ms}\after\ext{\Ds}$ on $\dcAs$ and, specifically, $\cls{\ncMs}=\ext{\Ms}$ on $\ncDs$.
  \end{enumerate}
\end{proposition}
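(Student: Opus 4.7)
The plan is to handle each claim by either showing $\someAs_\A=\emptyset$—which forces $\cls{\someAs}\A=\top$—or by exhibiting a least element of $\someAs_\A$, which then equals $\Intersection\someAs_\A$.

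Claims~\ref{item:prop:cls-formulae:ncAs} and~\ref{item:prop:cls-formulae:dcAs} are the emptiness cases. For~\ref{item:prop:cls-formulae:ncAs}, $\A\notin\ncAs$ gives $\A_\acc\intersection\A_\rej\neq\emptyset$, and any $\otherA\supseteq\A$ inherits this non-empty confusion set, so $\ncAs_\A=\emptyset$. For~\ref{item:prop:cls-formulae:dcAs}, I would assume $\A\notin\dcAs$; Theorem~\ref{thm:dcAs-zerocrit} supplies some $r\in\A_\rej\intersection\phull\A_\acc$. For every candidate dominator $\otherA$ in $\dcAs$, $\ncDs$, or $\ncMs$, its deductive extension $\ext{\Ds}\otherA$ lies in $\ncDs$, while $(\ext{\Ds}\otherA)_\acc=\phull\otherA_\acc\supseteq\phull\A_\acc$ and $(\ext{\Ds}\otherA)_\rej=\otherA_\rej\supseteq\A_\rej$—so $r$ would be confusing in $\ext{\Ds}\otherA$, a contradiction. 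All three dominator collections are thus empty and the closure returns $\top$.

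For Claims~\ref{item:prop:cls-formulae:Ds} and~\ref{item:prop:cls-formulae:ncDs}, I exhibit $\ext{\Ds}\A$ as the least element of the relevant dominator set. Membership is immediate: $\ext{\Ds}\A\in\Ds$ by construction, and $\ext{\Ds}\A\in\ncDs$ precisely when $\A\in\dcAs$. Minimality follows because any deductively closed $\otherD\supseteq\A$ satisfies $\otherD_\acc=\phull\otherD_\acc\supseteq\phull\A_\acc$ by monotonicity of $\phull$ and $\otherD_\rej\supseteq\A_\rej$, so $\otherD\supseteq\ext{\Ds}\A$. Intersecting over all such $\otherD$ leaves $\ext{\Ds}\A$ unchanged, giving $\cls{\Ds}=\ext{\Ds}$ on $\As$ and, on $\dcAs$, $\cls{\ncDs}=\ext{\Ds}$.

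The main obstacle is Claim~\ref{item:prop:cls-formulae:ncMs}. Fix $\A\in\dcAs$, write $\D\defeq\ext{\Ds}\A$ and $\M\defeq\ext{\Ms}\D$; Corollary~\ref{cor:dedclostomod} gives $\M\in\ncMs_\A$. To see that $\M$ is the minimum of $\ncMs_\A$, take any $\otherM\in\ncMs_\A$; since $\otherM$ is deductively closed, the preceding paragraph yields $\otherM\supseteq\D$. The critical step is upgrading this to $\otherM_\rej\supseteq\shull{\D_\rejnacc}\union(\shull{\D_\rejnacc}-\D_\acc)$. Here I would rely on the observation that, with $\otherM$ without confusion while $\D_\rej\subseteq\otherM_\rej$ and $\D_\acc\subseteq\otherM_\acc$, one must have $\D_\rej\intersection\otherM_\acc=\emptyset$, hence $\D_\rejnacc\subseteq\otherM_\rejnacc$; monotonicity of $\shull{\cdot}$ and of Minkowski addition then transports the No Limbo inclusion for $\otherM$ back to $\D$, yielding the required inclusion. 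Hence $\M$ is minimum in $\ncMs_\A$, so $\cls{\ncMs}\A=\ext{\Ms}(\ext{\Ds}\A)$; the specialisation $\cls{\ncMs}=\ext{\Ms}$ on $\ncDs$ is immediate because $\ext{\Ds}$ is the identity there.
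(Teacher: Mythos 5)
Your proposal is correct and follows essentially the same route as the paper: emptiness of the dominator sets for claims (i)–(ii), minimality of $\ext{\Ds}$ for (iii)–(iv), and for (v) the observation that any unconfused model dominating $\ext{\Ds}\A$ must already reject its limbo. The only difference is presentational—where the paper reduces (v) via idempotency of the closure operators and disposes of the final equality with a one-line remark, you make that remark precise by explicitly transporting No Limbo through monotonicity of $\shull{\cdot}$ and Minkowski addition, which is a welcome bit of extra detail rather than a new idea.
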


Concatenating the closure operators so defined with the supremum operator~$\Union$ of $(\As,\subseteq)$, which can be applied to arbitrary---i.e., also infinite---families of assessments, gives us the supremum operators of the complete lattices $(\ncAs\union\set{\top},\subseteq)$, $(\dcAs\union\set{\top},\subseteq)$, $(\Ds,\subseteq)$, $(\ncDs\union\set{\top},\subseteq)$, and $(\ncMs\union\set{\top},\subseteq)$ so formed.
Supremum operators of special interest are the \emph{deductive union} $\dedUnion\defeq\cls{\Ds}\after\Union$ and the \emph{reckoning union} $\reckUnion\defeq\cls{\ncMs}\after\Union$.

\subsection{Models Dominating Assessments}\label{sec:dominating-models}
We want to represent the agent's uncertainty using models in~$\ncMs$.
(From now on in this section, unless indicated otherwise, when we talk about models, we mean unconfused ones.)
The agent provides an assessment.
Unless it is deductively closable, i.e., an element of~$\dcAs$, it is impossible to derive a most conservative model from it using the closure operator $\cls{\ncMs}$---or the operators~$\ext{\Ds}$ and~$\ext{\Ms}$---or confusion would ensue (cf. Proposition~\ref{prop:cls-formulae}).
We are therefore interested in characterisations of~$\dcAs$, the set of assessments that can be turned into models.

We consider the sets of models~$\ncMs_\A$ and maximal models~$\maxncMs_\A\defeq\maxncMs\intersection\ncMs_\A$ dominating the assessment~$\A$.
Both sets are empty if the assessment is not in~$\dcAs$; but if it is, their elements all dominate $\cls{\ncMs}\A=\ext{\Ms}\group{\ext{\Ds}\A}$, which is the bottom of $(\ncMs_\A,\subseteq)$.
These observations can be strengthened into the following characterisation of~$\dcAs$:
\begin{theorem}\label{thm:dedclosable-nonemptymax}
  An assessment $\A$ in $\As$ is deductively closable if and only if its set of maximal dominating models is non-empty.
  Formally: $\A\in\dcAs$ if and only if $\maxncMs_\A\neq\emptyset$.
\end{theorem}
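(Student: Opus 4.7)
The plan is to establish the two implications separately. The ``if'' direction is immediate: given any $\M\in\maxncMs_\A$, from $\A\subseteq\M$ we get $\A_\acc\subseteq\M_\acc$ and $\A_\rej\subseteq\M_\rej$, and since $\M$ is deductively closed we have $\phull\A_\acc\subseteq\phull\M_\acc=\M_\acc$. Combined with the No Confusion of $\M$, this yields $(\ext{\Ds}\A)_\confus=\phull\A_\acc\intersection\A_\rej\subseteq\M_\acc\intersection\M_\rej=\emptyset$, so $\ext{\Ds}\A\in\ncDs$ and hence $\A\in\dcAs$.

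For the ``only if'' direction I would apply Zorn's lemma to the poset $(\ncMs_\A,\subseteq)$. By Corollary~\ref{cor:dedclostomod}, $\ext{\Ms}(\ext{\Ds}\A)\in\ncMs_\A$, so this poset is non-empty. Any element $\M$ maximal in $\ncMs_\A$ is automatically in $\maxncMs$, since a strict $\ncMs$-extension of $\M$ would still dominate $\A$ and thus lie in $\ncMs_\A$, contradicting maximality. So such an $\M$ witnesses $\maxncMs_\A\neq\emptyset$.

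The main obstacle is verifying that every chain $\someMs\subseteq\ncMs_\A$ has an upper bound in $\ncMs_\A$; the natural candidate is $\Union\someMs$. Three properties of this union must be checked. No Confusion follows from the chain property: a gamble in both components of the union would belong to both components of some single member of $\someMs$, contradicting its No Confusion. Deductive Closure of $\Union\someMs$ holds because its acceptance component is a directed union of convex cones, hence a convex cone. No Limbo is the most delicate: given $\lambda g - h$ with $g\in(\Union\someMs)_\rej$, $h\in(\Union\someMs)_\acc$, and $\lambda>0$, I would pick two members of $\someMs$ witnessing these memberships and replace them by the larger one in the chain, say $\M''$; the No Confusion of $\M''$ places $g\in\M''_\rejnacc$, and the No Limbo of $\M''$ then puts $\lambda g - h\in\M''_\rej\subseteq(\Union\someMs)_\rej$. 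The positive-scaling half of No Limbo is handled similarly, entirely within a single member of the chain.
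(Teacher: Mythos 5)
Your proof is correct, but its existence half runs along a genuinely different track from the paper's. For the ``if'' direction you and the paper do essentially the same thing: a dominating unconfused model forces $\phull\A_\acc\intersection\A_\rej\subseteq\M_\acc\intersection\M_\rej=\emptyset$, hence $\A\in\dcAs$ (the paper disposes of this via the observation, made just before the theorem, that $\ncMs_\A$ and $\maxncMs_\A$ are empty when $\A\notin\dcAs$). For the ``only if'' direction the paper does not invoke Zorn's lemma at all: writing $\D\defeq\ext{\Ds}\A$, it exhibits the explicit witness $\D\union\Adelim{\emptyset}{\D_\unres}=\Adelim{\D_\acc}{\linGs\setminus\D_\acc}$ (using that $\D$ is unconfused), which by Proposition~\ref{prop:maximalncmods}---maximal unconfused models are exactly the pairs $\Adelim{\someGs}{\linGs\setminus\someGs}$ with $\someGs\in\cones$, and $\maxncMs=\maxncDs$---is a maximal model dominating $\A$. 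That route is shorter, needs no choice principle, and yields a concrete maximal model, namely the one obtained from $\ext{\Ds}\A$ by rejecting every unresolved gamble. Your Zorn argument is heavier but sound: the chain-union checks (No Confusion by directedness, the acceptance component as a directed union of convex cones, No Limbo by passing to a common chain member and using that unconfusedness turns $\M''_\rejnacc$ into $\M''_\rej$) all go through, non-emptiness of $\ncMs_\A$ is correctly supplied by Corollary~\ref{cor:dedclostomod}, and your remark that a maximal element of $(\ncMs_\A,\subseteq)$ is automatically maximal in $\ncMs$ is right. What your route buys is independence from the explicit classification of maximal elements in Proposition~\ref{prop:maximalncmods}, together with the slightly stronger by-product that every model dominating $\A$ lies below a maximal one; what it costs is an appeal to the Axiom of Choice where the paper's construction needs none (the paper reserves choice for the accept-favour variant, Proposition~\ref{prop:accfav-maximalncmods}).
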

\noindent
This is our counterpart to the lower envelope theorem in the theory of coherent lower previsions \citep[see, e.g.,][\S3.3.3(a)]{Walley-1991}.
It leads to a first result about the combination of assessments:
\begin{corollary}\label{cor:reckoningunionmodel}
  If all assessments in some family $\someAs\subseteq\As$ are dominated by a common model in $\ncMs$, then their reckoning union $\reckUnion\someAs$ is a model in $\ncMs$.
\end{corollary}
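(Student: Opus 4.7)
The plan is to unpack the definition $\reckUnion\someAs=\cls{\ncMs}(\Union\someAs)$ and to exploit the closure-operator properties established in Propositions~\ref{prop:closure} and~\ref{prop:cls-formulae}, together with the assumed common dominating model. The key observation to keep in mind is that having even a single dominating model already pins the closure down to a genuine model rather than letting it escape to~$\top$.

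I would first pick a common model $\M\in\ncMs$ with $\A\subseteq\M$ for every $\A\in\someAs$; taking componentwise unions then yields $\Union\someAs\subseteq\M$. Applying $\cls{\ncMs}$ to both sides, and using that $\cls{\ncMs}$ is increasing (Proposition~\ref{prop:closure}\ref{item:prop:closure:cls}) together with the fact that $\cls{\ncMs}\M=\M$ because $\M\in\ncMs$ (Proposition~\ref{prop:closure}\ref{item:prop:closure:id}), I obtain $\reckUnion\someAs=\cls{\ncMs}(\Union\someAs)\subseteq\cls{\ncMs}\M=\M$.

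The final step is to argue that $\reckUnion\someAs$ is genuinely a model, i.e., that it is not $\top$. By Proposition~\ref{prop:cls-formulae}\ref{item:prop:cls-formulae:dcAs} combined with Proposition~\ref{prop:cls-formulae}\ref{item:prop:cls-formulae:ncMs}, the operator $\cls{\ncMs}$ sends every assessment either to an element of $\ncMs$ or (precisely when the input lies outside $\dcAs$) to $\top=\Adelim{\linGs}{\linGs}$. But $\top\nsubseteq\M$: since $\M\in\ncMs$, the set $\M_\acc\intersection\M_\rej$ is empty, whereas in $\top$ every gamble of the nonempty space $\linGs$ is confusing. The inclusion $\reckUnion\someAs\subseteq\M$ therefore excludes $\reckUnion\someAs=\top$, forcing $\Union\someAs\in\dcAs$ and $\reckUnion\someAs\in\ncMs$, as desired. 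There is no real obstacle here; the only mildly delicate step is recognising that a lone common dominating model already suffices to prevent the closure from blowing up to~$\top$, after which everything else is a direct application of the order-theoretic machinery of Section~\ref{sec:order}.
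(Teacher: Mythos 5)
Your proof is correct. The paper presents this statement as a corollary of Theorem~\ref{thm:dedclosable-nonemptymax}: the intended route is that the common dominating model~$\M$, being in $\dcAs$, has maximal dominating models, each of which also dominates $\Union\someAs$, so $\maxncMs_{\Union\someAs}\neq\emptyset$, whence $\Union\someAs\in\dcAs$ and $\cls{\ncMs}(\Union\someAs)\in\ncMs$ via Proposition~\ref{prop:cls-formulae}. You bypass maximal models entirely and argue purely with the closure-operator machinery: monotonicity plus $\cls{\ncMs}\M=\M$ give $\reckUnion\someAs\subseteq\M$, and since the range of $\cls{\ncMs}$ is $\ncMs\union\set{\top}$ while $\top\nsubseteq\M$ (as $\M$ is unconfused and every gamble in $\top$ is confusing), the closure cannot be $\top$. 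This is a legitimately more economical argument --- indeed you could shorten it further by noting that $\M\in\ncMs_{\Union\someAs}$ makes that set nonempty, so $\cls{\ncMs}(\Union\someAs)=\Intersection\ncMs_{\Union\someAs}\in\ncMs$ directly by the intersection-structure property (Proposition~\ref{prop:intersection}) --- whereas the paper's route has the side benefit of re-using the lower-envelope-style characterisation of $\dcAs$ it has just established. The only cosmetic remark is that your claim ``$\cls{\ncMs}$ lands in $\ncMs$ on $\dcAs$'' implicitly also uses Corollary~\ref{cor:dedclostomod} (or the definitional fact that $\cls{\someAs}$ maps into $\someAs\union\set{\top}$), which you could cite explicitly; this does not affect correctness.
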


The maximal models dominating an assessment can also be used for inference purposes \citep[cfr.][\S3.3.3(b)]{Walley-1991}:
\begin{proposition}\label{prop:inf-by-max}
  Given an assessment $\A$ in $\As$, then its closure in the set of models is $\cls{\ncMs}\A=\Intersection\maxncMs_\A$.
\end{proposition}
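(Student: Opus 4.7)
The plan is to split on whether $\A\in\dcAs$. If $\A\notin\dcAs$, then Proposition~\ref{prop:cls-formulae}(ii) gives $\cls{\ncMs}\A=\top$, while Theorem~\ref{thm:dedclosable-nonemptymax} gives $\maxncMs_\A=\emptyset$, so $\Intersection\maxncMs_\A=\Intersection\emptyset=\top$ as well; both sides agree trivially.

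Suppose then that $\A\in\dcAs$. Unpacking the closure operator of Section~\ref{sec:order} gives $\cls{\ncMs}\A=\Intersection\ncMs_\A$, and since $\maxncMs_\A\subseteq\ncMs_\A$ the inclusion $\cls{\ncMs}\A\subseteq\Intersection\maxncMs_\A$ is automatic. The real work is the converse inclusion, for which it suffices to show that for every $\M\in\ncMs_\A$ and every gamble $f$ missing from a particular component of $\M$, some $\maxM\in\maxncMs_\A$ also lacks $f$ from that same component. The strategy will be to augment $\M$ itself into a new assessment $\otherA\in\dcAs$ that pins $f$ to the opposite component, and then invoke Theorem~\ref{thm:dedclosable-nonemptymax} to obtain a maximal model $\maxM$ above $\otherA$; since $\A\subseteq\M\subseteq\otherA$, such $\maxM$ will automatically lie in $\maxncMs_\A$.

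If $f\notin\M_\acc$, I would set $\otherA\defeq\Adelim{\M_\acc}{\M_\rej\union\set{f}}$. Theorem~\ref{thm:dcAs-zerocrit} applies at once, because $\phull\M_\acc=\M_\acc$ is disjoint from both $\M_\rej$ (by No Confusion) and from $\set{f}$ (by assumption); so $\otherA\in\dcAs$ and the maximal $\maxM\supseteq\otherA$ puts $f\in\maxM_\rej$, whence $f\notin\maxM_\acc$. If instead $f\notin\M_\rej$, I would set $\otherA\defeq\Adelim{\M_\acc\union\set{f}}{\M_\rej}$. Here the key observation is that $\M$ satisfies No Limbo, so Corollary~\ref{cor:limbo-noconfusion} makes its limbo empty, which is precisely what is needed to ensure that moving $f$ into the accepted part creates no confusion: decomposing $\phull(\M_\acc\union\set{f})=\M_\acc\union(\M_\acc+\ray{f})$ reduces deductive closability to showing that $\M_\rej$ meets neither piece, and a hit in $\M_\acc+\ray{f}$ would force $\lambda f\in\shull{\M_\rej}-\M_\acc\subseteq\M_\rej$ by NL, contradicting $f\notin\M_\rej$. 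The resulting maximal $\maxM$ then has $f\in\maxM_\acc$ and hence $f\notin\maxM_\rej$.

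The main obstacle is this second sub-case. The first is pure cone arithmetic under Deductive Closure and No Confusion, but the rejected-component case cannot avoid invoking No Limbo: without it, adding $f\notin\M_\rej$ to the accepted side could silently create a confusing gamble through the reckoning extension, and the reduction to Theorem~\ref{thm:dedclosable-nonemptymax} would collapse.
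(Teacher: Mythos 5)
Your proof is correct and follows essentially the same route as the paper's: there, too, the dominating model is extended by pinning the offending gamble to the opposite component and the existence of dominating maximal models (Theorem~\ref{thm:dedclosable-nonemptymax}) finishes the argument, the only differences being that the paper argues ex absurdo and packages your No-Limbo/no-confusion check into Lemma~\ref{lem:ext-with-unres} (proved via Corollary~\ref{cor:limbo-noconfusion}) instead of doing the cone arithmetic by hand. The one slip is cosmetic: $\phull(\M_\acc\union\set{f})=\M_\acc\union\ray{f}\union(\M_\acc+\ray{f})$ since $0\in\M_\acc$ is not assumed here, and the omitted piece $\ray{f}$ is disjoint from $\M_\rej$ by the same No Limbo fact $\shull{\M_\rej}=\M_\rej$ (Proposition~\ref{prop:model+hulls}) that you already use implicitly.
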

\noindent
This result is practically applicable whenever $\maxncMs_\A$ has some special structure we can exploit.
For example, when it has a finite set of ‘extreme’ elements $\maxncMs_\A^*$ such that $\Intersection\maxncMs_\A^*=\Intersection\maxncMs_\A$.
The analogon in the theory of imprecise probabilities is the set of extreme points of a credal set \citep[cf., e.g.,][]{Levi-1980}.

The results in this section guarantee that unconfused models constitute an instance of what are called \emph{strong belief structures} by \citet{DeCooman-2005-order}.
This implies in particular that the whole apparatus developed there for dealing with AGM-style belief change and belief revision \citep{Alchourron+GM-JSL85p}, is also available for the unconfused models we are dealing with here.

\subsection{Background Models, Respect, Natural Extension, Coherence, and Indifference to Status Quo}\label{sec:background}
So far, we have not dealt with any structural a priori assumptions about the gambles in~$\linGs$ or the experiment.
Many of these can be captured by positing a so-called \demph{background model}~$\bgM$ in $\ncMs$ to replace the trivial smallest model~$\bot$.
In such a context, attention is evidently restricted to models in~$\ncMs_\bgM$, and when doing so, all the results of the preceding sections remain valid, mutatis mutandis.
An intuitively appealing background model is $\Adelim{\linGs_\geq}{\linGs_<}$.
Using this background model amounts to taking for granted that all non-negative gambles should be accepted, and all negative gambles rejected.\footnote{
  One reason for not generally including $\Adelim{\linGs_\geq}{\linGs_<}$ or $\Adelim{\linGs_\gtrdot}{\linGs_\lessdot}$ in the background model, would be that conditioning on an event with probability one---modelled using de Finetti's interpretation (cf. Section~\ref{sec:linprevs})---then results in a conditional model with confusion.
  (As the conditioning rule, we here take the standard one from the theory of sets of desirable gambles, namely restriction to the linear subspace corresponding to the conditioning event \citep[cf., e.g.,][\S1.3.3]{2013-Quaeghebeur-itip}.)
  We do not discuss this issue further, as we do not treat conditional models in this paper.
}
For other examples, we refer to Sections~\ref{sec:favourability}, \ref{sec:acceptability}, and~\ref{sec:previsions}.

We say that an assessment~$\A\in\As$ \demph{respects} the background model~$\bgM$ if they share a common maximal model; i.e., if 
$
  \maxncMs_\A\intersection\maxncMs_\bgM=\maxncMs_{\A\union\bgM}\neq\emptyset,
$
or, by Theorem~\ref{thm:dedclosable-nonemptymax}, that $\A\union\bgM\in\dcAs$.
The \demph{natural extension} of an assessment~$\A$ in~$\As$ is its reckoning union with the background model, $\A\reckunion\bgM$.
Corollary~\ref{cor:reckoningunionmodel} then leads to:
\begin{corollary}\label{cor:respect-natext}
  The natural extension of an assessment $\A$ in $\As$ is a model if and only if the assessment respects the background model $\bgM$ in $\ncMs$.
  Formally: $\A\reckunion\bgM\in\ncMs$ if and only if $\maxncMs_{\A\union\bgM}\neq\emptyset$ or $\A\union\bgM\in\dcAs$.
\end{corollary}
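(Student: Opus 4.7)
The plan is to unwind the definitions and chain together results already established. By definition of the reckoning union, $\A\reckunion\bgM=\cls{\ncMs}(\A\union\bgM)$. The definition of ``respects'' given just before the corollary identifies the condition $\maxncMs_\A\intersection\maxncMs_\bgM\neq\emptyset$ with $\maxncMs_{\A\union\bgM}\neq\emptyset$, and Theorem~\ref{thm:dedclosable-nonemptymax} turns this, in turn, into the condition $\A\union\bgM\in\dcAs$. So the two equivalent formulations of ``respect'' on the right-hand side require nothing beyond applying an earlier theorem and observing that the two maximal-model conditions coincide.

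What remains is the main equivalence $\A\reckunion\bgM\in\ncMs\iff\A\union\bgM\in\dcAs$. I would split this into two directions using Proposition~\ref{prop:cls-formulae}. For the forward direction: if $\A\union\bgM\notin\dcAs$, then by part~(ii) of that proposition $\cls{\ncMs}(\A\union\bgM)=\top$, and since $\top=\Adelim{\linGs}{\linGs}$ is confused (every nonzero gamble lies in both components), $\top\notin\ncMs$; contraposition gives the implication. For the reverse direction: if $\A\union\bgM\in\dcAs$, then part~(v) of the same proposition yields $\cls{\ncMs}(\A\union\bgM)=\ext{\Ms}(\ext{\Ds}(\A\union\bgM))$, and Corollary~\ref{cor:dedclostomod} immediately gives that this lies in $\ncMs$.

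I do not expect any genuine obstacle here; the corollary is essentially a packaging statement that specialises the general machinery developed in Sections~\ref{sec:dedcls}--\ref{sec:dominating-models} to the particular assessment $\A\union\bgM$, with the background model $\bgM$ supplying the a~priori content. The only subtlety worth double-checking is that the reckoning-union notation $\reckunion$ indeed refers to $\cls{\ncMs}\after\Union$ applied to the pair $\{\A,\bgM\}$, so that $\A\reckunion\bgM=\cls{\ncMs}(\A\union\bgM)$; this is exactly how the binary operator was introduced at the end of Section~\ref{sec:order}. Once that identification is made, the corollary follows in a few lines by combining Theorem~\ref{thm:dedclosable-nonemptymax}, Proposition~\ref{prop:cls-formulae}, and Corollary~\ref{cor:dedclostomod}, with no additional computation required.
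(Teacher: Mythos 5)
Your proof is correct and follows essentially the same route as the paper, which leaves the argument implicit (citing Corollary~\ref{cor:reckoningunionmodel} together with Theorem~\ref{thm:dedclosable-nonemptymax}): identifying $\A\reckunion\bgM$ with $\cls{\ncMs}(\A\union\bgM)$ and then using Proposition~\ref{prop:cls-formulae} and Corollary~\ref{cor:dedclostomod} is exactly the machinery behind that corollary. Only a trivial slip: $\top=\Adelim{\linGs}{\linGs}$ has \emph{every} gamble (including $0$) confusing, not just the nonzero ones, but all you need is $\top_\confus=\linGs\neq\emptyset$, so the conclusion $\top\notin\ncMs$ stands.
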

\noindent
So, more colloquially, the assessment~$\A$ respects the background model~$\bgM$ if their combination leads to a model without confusion.
We furthermore say that an assessment $\A$ in $\dcAs$ is \demph{coherent} if it coincides with its natural extension: $\A=\A\reckunion\bgM$, or equivalently $\bgM\subseteq\cls{\ncMs}\A=\A$.
To make explicit what the linear space~$\linGs$ of gambles of interest is or what the background model~$\bgM$ is, these can be used as a prefix: $(\linGs,\bgM)$-coherent, $\linGs$-coherent, $\bgM$-coherent.

Given the interpretation attached to accept statements, we judge it reasonable to always let the zero gamble~$0$---also called \emph{status quo}---be acceptable, and therefore indifferent.
This corresponds to the following rationality axiom:
\begin{axiom}[Indifference to Status Quo]{ISQ}\label{eq:indifference-to-status-quo}
  $\zeroM\subseteq\bgM$, \quad with $\zeroM\defeq\Adelim{\set{0}}{\emptyset}$.
\end{axiom}
\noindent
The set of assessments that express Indifference to Status Quo is $\As_\zeroM$, i.e., all assessments dominating~$\zeroM$.
Under Indifference to Status Quo, the limbo expression simplifies yet further (cf.~Corollary~\ref{cor:limbo-noconfusion}):
\begin{corollary}\label{cor:limbo-noconfusion+statusquo}
  Given a deductively closed assessment~$\D$ in $\ncDs_\zeroM$, so without confusion and with indifference to status quo, and an unresolved gamble~$f$ in $\D_\unres$, then
  \begin{enumerate}
    \item
      no confusion is created, i.e.,
      $
        \ext{\Ds}\Adelim{\D_\acc\union\set{f}}{\D_\rej}_\confus = \emptyset,
      $
      if and only if
      $
        f \notin \shull{\D_\rej} - \D_\acc,
      $
    \addtocounter{enumi}{1}
    \item
      the limbo of\/ $\D$ is
      $
        \group{
          \shull{\D_\rej} - \D_\acc
        } \setminus \D_\rej.
      $
  \end{enumerate}
\end{corollary}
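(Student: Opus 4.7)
The plan is to reduce the corollary to Corollary~\ref{cor:limbo-noconfusion} by performing a single set-theoretic simplification enabled by Indifference to Status Quo. Because $\D\in\ncDs_\zeroM$ means precisely that $\zeroM\subseteq\D$, we have $0\in\D_\acc$, and all the hypotheses of Corollary~\ref{cor:limbo-noconfusion} are satisfied, so both its criterion for no creation of confusion and its explicit expression for the limbo of~$\D$ are at our disposal.

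The key observation is the inclusion $\shull{\D_\rej}\subseteq\shull{\D_\rej}-\D_\acc$: indeed, for any $g\in\shull{\D_\rej}$, the identity $g=g-0$ together with $0\in\D_\acc$ places $g$ in the Minkowski difference. This collapses the union appearing in Corollary~\ref{cor:limbo-noconfusion} to a single term, $\shull{\D_\rej}\union(\shull{\D_\rej}-\D_\acc)=\shull{\D_\rej}-\D_\acc$.

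Substituting this identity into Corollary~\ref{cor:limbo-noconfusion}\ref{item:cor:limbo-noconfusion:condition} immediately yields the simplified criterion claimed in item~1 of the present corollary, and substituting it into the limbo formula of Corollary~\ref{cor:limbo-noconfusion} gives the limbo expression in item~3. (The disjointness assertion corresponding to Corollary~\ref{cor:limbo-noconfusion}\ref{item:cor:limbo-noconfusion:disjoint} is suppressed here—hence the \verb|\addtocounter|—because it is automatic from the disjointness already asserted there and the same inclusion.)

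There is no real obstacle: the entire argument is a one-line Minkowski-algebra simplification riding on the inclusion $0\in\D_\acc$. The only thing worth double-checking is that no additional hypothesis from Corollary~\ref{cor:limbo-noconfusion} is silently needed—in particular, that $f\in\D_\unres$ carries over unchanged, which it does by the matching hypothesis in the present statement.
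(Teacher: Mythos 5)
Your proof is correct and takes exactly the route the paper intends (it leaves this corollary unproved as an immediate consequence of Corollary~\ref{cor:limbo-noconfusion}): since $0\in\D_\acc$, the inclusion $\shull{\D_\rej}\subseteq\shull{\D_\rej}-\D_\acc$ collapses the union $\shull{\D_\rej}\union(\shull{\D_\rej}-\D_\acc)$ to $\shull{\D_\rej}-\D_\acc$, and both the confusion criterion and the limbo expression follow by substitution. No gaps.
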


It is possible to give a compact characterisation of models coherent with a background model that is indifferent to status quo:
\begin{theorem}\label{thm:char-AR}
  An assessment~$\M$ in~$\As$ is a model that respects a background model with indifference to status quo $\bgM$ in~$\ncMs_\zeroM$, i.e., $\M\in\ncMs_\bgM$, if and only if
  \begin{enumerate}[label=\upshape{(AR\arabic*)},leftmargin=*,widest=AR0]
    \item\label{item:ARbg} it includes the background model: $\bgM\subseteq\M$,
    \item\label{item:ARzr} it does not reject status quo: $0\notin\M_\rej$,
    \item\label{item:ARcn} its acceptable gambles form a cone: $\M_\acc\in\cones$, and
    \item\label{item:ARss} it has no limbo: $\shull{\M_\rej}-\M_\acc\subseteq\M_\rej$.
  \end{enumerate}
\end{theorem}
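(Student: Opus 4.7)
The theorem is a biconditional characterisation, so the plan is to handle each direction separately, with the forward direction being essentially a repackaging of the definitions and the reverse direction requiring a small chain of deductions.

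For the \emph{only if} direction, assume $\M\in\ncMs_\bgM$. Then (AR1) is just the definition of $\M\in\ncMs_\bgM$. Since $\zeroM\subseteq\bgM\subseteq\M$ by Indifference to Status Quo, we have $0\in\M_\acc$; combined with No Confusion this gives (AR2). Deductive Closure, via $\M_\acc\in\cones$, gives (AR3). Finally, since $\M$ satisfies No Confusion we have $\M_\rejnacc=\M_\rej$, and then No Limbo yields $\shull{\M_\rej}-\M_\acc\subseteq\M_\rej$, which is (AR4).

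For the \emph{if} direction, assume (AR1)--(AR4). The goal is to verify, in order: $\bgM\subseteq\M$, No Confusion, Deductive Closure, and No Limbo, after which $\M\in\ncMs_\bgM$ follows. Dominance of $\bgM$ is immediate from (AR1). The crucial observation, which I expect to be the main obstacle and the hinge of the argument, is that (AR1) together with Indifference to Status Quo forces $0\in\M_\acc$. With this in hand, setting $h=0\in\M_\acc$ inside the Minkowski difference shows $\shull{\M_\rej}\subseteq\shull{\M_\rej}-\M_\acc$, so (AR4) upgrades to $\shull{\M_\rej}\subseteq\M_\rej$ (in particular $\M_\rej$ is closed under positive scaling).

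No Confusion follows from the same device: if some $f\in\M_\acc\intersection\M_\rej$ existed, then $f\in\shull{\M_\rej}$ and $f\in\M_\acc$, so by (AR4) the element $f-f=0$ would lie in $\M_\rej$, contradicting (AR2). Deductive Closure is then the content of (AR3), since $\ext{\Ds}\M=\Adelim{\phull\M_\acc}{\M_\rej}=\M$ whenever $\M_\acc\in\cones$. Finally, with No Confusion established we have $\M_\rejnacc=\M_\rej$, so the No Limbo condition reduces to $\shull{\M_\rej}\union(\shull{\M_\rej}-\M_\acc)\subseteq\M_\rej$, both parts of which have already been secured. Thus $\M\in\ncMs$, and together with (AR1) this gives $\M\in\ncMs_\bgM$, completing the proof.
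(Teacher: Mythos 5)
Your proof is correct and follows essentially the same route as the paper's: \ref{item:ARbg} and \ref{item:ARcn} encode background dominance and Deductive Closure, while \ref{item:ARzr} and \ref{item:ARss} together yield No Confusion and No Limbo, with $0\in\M_\acc$ (forced by \ref{item:ARbg} and Indifference to Status Quo) doing the key work, exactly as in the paper. The only difference is cosmetic: you verify No Confusion and the reduction of the No Limbo condition directly, where the paper shortcuts these steps by invoking Theorem~\ref{thm:dcAs-zerocrit} and Corollary~\ref{cor:limbo-noconfusion+statusquo}.
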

\noindent
We will give similar characterisation results when formulating our theory in terms of gamble relations in Section~\ref{sec:relations} and when looking at simplified versions of our theory in Section~\ref{sec:simplified-frameworks}.
These results allow for easy high-level comparison of the different representations and theory variants.

\pagebreak
\subsection{Gamble Space Partitions Induced by Assessments with No Confusion}\label{sec:partitions}
\begin{wrapfigure}{l}{0pt}
  \begin{tikzpicture}
    \matrix [nodes={regular polygon, regular polygon sides=4,pattern=north east lines,pattern color=lightgray,inner sep=-1ex,minimum size=8ex,outer sep=.5ex},column sep={1ex,between borders},row sep={1ex,between borders}] {
      \node[fill=lightgray] (apam) {$\A_\indiff$}; & \node (ap) {}; & \node[fill=lightgray] (aprm) {$\A_\fav$}; \\
      \node (am) {}; & \node (none) {}; & \node (rm) {}; \\
      \node[fill=lightgray] (rpam) {$-\A_\fav$}; & \node (rp) {}; & \node[fill=lightgray] (rprm) {}; \\
    };
    \draw[thick] ([xshift=1ex]aprm.north east) rectangle ([xshift=-5ex]apam.south west) node[above right] {$\A_\acc$};
    \draw[thick] ([yshift=-1ex]rpam.south west) rectangle ([yshift=4ex]apam.north east) node[below left] {$-\A_\acc$};
    \draw[thick] ([xshift=-5ex]rpam.south west) node[above right] {$\A_\rej$} rectangle ([xshift=1ex]rprm.north east);
    \draw[thick] ([yshift=-1ex]rprm.south west) rectangle ([yshift=4ex]aprm.north east) node[below left] {$-\A_\rej$};
  \end{tikzpicture}
\end{wrapfigure}
In our \emph{accept-reject framework}, an assessment~$\A$ in~$\ncAs$---so with no confusion---partitions the linear space of gambles of interest~$\linGs$ into nine classes, each of which is defined by whether its constituent gambles and their negations are acceptable, rejected, or unresolved.
Some of these classes may be empty.
This partitioning is illustrated on the left.
We have explicitly labelled the partition elements corresponding to the sets of indifferent and favourable gambles.
To make the partition's gamble negation-based cross-diagonal symmetry visible, we have also labelled the partition element corresponding to the negation of the set of favourable gambles.
The set of indifferent gambles is invariant under this symmetry, i.e., gamble negation.

From Proposition~\ref{prop:maximalncmods}, we know that for maximal assessments all gambles in~$\linGs$ are either accepted or rejected.
Because of this, for maximal models, some partition classes are empty for sure; these 
\par
\begin{wrapfigure}{r}{0pt}
  \begin{tikzpicture}
    \matrix [nodes={regular polygon, regular polygon sides=4,inner sep=-1ex,minimum size=8ex,outer sep=.5ex},column sep={1ex,between borders},row sep={1ex,between borders}] {
      \node (apam) {}; & \node (ap) {}; & \node (aprm) {}; \\
      \node (am) {}; & \node (none) {}; & \node (rm) {}; \\
      \node (rpam) {}; & \node (rp) {}; & \node (rprm) {}; \\
    };
    \draw[thick] ([xshift=1ex]aprm.north east) rectangle ([xshift=-5ex]apam.south west) node[above right] {$\A_\acc$};
    \draw[thick] ([yshift=-1ex]rpam.south west) rectangle ([yshift=4ex]apam.north east) node[below left] {$-\A_\acc$};
    \draw[thick] ([xshift=-5ex]rpam.south west) node[above right] {$\A_\rej$} rectangle ([xshift=1ex]rprm.north east);
    \draw[thick] ([yshift=-1ex]rprm.south west) rectangle ([yshift=4ex]aprm.north east) node[below left] {$-\A_\rej$};
    \fill[lightgray] ([shift={(.5ex,-.5ex)}]none.north west) rectangle ([shift={(-.5ex,.5ex)}]rprm.south east);
    \node at ($(none)!.5!(rprm)$) {$\A_\incomp$};
  \end{tikzpicture}
  \quad
  \begin{tikzpicture}
    \matrix [nodes={regular polygon, regular polygon sides=4,inner sep=-1ex,minimum size=8ex,outer sep=.5ex},column sep={1ex,between borders},row sep={1ex,between borders}] {
      \node (apam) {}; & \node (ap) {}; & \node (aprm) {}; \\
      \node (am) {}; & \node (none) {}; & \node (rm) {}; \\
      \node (rpam) {}; & \node (rp) {}; & \node (rprm) {}; \\
    };
    \draw[thick] ([xshift=1ex]aprm.north east) rectangle ([xshift=-5ex]apam.south west) node[above right] {$\A_\acc$};
    \draw[thick] ([yshift=-1ex]rpam.south west) rectangle ([yshift=4ex]apam.north east) node[below left] {$-\A_\acc$};
    \draw[thick] ([xshift=-5ex]rpam.south west) node[above right] {$\A_\rej$} rectangle ([xshift=1ex]rprm.north east);
    \draw[thick] ([yshift=-1ex]rprm.south west) rectangle ([yshift=4ex]aprm.north east) node[below left] {$-\A_\rej$};
    \fill[pattern=north east lines,pattern color=lightgray] ([shift={(.5ex,-.5ex)}]am.north west) rectangle ([shift={(-.5ex,.5ex)}]rm.south east);
    \node at ($(am)!.5!(rm)$) {$\A_\unres$};
  \end{tikzpicture}
\end{wrapfigure}
\noindent
have been hatched instead of filled.
Whenever a background model~$\bgM$ has been posited, the picture stays the same, but the background model~$\bgM$ constrains some of the partition classes to be non-empty.

It is useful to see where the other assessment categories we have encountered lie as contextualised by the above partition.
We have done this on the right:
In the first figure, we show the set of indeterminate gambles, which is invariant under the cross-diagonal symmetry mentioned.
In the second figure, we show the set of unresolved gambles---hatched, because it is empty for maximal assessments.

\subsection{Summary of Concepts \& Notation}\label{sec:summary}
This is a good point to provide a summary of the most important concepts and notation introduced up until now, so as to have a convenient reference for quick reminders when reading the remainder of the paper:
\begin{enumerate}[label=\arabic*.]
  \item
    We consider \emph{assessments} $\A$, which are pairs of sets $\Adelim{\A_\acc}{\A_\rej}$ of gambles, where $\A_\acc$ contains \emph{acceptable} gambles and $\A_\rej$ \emph{rejected} gambles.
    The set of all assessments is $\As$.
  \item
    When a gamble is both accepted and rejected, we call it confusing.
    Our first axiom is \emph{No Confusion}~\eqref{eq:no-confusion}: $\A_\confus\defeq\A_\acc\intersection\A_\rej=\emptyset$.
    The set of all assessments without confusion is $\ncAs$.
    (We consistently use blackboard bold instead of plain bold to denote a set of assessments without confusion.)
  \item
    The most important derived statement types are \emph{indifference} ($\A_\indiff\coloneqq\A_\acc\intersection-\A_\acc$), accepting both a gamble and its negation,  and \emph{favourability} ($\A_\fav\coloneqq\A_\acc\intersection-\A_\rej$), accepting a gamble and rejecting its negation.
  \item
    Our second axiom is \emph{Deductive Closure}~\eqref{eq:deduction}: all positive linear combinations of acceptable gambles must be (made) acceptable.
    From an assessment $\A$ we can obtain a \emph{deductively closed} assessment $\D\coloneqq\ext{\Ds}\A\coloneqq\Adelim{\phull\A_\acc}{\A_\rej}$.
    The set of all deductively closed assessments is $\Ds$ and of those without confusion $\ncDs$.
    The set of all assessments that are \emph{deductively closable} without generating confusion is $\dcAs$.
  \item
    Our third axiom is \emph{No Limbo}~\eqref{eq:no-limbo}: all gambles that would cause confusion if made acceptable must be rejected.
    From a deductively closed assessment without confusion $\D$ we can obtain a \emph{model} $\M\coloneqq\ext{\Ms}\D\coloneqq\Adelim[\big]{\D_\acc}{\shull{\D_\rej}\union(\shull{\D_\rej}-\D_\acc)}$.
    The set of all models without confusion is $\ncMs$.
  \item
    Consider a set of assessments $\someAs$.
    The subset of elements that are \emph{maximal} under the ‘at most as \emph{resolved} as’ relation $\subseteq$ is $\maxsomeAs$.
    The subset of elements that $\subseteq$-\emph{dominate} $\A$ is $\someAs_\A$.
  \item
    Typically, a \emph{background model} $\bgM$ in $\ncMs$ is given, which provides a default set of accepted and rejected gambles and which should be combined with a specific assessment $\A$ in $\dcAs$.
    If $\A\union\bgM\in\dcAs$, then $\A$ \emph{respects} $\bgM$ and its \emph{natural extension} is
    $
      \A\reckunion\bgM \coloneqq \cls{\ncMs}(\A\union\bgM)
                                   \coloneqq \Intersection\maxncMs_{\A\union\bgM}
                                   = \ext{\Ms}\group[\big]{\ext{\Ds}(\A\union\bgM)}.
    $
    An assessment is \emph{coherent} if it coincides with its natural extension.
  \item
    Our fourth and last axiom is Indifference to Status Quo~\eqref{eq:indifference-to-status-quo}: $0\in\bgM_\acc$, the zero gamble is acceptable.
    This assumption simplifies many expressions in the accept-reject framework.
\end{enumerate}

\section{Gamble Relations}\label{sec:relations}
We associate a number of gamble relations on $\linGs$ with each model with Indifference to Status Quo in our accept-reject framework.
So, fix a model~$\M$ in~$\ncMs_\zeroM$ and consider the following defining equivalences:
\begin{equation}\label{eq:gamble-rel-def}
  f\acc g \iff f-g\in\M_\acc
  \quad\text{ and }\quad
  f\rej g \iff f-g\in\M_\rej.
\end{equation}
The former can be read as `$f$ is \demph{accepted} in exchange for $g$', the latter as `$f$ is \demph{unpreferred} to $g$'.
Please not that, despite what may be suggested by the notation, $f\rej g$ on its own does \emph{not} imply $g\acc  f$.

The nature of these gamble relations follows from the axioms of the accept-reject framework: No Confusion~\eqref{eq:no-confusion}, Deductive Closure~\eqref{eq:deduction}, No Limbo~\eqref{eq:no-limbo}, and Indifference to Status Quo~\eqref{eq:indifference-to-status-quo}.
We give a translation of these axioms for gamble relations under the form of a characterisation in the vein of Theorem~\ref{thm:char-AR}:
\begin{theorem}\label{thm:char-AD}
  Gamble relations $\acc$ and $\rej$ on $\linGs$ are equivalent under Definition~\eqref{eq:gamble-rel-def} to a model $\M$ in~$\ncMs_\zeroM$, so without confusion and with indifference to status quo, if and only if for all gambles $f$, $g$, and~$h$ in~$\linGs$ and all mixture coefficients $0<\mu\leq1$ it holds that
  \begin{enumerate}[label=\upshape{(AD\arabic*)},leftmargin=*,widest=AD0,series=AD]
    \item\label{item:AD-accrefl} Accept Reflexivity: $f\acc f$,
    \item\label{item:AD-rejirrefl} Reject Irreflexivity: $f\nrej f$,
    \item\label{item:AD-acctrans}
      Accept Transitivity: $f\acc g \conj g\acc h \then f\acc h$,
    \item\label{item:AD-mixtrans}
      Mixed Transitivity: $f\rej g \conj h\acc g \then f\rej h$, and
    \item\label{item:AD-accmixindep} Accept Mixture Independence:
      $
        f\acc g \iff \mu\cdot f+(1-\mu)\cdot h\acc\mu\cdot g+(1-\mu)\cdot h.
      $
    \item\label{item:AD-rejmixindep} Reject Mixture Independence:
      $
        f\rej g \iff \mu\cdot f+(1-\mu)\cdot h\rej\mu\cdot g+(1-\mu)\cdot h.
      $
  \end{enumerate}
\end{theorem}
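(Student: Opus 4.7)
The plan is to translate directly between the model axioms \ref{item:ARbg}--\ref{item:ARss} of Theorem~\ref{thm:char-AR} (instantiated with $\bgM=\zeroM$) and the gamble-relation axioms \ref{item:AD-accrefl}--\ref{item:AD-rejmixindep}. Since the equivalences \eqref{eq:gamble-rel-def} reduce every statement $f\acc g$ (respectively $f\rej g$) to a membership of the difference $f-g$ in $\M_\acc$ (respectively $\M_\rej$), the whole argument becomes a matter of rewriting linear combinations of differences and choosing convenient substitutions for $f$, $g$, and $h$.

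For the ``only if'' direction I would start from $\M\in\ncMs_\zeroM$: \ref{item:ARbg} gives $0\in\M_\acc$, yielding \ref{item:AD-accrefl}; \ref{item:ARzr} is \ref{item:AD-rejirrefl}. The cone property \ref{item:ARcn} yields \ref{item:AD-acctrans} via $f-h=(f-g)+(g-h)$, and yields \ref{item:AD-accmixindep} because both sides of the equivalence differ by the scalar factor $\mu$, with positive scaling of $\M_\acc$ supplying both implications. For \ref{item:AD-mixtrans}, the identity $f-h=(f-g)-(h-g)$ reduces the claim to $\M_\rej-\M_\acc\subseteq\M_\rej$, which is a weakening of \ref{item:ARss}; and combining \ref{item:ARss} with $0\in\M_\acc$ (from ISQ) forces $\shull{\M_\rej}\subseteq\M_\rej$, i.e.\ positive scaling of $\M_\rej$, whence \ref{item:AD-rejmixindep} by the same differencing-and-scaling trick.

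For the ``if'' direction I would reverse each translation, with one additional step: No Confusion (membership in $\ncMs$) is not explicit in the AD list and must be derived. I plan to obtain it from the combination of \ref{item:AD-rejirrefl} and \ref{item:AD-mixtrans}: if some $k$ lay in both $\M_\acc$ and $\M_\rej$, then choosing $f=k$, $g=0$, $h=k$ in \ref{item:AD-mixtrans} would yield $0\in\M_\rej$, contradicting \ref{item:AD-rejirrefl}. From there, \ref{item:AD-accrefl} gives $\zeroM\subseteq\M$ and hence \ref{item:ARbg}; \ref{item:AD-rejirrefl} gives \ref{item:ARzr}; \ref{item:AD-acctrans} together with the positive-scaling content of \ref{item:AD-accmixindep} supplies \ref{item:ARcn}; and \ref{item:AD-mixtrans} combined with \ref{item:AD-rejmixindep} (which yields $\shull{\M_\rej}=\M_\rej$) supplies \ref{item:ARss}.

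The main obstacle I expect is handling the restricted mixture range $0<\mu\leq 1$ in \ref{item:AD-accmixindep} and \ref{item:AD-rejmixindep}: to recover full positive scaling ($\lambda>0$) of $\M_\acc$ and $\M_\rej$ one has to invoke each mixture-independence axiom twice, once with $\mu$ and once with a suitable reciprocal, and exploit both directions of the biconditional. Once that bookkeeping is set up, the remaining work is a routine sequence of algebraic substitutions and the verification that the $h$-variable cancels from every difference, so it plays no role at the level of $\M_\acc$ and $\M_\rej$ but is precisely what makes the AD-formulation a set of axioms on binary relations.
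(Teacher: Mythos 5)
Your proposal is correct and follows essentially the same route as the paper: each of \ref{item:AD-accrefl}--\ref{item:AD-rejmixindep} is translated via Definition~\eqref{eq:gamble-rel-def} into a condition on $\M_\acc$ and $\M_\rej$ ($0\in\M_\acc$, $0\notin\M_\rej$, Combination, $\M_\rej-\M_\acc\subseteq\M_\rej$, Positive Scaling, $\shull{\M_\rej}\subseteq\M_\rej$), these are matched with \ref{item:ARbg}--\ref{item:ARss} of Theorem~\ref{thm:char-AR} for $\bgM=\zeroM$, and the restricted range $0<\mu\leq1$ is handled exactly as in the paper by using both directions of the biconditional with $\mu$ and its reciprocal. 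Your separate derivation of No Confusion from \ref{item:AD-rejirrefl} and \ref{item:AD-mixtrans} is sound but redundant once Theorem~\ref{thm:char-AR} is invoked, since that theorem's proof already extracts No Confusion from \ref{item:ARzr} and \ref{item:ARss}.
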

\noindent
So acceptability is reflexive and transitive, which makes it a non-strict pre-order, also a vector ordering.
Unpreference is irreflexive.
Both gamble relations are linked together by Mixed Transitivity.

The two definitions of Equation~\eqref{eq:gamble-rel-def} engender three other useful gamble relations:
We say that the agent is \demph{indifferent} between two gambles~$f$ and~$g$ if he accepts~$f$ in exchange for~$g$ and vice versa:
\begin{equation}\label{eq:gamble-rel-indiff}
  f\indiff g \iff f\acc g \conj g\acc f \iff f-g\in\M_\indiff.
\end{equation}
Because it is the symmetrisation of the reflexive and transitive acceptability~$\acc$, indifference is reflexive, transitive, and symmetric, which makes it an equivalence relation.

We say that the agent \demph{prefers} a gamble~$f$ over a gamble~$g$ if he both accepts $f$ in exchange for $g$ and unprefers $g$ to~$f$:
\begin{equation}\label{eq:gamble-rel-fav}
  f\fav g \iff f\acc g \conj g\rej f \iff f-g\in\M_\fav.
\end{equation}
Because of how it is derived from unpreference~$\rej$ and acceptability~$\acc$, and because of Proposition~\ref{prop:model+hulls} and Corollary~\ref{cor:sweetened-deals}, preference satisfies the following properties for all $f$, $g$, and~$h$ in~$\linGs$ and $0<\mu\leq1$:
\begin{alignat}{2}
  \text{Weakening: }&\quad& f\fav g \then& f\acc g,\label{eq:weakening}\\
  \text{Favour Irreflexivity: }&\quad& &f\nfav f,\label{eq:fav-irreflexivity}\\
  \text{Favour Transitivity: }&\quad& f\fav g \conj g\fav h \then& f\fav h,\label{eq:fav-transitivity}\\
  \text{Mixed Transitivity: }&\quad& f\fav g \conj g\acc h \then& f\fav h,\label{eq:fav-mixed-transitivity}\\
  \text{Mixture Independence: }&\quad&
    f\fav g \iff& \mu\cdot f+(1-\mu)\cdot h\fav\mu\cdot g+(1-\mu)\cdot h.
    \label{eq:fav-mix-independence}
\end{alignat}
So preference is irreflexive and transitive (and therefore also antisymmetric), which makes it a strict partial ordering.
This, together with the interpretation attached to the accept and reject type statements from which it derives, makes it ideally suited for decision making.

We say that two gambles~$f$ and~$g$ are \demph{uncomparable} when neither is accepted in exchange for the other:
\begin{equation}\label{eq:gamble-rel-incomp}
  f\incomp g \iff f-g\in\M_\incomp.
\end{equation}
Uncomparability is by definition symmetric, but is irreflexive because of Accept Reflexivity.
Moreover, in general it will not be transitive.
We wish to stress that uncomparability of two gambles corresponds to the absence of an acceptability relation between them.
So, for example, two gambles that are mutually unpreferred are uncomparable, but arguably in a different way than two gambles between which there is no relationship whatsoever, as in this latter case uncomparability is caused by a lack of statements instead of by the nature of the applicable statements.

For all the gamble relations introduced above, denote them generically by $\mathrel{\Box}$, the following property follows from Mixture Independence with $h=-g$ and $\mu=\frac12$:
\begin{equation}\label{eq:cancellation}
  \text{Cancellation: }\quad
    f\mathrel{\Box}g\iff f-g\mathrel{\Box}0.
\end{equation}
This property can be considered as a conceptual intermediate step when moving between gamble relations and models, using Equations~\eqref{eq:gamble-rel-def}, \eqref{eq:gamble-rel-indiff}, \eqref{eq:gamble-rel-fav}, and \eqref{eq:gamble-rel-incomp}.

\begin{wrapfigure}{r}{0pt}
  \shortstack[c]{
    \begin{tikzpicture}[scale=1.9]
      \draw[->] (-2.2,0) coordinate (xl) -- (2.2,0) coordinate (xu);
      \draw[->] (0,-1) coordinate (yl) -- (0,1) coordinate (yu);
      \coordinate (a1) at (30:.7);
      \coordinate (a2) at (135:.7);
      \coordinate (r1) at (60:.7);
      \coordinate (r2) at (163:.7);
      \begin{pgfonlayer}{background}
        \draw[border] (0,0) -- (intersection of 0,0--a1 and yu--{xu|-yu}) coordinate (a1away);
        \draw[border] (0,0) -- (intersection of 0,0--a2 and yu--{xu|-yu}) coordinate (a2away);
        \fill[lightgray] (0,0) -- (a1away) |- (a2away) --cycle;
        \draw[border] (0,0) -- (intersection of 0,0--r2 and xl--{xl|-yu}) coordinate (r2away);
        \draw[exclborder] (0,0) -- (intersection of 0,0--a2 and yl--{xu|-yl}) coordinate (a2anotherway);
        \fill[gray] (0,0) -- (a2anotherway) |- (xl|-yl) |- (r2away) --cycle;
        \draw[favborder] (0,0) -- (a1away);
      \end{pgfonlayer}
      \coordinate[vec,label={[xshift=-6pt]above right:$f-2\cdot g$}] (fm2g) at (intersection of a2--{a2 -| 0,0} and 0,0--a2);
      \coordinate[vec,label={[white,xshift=6pt]below left:$2\cdot g-f$}] (mfm2g) at ($-1*(fm2g)$);
      \coordinate[vec,label=right:$f$,xshift=2ex] (f) at (intersection of a2--{a2 -| 0,0} and 0,0--a1);
      \coordinate[white,vec,label={[white]left:$-f$}] (mf) at ($-1*(f)$);
      \coordinate[vec,label={[xshift=-6pt]above right:$f-g$}] (fm1g) at ($(f)!.5!(fm2g)$);
      \coordinate[white,vec,label={[white,xshift=6pt]below left:$g-f$}] (mfm1g) at ($-1*(fm1g)$);
      \coordinate[vec,label=below:$g$] (g) at ($.5*(f)-.5*(fm2g)$);
      \coordinate[white,vec,label={[white]below:$-g$}] (mg) at ($-1*(g)$);
      \coordinate[vec,label={[xshift=-6pt]above right:$f-3\cdot g$}] (fm3g) at ($(fm2g)-(g)$);
      \coordinate[vec,label={[xshift=6pt]below left:$3\cdot g-f$}] (mfm3g) at ($-1*(fm3g)$);
      \coordinate[white,vec,label={[white,xshift=-6pt]below right:$f-4\cdot g$}] (fm4g) at ($(fm3g)-(g)$);
      \coordinate[vec,label={[xshift=6pt]below left:$4\cdot g-f$}] (mfm4g) at ($-1*(fm4g)$);
    \end{tikzpicture}
  \\[1ex]
    \begin{tikzpicture}[scale=1.9]
      \draw[->] (-2.2,0) coordinate (xl) -- (2.2,0) coordinate (xu);
      \draw[->] (0,-1) coordinate (yl) -- (0,1) coordinate (yu);
      \coordinate (a1) at (150:.6);
      \begin{pgfonlayer}{background}
        \draw[border]
          (intersection of 0,0--a1 and yu--{xu|-yu}) coordinate (a1away1) -- (intersection of 0,0--a1 and yl--{xu|-yl}) coordinate (a1away2);
        \fill[lightgray]  (a1away1) -- (a1away2) -| (yl -| xu) -- (yu -| xu) --cycle;
      \end{pgfonlayer}
      \coordinate[vec,label=above:$f'$,xshift=5ex] (f) at (intersection of a1away1--a1away2 and 0,.4--1,.4);
      \coordinate[vec,label=right:$g'$,xshift=5ex] (g) at (intersection of a1away1--a1away2 and 0,-.3--1,-.3);
      \coordinate[vec,label=below left:$f'-g'$] (fmg) at ($(f)-1*(g)$);
      \coordinate[vec,label=below left:$g'-f'$] (gmf) at ($(g)-1*(f)$);
    \end{tikzpicture}
  }
\end{wrapfigure}
The connection between models and the gamble relations of this section is illustrated on the right.
In the top figure, we have that
\begin{description}
  \item[$f\fav g$:]
    $f$ is preferred to $g$, because $f-g$ is acceptable and $g-f$ is rejected;
  \item[$f\acc2\cdot g$:]
    $f$ is accepted in exchange for ${2\cdot g}$, because $f-2\cdot g$ is acceptable and $2\cdot g-f$ is unresolved;
  \item[$f\incomp 3\cdot g$:]
    $f$ and $3\cdot g$ are uncomparable, because both $f-3\cdot g$ and $3\cdot g-f$ are unresolved;
  \item[$f\rej 4\cdot g$:]
    $f$ is unpreferred to $4\cdot g$, because $f-4\cdot g$ is rejected and $4\cdot g-f$ is unresolved;
  \item[$f\rej0$, $g\rej0$:]
    both $f$ and $g$ are rejected.
\end{description}
In the bottom figure we have that
\begin{description}
  \item[$f'\indiff g'$:]
    indifference between $f'$ and $g'$, because both $f'-g'$ and $g'-f'$ are acceptable;
  \item[$f'\acc0$, $g'\acc0$:]
    both $f'$ and $g'$ are acceptable.
\end{description}

Let us denote the gamble relations corresponding to a background model~$\bgM$ in~$\ncMs_\zeroM$ by $\sacc$ for acceptability and~$\srej$ for unpreference.
This provides a baseline assessment that the agent can augment, resulting---if all goes well---in a model~$\M$ in~$\ncMs_\bgM$ with which, as above, we associate the gamble relations~$\acc$ and~$\rej$.
How should such a background gamble relation pair $(\sacc,\srej)$ impact the agent's gamble relation pair $(\acc,\rej)$?
\begin{proposition}\label{prop:char-ADbgM}
  Given pairs of gamble relations $(\sacc,\srej)$ and $(\acc,\rej)$ on $\linGs$ equivalent under Definition~\eqref{eq:gamble-rel-def} respectively to models with indifference to status quo $\bgM$ and $\M$ in~$\ncMs_\zeroM$, then~$\M$ is $\bgM$-coherent if and only if for all gambles $f$ and $g$ in~$\linGs$ it holds that
  \begin{enumerate}[AD]
    \item\label{item:AD-monoton}
      Monotonicity: $f\sacc g \then f\acc g$ and $f\srej g \then f\rej g$.
  \end{enumerate}
\end{proposition}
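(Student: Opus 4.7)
The plan is to prove the equivalence by directly unwinding the definitions, with the Cancellation property~\eqref{eq:cancellation} acting as the bridge between the set-based and the relation-based views. By the definition of $\bgM$-coherence given in Section~\ref{sec:background}, the model $\M$ is $\bgM$-coherent if and only if $\bgM\subseteq\M$, which componentwise reads $\bgM_\acc\subseteq\M_\acc$ and $\bgM_\rej\subseteq\M_\rej$. So the task reduces to showing that these two set inclusions are respectively equivalent to the two Monotonicity implications.

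First I would handle the accept part. By Definition~\eqref{eq:gamble-rel-def}, $f\sacc g\iff f-g\in\bgM_\acc$ and $f\acc g\iff f-g\in\M_\acc$. Hence the implication $f\sacc g\Rightarrow f\acc g$ holding for all $f,g\in\linGs$ is literally the statement that every gamble of the form $f-g$ lying in $\bgM_\acc$ also lies in $\M_\acc$. Since $\{f-g:f,g\in\linGs\}=\linGs$ (take $g=0$ for one inclusion, and note $f-g\in\linGs$ for the other), this is just $\bgM_\acc\subseteq\M_\acc$. The reject part is completely analogous, replacing $\acc$ by $\rej$ and $\bgM_\acc,\M_\acc$ by $\bgM_\rej,\M_\rej$, yielding $\bgM_\rej\subseteq\M_\rej$.

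Combining the two equivalences gives the claim. There is no real obstacle here: the statement is essentially a dictionary translation between the pair of gamble sets forming a model and the pair of gamble relations it induces, and the translation is mediated by Cancellation, which was already derived for all the relations in play. The only thing worth flagging explicitly in the write-up is that, thanks to Indifference to Status Quo~\eqref{eq:indifference-to-status-quo}, the reduction $h=h-0$ is legitimate within the framework, since $0\in\bgM_\acc\cap\M_\acc$ ensures that the choice $g=0$ is available when quantifying over pairs of gambles related by $\sacc$ or $\srej$.
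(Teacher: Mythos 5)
Your proof is correct and takes essentially the same route as the paper's: reduce $\bgM$-coherence of the model $\M$ to the inclusion $\bgM\subseteq\M$ and then translate the componentwise inclusions $\bgM_\acc\subseteq\M_\acc$ and $\bgM_\rej\subseteq\M_\rej$ back and forth through Definition~\eqref{eq:gamble-rel-def} (your version even spells out the converse direction, which the paper leaves implicit). One small quibble: the substitution $g=0$ only needs $0\in\linGs$, which holds because $\linGs$ is a linear space, so the appeal to Indifference to Status Quo at that point is superfluous, though harmless.
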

\noindent
For example, when using $\bgM\defeq\Adelim{\linGs_\geq}{\linGs_<}$, we have ${\sacc}={\geq}$ and ${\srej}={<}$, which supports the name `Monotonicity' even more than the suggestive notation used.

\section{Simplified Frameworks}\label{sec:simplified-frameworks}
The accept-reject framework of Section~\ref{sec:framework} may in many situations be more general than needed.
Therefore, it is interesting to have a look at simplified versions of this framework.
By `simplified', we mean that we add additional restrictions on the statements the agent is allowed to make, so that the models that result become easier to work with or characterise.

The most simplified frameworks we consider here essentially restrict assessments in terms of either favourability or acceptability statements.
They allow us to make the connection with other frameworks in the literature for modelling uncertainty that are also based on statements about gambles.

\pagebreak
\subsection{The Accept-Favour Framework}\label{sec:accept-favour}
\begin{wrapfigure}[10]{l}{0pt}
  \begin{tikzpicture}
    \matrix [nodes={regular polygon, regular polygon sides=4,inner sep=-1ex,minimum size=8ex,outer sep=.5ex},column sep={1ex,between borders},row sep={1ex,between borders}] {
      \node[fill=lightgray] (apam) {$\A_\indiff$}; & \node[pattern=north east lines,pattern color=lightgray] (ap) {}; & \node[fill=lightgray] (aprm) {$\A_\fav$}; \\
      \node[pattern=north east lines,pattern color=lightgray] (am) {}; & \node[pattern=north east lines,pattern color=lightgray] (none) {$\A_\incomp$}; & \node (rm) {}; \\
      \node[fill=lightgray] (rpam) {$-\A_\fav$}; & \node (rp) {}; & \node (rprm) {}; \\
    };
    \draw[thick] ([xshift=1ex]aprm.north east) rectangle ([xshift=-5ex]apam.south west) node[above right] {$\A_\acc$};
    \draw[thick] ([yshift=-1ex]rpam.south west) rectangle ([yshift=4ex]apam.north east) node[below left] {$-\A_\acc$};
    \draw[thick] ([xshift=-5ex]rpam.south west) node[above right] {$\A_\rej$} rectangle ([xshift=-.2ex]rpam.north east);
    \draw[thick] ([yshift=.2ex]aprm.south west) rectangle ([yshift=4ex]aprm.north east) node[below left] {$-\A_\rej$};
  \end{tikzpicture}
\end{wrapfigure}
On the left we give the illustration of the six-element partition that results if we simplify our framework by restricting reject statements to negated acceptable gambles by imposing
\begin{condition}{AF}\label{eq:accfav-condition}
  $-\A_\rej\subseteq\A_\acc$.
\end{condition}
\noindent
In such a context, the rejection of a gamble~$f$ in~$\linGs$ can be viewed as an explicit statement of favourability about the gamble's negation~$-f$, because then $\A_\fav=\A_\acc\intersection-\A_\rej=-\A_\rej$.
It is therefore immaterial whether we specify an assessment by providing the sets $\A_\acc$ and $\A_\rej$, or by the sets $\A_\acc$ and~$\A_\fav$; in this situation we say we are using the \emph{accept-favour framework}.

We should not impose Condition~\eqref{eq:accfav-condition} as a universal rationality requirement.
For example, it would preclude one from rejecting all gambles with maximum winnings that are not at least twice the maximum losses without creating confusion; e.g., in a two-dimensional context $\set{(-1,1),(1,-1)}\subset\A_\confus$.

Given a set of assessments $\someAs\subseteq\As$, we define its subset of assessments satisfying Condition~\eqref{eq:accfav-condition} by $\aff{\someAs}\defeq\cset{\A\in\someAs}{-\A_\rej\subseteq\A_\acc}$.
The basic results of Sections~\ref{sec:no-confusion} to~\ref{sec:nolimbo} about assessments, deductively closed assessments, and models of course also remain valid when restricting attention to the accept-favour assessments in~$\aff{\As}$.
It is useful to state a more specific version of our criterion for deductive closability:
\begin{theorem}[cf. Theorem~\ref{thm:dcAs-zerocrit}]\label{thm:affdcAs-zerocrit}
  An accept-favour assessment~$\A$ in~$\aff\As$ is deductively closable---i.e., $\A\in\affdcAs$---if and only if\/ $0\notin\A_\fav+\phull\A_\acc$.
\end{theorem}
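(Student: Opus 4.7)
The plan is to reduce the claim to Theorem~\ref{thm:dcAs-zerocrit} by exploiting the characteristic identity of the accept-favour framework. Indeed, because $\A\in\aff\As$ satisfies Condition~\eqref{eq:accfav-condition}, i.e., $-\A_\rej\subseteq\A_\acc$, we immediately have $\A_\fav=\A_\acc\intersection-\A_\rej=-\A_\rej$, equivalently $\A_\rej=-\A_\fav$. The only substantive work, then, is to track the negation carefully and to confirm that the deductive closability condition in $\dcAs$ specialises to the one claimed for $\affdcAs$.

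First I would verify that, in the accept-favour setting, $\A\in\affdcAs$ is equivalent to $\A\in\dcAs$: by definition $\affdcAs$ consists of the deductively closable assessments in $\aff\As$, and since deductive extension enlarges only $\A_\acc$, the inclusion $-\A_\rej\subseteq\A_\acc$ is preserved by $\ext{\Ds}$, so no additional condition beyond $\A\in\dcAs$ is imposed. Then apply Theorem~\ref{thm:dcAs-zerocrit}: $\A\in\dcAs$ iff $0\notin\A_\rej-\phull\A_\acc$. Substituting $\A_\rej=-\A_\fav$ gives
\[
  \A_\rej-\phull\A_\acc
  = -\A_\fav-\phull\A_\acc
  = -\group[\big]{\A_\fav+\phull\A_\acc},
\]
and because $0=-0$, non-membership of $0$ on either side is equivalent. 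This yields exactly $\A\in\affdcAs$ iff $0\notin\A_\fav+\phull\A_\acc$.

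There is no real obstacle here; the only subtlety worth spelling out is that the zero criterion automatically enforces No Confusion, so we need not additionally verify $\A\in\aff\ncAs$. Explicitly: if $f\in\A_\confus=\A_\acc\intersection\A_\rej$, then $-f\in-\A_\rej=\A_\fav$ while $f\in\phull\A_\acc$, whence $0=(-f)+f\in\A_\fav+\phull\A_\acc$; contrapositively, the assumed non-membership of $0$ forces $\A_\confus=\emptyset$. This parallels the analogous remark needed when invoking Theorem~\ref{thm:dcAs-zerocrit} in its full generality.
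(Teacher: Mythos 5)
Your proof is correct and is essentially the paper's own argument: the paper also reduces the claim to Theorem~\ref{thm:dcAs-zerocrit} via the observation that Condition~\eqref{eq:accfav-condition} forces $\A_\fav=-\A_\rej$, so the criterion $0\notin\A_\rej-\phull\A_\acc$ becomes $0\notin\A_\fav+\phull\A_\acc$ after negation. Your additional remarks (that $\ext{\Ds}$ preserves Condition~\eqref{eq:accfav-condition} and that the zero criterion subsumes No Confusion) are just the details the paper leaves implicit and are consistent with its Lemma~\ref{lem:exts-pres-aff} and the proof of Theorem~\ref{thm:dcAs-zerocrit}.
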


We furthermore wish the results of Sections~\ref{sec:order} to~\ref{sec:background} about intersection structures, maximal elements, and closure operators to have counterparts in the accept-favour framework.
The following four results take care of the basics:
\begin{proposition}[cf. Proposition~\ref{prop:intersection}]\label{prop:accfav-intersection}
  Condition~\eqref{eq:accfav-condition} is preserved under arbitrary non-empty intersections:
  If $(\someAs,\subseteq)$, with $\someAs\subseteq\As$, is an intersection structure, then so is $(\aff{\someAs},\subseteq)$.
\end{proposition}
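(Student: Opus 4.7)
The plan is to reduce the claim to a component-wise verification, leaning on the fact that $(\someAs,\subseteq)$ is already an intersection structure so that only Condition~\eqref{eq:accfav-condition} needs to be checked for an arbitrary non-empty intersection.

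Take any non-empty family $\somemoreAs\subseteq\aff{\someAs}$ and set $\otherA\defeq\Intersection\somemoreAs$. Since $\somemoreAs\subseteq\someAs$ and $(\someAs,\subseteq)$ is an intersection structure, Proposition~\ref{prop:intersection}-style closure gives $\otherA\in\someAs$, so it remains only to show $-\otherA_\rej\subseteq\otherA_\acc$. Unwinding the component-wise definition of intersection yields $\otherA_\acc=\Intersection_{\A\in\somemoreAs}\A_\acc$ and $\otherA_\rej=\Intersection_{\A\in\somemoreAs}\A_\rej$. Pick any $f\in\otherA_\rej$; then $f\in\A_\rej$ for every $\A\in\somemoreAs$, whence Condition~\eqref{eq:accfav-condition} applied to each $\A$ gives $-f\in\A_\acc$ for every $\A\in\somemoreAs$, so $-f\in\otherA_\acc$. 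This proves $\otherA\in\aff{\someAs}$, establishing that $(\aff{\someAs},\subseteq)$ is an intersection structure as well.

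There is no real obstacle here: the argument is essentially a pointwise propagation of Condition~\eqref{eq:accfav-condition} across the intersection, and it matches the same pattern used in Proposition~\ref{prop:intersection} itself. The one thing to be mindful of is the non-emptiness of $\somemoreAs$, which ensures we are intersecting over at least one assessment that already satisfies~\eqref{eq:accfav-condition} and avoids any degenerate appeal to $\Intersection\emptyset=\top$.
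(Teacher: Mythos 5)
Your proof is correct and follows essentially the same route as the paper: the paper verifies $-(\Intersection\somemoreAs)_\rej=\Intersection_{\A\in\somemoreAs}-\A_\rej\subseteq\Intersection_{\A\in\somemoreAs}\A_\acc=(\Intersection\somemoreAs)_\acc$ as a set-level computation, which is exactly your pointwise propagation of Condition~\eqref{eq:accfav-condition}, with membership of the intersection in $\someAs$ handled by the hypothesis that $(\someAs,\subseteq)$ is an intersection structure, just as you note.
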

\begin{proposition}[cf. Proposition~\ref{prop:maximalncasss}]\label{prop:accfav-maximalncasss}
  The set of maximal accept-favour assessments is
  \begin{equation*}
    \aff\maxncAs
    = \aff\ncAs\intersection\maxncAs
    = \cset{\Adelim{\someGs}{\linGs\setminus\someGs}}
          {\someGs\subseteq\linGs
            \conj -(\linGs\setminus\someGs)\subseteq\someGs},
  \end{equation*}
  and every such assessment $\A$ in~$\aff\maxncAs$ satisfies indifference to status quo: $\zeroM\subseteq\A$.
\end{proposition}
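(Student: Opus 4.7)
The plan is to prove the set equality first and then derive indifference to status quo from the resulting explicit form. Note that the statement actually combines two claims: $\aff\maxncAs = \aff\ncAs \intersection \maxncAs$ (an equality between notions of maximality), and the explicit characterisation via the subset $\someGs$, which will follow by plugging Proposition~\ref{prop:maximalncasss} into the first identity.

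The easy inclusion is $\aff\ncAs \intersection \maxncAs \subseteq \aff\maxncAs$: if $\A \in \aff\ncAs$ and no element of $\ncAs$ strictly dominates $\A$, then \emph{a fortiori} no element of the subset $\aff\ncAs$ strictly dominates $\A$, so $\A \in \aff\maxncAs$. The substantive direction is the reverse, $\aff\maxncAs \subseteq \maxncAs$, for which I would argue by contraposition: given $\A \in \aff\ncAs$ with $\A_\unres \neq \emptyset$, I construct a strictly larger element of $\aff\ncAs$, showing $\A \notin \aff\maxncAs$. Pick $f \in \A_\unres$ and split on whether $-f$ is resolved. If $-f \in \A_\unres$, define $\A' \defeq \Adelim{\A_\acc \union \set{f}}{\A_\rej}$; no confusion is introduced since $f \notin \A_\rej$, and Condition~\eqref{eq:accfav-condition} continues to hold since $\A_\acc \subseteq \A'_\acc$. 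If $-f$ is resolved, Condition~\eqref{eq:accfav-condition} rules out $-f \in \A_\rej$ (that would force $f \in \A_\acc$, contradicting unresolvedness), so $-f \in \A_\acc$; then define $\A' \defeq \Adelim{\A_\acc}{\A_\rej \union \set{f}}$, which avoids confusion because $f \notin \A_\acc$, and which satisfies Condition~\eqref{eq:accfav-condition} because $-\A'_\rej = -\A_\rej \union \set{-f} \subseteq \A_\acc = \A'_\acc$. In either case $\A \subsetneq \A'$ in $\aff\ncAs$, completing the contraposition. The case split on $-f$ is the main obstacle—straightforward but needing both directions of Condition~\eqref{eq:accfav-condition} to close off all escape routes.

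Combining $\aff\maxncAs = \aff\ncAs \intersection \maxncAs$ with the explicit form of $\maxncAs$ from Proposition~\ref{prop:maximalncasss}, an element has the form $\Adelim{\someGs}{\linGs \setminus \someGs}$ for some $\someGs \subseteq \linGs$, and the Condition~\eqref{eq:accfav-condition} constraint $-\A_\rej \subseteq \A_\acc$ translates directly to $-(\linGs \setminus \someGs) \subseteq \someGs$, yielding the stated characterisation.

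Finally, for Indifference to Status Quo, I would argue by contradiction from the explicit form: if $0 \notin \someGs$, then $0 \in \linGs \setminus \someGs$, so $0 = -0 \in -(\linGs \setminus \someGs) \subseteq \someGs$, a contradiction. Hence $0 \in \someGs = \A_\acc$, which together with $\emptyset \subseteq \A_\rej$ gives $\zeroM \subseteq \A$.
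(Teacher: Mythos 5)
Your proof is correct, and it reaches the same three conclusions by a slightly more hands-on route than the paper. For the first equality the paper simply invokes its Lemma~\ref{lem:maximalsubmaximal} (with $\someAs\defeq\ncAs$, $\somemoreAs\defeq\aff\ncAs$), using as witness the one-shot extension $\otherA\defeq\otherD\union\Adelim{\otherD_\unres}{\emptyset}$, i.e.\ accepting \emph{all} unresolved gambles at once; this stays in $\aff\ncAs$ and lands in $\maxncAs$, which yields both inclusions simultaneously and in addition shows that every element of $\aff\ncAs$ is dominated by a maximal one. You instead prove the easy inclusion directly and the substantive one by contraposition, extending by a single unresolved gamble---the same underlying idea, just executed element-wise and without the reusable lemma. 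One small remark: your case split on whether $-f$ is resolved is unnecessary, since the first construction $\A'\defeq\Adelim{\A_\acc\union\set{f}}{\A_\rej}$ already avoids confusion ($f\notin\A_\rej$) and preserves Condition~\eqref{eq:accfav-condition} ($\A'_\rej=\A_\rej$ and $\A_\acc\subseteq\A'_\acc$) regardless of the status of $-f$; the second branch is correct but redundant. Your derivations of the explicit form (Proposition~\ref{prop:maximalncasss} plus Condition~\eqref{eq:accfav-condition}) and of Indifference to Status Quo (if $0\notin\someGs$ then $0=-0\in-(\linGs\setminus\someGs)\subseteq\someGs$, a contradiction) coincide with the paper's, which phrases the latter as: $0\in\A_\rej$ is disallowed because Condition~\eqref{eq:accfav-condition} would then force $0\in\A_\acc$ and hence confusion.
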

\begin{proposition}[cf. Proposition~\ref{prop:maximalncmods}]\label{prop:accfav-maximalncmods}
  The set of all maximal accept-favour models is\footnote{
    The Axiom of Choice is assumed for infinite-dimensional~$\linGs$ to prove the existence of the cones~$\someGs$ (cf. Lemma~\ref{lem:kakutani-for-cones} in the Proofs Appendix).
  }
  \begin{equation*}
    \aff\maxncMs
    = \aff\maxncAs\intersection\aff\ncDs
    = \cset{\Adelim{\someGs}{\linGs\setminus\someGs}}
          {\someGs\in\cones
            \conj -(\linGs\setminus\someGs)\subseteq\someGs}.
  \end{equation*}
  It moreover coincides with the sets of all maximal deductively closable or closed accept-favour assessments without confusion: $\aff\maxncMs=\aff\maxncDs=\aff\maxdcAs$.
\end{proposition}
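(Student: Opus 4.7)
The plan is to mimic the strategy used for Proposition~\ref{prop:maximalncmods} while being careful that the accept-favour condition~\eqref{eq:accfav-condition} is preserved by every construction along the way. I would begin with the set $\aff\maxncAs \intersection \aff\ncDs$: Proposition~\ref{prop:accfav-maximalncasss} gives $\aff\maxncAs$ as the partition-form assessments $\Adelim{\someGs}{\linGs \setminus \someGs}$ satisfying $-(\linGs \setminus \someGs) \subseteq \someGs$, and intersecting with $\aff\ncDs$ adds exactly the cone requirement $\someGs \in \cones$, which produces the explicit description claimed. I would then verify that every such assessment is already a model by checking No Limbo directly: $\shull{\linGs \setminus \someGs} = \linGs \setminus \someGs$ because complements of cones are invariant under positive scaling, and $(\linGs \setminus \someGs) - \someGs \subseteq \linGs \setminus \someGs$ by the one-line contradiction $f = (f - g) + g \in \someGs$. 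Maximality in $\aff\ncMs$ is automatic, since these assessments are already maximal in the larger set $\aff\ncAs \supseteq \aff\ncMs$.

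The crux is the reverse inclusion: any maximal element of $\aff\ncMs$ has no unresolved gambles and therefore lies in $\aff\maxncAs$. I would argue by contradiction. Suppose $\M \in \aff\ncMs$ is maximal but admits some $f \in \M_\unres$. The accept-favour condition forbids $-f \in \M_\rej$ (which would force $f \in \M_\acc$), so $-f$ is either accepted or unresolved. In the first case I would take $\M'$ to be the reckoning extension of $\Adelim{\M_\acc}{\M_\rej \union \set{f}}$; in the second, the reckoning extension of the deductive extension of $\Adelim{\M_\acc \union \set{f}}{\M_\rej}$. The main obstacle is verifying that in each case $\M' \in \aff\ncMs$ strictly dominates $\M$. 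No Confusion follows from the conic structure of $\M_\acc$ together with $f, -f \notin \M_\acc$, which prevents any newly acceptable gamble from coinciding with a rejected one. The accept-favour inclusion $-\M'_\rej \subseteq \M'_\acc$ propagates through the closures because every gamble added to the reject set decomposes as a positive scaling of a gamble in $\M_\rej \union \set{f}$, optionally minus an element of $\M'_\acc$; its negation then lies in $\M'_\acc$ by virtue of the original condition $-\M_\rej \subseteq \M_\acc$, the case assumption on $-f$, and the closure of $\M'_\acc$ under positive scaling and Minkowski addition.

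The chain $\aff\maxncMs = \aff\maxncDs = \aff\maxdcAs$ then follows using the closure operators of Proposition~\ref{prop:cls-formulae}. A maximal element of $\aff\dcAs$ must coincide with its deductive extension---which still belongs to $\aff\dcAs$ by Theorem~\ref{thm:affdcAs-zerocrit}---and therefore lies in $\aff\ncDs$; likewise, a maximal element of $\aff\ncDs$ must equal its reckoning extension, which by the same negation-and-cone bookkeeping as in the previous paragraph remains in $\aff\ncMs$. Equality of the maximal sets then follows because any strict dominator in a larger ambient set maps, under the relevant closure operator, to a strict dominator in the smaller one.
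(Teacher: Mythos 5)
Your proposal is correct, but it reaches the crucial inclusion by a genuinely different route than the paper. The paper characterises $\aff\maxncDs$ first, by feeding Lemma~\ref{lem:maximalsubmaximal} (with $\someAs\defeq\aff\ncAs$ and $\somemoreAs\defeq\aff\ncDs$) the hypothesis that \emph{every} element of $\aff\ncDs$ is dominated by a partition-form assessment $\Adelim{\someGs}{\linGs\setminus\someGs}$ with $\someGs\in\cones$; that global existence step is supplied by Lemma~\ref{lem:kakutani-for-cones} (Hammer's semispaces), which is exactly where the Axiom of Choice enters, and the chain $\aff\maxncMs=\aff\maxncDs=\aff\maxdcAs$ is then obtained from Lemma~\ref{lem:exts-pres-aff}, much as you do. You instead work directly with $\aff\maxncMs$ and argue locally: a model with an unresolved gamble $f$ can be strictly extended inside $\aff\ncMs$, by rejecting $f$ when $-f\in\M_\acc$ and by accepting $f$ when $-f$ is also unresolved (the case $-f\in\M_\rej$ being excluded by Condition~\eqref{eq:accfav-condition}); this is in essence the paper's Lemma~\ref{lem:accfav-ext-with-unres}, which the paper only deploys later, for Proposition~\ref{prop:accfav-inf-by-max}. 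What each approach buys: your local argument is more elementary and choice-free---on your route this proposition needs no appeal to Lemma~\ref{lem:kakutani-for-cones} at all---whereas the paper's global construction simultaneously establishes that every element of $\aff\ncDs$ is dominated by a maximal model, the existence fact that powers Theorem~\ref{thm:accfav-dedclosable-nonemptymax} and Proposition~\ref{prop:accfav-inf-by-max} and that your one-gamble extensions cannot deliver without transfinite iteration (i.e., Zorn again). One small repair: when you accept an unresolved $f$, No Confusion of the extension does not follow from ``$\M_\acc$ conic and $f,-f\notin\M_\acc$'' alone; you must invoke No Limbo~\eqref{eq:no-limbo} of $\M$, i.e., $\shull{\M_\rej}\union(\shull{\M_\rej}-\M_\acc)\subseteq\M_\rej$, together with $f\notin\M_\rej$, as in Corollary~\ref{cor:limbo-noconfusion} and Lemma~\ref{lem:ext-with-unres}; since $\M$ is a model these ingredients are at hand, so the fix is immediate and the rest of your bookkeeping (the AF-preservation of the extensions and the closure-operator argument for the chain of maximal sets) goes through.
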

\begin{proposition}[cf. Propositions~\ref{prop:closure} and~\ref{prop:cls-formulae}]\label{prop:accfav-cls}
  Given an intersection structure $(\someAs,\subseteq)$ with $\someAs$ in $\set{\As,\ncAs,\dcAs,\Ds,\ncDs,\ncMs}$, then the closure operator on the intersection structure $(\aff\someAs,\subseteq)$ coincides with the one on $(\someAs,\subseteq)$: $\cls{\aff{\someAs}}=\cls{\someAs}$ on $\aff{\As}$.
\end{proposition}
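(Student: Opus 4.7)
The plan is to show that for every $\A$ in $\aff{\As}$ the closure $\cls{\someAs}\A$ either still satisfies Condition~\eqref{eq:accfav-condition} or equals $\top$. From this the claimed identity will follow by a short order-theoretic argument: the inclusion $\aff{\someAs}\subseteq\someAs$ gives $(\aff{\someAs})_\A\subseteq\someAs_\A$ and hence $\cls{\someAs}\A\subseteq\cls{\aff{\someAs}}\A$ automatically; conversely, if $\cls{\someAs}\A=\top$ then $(\aff{\someAs})_\A\subseteq\someAs_\A=\emptyset$ forces $\cls{\aff{\someAs}}\A=\top$ as well, while if $\cls{\someAs}\A$ satisfies Condition~\eqref{eq:accfav-condition} then it lies in $\aff{\someAs}$ and, by extensivity, in $(\aff{\someAs})_\A$, so $\cls{\aff{\someAs}}\A\subseteq\cls{\someAs}\A$.

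The six cases then split naturally using Proposition~\ref{prop:cls-formulae}. For $\someAs\in\set{\As,\ncAs,\dcAs}$, the operator $\cls{\someAs}$ returns either $\A$ itself---which is in $\aff{\As}$ by hypothesis---or $\top$, so there is nothing to check. For $\someAs\in\set{\Ds,\ncDs}$, parts~(iii,iv) of that proposition give $\cls{\someAs}\A=\ext{\Ds}\A=\Adelim{\phull\A_\acc}{\A_\rej}$ (with the value $\top$ outside $\dcAs$ in the case $\someAs=\ncDs$), and the chain $-\A_\rej\subseteq\A_\acc\subseteq\phull\A_\acc$ immediately re-establishes Condition~\eqref{eq:accfav-condition}.

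The delicate case, which I expect to be the main obstacle, is $\someAs=\ncMs$. On $\dcAs$, Proposition~\ref{prop:cls-formulae}(v) yields $\cls{\ncMs}\A=\ext{\Ms}\D$ with $\D\defeq\ext{\Ds}\A$; by the preceding paragraph $\D$ lies in $\aff{\ncDs}$, so $-\D_\rej\subseteq\D_\acc$ and $\D_\acc$ is a convex cone. The reckoning extension enlarges the rejected set to $\D_\rej\union\shull{\D_\rejnacc}\union(\shull{\D_\rejnacc}-\D_\acc)$, and Condition~\eqref{eq:accfav-condition} needs to be verified piece by piece: the inclusion $-\D_\rej\subseteq\D_\acc$ is already in hand; for $g=\lambda f\in\shull{\D_\rejnacc}$ with $\lambda>0$ and $f\in\D_\rejnacc\subseteq\D_\rej$, we have $-f\in\D_\acc$ and positive scaling inside the cone gives $-g=\lambda(-f)\in\D_\acc$; and for an element $g-h$ of $\shull{\D_\rejnacc}-\D_\acc$, the negation $-(g-h)=h+(-g)$ is the sum of two elements of the cone $\D_\acc$ and therefore again in $\D_\acc$. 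Outside $\dcAs$ the closure returns $\top$ and nothing further is needed.
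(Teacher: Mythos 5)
Your proposal is correct and follows essentially the same route as the paper: reduce to the explicit formulae of Proposition~\ref{prop:cls-formulae} and check case by case that the closure of an accept-favour assessment in the larger class again satisfies Condition~\eqref{eq:accfav-condition} (or is $\top$), your inline verifications for $\ext{\Ds}$ and $\ext{\Ms}$ being exactly the paper's Lemma~\ref{lem:exts-pres-aff}. The only cosmetic slip is the claim that $\cls{\someAs}\A=\top$ forces $\someAs_\A=\emptyset$ (false for $\someAs\in\set{\As,\Ds}$ with $\A=\top$), but equality there follows anyway from your general inclusion $\cls{\someAs}\A\subseteq\cls{\aff{\someAs}}\A$, since $\top$ itself satisfies Condition~\eqref{eq:accfav-condition}.
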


We also consider the set of maximal models~$\aff{\maxncMs}_\A\defeq\aff{\maxncMs}\intersection\aff{\ncMs}_\A$ that dominate the assessment~$\A$ in~$\aff{\As}$ and satisfy Condition~\eqref{eq:accfav-condition}.
This leads to a variant of our alternative characterisation of deductive closability:
\begin{theorem}[cf. Theorem~\ref{thm:dedclosable-nonemptymax}]\label{thm:accfav-dedclosable-nonemptymax}
  An accept-favour assessment $\A$ in $\aff\As$ is deductively closable if and only if its set of maximal dominating accept-favour models is non-empty.
  Formally: $\A\in\affdcAs$ if and only if\/ $\aff\maxncMs_\A\neq\emptyset$.
\end{theorem}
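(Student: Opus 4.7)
The plan is to mirror the proof of Theorem~\ref{thm:dedclosable-nonemptymax} and leverage the machinery about $\aff{\cdot}$ already assembled above, then obtain maximality by a Zorn's Lemma argument inside $(\aff{\ncMs}_\A,\subseteq)$.

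For the \emph{if} direction, suppose $\M\in\aff{\maxncMs}_\A$ exists. Then $\A\subseteq\M$ componentwise, so in particular $\phull\A_\acc\subseteq\phull\M_\acc=\M_\acc$ by Deductive Closure of $\M$, and hence $\ext{\Ds}\A=\Adelim{\phull\A_\acc}{\A_\rej}\subseteq\M$. Because $\M_\confus=\emptyset$, this forces $(\ext{\Ds}\A)_\confus=\phull\A_\acc\intersection\A_\rej\subseteq\M_\acc\intersection\M_\rej=\emptyset$, so $\ext{\Ds}\A\in\ncDs$ and thus $\A\in\affdcAs$ (since $\A\in\aff\As$ is given).

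For the \emph{only if} direction, assume $\A\in\affdcAs$. By Proposition~\ref{prop:accfav-cls} we have $\cls{\aff{\ncMs}}\A=\cls{\ncMs}\A=\ext{\Ms}(\ext{\Ds}\A)$, and by Proposition~\ref{prop:cls-formulae}\,\ref{item:prop:cls-formulae:ncMs} this lies in $\ncMs$. Since Condition~\eqref{eq:accfav-condition} is preserved here, the closure lies in $\aff{\ncMs}_\A$, so $\aff{\ncMs}_\A\neq\emptyset$. To upgrade this to a maximal element, I would apply Zorn's Lemma to $(\aff{\ncMs}_\A,\subseteq)$: given any chain $\mathcal{C}\subseteq\aff{\ncMs}_\A$, its componentwise union $\otherM\defeq\Union\mathcal{C}$ is the required upper bound, provided $\otherM\in\aff{\ncMs}_\A$.

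The technical core is verifying that $\otherM$ inherits all four required properties from the chain. No Confusion follows because any candidate $f\in\otherM_\acc\intersection\otherM_\rej$ would be witnessed by some $\M,\M'\in\mathcal{C}$ with $\M\subseteq\M'$ (or vice versa), forcing $f\in\M'_\confus=\emptyset$. Deductive Closure follows because any finite subset of $\otherM_\acc$ lies in a single $\M\in\mathcal{C}$ by the chain property, and $\phull\M_\acc=\M_\acc\subseteq\otherM_\acc$. No Limbo and Condition~\eqref{eq:accfav-condition} are checked elementwise analogously: a pair of witnesses in $\otherM$ can always be located inside a single chain element thanks to totality of $\subseteq$, where the desired inclusions already hold. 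The \textbf{main obstacle} is the No Limbo step, since the defining inclusion $\shull{\otherM_\rejnacc}-\otherM_\acc\subseteq\otherM_\rej$ involves both components and a Minkowski difference; one must use No Confusion on $\otherM$ to identify $\otherM_\rejnacc$ with $\otherM_\rej$ and then push each individual element back into a single chain member $\M$, where No Limbo of $\M$ delivers the needed membership in $\M_\rej\subseteq\otherM_\rej$. Once $\otherM\in\aff{\ncMs}_\A$ is established, Zorn yields a maximal element of $\aff{\ncMs}_\A$, which by Proposition~\ref{prop:accfav-maximalncmods} is automatically maximal in $\aff{\ncMs}$, i.e., lies in $\aff{\maxncMs}_\A$.
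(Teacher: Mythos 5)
Your proof is correct, but the existence half is argued along a genuinely different route from the paper's. After the same reduction via Proposition~\ref{prop:accfav-cls} (so that $\aff{\ncMs}_\A\neq\emptyset$), the paper obtains a maximal dominating element in one stroke by applying the Kakutani separation property of Lemma~\ref{lem:kakutani-for-cones} (Hammer's semispaces) to the pair $\Adelim{\D_\acc}{\phull\D_\rej}$ with $\D\defeq\cls{\ncDs}\A$, together with the characterisation in Proposition~\ref{prop:accfav-maximalncmods}. You instead run Zorn's Lemma inside $(\aff{\ncMs}_\A,\subseteq)$, and your verification that a chain's componentwise union again satisfies No Confusion~\eqref{eq:no-confusion}, Deductive Closure~\eqref{eq:deduction}, No Limbo~\eqref{eq:no-limbo} and Condition~\eqref{eq:accfav-condition} is sound: every finite configuration of witnesses sits inside a single chain member, and for No Limbo the identification $\otherM_\rejnacc=\otherM_\rej$ plus No Limbo of that member delivers the required membership; this is the directed-union counterpart of Proposition~\ref{prop:accfav-intersection}. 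Two remarks on the last step: maximality in $\aff{\ncMs}_\A$ yields maximality in $\aff{\ncMs}$ by the elementary up-set argument (any accept-favour model dominating an element of $\aff{\ncMs}_\A$ is itself in $\aff{\ncMs}_\A$), and it is only the identification of the maximal elements of $\aff{\ncMs}$ with $\aff{\maxncMs}$ that Proposition~\ref{prop:accfav-maximalncmods} supplies; since that proposition's proof itself invokes Lemma~\ref{lem:kakutani-for-cones}, your route does not avoid the separation machinery so much as relocate it into a cited prior result. Both routes rely on the Axiom of Choice for infinite-dimensional~$\linGs$ (Zorn for you, Hammer's theorem for the paper). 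What your approach buys is a proof whose only new work is the chain-stability check; what the paper's buys is an explicit description of the maximal model as a separating cone, in parallel with the proof of Proposition~\ref{prop:accfav-maximalncmods}. Your ‘if’ direction is simply the direct form of the paper's contrapositive appeal to Theorem~\ref{thm:dedclosable-nonemptymax} and is fine.
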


We would also like to prove that here too, the maximal models dominating an assessment can be used for inference purposes, in a result similar to Proposition~\ref{prop:inf-by-max}.
But since in the accept-favour framework every maximal model satisfies Indifference to Status Quo by Proposition~\ref{prop:accfav-maximalncasss}, no model without Indifference to Status Quo can ever be the intersection of the maximal models in $\aff\maxncMs$ that dominate it.
Therefore our variant restricts attention to assessments with indifference to status quo:
\begin{proposition}[cf. Proposition~\ref{prop:inf-by-max}]\label{prop:accfav-inf-by-max}
  Given an accept-favour assessment with indifference to status quo $\A$ in $\aff{\As}_\zeroM$, then its closure in the set of accept-favour models is $\cls{\aff\ncMs}\A=\cls{\ncMs}\A=\Intersection\aff{\maxncMs}_\A$.
\end{proposition}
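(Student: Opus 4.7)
The first equality $\cls{\aff\ncMs}\A=\cls\ncMs\A$ is immediate from Proposition~\ref{prop:accfav-cls} applied with $\someAs=\ncMs$, since $\A\in\aff\As$. For the second equality, consider first the trivial case $\A\notin\affdcAs$: Theorem~\ref{thm:accfav-dedclosable-nonemptymax} gives $\aff\maxncMs_\A=\emptyset$ so $\Intersection\aff\maxncMs_\A=\top$, while $\A\notin\dcAs$ so Proposition~\ref{prop:cls-formulae} gives $\cls\ncMs\A=\top$; both sides equal $\top$.

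Assume henceforth $\A\in\affdcAs$ and set $\M\defeq\cls\ncMs\A=\ext\Ms(\ext\Ds\A)$. A direct verification using the explicit formulas for $\ext\Ds$ and $\ext\Ms$, the accept-favour property $-\A_\rej\subseteq\A_\acc$, and the fact that $\M_\acc=\phull\A_\acc$ is a cone shows that $\M$ is itself accept-favour. Together with indifference to status quo inherited from~$\A$, this makes~$\M$ the minimum element of $\aff\ncMs_\A$, whence $\aff\maxncMs_\A=\aff\maxncMs_\M$ and the inclusion $\M\subseteq\Intersection\aff\maxncMs_\M$ is immediate.

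The substantive step is the reverse inclusion $\Intersection\aff\maxncMs_\M\subseteq\M$, which I would establish by a separation argument: for each $f\notin\M_\acc$, exhibit some $\otherM\in\aff\maxncMs_\M$ with $f\in\otherM_\rej$ (whence $f\notin\otherM_\acc$ by no confusion), and symmetrically for $f\notin\M_\rej$. In the first case, form the augmented assessment $\M'\defeq\Adelim{\M_\acc\union\set{-f}}{\M_\rej\union\set{f}}$, which is accept-favour by construction since $-(\M_\rej\union\set{f})\subseteq\M_\acc\union\set{-f}$; then take $\otherM\in\aff\maxncMs_{\M'}$ supplied by Theorem~\ref{thm:accfav-dedclosable-nonemptymax}, once we know $\M'\in\affdcAs$. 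The main obstacle is verifying this last condition, i.e.\ that $\phull(\M_\acc\union\set{-f})\intersection(\M_\rej\union\set{f})=\emptyset$. Decomposing the hull as $\M_\acc\union\ray{-f}\union(\M_\acc+\ray{-f})$ yields six disjointness conditions, each reducing to a contradiction with $f\notin\M_\acc$ via the accept-favour identity $-\M_\rej\subseteq\M_\acc$, Proposition~\ref{prop:model+hulls}'s no-limbo identities $\shull{\M_\rej}=\M_\rej$ and $\M_\rej-\M_\acc\subseteq\M_\rej$, and the fact that $f\neq0$ guaranteed by indifference to status quo. The case $f\notin\M_\rej$ is handled analogously using $\M'\defeq\Adelim{\M_\acc\union\set{f}}{\M_\rej}$, which is automatically accept-favour and whose deductive closability reduces, via the same no-limbo identities, to $f\notin\M_\rej$.
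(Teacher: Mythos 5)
Your proof is correct and follows essentially the same route as the paper: reduce to $\M\defeq\cls{\ncMs}\A$, get the forward inclusion from minimality, and for the reverse inclusion adjoin $\Adelim{\set{-f}}{\set{f}}$ (resp.\ $\Adelim{\set{f}}{\emptyset}$) and invoke existence of maximal dominating accept-favour models via Theorem~\ref{thm:accfav-dedclosable-nonemptymax}; your six-intersection verification is exactly the content of the paper's Lemma~\ref{lem:accfav-ext-with-unres}. The only (harmless) differences are that you treat the degenerate case $\A\notin\affdcAs$ explicitly and phrase the second case via rejection of $f$ rather than favourability of $-f$, which are equivalent under Condition~\eqref{eq:accfav-condition}.
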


Again we can give a compact characterisation of models coherent with a background model that is indifferent to status quo.
But now, in the accept-favour framework, we can refocus attention from rejected to favoured gambles.
\begin{theorem}[cf. Theorem~\ref{thm:char-AR}]\label{thm:char-AF}
  An accept-favour assessment~$\M$ in~$\aff\As$ is a model that respects an accept-favour background model with indifference to status quo $\bgM$ in~$\aff\ncMs_\zeroM$, i.e., $\M\in\aff\ncMs_\bgM$, if and only if
  \begin{enumerate}[label=\upshape{(AF\arabic*)},leftmargin=*,widest=AF0]
    \item\label{item:AFbg} it includes the background model: $\bgM\subseteq\M$,
    \item\label{item:AFzr} it does not favour status quo: $0\notin\M_\fav$,
    \item\label{item:AFcn}
      both its acceptable and favourable gambles form a cone: $\M_\acc,\M_\fav\in\cones$, and
    \item\label{item:AFss}
      it favours sweetened deals: $\M_\acc+\M_\fav\subseteq\M_\fav$.
  \end{enumerate}
\end{theorem}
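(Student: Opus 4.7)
The strategy is to translate Theorem~\ref{thm:char-AR} into the accept-favour setting using the key identity $\M_\fav=-\M_\rej$, which holds for any $\M\in\aff{\As}$ because Condition~\eqref{eq:accfav-condition} gives $-\M_\rej\subseteq\M_\acc$ and therefore $\M_\fav=\M_\acc\intersection-\M_\rej=-\M_\rej$. Under this identity the axioms (AR1)--(AR4) pull back to (AF1)--(AF4), so the result should follow with essentially no additional work beyond carefully tracking signs and hulls.

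For the forward direction, I would assume $\M\in\aff{\ncMs}_\bgM\subseteq\ncMs_\bgM$ and invoke Theorem~\ref{thm:char-AR} to obtain (AR1)--(AR4). Then (AF1) is literally (AR1); (AF2) follows from (AR2) because $0\in\M_\fav$ iff $0\in-\M_\rej$ iff $0\in\M_\rej$; the first half of (AF3), namely $\M_\acc\in\cones$, is (AR3), and the second half, $\M_\fav\in\cones$, is Proposition~\ref{prop:model+hulls}\ref{item:prop:model+hulls:faviscone}; finally (AF4) is a direct consequence of Corollary~\ref{cor:sweetened-deals} ($\M_\acc+\M_\fav=\M_\fav$).

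For the backward direction, I would assume (AF1)--(AF4) and verify (AR1)--(AR4) in order to apply Theorem~\ref{thm:char-AR}. (AR1) is (AF1), and (AR2) follows from (AF2) via $\M_\rej=-\M_\fav$. (AR3) is the first half of (AF3). The only non-trivial step is (AR4): from $\M_\fav\in\cones$ (second half of (AF3)) I get that $\M_\rej=-\M_\fav$ is also a convex cone, so $\shull{\M_\rej}=\M_\rej$; then
\begin{equation*}
  \shull{\M_\rej}-\M_\acc = -\M_\fav-\M_\acc = -(\M_\fav+\M_\acc) \subseteq -\M_\fav = \M_\rej,
\end{equation*}
using (AF4). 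Since $\M\in\aff{\As}$ is part of the hypothesis, the resulting model lies in $\aff{\ncMs}_\bgM$ as required.

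The main obstacle is really a bookkeeping one: making sure that $\shull{\M_\rej}$ in (AR4) can be replaced by $\M_\rej$ itself, which is not automatic from No Limbo but does follow here from (AF3) once one notes that $\M_\fav\in\cones$ forces $-\M_\fav\in\cones$. Beyond that, the proof is essentially the symmetry $\M_\fav\leftrightarrow-\M_\rej$ applied componentwise to the axioms of Theorem~\ref{thm:char-AR}.
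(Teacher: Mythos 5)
Your proof is correct and follows essentially the same route as the paper: it rewrites the conditions of Theorem~\ref{thm:char-AR} via the identity $\M_\fav=-\M_\rej$ (valid since $\M\in\aff\As$), adds $\M_\fav\in\cones$ using Proposition~\ref{prop:model+hulls}\ref{item:prop:model+hulls:faviscone}, and uses $\shull{\M_\fav}=\M_\fav$ (equivalently $\shull{\M_\rej}=\M_\rej$) to identify the no-limbo condition with \ref{item:AFss}. Your explicit two-direction bookkeeping, including the appeal to Corollary~\ref{cor:sweetened-deals} in the forward direction, is just a more detailed rendering of the paper's argument.
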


It is possible to consider the `dual' of Condition~\eqref{eq:accfav-condition}, $-\A_\acc\subseteq\A_\rej$, i.e., restrict accept statements to negated rejected gambles.
Then any acceptable gamble is also favourable: $\A_\fav=\A_\acc$; we end up with a \emph{reject-favour framework}.
This framework is incompatible with Indifference to Status Quo in the sense that their combination would lead to confusion.
So status quo would effectively be in limbo, leading us to reject it by default within this framework.
We see no compelling reason to exclude the reject-favour framework from consideration, but we do not investigate it here further, focussing rather on models compatible with Indifference to Status Quo.

\subsection{The Favour-Indifference Framework}\label{sec:favindiff}
\begin{wrapfigure}[10]{l}{0pt}
  \begin{tikzpicture}
    \matrix [nodes={regular polygon, regular polygon sides=4,inner sep=-1ex,minimum size=8ex,outer sep=.5ex},column sep={1ex,between borders},row sep={1ex,between borders}] {
      \node[fill=lightgray] (apam) {$\A_\indiff$}; & \node (ap) {}; & \node[fill=lightgray] (aprm) {$\A_\fav$}; \\
      \node (am) {}; & \node[pattern=north east lines,pattern color=lightgray] (none) {$\A_\incomp$}; & \node (rm) {}; \\
      \node[fill=lightgray] (rpam) {$-\A_\fav$}; & \node (rp) {}; & \node (rprm) {}; \\
    };
    \draw[thick] ([xshift=1ex]aprm.north east) rectangle ([xshift=-5ex]apam.south west) node[above right] {$\A_\acc$};
    \draw[thick] ([yshift=-1ex]rpam.south west) rectangle ([yshift=4ex]apam.north east) node[below left] {$-\A_\acc$};
    \draw[thick] ([xshift=-5ex]rpam.south west) node[above right] {$\A_\rej$} rectangle ([xshift=-.2ex]rpam.north east);
    \draw[thick] ([yshift=.2ex]aprm.south west) rectangle ([yshift=4ex]aprm.north east) node[below left] {$-\A_\rej$};
  \end{tikzpicture}
\end{wrapfigure}
On the left we give the illustration of the four-element partition that results if we further simplify the accept-favour framework by restricting accept statements to either favourability statements or indifference statements by imposing, in addition to Condition~\eqref{eq:accfav-condition} that:
\begin{condition}{FI}\label{eq:favindiff-condition}
  $\A_\acc=\A_\fav\union\A_\indiff$.
\end{condition}
\noindent
In such a context, all unresolved gambles have an unresolved negation, because indifference and favourability statements both say something about a gamble~$f$ and its negation~$-f$ concurrently.
It is therefore immaterial whether we specify an assessment by providing the sets $\A_\acc$ and $\A_\rej$, or by the sets $\A_\fav$ and~$\A_\indiff$; in this situation we say we are using the \emph{favour-indifference framework}.

Given $\someAs\subseteq\aff\As$, then define $\fif\someAs\defeq\cset{\A\in\someAs}{-\A_\rej\subseteq\A_\acc\conj\A_\acc=\A_\fav\union\A_\indiff}\subseteq\aff\someAs$ as its subset of assessments satisfying Conditions~\eqref{eq:accfav-condition} and~\eqref{eq:favindiff-condition}.
Again, the results of Sections~\ref{sec:no-confusion} to~\ref{sec:nolimbo} about assessments, deductively closed assessments, and models also remain valid when restricting attention to~$\fif{\As}$.
It is useful to state a more specific version of our criterion for deductive closability:
\begin{theorem}[cf. Theorem~\ref{thm:affdcAs-zerocrit}]\label{thm:fifdcAs-zerocrit}
  A favour-indifference assessment~$\A$ in~$\fif\As$ is deductively closable---i.e., $\A\in\fifdcAs$---if and only if\/ $0\notin\phull\A_\fav+\lhull\A_\indiff$.
\end{theorem}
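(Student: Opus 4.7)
The plan is to invoke Theorem~\ref{thm:affdcAs-zerocrit} to reduce the claim to a purely algebraic identity. Since $\fif\As \subseteq \aff\As$ by construction, that theorem gives $\A \in \fifdcAs$ if and only if $0 \notin \A_\fav + \phull\A_\acc$. Condition~\eqref{eq:favindiff-condition} collapses $\A_\acc$ to $\A_\fav \union \A_\indiff$, so the remaining task is to establish
\[
  0 \in \A_\fav + \phull(\A_\fav \union \A_\indiff)
  \iff 0 \in \phull\A_\fav + \lhull\A_\indiff.
\]
The essential structural input throughout is that $\A_\indiff = \A_\acc \intersection -\A_\acc$ is automatically symmetric: $\A_\indiff = -\A_\indiff$.

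For the forward direction, I would write $0 = f + g$ with $f \in \A_\fav$ and $g$ expanded as a finite positive combination $\sum_i \alpha_i f_i + \sum_j \beta_j h_j$, where $f_i \in \A_\fav$, $h_j \in \A_\indiff$, $\alpha_i, \beta_j > 0$, and either subsum possibly empty. Regrouping puts the non-empty combination $f + \sum_i \alpha_i f_i$ in $\phull\A_\fav$ and $\sum_j \beta_j h_j$ in $\phull\A_\indiff \union \set{0} \subseteq \lhull\A_\indiff$, which yields $0 \in \phull\A_\fav + \lhull\A_\indiff$.

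The reverse direction is slightly trickier. Write $0 = p + \ell$ with $p = \sum_i \alpha_i f_i \in \phull\A_\fav$ and $\ell \in \lhull\A_\indiff$. Any representation $\ell = \sum_k \gamma_k h_k - \sum_m \delta_m h'_m$ with $h_k, h'_m \in \A_\indiff$ and $\gamma_k, \delta_m > 0$ rewrites, using $\A_\indiff = -\A_\indiff$, as $\sum_k \gamma_k h_k + \sum_m \delta_m(-h'_m)$ with every summand in $\A_\indiff$, so $\ell \in \phull\A_\indiff \union \set{0}$. Isolating one term $\alpha_1 f_1$ from $p$ and dividing the equation by $\alpha_1$ yields $0 = f_1 + q$ with $f_1 \in \A_\fav$ and $q = \sum_{i\neq 1}(\alpha_i/\alpha_1) f_i + (1/\alpha_1)\ell$. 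When either remaining summand is non-empty and non-zero, $q$ is a non-negative combination of elements of $\A_\fav \union \A_\indiff$, so $0 \in \A_\fav + \phull(\A_\fav \union \A_\indiff)$.

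The main obstacle is the degenerate case $q = 0$, which forces $f_1 \in \lhull\A_\indiff$; invoking symmetry of $\A_\indiff$ once more yields either $-f_1 \in \phull\A_\indiff$ (so $0 = f_1 + (-f_1) \in \A_\fav + \phull\A_\acc$) or $f_1 = 0$ (whence $0 \in \A_\fav \subseteq \A_\fav + \phull\A_\acc$). Beyond this bookkeeping --- also of empty index sets and the trivial sub-case $\A_\fav = \emptyset$, which makes both sides of the equivalence vacuously true --- no conceptually new ideas are required; the engine throughout is the trade between $\lhull$ and $\phull \union \set{0}$ afforded by the symmetry $\A_\indiff = -\A_\indiff$.
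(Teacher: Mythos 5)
Your proposal is correct and follows essentially the same route as the paper: both reduce the claim via Theorem~\ref{thm:affdcAs-zerocrit} and Condition~\eqref{eq:favindiff-condition}, then trade $\lhull\A_\indiff$ against $\phull\A_\indiff$ using the symmetry $\A_\indiff=-\A_\indiff$. The only difference is presentational: you chase explicit positive combinations and handle the empty/degenerate cases inline, whereas the paper packages the same bookkeeping into the set identity of Lemma~\ref{lem:favindiff-accphull} together with a case split on whether $\A_\indiff$ is empty.
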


\begin{wrapfigure}{r}{0pt}
  $
    \begin{tikzpicture}[scale=1.5,baseline=-.7ex]
      \draw[->] (-.7,0) coordinate (xl) -- (1,0) coordinate (xu);
      \draw[->] (0,-.7) coordinate (yl) -- (0,1) coordinate (yu);
      \node (a1) at (150:.7) {};
      \begin{pgfonlayer}{background}
        \draw[border] (intersection of 0,0--a1 and xl--{xl|-yu}) coordinate (a1away) -- (intersection of 0,0--a1 and xu--{yl-|xu}) coordinate (a1away2);
        \fill[lightgray] (a1away) |- (xu|-yu) |- (a1away2) --cycle;
        \fill[gray] (a1away) |- (xl|-yl) -| (a1away2) --cycle;
      \end{pgfonlayer}
    \end{tikzpicture}
    \intersection
    \begin{tikzpicture}[scale=1.5,baseline=-.7ex]
      \draw[->] (-.7,0) coordinate (xl) -- (1,0) coordinate (xu);
      \draw[->] (0,-.7) coordinate (yl) -- (0,1) coordinate (yu);
      \node (a2) at (120:.7) {};
      \begin{pgfonlayer}{background}
        \draw[border] (intersection of 0,0--a2 and yu--{xl|-yu}) coordinate (a2away) -- (intersection of 0,0--a2 and yl--{yl-|xu}) coordinate (a2away2);
        \fill[lightgray] (a2away) |- (xu|-yu) |- (a2away2) --cycle;
        \fill[gray] (a2away) -| (xl) |- (a2away2) --cycle;
      \end{pgfonlayer}
    \end{tikzpicture}
    =
    \begin{tikzpicture}[scale=1.5,baseline=-.7ex]
      \draw[->] (-.7,0) coordinate (xl) -- (1,0) coordinate (xu);
      \draw[->] (0,-.7) coordinate (yl) -- (0,1) coordinate (yu);
      \node (a1) at (150:.7) {};
      \node (a2) at (120:.7) {};
      \begin{pgfonlayer}{background}
        \path (intersection of 0,0--a1 and xl--{xl|-yu}) coordinate (a1away) -- (intersection of 0,0--a1 and xu--{yl-|xu}) coordinate (a1away2);
        \path (intersection of 0,0--a2 and yu--{xl|-yu}) coordinate (a2away) --   (intersection of 0,0--a2 and yl--{yl-|xu}) coordinate (a2away2);
        \draw[border] (a1away2) -- (0,0) -- (a2away);
        \draw[exclborder] (a2away2) -- (0,0) -- (a1away);
        \fill[lightgray] (0,0) -- (a2away) |- (xu|-yu) |- (a1away2) --cycle;
        \fill[gray] (0,0) -- (a1away) |- (xl|-yl) -| (a2away2) --cycle;
      \end{pgfonlayer}
    \end{tikzpicture}
  $
\end{wrapfigure}
The poset $(\fif\As,\subseteq)$ is not an intersection structure.
This can be shown using the graphical counterexample on the right.
The resulting intersection---of two elements of $\fif\ncMs\subseteq\fif\As$ with topologically closed sets of acceptable gambles---does not satisfy Condition~\eqref{eq:favindiff-condition} any more: it has (border) gambles that are acceptable without being either favourable or indifferent.
However, by investigating the effect of~$\cls{\ncMs}$, some interesting conclusions can still be drawn:
\begin{theorem}\label{thm:favindiff}
  Given a favour-indifference assessment~$\A$ in~$\fifdcAs\!\vphantom{\dcAs}_\zeroM$, so without confusion and with indifference to status quo, then  the corresponding model $\M\defeq\cls{\ncMs}\A$ is the element of $\fif\ncMs_\zeroM$ with defining components $\M_\indiff=\lhull\A_\indiff$ and $\M_\fav=\phull\A_\fav+\lhull\A_\indiff$.
\end{theorem}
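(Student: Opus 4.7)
The plan is to invoke the formula $\cls{\ncMs}=\ext{\Ms}\after\ext{\Ds}$, valid on $\dcAs$ by Proposition~\ref{prop:cls-formulae}, and unwind the resulting expressions. Three preliminary observations pave the way: (i)~Conditions~\eqref{eq:accfav-condition} and~\eqref{eq:favindiff-condition} together force $\A_\rej=-\A_\fav$; (ii)~since $\A_\indiff$ is negation-invariant by definition and contains $0$ by~\eqref{eq:indifference-to-status-quo}, its positive hull is already a linear space, so $\phull\A_\indiff=\lhull\A_\indiff$, and we write $L$ for this space; and (iii)~splitting any finite positive combination of elements of $\A_\acc=\A_\fav\union\A_\indiff$ according to the provenance of its terms yields $\phull\A_\acc=L\union(\phull\A_\fav+L)$. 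Hence $\D\defeq\ext{\Ds}\A=\Adelim{L\union(\phull\A_\fav+L)}{-\A_\fav}$ and, by Proposition~\ref{prop:reckoning-for-closed-noconf}, the reckoning extension yields a model $\M=\ext{\Ms}\D$ in $\ncMs$.

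Next, I would carry out two algebraic simplifications. First, computing $-\M_\rej=\shull{\A_\fav}\union(\shull{\A_\fav}+\D_\acc)$ and using the identity $\shull{\A_\fav}+\phull\A_\fav=\phull\A_\fav$—a single-term positive multiple splits into halves, and a combination with two or more terms has one term to peel off—the expression collapses to $\phull\A_\fav+L$. This immediately yields $-\M_\rej\subseteq\M_\acc$, so $\M$ satisfies Condition~\eqref{eq:accfav-condition}, whence $\M_\fav=\M_\acc\intersection-\M_\rej=-\M_\rej=\phull\A_\fav+L$, which is the second defining component claimed by the theorem.

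The main obstacle is ruling out superfluous indifferent gambles in $\M_\indiff=\M_\acc\intersection-\M_\acc$. Here deductive closability is essential: by Theorem~\ref{thm:fifdcAs-zerocrit}, $0\notin\phull\A_\fav+L$, and since $L$ is a linear space, any $f\in\phull\A_\fav\intersection L$—zero or not—would give $0=f+(-f)\in\phull\A_\fav+L$, a contradiction; hence $\phull\A_\fav\intersection L=\emptyset$. Expanding $\M_\indiff=\group{L\union(\phull\A_\fav+L)}\intersection\group{L\union(-\phull\A_\fav+L)}$ and using $-L=L$, each of the three cross-intersections collapses to $\emptyset$: any witness would force an element of $\phull\A_\fav$—or a sum of two such, which is again in $\phull\A_\fav$—to lie in $L$. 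So only the $L\intersection L=L$ contribution survives, giving $\M_\indiff=L$.

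Finally, it remains to verify $\M\in\fif\ncMs_\zeroM$. Indifference to status quo follows from $0\in L\subseteq\M_\acc$; Condition~\eqref{eq:favindiff-condition} reads $\M_\acc=\M_\fav\union\M_\indiff=(\phull\A_\fav+L)\union L$, which matches observation~(iii); Condition~\eqref{eq:accfav-condition} was already shown; and No Confusion, $\M_\acc\intersection\M_\rej=\emptyset$, reduces once more to $\phull\A_\fav\intersection L=\emptyset$.
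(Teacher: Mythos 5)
Your proof is correct and follows essentially the same route as the paper's: you unwind $\cls{\ncMs}=\ext{\Ms}\after\ext{\Ds}$ via Proposition~\ref{prop:cls-formulae}, use the decomposition $\phull\A_\acc=\lhull\A_\indiff\union(\phull\A_\fav+\lhull\A_\indiff)$ (the paper's Lemma~\ref{lem:favindiff-accphull}) together with Proposition~\ref{prop:reckoning-for-closed-noconf}, and simplify the rejected component to $-(\phull\A_\fav+\lhull\A_\indiff)$. The only difference is that you spell out, via $0\notin\phull\A_\fav+\lhull\A_\indiff$ and the ensuing disjointness of $\phull\A_\fav$ from $\lhull\A_\indiff$, the verification that the computed model indeed has $\M_\indiff=\lhull\A_\indiff$ and belongs to $\fif\ncMs_\zeroM$---a check the paper leaves implicit in its closing remark that the explicit expression ``functions as a proof for all claims''.
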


As we did in the accept-favour framework, it is possible to give a compact characterisation of models coherent with a background model that is indifferent to status quo.
But now, in the favour-indifference framework, we can refocus attention from accepted to indifferent gambles.
\begin{theorem}[cf. Theorem~\ref{thm:char-AF}]\label{thm:char-FI}
  A favour-indifference assessment~$\M$ in~$\fif\As$ is a model that respects a favour-indifference background model with indifference to status quo $\bgM$ in~$\fif\ncMs_\zeroM$, i.e., $\M\in\fif\ncMs_\bgM$, if and only if
  \begin{enumerate}[label=\upshape{(FI\arabic*)},leftmargin=*,widest=FI0]
    \item\label{item:FIbg} it includes the background model: $\bgM\subseteq\M$,
    \item\label{item:FIzr} it does not favour status quo: $0\notin\M_\fav$,
    \item\label{item:FIcn}
      its favourable gambles form a cone and its indifferent gambles form a linear space: $\M_\fav\in\cones$ and $\M_\indiff\in\lineals$, and
    \item\label{item:FIss}
      it favours sweetened deals: $\M_\fav+\M_\indiff\subseteq\M_\fav$.
  \end{enumerate}
\end{theorem}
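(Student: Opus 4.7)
The plan is to reduce Theorem~\ref{thm:char-FI} to its accept-favour counterpart, Theorem~\ref{thm:char-AF}, by exploiting the inclusion $\fif\ncMs_\bgM\subseteq\aff\ncMs_\bgM$ (since Condition~\eqref{eq:favindiff-condition} implies Condition~\eqref{eq:accfav-condition}) together with the structural identity $\M_\acc=\M_\fav\union\M_\indiff$ forced by~\eqref{eq:favindiff-condition}. Each FI condition then gets transcribed into, or derived from, an AF condition.

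For the forward direction, suppose $\M\in\fif\ncMs_\bgM$. Since $\bgM\in\fif\ncMs_\zeroM\subseteq\aff\ncMs_\zeroM$, Theorem~\ref{thm:char-AF} supplies AF1--AF4. Now AF1 and AF2 coincide with FI1 and FI2, and the cone claim for $\M_\fav$ inside AF3 gives half of FI3. The linear-subspace half follows from the observation (made immediately before Proposition~\ref{prop:lineality}) that $\M_\indiff=\M_\acc\intersection-\M_\acc$ is either empty or a linear subspace; emptiness is ruled out by $\zeroM\subseteq\bgM\subseteq\M$, which forces $0\in\M_\indiff$. Finally, FI4 chains off AF4 via $\M_\indiff\subseteq\M_\acc$: $\M_\fav+\M_\indiff\subseteq\M_\fav+\M_\acc\subseteq\M_\fav$.

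For the reverse direction, assume FI1--FI4 and $\M\in\fif\As$; the plan is to verify AF1--AF4 and invoke Theorem~\ref{thm:char-AF} to place $\M$ in $\aff\ncMs_\bgM$, whence $\M\in\aff\ncMs_\bgM\intersection\fif\As=\fif\ncMs_\bgM$. Conditions AF1 and AF2 are immediate from FI1 and FI2. For AF3, the identity $\M_\acc=\M_\fav\union\M_\indiff$ reduces cone closure to a case analysis: positive scaling is inherited from the cone $\M_\fav$ and the linear subspace $\M_\indiff$, while additive closure needs $\M_\fav+\M_\fav\subseteq\M_\fav$, $\M_\indiff+\M_\indiff\subseteq\M_\indiff$, and the cross-case $\M_\fav+\M_\indiff\subseteq\M_\fav$---the first two from FI3, the third from FI4. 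AF4 decomposes analogously as $\M_\acc+\M_\fav=(\M_\fav+\M_\fav)\union(\M_\indiff+\M_\fav)\subseteq\M_\fav$.

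The one spot where the structure really matters is showing that $\M_\acc=\M_\fav\union\M_\indiff$ is a convex cone under FI3 and FI4: the union of a convex cone and a linear subspace is not generally a cone, and FI4 is precisely the glue that makes it so. Beyond that, the argument is routine bookkeeping, since all no-confusion and background-respecting content is delegated to Theorem~\ref{thm:char-AF}, and Condition~\eqref{eq:favindiff-condition} is preserved trivially under the translation.
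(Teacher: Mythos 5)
Your proof is correct, and at the top level it follows the same strategy as the paper: reduce the statement to Theorem~\ref{thm:char-AF} for the ambient accept-favour framework and then translate conditions. Where you diverge is in how the translation between \ref{item:AFcn}--\ref{item:AFss} and \ref{item:FIcn}--\ref{item:FIss} is justified. The paper disposes of this in one line by invoking Theorem~\ref{thm:favindiff} with $\A\defeq\M$, so the equivalence is absorbed into the hull computation $\M_\indiff=\lhull\A_\indiff$, $\M_\fav=\phull\A_\fav+\lhull\A_\indiff$ established there; you instead verify the equivalence by hand, using the identity $\M_\acc=\M_\fav\union\M_\indiff$ forced by Condition~\eqref{eq:favindiff-condition}, the lineality-space observation preceding Proposition~\ref{prop:lineality} together with $0\in\M_\indiff$ (via $\zeroM\subseteq\bgM\subseteq\M$) to get $\M_\indiff\in\lineals$ in the forward direction, and the case analysis $(\M_\fav\union\M_\indiff)+\M_\fav=(\M_\fav+\M_\fav)\union(\M_\indiff+\M_\fav)$ etc.\ in the reverse direction. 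Your route is more elementary and self-contained --- it makes explicit that \ref{item:FIss} is exactly the glue needed for the union of the cone $\M_\fav$ and the linear space $\M_\indiff$ to be a convex cone, which is the content the paper's appeal to Theorem~\ref{thm:favindiff} leaves implicit --- at the cost of a somewhat longer argument; the paper's version buys brevity by reusing the already-proved closure formula. One small point worth keeping in your write-up: the final identification $\aff\ncMs_\bgM\intersection\fif\As=\fif\ncMs_\bgM$ is indeed immediate from the definition of $\fif{(\cdot)}$, but it is the step that licenses transferring the conclusion of Theorem~\ref{thm:char-AF} back to the favour-indifference setting, so it deserves the explicit mention you give it.
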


\subsection{The Favourability Framework \& its Appearance in the Literature}\label{sec:favourability}
To work towards types of models encountered in the literature, we look at the special case where the agent only makes favourability statements forming a set $\A_\fav\subset\linGs$, but where there is a favour-indifference background model~$\bgM$ in~$\fif\ncMs_\zeroM$, so one that satisfies \ref{item:FIbg}--\ref{item:FIss}.
The resulting \emph{favourability framework} is a restriction of the favour-indifference framework.
To know what the models in this framework look like, we specialise Theorems~\ref{thm:fifdcAs-zerocrit} and~\ref{thm:favindiff} to obtain a specialised criterion for deductive closability and an explicit expression for natural extension:
\begin{theorem}[cf. Theorems~\ref{thm:fifdcAs-zerocrit} and~\ref{thm:favindiff}]\label{thm:ffdcAs-zerocrit}
  Given a set of favourable gambles $\A_\fav\subseteq\linGs$ and a favour-indifference background model with indifference to status quo $\bgM$ in~$\fif\ncMs_\zeroM$, then the favourability assessment $\A\defeq\Adelim{\A_\fav}{-\A_\fav}$ respects the background model~$\bgM$ if and only if\/ $0\notin\bgM_\indiff+\phull(\bgM_\fav\union\A_\fav)$.
  In that case, the natural extension $\M\defeq\A\reckunion\bgM\in\fif\ncMs_\zeroM$ has defining components $\M_\indiff\defeq\bgM_\indiff$ and $\M_\fav\defeq\bgM_\indiff+\phull(\bgM_\fav\union\A_\fav)$, so the resulting model is $\M=\Adelim{\bgM_\indiff\union\M_\fav}{-\M_\fav}$.
\end{theorem}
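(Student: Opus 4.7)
The plan is to view $\A = \Adelim{\A_\fav}{-\A_\fav}$ as a (degenerate) favour-indifference assessment and then specialize Theorems~\ref{thm:fifdcAs-zerocrit} and~\ref{thm:favindiff} to the union $\A \union \bgM$, rewriting the resulting expressions in terms of the data $\A_\fav$, $\bgM_\fav$, and $\bgM_\indiff$.

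First I would verify that $\A \union \bgM$ belongs to $\fif\As_\zeroM$: the inclusion $-\A_\rej \subseteq \A_\acc$ holds trivially for $\A$ and survives the union, while a direct distributivity computation, using $\bgM_\acc = \bgM_\fav \union \bgM_\indiff$ together with $-\bgM_\rej = \bgM_\fav$, yields
\begin{equation*}
  (\A \union \bgM)_\fav = \A_\fav \union \bgM_\fav
  \qquad\text{and}\qquad
  (\A \union \bgM)_\indiff \supseteq \bgM_\indiff \ni 0,
\end{equation*}
from which Condition~\eqref{eq:favindiff-condition} and indifference to status quo both follow. Theorem~\ref{thm:fifdcAs-zerocrit} then characterises deductive closability of $\A \union \bgM$ by $0 \notin \phull(\A_\fav \union \bgM_\fav) + \lhull(\A \union \bgM)_\indiff$, and Theorem~\ref{thm:favindiff}, combined with Proposition~\ref{prop:cls-formulae} to identify $\cls{\ncMs}(\A \union \bgM)$ with the natural extension $\A \reckunion \bgM$, reads off $\M_\indiff = \lhull(\A \union \bgM)_\indiff$ and $\M_\fav = \phull(\A_\fav \union \bgM_\fav) + \lhull(\A \union \bgM)_\indiff$.

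The main obstacle is collapsing $\lhull(\A \union \bgM)_\indiff$ to $\bgM_\indiff$. A direct set-theoretic computation, using linearity of $\bgM_\indiff$ to absorb $\A_\fav \intersection -\bgM_\indiff$ and $\bgM_\indiff \intersection -\A_\fav$ into $\bgM_\indiff$ itself, gives
\begin{equation*}
  (\A \union \bgM)_\indiff \setminus \bgM_\indiff
    \subseteq (\A_\fav \intersection -\A_\fav)
    \union (\A_\fav \intersection -\bgM_\fav)
    \union (\bgM_\fav \intersection -\A_\fav).
\end{equation*}
I would then show that the non-emptiness of any one of the four sets $\A_\fav \intersection -\A_\fav$, $\A_\fav \intersection -\bgM_\fav$, $\bgM_\fav \intersection -\A_\fav$, or $\bgM_\indiff \intersection -\A_\fav$ (the first three witnessing extra indifferences or confusion, the fourth witnessing confusion only) forces $0 \in \bgM_\indiff + \phull(\bgM_\fav \union \A_\fav)$: in each case a witnessing $g$ yields $0 = g + (-g)$ with $g$ and $-g$ living in $\A_\fav \union \bgM_\fav \union \bgM_\indiff$ in such a way that $0$ belongs to $\bgM_\indiff + \phull(\bgM_\fav \union \A_\fav)$. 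Hence, under the announced criterion, all four sets are empty; this yields both $\A \union \bgM \in \ncAs$ and $(\A \union \bgM)_\indiff = \bgM_\indiff$, and since the latter is a linear space by~\ref{item:FIcn}, $\lhull(\A \union \bgM)_\indiff = \bgM_\indiff$. The two closability criteria then coincide, the expressions for $\M_\indiff$ and $\M_\fav$ collapse to the announced ones, and $\M = \Adelim{\bgM_\indiff \union \M_\fav}{-\M_\fav}$ follows from $\M \in \fif\ncMs_\zeroM$, which forces $\M_\acc = \M_\fav \union \M_\indiff$ and $\M_\fav = -\M_\rej$.
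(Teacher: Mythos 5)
Your proposal is correct and follows essentially the same route as the paper's proof: apply Theorems~\ref{thm:fifdcAs-zerocrit} and~\ref{thm:favindiff} to the combined assessment $\A\union\bgM=\Adelim[\big]{\A_\fav\union\bgM_\fav\union\bgM_\indiff}{-(\A_\fav\union\bgM_\fav)}$ and rewrite the resulting criterion and components in terms of $\bgM_\indiff$ and $\phull(\bgM_\fav\union\A_\fav)$. Your explicit check that any element of $(\A\union\bgM)_\indiff\setminus\bgM_\indiff$ (or any confusion) already forces $0\in\bgM_\indiff+\phull(\bgM_\fav\union\A_\fav)$ properly justifies collapsing $\lhull(\A\union\bgM)_\indiff$ to $\bgM_\indiff$, a point the paper's proof passes over tersely with the remark that $\lhull\bgM_\indiff=\bgM_\indiff\neq\emptyset$.
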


\begin{wrapfigure}[10]{l}{0pt}
  \begin{tikzpicture}
    \matrix [nodes={regular polygon, regular polygon sides=4,inner sep=-1ex,minimum size=8ex,outer sep=.5ex},column sep={1ex,between borders},row sep={1ex,between borders}] {
      \node[fill=lightgray] (apam) {$\bgM_\indiff$}; & \node[white] (ap) {}; & \node[fill=lightgray] (aprm) {$\M_\fav$}; \\
      \node[white] (am) {}; & \node[pattern=north east lines,pattern color=lightgray] (none) {$\M_\incomp$}; & \node (rm) {}; \\
      \node[fill=lightgray] (rpam) {$-\M_\fav$}; & \node (rp) {}; & \node (rprm) {}; \\
    };
    \draw[thick] ([xshift=1ex]aprm.north east) rectangle ([xshift=-5ex]apam.south west) node[above right] {$\M_\acc$};
    \draw[thick] ([yshift=-1ex]rpam.south west) rectangle ([yshift=4ex]apam.north east) node[below left] {$-\M_\acc$};
    \draw[thick] ([xshift=-5ex]rpam.south west) node[above right] {$\M_\rej$} rectangle ([xshift=-.2ex]rpam.north east);
    \draw[thick] ([yshift=.2ex]aprm.south west) rectangle ([yshift=4ex]aprm.north east) node[below left] {$-\M_\rej$};
  \end{tikzpicture}
\end{wrapfigure}
The illustration of the four-element partition of gamble space corresponding to a model~$\M$ in the favourability framework with background model~$\bgM$ is given on the left.
The reason for working with a model here when describing the partition instead of with a general assessment, as in the previous section, is that it highlights the specific difference with favour-indifference models---namely that $\M_\indiff=\bgM_\indiff$.

The focus in this framework's characterisation result lies on the set of favourable gambles:
\begin{theorem}[cf.~Theorem~\ref{thm:char-FI}]\label{thm:char-F}
  Given a set of favourable gambles $\M_\fav\subseteq\linGs$ and a favour-indifference background model with indifference to status quo $\bgM$ in~$\fif\ncMs_\zeroM$, then the assessment $\M\defeq\Adelim{\bgM_\indiff\union\M_\fav}{-\M_\fav}$ is a model, i.e., $\M\in\fif\ncMs_\bgM$, if and only if
  \begin{enumerate}[label=\upshape{(F\arabic*)},leftmargin=*,widest=F0]
    \item\label{item:Fbg}
      it favours background-favourable gambles: $\bgM_\fav\subseteq\M_\fav$,
    \item\label{item:Fzr} it does not favour status quo: $0\notin\M_\fav$,
    \item\label{item:Fcn} its favourable gambles form a cone: $\M_\fav\in\cones$, and
    \item\label{item:Fss}
      it favours sweetened deals: $\M_\fav+\bgM_\indiff\subseteq\M_\fav$.
  \end{enumerate}
\end{theorem}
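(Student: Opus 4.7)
The plan is to reduce the statement to Theorem~\ref{thm:char-FI} by first computing the derived favourable and indifferent sets of the assessment $\M=\Adelim{\bgM_\indiff\cup\M_\fav}{-\M_\fav}$ and showing that they coincide with, respectively, the input parameter $\M_\fav$ and the background indifferent set $\bgM_\indiff$. Once this identification is in place, translating FI1--FI4 into F1--F4 is essentially mechanical.

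First I would verify that $\M$ lies in $\fif\As$. Since $-\M_\rej=\M_\fav\subseteq\M_\acc$, Condition~\eqref{eq:accfav-condition} holds. The derived favourable set of $\M$ is $\M_\acc\cap-\M_\rej=(\bgM_\indiff\cup\M_\fav)\cap\M_\fav=\M_\fav$, matching the parameter. For the derived indifferent set, I would use that $\bgM_\indiff$ is a linear space by \ref{item:FIcn} applied to $\bgM$, so $-\bgM_\indiff=\bgM_\indiff$; distributivity over the union then yields $\M_\acc\cap-\M_\acc=\bgM_\indiff\cup(\M_\fav\cap-\M_\fav)$. The second piece is empty: any $f\in\M_\fav\cap-\M_\fav$ would give $0=f+(-f)\in\M_\fav$ by \ref{item:Fcn}, contradicting \ref{item:Fzr}. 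Hence the derived indifferent set equals $\bgM_\indiff$, and the identity $\M_\acc=\M_\fav\cup\bgM_\indiff$ confirms Condition~\eqref{eq:favindiff-condition}.

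With these identifications, the translation runs as follows: \ref{item:FIzr} becomes \ref{item:Fzr}, \ref{item:FIss} becomes \ref{item:Fss}, the cone half of \ref{item:FIcn} becomes \ref{item:Fcn}, while the linear-space half $\M_\indiff=\bgM_\indiff\in\lineals$ is automatic from the hypothesis on $\bgM$. For \ref{item:FIbg} ($\bgM\subseteq\M$), I would use that any unconfused assessment in the accept-favour framework satisfies $\bgM_\fav=-\bgM_\rej$ (since Condition~\eqref{eq:accfav-condition} gives $-\bgM_\rej\subseteq\bgM_\acc$, whence $\bgM_\fav=\bgM_\acc\cap-\bgM_\rej=-\bgM_\rej$). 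Then $\bgM_\rej\subseteq\M_\rej$ reduces to $\bgM_\fav\subseteq\M_\fav$, which is \ref{item:Fbg}, and $\bgM_\acc=\bgM_\fav\cup\bgM_\indiff\subseteq\M_\fav\cup\bgM_\indiff=\M_\acc$ follows from the same inclusion.

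The proof presents no real obstacle; the only subtlety worth flagging is the deliberate overloading of the notation $\M_\fav$ as both input parameter and derived favourable set, which is exactly what the opening computation legitimizes.
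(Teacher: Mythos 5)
Your proof is correct and follows essentially the same route as the paper's own: a condition-by-condition reduction to Theorem~\ref{thm:char-FI} after identifying the derived favourable and indifferent sets of $\M$ with $\M_\fav$ and $\bgM_\indiff$. You are in fact slightly more careful than the paper on one point: where its proof asserts $\M_\indiff=\bgM_\indiff$ ``by construction'', you actually verify it by showing $\M_\fav\intersection-\M_\fav=\emptyset$ from \ref{item:Fzr} and \ref{item:Fcn} (equivalently \ref{item:FIzr} and \ref{item:FIcn} in the converse direction).
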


The simplest case occurs when we take $0$ to be the single background indifferent gamble: $\bgM_\indiff=\set{0}$; the background favourable gambles $\bgM_\fav$ should then by \ref{item:FIbg}--\ref{item:FIss} be a convex cone that does not contain the zero gamble.
This case results in \ref{item:Fss} being trivially satisfied; \ref{item:Fbg}--\ref{item:Fcn} then reduce to the conditions for what \citet{DeCooman-Quaeghebeur-2012-Kyburg} have called coherence relative to the convex cone $\bgM_\fav$.
The notion of Bernstein coherence of a set of polynomials that they discuss there, is an interesting and useful special case.
Elsewhere in the literature, only the case $\bgM_\fav=\linGs_>$ is considered.
We give a brief chronological overview, but do give an axiom-for-axiom correspondence with the most general of these frameworks:
\begin{itemize}
  \item
    \Citet[§14]{Smith-1961} talks about an open cone of `exchange vectors' (he works with a finite possibility space~$\pspace$ and the linear space of all gambles $\linGs=\Gs{\pspace}$); his notion of preference fits in the favourability framework.
    Furthermore, he imposes that~$\M_\fav$ should be an open set.
  \item
    \Citet[§IV]{Seidenfeld-Schervish-Kadane-1990-decwoord} talk about ‘favorable’ gambles (again on a finite possibility space~$\pspace$ and with the linear space of all gambles $\linGs=\Gs{\pspace}$); their work fits right in the favourability framework.
  \item
    \Citet[§3.7.8]{Walley-1991} discusses ‘strictly desirable’ gambles (with $\linGs$ any linear space containing constant gambles).
    An openness axiom is added for prevision-equivalence (cf. Section~\ref{sec:previsions}): $\M_\fav\setminus\bgM_\fav\subseteq\M_\fav+\reals_{>0}$.
  \item
    \Citet[§6]{Walley-2000-towards} advocates a desirability framework that is more elaborately discussed, but essentially equivalent to \citeauthor{Seidenfeld-Schervish-Kadane-1990-decwoord}'s \cite{Seidenfeld-Schervish-Kadane-1990-decwoord} `favorable' gambles, but without the focus on finite possibility spaces~$\pspace$.
    This framework also corresponds to his `strictly desirable' gambles without the extra openness axiom.

    Let us take take a slightly more detailed look at this framework by exhibiting the correspondence between \Citeauthor{Walley-2000-towards}'s axioms and ours, as formulated in Theorem~\ref{thm:char-F}:
    He calls a set of desirable gambles $\M_\fav\subseteq\linGs$ coherent if and only if for all gambles $f$ and $g$ in $\linGs$ it satisfies
    \begin{enumerate}[label=\upshape{(D\arabic*)},leftmargin=*,widest=D0]
      \item\label{item:Dzr} $0\notin\M_\fav$,
      \item\label{item:Dbg} if $f>0$, then $f\in\M_\fav$,
      \item\label{item:Dps}
        if $f\in\M_\fav$, then $\lambda\cdot f\in\M_\fav$ for all positive scaling factors $\lambda\in\reals_>$,
      \item\label{item:Dcb}
        if $f,g\in\M_\fav$, then $f+g\in\M_\fav$.
    \end{enumerate}
    Recall that we consider \ref{item:Fss} trivially satisfied because our notion of acceptability and thus indifference does not exist in \Citeauthor{Walley-2000-towards}'s framework.
    We furthermore see that \ref{item:Dzr} and \ref{item:Fzr} are identical, as are \ref{item:Dbg} and \ref{item:Fbg} with $\bgM_\fav=\linGs_>$.
    Finally, \ref{item:Dps} corresponds to Positive Scaling~\eqref{eq:scaling} and \ref{item:Dcb} to Combination~\eqref{eq:combination}, and therefore together to Deductive Closure~\eqref{eq:deduction}, i.e., \ref{item:Fcn}.
\end{itemize}

\citet{DeCooman-Quaeghebeur-2012-Kyburg} use and extend the framework of \Citet[§6]{Walley-2000-towards} to study exchangeability.
Actually, we feel the accept-reject framework is a more natural setting for such a study.
Therefore we repeat it in Section~\ref{sec:xch} for the restricted finite exchangeability case \citep[cf.][]{DeCooman-Quaeghebeur-2009-ISIPTA} as an illustrative application, where both $\bgM_\fav$ and $\bgM_\indiff$ are non-trivial.

The unconfused models of a favourability framework with a fixed background model actually form an intersection structure, in contrast to the situation for the general favour-indifference framework of the previous section.
Order-theoretic results and results about maximal elements can be found in the work of \citet{DeCooman-Quaeghebeur-2012-Kyburg} and \citet{2011-Couso+Moral-desir}.

\subsection{The Acceptability Framework \& its Appearance in the Literature}\label{sec:acceptability}
Again to work towards types of models encountered in the literature, we look at the special case where the agent only makes acceptability statements forming a set $\A_\acc\subset\linGs$, but where there is an accept-reject background model~$\bgM$ in~$\ncMs_\zeroM$, so one that satisfies \ref{item:ARbg}--\ref{item:ARss}.
The resulting \emph{acceptability framework} is a restriction of the accept-reject framework.
To know what the models in this framework look like, we derive a result similar to Theorem~\ref{thm:ffdcAs-zerocrit}, with a specialised criterion for deductive closability and an explicit expression for natural extension:
\begin{theorem}\label{thm:afdcAs-zerocrit}
  Given a set of acceptable gambles $\A_\acc\subseteq\linGs$ and a background model with indifference to status quo $\bgM$ in~$\ncMs_\zeroM$, then the acceptability assessment $\A\defeq\Adelim{\A_\acc}{\emptyset}$ respects the background model $\bgM$ if and only if\/ $0\notin\bgM_\rej-\phull(\bgM_\acc\union\A_\acc)$.
  In that case, the natural extension is
  $
    \M \defeq \A\reckunion\bgM
       = \Adelim{\M_\acc}{\bgM_\rej-\M_\acc},
  $
  where $\M_\acc=\phull(\bgM_\acc\union\A_\acc)$ is the resulting set of acceptable gambles.
\end{theorem}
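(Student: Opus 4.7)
The plan is to reduce the theorem to two successive applications of the extension operators via Proposition~\ref{prop:cls-formulae}, combined with Theorem~\ref{thm:dcAs-zerocrit} for the iff characterisation and a final simplification driven by Proposition~\ref{prop:model+hulls} together with Indifference to Status Quo.

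For the iff statement, note that the combined assessment has $(\A\union\bgM)_\acc=\A_\acc\union\bgM_\acc$ and $(\A\union\bgM)_\rej=\bgM_\rej$, because $\A_\rej=\emptyset$. By Corollary~\ref{cor:respect-natext}, $\A$ respects $\bgM$ if and only if $\A\union\bgM\in\dcAs$, and Theorem~\ref{thm:dcAs-zerocrit} translates the latter precisely into $0\notin\bgM_\rej-\phull(\bgM_\acc\union\A_\acc)$, which is the condition in the statement.

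Assuming this condition holds, Proposition~\ref{prop:cls-formulae} gives $\A\reckunion\bgM=\ext{\Ms}\bigl(\ext{\Ds}(\A\union\bgM)\bigr)$. Applying $\ext{\Ds}$ yields the deductively closed, unconfused assessment with accepted component $\M_\acc\defeq\phull(\A_\acc\union\bgM_\acc)$ and rejected component $\bgM_\rej$; then, by Proposition~\ref{prop:reckoning-for-closed-noconf}, applying $\ext{\Ms}$ produces the rejection set $\shull{\bgM_\rej}\union(\shull{\bgM_\rej}-\M_\acc)$. This already pins down $\M_\acc$ as claimed, and leaves only the job of collapsing this union to $\bgM_\rej-\M_\acc$.

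The collapse—and the one subtle point in the argument—goes as follows. Because $\bgM$ itself lies in $\ncMs$, Proposition~\ref{prop:model+hulls} gives $\shull{\bgM_\rej}=\bgM_\rej$, so the rejection set equals $\bgM_\rej\union(\bgM_\rej-\M_\acc)$. Because $\bgM$ satisfies Indifference to Status Quo, $0\in\bgM_\acc\subseteq\M_\acc$, whence $\bgM_\rej=\bgM_\rej-\{0\}\subseteq\bgM_\rej-\M_\acc$, and the union collapses to $\bgM_\rej-\M_\acc$. This last absorption is where ISQ is indispensable: without $0\in\M_\acc$ the bare $\bgM_\rej$ term would not be subsumed, and the compact form claimed for $\M_\rej$ would fail. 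Everything else is routine bookkeeping with the definitions of $\ext{\Ds}$ and $\ext{\Ms}$.
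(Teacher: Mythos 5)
Your proposal is correct and follows essentially the same route as the paper's proof: the respect criterion via Theorem~\ref{thm:dcAs-zerocrit} applied to $\A\union\bgM$, and the explicit computation $\M=\ext{\Ms}\group{\ext{\Ds}(\A\union\bgM)}$. Your final collapse of $\shull{\bgM_\rej}\union(\shull{\bgM_\rej}-\M_\acc)$ to $\bgM_\rej-\M_\acc$, using $\shull{\bgM_\rej}=\bgM_\rej$ from Proposition~\ref{prop:model+hulls} and absorption via $0\in\M_\acc$, just spells out what the paper compresses into its citation of Corollary~\ref{cor:limbo-noconfusion+statusquo}.
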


\begin{wrapfigure}[10]{l}{0pt}
  \begin{tikzpicture}
    \matrix [nodes={regular polygon, regular polygon sides=4,pattern=north east lines,pattern color=lightgray,inner sep=-1ex,minimum size=8ex,outer sep=.5ex},column sep={1ex,between borders},row sep={1ex,between borders}] {
      \node[fill=lightgray] (apam) {$\M_\indiff$}; & \node (ap) {}; & \node[fill=lightgray] (aprm) {}; \\
      \node (am) {}; & \node (none) {}; & \node (rm) {}; \\
      \node[fill=lightgray] (rpam) {}; & \node (rp) {}; & \node[fill=lightgray] (rprm) {}; \\
    };
    \draw[thick] ([xshift=1ex]aprm.north east) rectangle ([xshift=-5ex]apam.south west) node[above right] {$\M_\acc$};
    \draw[thick] ([yshift=-1ex]rpam.south west) rectangle ([yshift=4ex]apam.north east) node[below left] {$-\M_\acc$};
    \draw[thick] ([xshift=-5ex]rpam.south west) node[above right] {$\M_\rej$} rectangle ([xshift=1ex]rprm.north east);
    \draw[thick] ([yshift=-1ex]rprm.south west) rectangle ([yshift=4ex]aprm.north east) node[below left] {$-\M_\rej$};
    \node[ellipse,minimum width=6em,draw,inner sep=4pt] at (rp) {$\bgM_\rej$};
  \end{tikzpicture}
\end{wrapfigure}
The illustration of the nine-element partition of gamble space corresponding to a model~$\M$ in the acceptability framework with background model~$\bgM$ is given on the left.
The reason for working with a model here when describing the partition instead of with a general assessment, is to draw attention to the specific difference with accept-reject models (cf. Section~\ref{sec:partitions})---namely that the set of rejected gambles~$\M_\rej$ is completely determined by $\bgM_\rej$ and $\M_\acc$.

The focus in this framework's characterisation result lies on the set of acceptable gambles:
\begin{theorem}[cf. Theorem~\ref{thm:char-AR}]\label{thm:char-A}
  Given a set of acceptable gambles~$\M_\acc\subseteq\linGs$ and a background model with indifference to status quo $\bgM$ in~$\ncMs_\zeroM$, then the assessment $\M\defeq\Adelim{\M_\acc}{\bgM_\rej-\M_\acc}$ is a model, i.e., $\M\in\ncMs_\bgM$, if and only if
  \begin{enumerate}[label=\upshape{(A\arabic*)},leftmargin=*,widest=A0]
    \item\label{item:Abg}
      it accepts background-acceptable gambles: $\bgM_\acc\subseteq\M_\acc$,
    \item\label{item:Azr} it does not reject status quo: $0\notin\bgM_\rej-\M_\acc$, and
    \item\label{item:Acn} its acceptable gambles form a cone: $\M_\acc\in\cones$.
  \end{enumerate}
\end{theorem}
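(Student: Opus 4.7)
The plan is to reduce Theorem~\ref{thm:char-A} to Theorem~\ref{thm:char-AR} by exploiting the fact that the reject component of $\M$ is rigidly pinned down by the formula $\M_\rej = \bgM_\rej - \M_\acc$. Concretely, $\M \in \ncMs_\bgM$ is equivalent to (AR1)--(AR4) holding, so it suffices to verify that under the prescribed form of $\M_\rej$ the four axioms (AR1)--(AR4) collapse to the three conditions (A1)--(A3).

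For the forward direction I would simply read off (A1) and (A3) from (AR1) and (AR3), and note that (AR2) is literally $0\notin\bgM_\rej-\M_\acc$, which is (A2). For the backward direction, assume (A1)--(A3). (AR3) is (A3) and (AR2) is (A2). The accept component of (AR1) is (A1); for the reject component I would use Indifference to Status Quo for the background, so that $0\in\bgM_\acc\subseteq\M_\acc$ by (A1), and hence $\bgM_\rej=\bgM_\rej-\set{0}\subseteq\bgM_\rej-\M_\acc=\M_\rej$.

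The one axiom that needs a small Minkowski-sum calculation is No Limbo~(AR4), namely $\shull{\bgM_\rej-\M_\acc}-\M_\acc\subseteq\bgM_\rej-\M_\acc$. I would first argue that $\shull{\bgM_\rej-\M_\acc}=\bgM_\rej-\M_\acc$: for $\lambda>0$ and $b-a\in\bgM_\rej-\M_\acc$, $\lambda b\in\bgM_\rej$ by Proposition~\ref{prop:model+hulls}(\ref{item:prop:model+hulls:shullrej}) applied to the model $\bgM$, and $\lambda a\in\M_\acc$ by (A3). Then, since $\M_\acc$ is a convex cone containing $0$, we have $\M_\acc+\M_\acc=\M_\acc$, so
\[
  \shull{\bgM_\rej-\M_\acc}-\M_\acc
  =\bgM_\rej-(\M_\acc+\M_\acc)
  =\bgM_\rej-\M_\acc,
\]
giving (AR4). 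Combined with the verifications above, this yields $\M\in\ncMs_\bgM$.

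The only mildly delicate point is (AR4); everything else is immediate from the definition of $\M_\rej$ and the ISQ assumption on $\bgM$. The main obstacle, insofar as there is one, is keeping the bookkeeping clean in that Minkowski-sum step---in particular remembering to invoke Proposition~\ref{prop:model+hulls}(\ref{item:prop:model+hulls:shullrej}) (ensuring $\bgM_\rej$ is ray-closed) rather than trying to rederive it from the axioms on $\bgM$.
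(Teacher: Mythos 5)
Your proposal is correct and follows essentially the same route as the paper's own proof: reduce to Theorem~\ref{thm:char-AR}, observe that \ref{item:ARzr} and \ref{item:ARcn} are literally \ref{item:Azr} and \ref{item:Acn}, that the reject half of \ref{item:ARbg} follows from $0\in\M_\acc$ (via \ref{item:Abg} and Indifference to Status Quo), and that \ref{item:ARss} holds automatically because $\shull{\bgM_\rej-\M_\acc}-\M_\acc=\bgM_\rej-\M_\acc$ using $\shull{\bgM_\rej}=\bgM_\rej$ and $\M_\acc\in\cones$. The only cosmetic difference is that you spell out the ray-closedness step via Proposition~\ref{prop:model+hulls}\ref{item:prop:model+hulls:shullrej}, which the paper simply cites as the fact $\shull{\bgM_\rej}=\bgM_\rej$.
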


When $\bgM_\rej\subseteq-\bgM_\acc$, the resulting models will also belong to the accept-favour framework (cf. Section~\ref{sec:accept-favour}).
This is actually the case for all acceptability-type frameworks we are aware of in the literature.
Again, we give a brief chronological overview, but do give an axiom-for-axiom correspondence with the most general of these frameworks:
\begin{itemize}
 \item
    \Citet[§IV]{Williams-1974} talks about `acceptable bets' (with as a linear space $\linGs$ the set of simple functions with as a basis the indicator functions of the elements of a set~$\someEs$ of subsets of $\pspace$, including $\pspace$).
    The background model~$\bgM$ he uses is $\Adelim{\linGs_\gtrdot}{\linGs_\lessdot}$.
    He does not require Indifference to Status Quo.
  \item
    \Citet{Williams-1975} extends the previous work to include conditional models.
    (He now lets~$\linGs$ be any linear space including constant gambles.)
    To deal with conditional models nicely, he uses a larger background model $\Union_{\emptyset\subset E\subseteq\pspace}\bgM_E$, where each $\bgM_E$ is defined by $(\bgM_E)_\acc\defeq\cset[\big]{f\in\linGs}{\inf{f(E)}>0\conj f(\pspace\setminus E)=\set{0}}$ and $\bgM_\rej=-\bgM_\acc$.
    This union of models is still a model because $(\bgM_E)_\acc+(\bgM_F)_\acc\subset(\bgM_{E\union F})_\acc$ for all non-empty events~$E$ and~$F$.
  \item
    \Citet[§3.7.3]{Walley-1991} discusses ‘almost desirable’ gambles (with $\linGs$ any linear space containing constant gambles).
    The background model here is $\Adelim{\linGs_\geq}{\linGs_\lessdot}$.
    A closure axiom is added for prevision-equivalence (cf. Section~\ref{sec:previsions}): $f+\reals_{>0}\subseteq\M_\acc\then f\in\M_\acc$.
  \item
    \Citet[App.~F]{Walley-1991} talks about ‘really desirable’ gambles (with $\linGs$ any linear space containing constant gambles).
    The background model here is $\Adelim{\linGs_\geq}{\linGs_<}$.
    Normally, given a not necessarily finite partition~$\someEs$ of~$\pspace$ and some~$f$ in~$\linGs$ such that $fI_E\in\M_\acc$ for all~$E$ in~$\someEs$, then Deductive Closure implies $fI_{\Union\othersomeEs}\in\M_\acc$ for all finite subsets $\othersomeEs$ of~$\someEs$.
    (Here~$I_E$ is the indicator of the event~$E$, i.e., the gamble that is~$1$ on~$E$ and~$0$ elsewhere.)
    He imposes a conglomerability axiom that says that under those conditions $f\in\M_\acc$ should hold.

    Let us take take a slightly more detailed look at this framework by explicitly showing the correspondence between \Citeauthor{Walley-1991}'s axioms---ignoring conglomerability---and ours, as formulated in Theorem~\ref{thm:char-A}:
    He calls a set of desirable gambles $\M_\acc\subseteq\linGs$ coherent if and only if for all gambles $f$ and $g$ in $\linGs$ it satisfies
    \begin{enumerate}[label=\upshape{(D\arabic*)},leftmargin=*,widest=D00]
      \addtocounter{enumi}{1}
      \item\label{item:RDps}
        if $f\in\M_\acc$, then $\lambda\cdot f\in\M_\acc$ for all positive scaling factors $\lambda\in\reals_>$ (positive homogeneity),
      \item\label{item:RDcb}
        if $f,g\in\M_\acc$, then $f+g\in\M_\acc$ (addition),
      \addtocounter{enumi}{6}
      \item\label{item:RDapl} if $f<0$, then $f\notin\M_\acc$ (avoiding partial loss),
      \item\label{item:RDapg} if $f\geq0$, then $f\in\M_\acc$ (accepting sure gains).
    \end{enumerate}
    We see that \ref{item:RDps} corresponds to Positive Scaling~\eqref{eq:scaling} and \ref{item:RDcb} to Combination~\eqref{eq:combination}, and therefore together to Deductive Closure~\eqref{eq:deduction}, i.e., \ref{item:Acn}.
    Furthermore, \ref{item:RDapl} is equivalent to $\M_\acc\intersection\linGs_<=\emptyset$, which is in turn equivalent to \ref{item:Azr} with $\bgM_\rej=\linGs_<$.
    Finally, \ref{item:RDapg} is identical to \ref{item:Abg} with $\bgM_\acc=\linGs_\geq$.
  \item
    \Citet[\S2]{Artzner-etal-1999} talk about sets of acceptable `future net worths' (they work with a finite~$\pspace$ and $\linGs=\Gs{\pspace}$).
    The background models~$\bgM$ they use are $\Adelim{\linGs_\geq}{\linGs_\lessdot}$ and $\Adelim{\linGs_\geq}{\linGs_<}$.
    The context of their work is mathematical finance, and more specifically risk measures, which can be seen as scaled negations of lower previsions (cf. Section~\ref{sec:lowprevs}).
    \Citet{Follmer-Schied-2002} generalise \citeauthor{Artzner-etal-1999}'s work and introduce \emph{convex} risk measures:
    They weaken Deductive Closure~\eqref{eq:deduction} to a convexity requirement, so their theory falls outside scope of our frameworks.
\end{itemize}

\section{Linear \& Lower Previsions}\label{sec:previsions}
In this section, we discuss how our framework can be connected to that most popular framework for modelling uncertainty: probability theory.
Given the fact that we have been dealing with gambles, i.e., real-valued functions on the possibility space $\pspace$, and not with events, the most natural way to make this connection is via expectation operators and not probability measures \citep[cf.][]{Whittle-1992}.
Because of our own background, we will call expectation operators \emph{previsions} \citep[cf.][]{DeFinetti-1937,DeFinetti-1974/1975}.

As has been argued by many, restricting attention to precise probabilities (and thus previsions) limits our expression power in modelling uncertainty \citep[e.g.,][and references therein]{Keynes-1921,Koopman-1940-ams,Good-1952,Smith-1961,Dempster-1967,Suppes-1974,Shafer-1976,Levi-1980,Walley-1991}.
It turns out that our framework is sufficiently general to connect it with the theories that have been proposed to provide added expressiveness; we focus on the so-called imprecise-probabilistic theory of coherent lower previsions \citep[for more information, see][]{Walley-1991,Miranda-2008-survey}.

Throughout this section it is convenient to assume that the linear space of gambles of interest $\linGs$ contains the constant gambles, which are identified by their constant value for notational convenience.

\subsection{Linear Previsions}\label{sec:linprevs}
In \citeauthor{DeFinetti-1937}'s theory, the agent's prevision $\pr{f}$ for a gamble~$f$ is a real number seen as his fair price for it.
This can be interpreted in two ways:
The first---which \citet[\S3.1.4]{DeFinetti-1974/1975} seems to have had in mind---is that $f\indiff\pr{f}$, i.e., that the agent is indifferent between~$f$ and $\pr{f}$, which is seen as a constant gamble; this means that he is willing to exchange either one for the other.
The second---which \citet[\S2.3.6, \S2.3.1]{Walley-1991} seems to have kept in mind---is that $\set{f-\pr{f}}+\reals_>$ and $\set{\pr{f}-f}+\reals_>$ are sets of acceptable gambles for the agent, which means that he is willing to buy $f$ for any price strictly lower than $\pr{f}$ and sell it for any price strictly higher.
\citeauthor{Walley-1991}'s ‘willingness’ translates to ‘commitment’ in our terminology (cf.~Section~\ref{sec:accept+reject}).

\Citeauthor*{DeFinetti-1974/1975}'s coherent previsions, when defined on the whole of~$\linGs$, can be characterised as real linear functionals that satisfy $\inf{f}\leq\pr{f}\leq\sup{f}$ for any~$f$ in~$\linGs$ \citep[\S3.1.5]{DeFinetti-1974/1975}. 
This means any such prevision~$\pr$ partitions~$\linGs$ into
\begin{enumerate}
  \item $\linGs_{\gtrrvf{\pr}}\defeq\cset{h\in\linGs}{\pr{h}>0}$, which always includes $\linGs_\gtrdot\defeq\cset{h\in\linGs}{\inf{h}>0}$,
  \item $\linGs_{\lessrvf{\pr}}\defeq\cset{h\in\linGs}{\pr{h}<0}$, which always includes  $\linGs_\lessdot\defeq\cset{h\in\linGs}{\sup{h}<0}$, and
  \item $\linGs_{\eqrvf{\pr}}\defeq\cset{h\in\linGs}{\pr{h}=0}$, which always contains~$0$; its elements are called marginal gambles and $f-\pr{f}$ and $\pr{f}-f$ are the \emph{marginal gambles} corresponding to $f$.
\end{enumerate}
\par
\noindent Because of the linearity of~$\pr$, $\linGs_{\gtrrvf{\pr}}$ and $\linGs_{\lessrvf{\pr}}$ are convex cones related by negation---i.e., $\linGs_{\gtrrvf{\pr}}=-\linGs_{\lessrvf{\pr}}$---and~$\linGs_{\eqrvf{\pr}}$ is  a linear space; the line segment joining any element of $\linGs_{\gtrrvf{\pr}}$ to any element of $\linGs_{\lessrvf{\pr}}$ always intersects~$\linGs_{\eqrvf{\pr}}$.
The associated mental image is that of a hyperplane~$\linGs_{\eqrvf{\pr}}$ separating the (open) positive orthant~$\linGs_\gtrdot$ from the (open) negative one~$\linGs_\lessdot$.
\begin{wrapfigure}{r}{0pt}
  \begin{tikzpicture}[scale=1.5,baseline=-.7ex]
    \draw[->] (-.7,0) coordinate (xl) -- (1,0) coordinate (xu);
    \draw[->] (0,-1) coordinate (yl) -- (0,1) coordinate (yu);
    \node (a1) at (135:.7) {};
    \begin{pgfonlayer}{background}
      \draw[border] (intersection of 0,0--a1 and xl--{xl|-yu}) coordinate[label={[xshift=3pt]right:$\linGs_{\eqrvf{\pr}}$}] (a1away) -- (intersection of 0,0--a1 and xu--{yl-|xu}) coordinate (a1away2);
    \end{pgfonlayer}
  \end{tikzpicture}
  \hspace{1em}
  \begin{tikzpicture}[scale=1.5,baseline=-.7ex]
    \draw[->] (-.7,0) coordinate (xl) -- (1,0) coordinate (xu);
    \draw[->] (0,-1) coordinate (yl) -- (0,1) coordinate (yu);
    \node (a1) at (135:.7) {};
    \begin{pgfonlayer}{background}
      \draw[exclborder] (intersection of 0,0--a1 and xl--{xl|-yu}) coordinate (a1away) -- (intersection of 0,0--a1 and xu--{yl-|xu}) coordinate (a1away2);
      \fill[lightgray] (a1away) |- (xu|-yu) |- (a1away2) --cycle;
      \node at (.5,.5) {$\linGs_{\gtrrvf{\pr}}$};
    \end{pgfonlayer}
  \end{tikzpicture}
  \hspace{1em}
  \begin{tikzpicture}[scale=1.5,baseline=-.7ex]
    \draw[->] (-.7,0) coordinate (xl) -- (1,0) coordinate (xu);
    \draw[->] (0,-1) coordinate (yl) -- (0,1) coordinate (yu);
    \node (a1) at (135:.7) {};
    \begin{pgfonlayer}{background}
      \draw[exclborder] (intersection of 0,0--a1 and xl--{xl|-yu}) coordinate (a1away) -- (intersection of 0,0--a1 and xu--{yl-|xu}) coordinate (a1away2);
      \fill[gray] (a1away) |- (xl|-yl) -| (a1away2) --cycle;
      \node[white] at (-.3,-.4) {$\linGs_{\lessrvf{\pr}}$};
    \end{pgfonlayer}
  \end{tikzpicture}
\end{wrapfigure}
This partitioning is illustrated on the right for the linear prevision $\pr$ modelling a fair coin, so which specifies probability $1/2$ for both elementary events and which has $(\frac{1}{2},-\frac{1}{2})$ and $(-\frac{1}{2},\frac{1}{2})$ as the marginal gambles corresponding to the indicator gambles $(1,0)$ and $(0,1)$, respectively.
We give the hyperplane of marginal gambles $\linGs_{\eqrvf{\pr}}$ in the first figure, the open half-space of gambles with positive prevision~$\linGs_{\gtrrvf{\pr}}$ including $\linGs_\gtrdot$ in the second, and the open half-space of gambles with negative prevision $\linGs_{\lessrvf{\pr}}$ including $\linGs_\lessdot$ in the last figure.

Under the first interpretation---of indifference between $f$ and $\pr{f}$---, the gambles in
$
  \Union_{f\in\linGs}\set{f-\pr{f},\pr{f}-f} = \linGs_{\eqrvf{\pr}}
$
are assessed to be acceptable.
So the assessment corresponding to~$\pr$ is $\Adelim{\linGs_{\eqrvf{\pr}}}{\emptyset}$.
Under the second interpretation, the gambles in the set
$
  \Union_{f\in\linGs}\set{f-\pr{f},\pr{f}-f} + \reals_> = \linGs_{\eqrvf{\pr}} + \reals_>
$
are assessed to be acceptable; this set is equal to $\linGs_{\gtrrvf{\pr}}$ because any gamble~$h$ in $\linGs_{\gtrrvf{\pr}}$ can be decomposed into $h-\pr{h}\in\linGs_{\eqrvf{\pr}}$ and $\pr{h}\in\reals_>$, and any gamble~$-h$ in $\linGs_{\gtrrvf{\pr}}$ can be decomposed into $\pr{h}-h\in\linGs_{\eqrvf{\pr}}$ and $-\pr{h}=\pr(-h)\in\reals_>$.
So the assessment corresponding to~$\pr$ is $\Adelim{\linGs_{\gtrrvf{\pr}}}{\emptyset}$.

From these assessments, we wish to derive models.
For that, we need to choose a minimal background model.
Our choice will be based on what happens if the coherence constraint $\inf{f}\leq\pr{f}\leq\sup{f}$ is violated but linearity is not.
Then either $\sup\group{f-\pr{f}}<0$ or $\inf\group{f-\pr{f}}>0$, i.e., this happens whenever $f-\pr{f}$ is an element of $\linGs_\lessdot$ or~$\linGs_\gtrdot$.
This implies that $\linGs_{\eqrvf{\pr}}$ must not intersect $\linGs_\lessdot$ and~$\linGs_\gtrdot$.
\begin{itemize}
  \item
    Under the first interpretation this means that no element of either may be indifferent, which implies that $\linGs_\lessdot\union\linGs_\gtrdot\subseteq\bgM_\rej\union-\bgM_\rej$ must hold for the background model~$\bgM$.
    In the spirit of \citeauthor{DeFinetti-1937}'s focus on two-sided gambles, we therefore choose all gambles in~$\linGs_\gtrdot$ to be favourable; i.e., $\dotbgM\subseteq\bgM$.
  \item
    For the second interpretation, the constraint may be reformulated as saying that $\linGs_{\gtrrvf{\pr}}$ must not intersect $\linGs_\lessdot$, i.e., no element of $\linGs_\lessdot$ may be acceptable.
    In this case $\Adelim{\emptyset}{\linGs_\lessdot}$ must be part of the background model~$\bgM$.
\end{itemize}

So, under the first interpretation, the model we associate with a coherent prevision~$\pr$ is
\begin{align*}
  \cls{\ncMs}\Adelim{\linGs_{\eqrvf{\pr}}\union\linGs_\gtrdot}{\linGs_\lessdot}
  &= \ext{\Ms}(\ext{\Ds}\Adelim{\linGs_{\eqrvf{\pr}}\union\linGs_\gtrdot}
                                                    {\linGs_\lessdot})
          &&\text{(def. $\cls{\ncMs}$)}\\
  &= \ext{\Ms}\Adelim{\phull(\linGs_{\eqrvf{\pr}}\union\linGs_\gtrdot)}
                                    {\linGs_\lessdot}
          &&\text{(def. $\ext{\Ds}$)}\\
  &= \ext{\Ms}\Adelim{
          \linGs_{\eqrvf{\pr}} \union \linGs_\gtrdot
                                           \union (\linGs_{\eqrvf{\pr}}+\linGs_\gtrdot)
        }{\linGs_\lessdot} &&\text{(def. $\phull$)}\\
  &= \ext{\Ms}\Adelim{
          \linGs_{\eqrvf{\pr}}\union(\linGs_{\eqrvf{\pr}}+\linGs_\gtrdot)
        }{\linGs_\lessdot} &&\text{($0\in\linGs_{\eqrvf{\pr}}$)}\\
  &= \ext{\Ms}\Adelim{\linGs_{\eqrvf{\pr}}\union\linGs_{\gtrrvf{\pr}}}
                                    {\linGs_\lessdot}
        &&\text{(%
          $
            \linGs_\gtrdot \subset \linGs_{\gtrrvf{\pr}}
                                   = \linGs_{\eqrvf{\pr}} + \reals_>
                                   \subseteq \linGs_{\eqrvf{\pr}} + \linGs_\gtrdot
                                   \subseteq \linGs_{\eqrvf{\pr}} + \linGs_{\gtrrvf{\pr}}
                                   \subseteq \linGs_{\gtrrvf{\pr}}
          $
        )}\\
  &= \Adelim{\linGs_{\eqrvf{\pr}}\union\linGs_{\gtrrvf{\pr}}}
                    {\linGs_\lessdot-(\linGs_{\eqrvf{\pr}}\union\linGs_{\gtrrvf{\pr}})}
        &&\text{(def. $\ext{\Ms}$)}\\
  &= \Adelim{\linGs_{\eqrvf{\pr}}\union\linGs_{\gtrrvf{\pr}}}
                    {(\linGs_\lessdot-\linGs_{\eqrvf{\pr}})
                      \union (\linGs_\lessdot-\linGs_{\gtrrvf{\pr}})} \\
  &= \Adelim{\linGs_{\eqrvf{\pr}}\union\linGs_{\gtrrvf{\pr}}}
                  {\linGs_{\lessrvf{\pr}}}.
        &&\text{($\linGs_\lessdot\subset\linGs_{\lessrvf{\pr}}=-\linGs_{\gtrrvf{\pr}}$)}
\end{align*}
This is a maximal model (cf. Proposition~\ref{prop:maximalncmods}).
The model associated with $\pr$ under the second interpretation is
\begin{align*}
  \otherM_\pr \defeq \cls{\ncMs}\Adelim{\linGs_{\gtrrvf{\pr}}}{\linGs_{\lessdot}}
  &= \ext{\Ms}(\ext{\Ds}\Adelim{\linGs_{\gtrrvf{\pr}}}{\linGs_{\lessdot}})
          &&\text{(def. $\cls{\ncMs}$)}\\
  &= \ext{\Ms}\Adelim{\linGs_{\gtrrvf{\pr}}}{\linGs_{\lessdot}}
          &&\text{(def. $\ext{\Ds}$, $\linGs_{\gtrrvf{\pr}}\in\cones$)}\\
  &= \Adelim{\linGs_{\gtrrvf{\pr}}}
                    {\linGs_{\lessdot}\union(\linGs_{\lessdot}-\linGs_{\gtrrvf{\pr}})}
          &&\text{(def. $\ext{\Ms}$)}\\
  &= \Adelim{\linGs_{\gtrrvf{\pr}}}{\linGs_{\lessrvf{\pr}}}.
          &&\text{(%
            $\linGs_\lessdot\subset\linGs_{\lessrvf{\pr}}=-\linGs_{\gtrrvf{\pr}}$
          )}
\end{align*}
The second interpretation leads to a model that is less committal than the one resulting from the first interpretation.\footnote{
  \Citet{Wagner-2007-Smith-Walley} argues for the second interpretation by pointing out that it avoids vulnerability to a weak Dutch book \citep[cf.][]{Shimony-1955}.
  The commitments implied by the different models associated with both interpretations---$\linGs_{\eqrvf{\pr}}\union\linGs_{\gtrrvf{\pr}}$ versus $\linGs_{\gtrrvf{\pr}}$---make the reason for this explicit.
}
(As $\linGs_\gtrdot\subset\linGs_{\gtrrvf{\pr}}$, we see that this does not depend on the difference in restrictions imposed on $\bgM$.)
To not record any commitments the agent might not have wanted to make when specifying the prevision, we continue with the second interpretation.

What is the background model associated with the set~$\prs$ of all coherent previsions on~$\linGs$?
At face value, this question asks what assessment is shared by every model associated with a coherent prevision; the next proposition answers this:
\begin{proposition}\label{prop:pr-bgM-inner}
  The assessment implicit in all models associated with a coherent prevision is $\Intersection_{\pr\in\prs}\otherM_\pr = \dotbgM$.
\end{proposition}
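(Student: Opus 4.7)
The plan is to prove the two set equalities component-wise, i.e.\ to show separately that $\Intersection_{\pr\in\prs}\linGs_{\gtrrvf{\pr}}=\linGs_\gtrdot$ and $\Intersection_{\pr\in\prs}\linGs_{\lessrvf{\pr}}=\linGs_\lessdot$, since intersection of assessments is defined component-wise. By the negation symmetry $\linGs_{\lessrvf{\pr}}=-\linGs_{\gtrrvf{\pr}}$ and $\linGs_\lessdot=-\linGs_\gtrdot$, it suffices to establish the first equality; the second follows by replacing $f$ with $-f$.

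For the inclusion $\linGs_\gtrdot\subseteq\Intersection_{\pr\in\prs}\linGs_{\gtrrvf{\pr}}$, I would use the characterisation of coherent previsions as linear functionals with $\inf{g}\leq\pr{g}\leq\sup{g}$ recalled at the start of Section~\ref{sec:linprevs}: if $f\in\linGs_\gtrdot$, then $\pr{f}\geq\inf{f}>0$ for every $\pr\in\prs$, so $f$ lies in every $\linGs_{\gtrrvf{\pr}}$.

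For the reverse inclusion, contrapositively, I would show that whenever $\inf{f}\leq0$ there is some coherent prevision $\pr\in\prs$ with $\pr{f}\leq0$. This is the part that requires real work: a Hahn--Banach separation argument. Choose any $\alpha\in[\inf{f},\min\{0,\sup{f}\}]$ (this interval is non-empty because $\inf{f}\leq\sup{f}$ and $\inf{f}\leq0$). On the linear subspace $\reals\cdot f+\reals\subseteq\linGs$ (using the standing assumption that $\linGs$ contains the constants), define the linear functional $\ell(\lambda f+c)\defeq\lambda\alpha+c$. A short case split on the sign of $\lambda$ shows that $\ell(\lambda f+c)\leq\sup(\lambda f+c)$ on this subspace, using $\inf{f}\leq\alpha\leq\sup{f}$. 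The sublinear functional $p\defeq\sup$ on $\linGs$ then dominates $\ell$, so the Hahn--Banach theorem supplies a linear extension $\pr$ on $\linGs$ with $\pr{g}\leq\sup{g}$; applying this to $-g$ also gives $\pr{g}\geq\inf{g}$, so $\pr\in\prs$, and by construction $\pr{f}=\alpha\leq0$, placing $f\notin\linGs_{\gtrrvf{\pr}}$.

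The main obstacle is this separation step; the rest is essentially bookkeeping about how $\otherM_\pr$ and $\dotbgM$ decompose into their two components. I expect no subtlety from the possibility space $\pspace$ being infinite, because the Hahn--Banach argument does not depend on its cardinality—only on $\linGs$ being a real vector space containing~$f$ and the constants—and the Axiom of Choice is already admitted throughout the paper.
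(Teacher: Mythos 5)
Your proof is correct, but it reaches the crucial separation step by a genuinely different (and more self-contained) route than the paper. The paper handles both components at once: it takes a gamble $f$ in the intersection with $f\notin\linGs_\gtrdot\union\linGs_\lessdot$ and simply cites \citet[\S 3.4.2]{Walley-1991} for the existence of a coherent prevision $\pr$ with $\pr{f}=0$, so that $f$ lies in neither $\linGs_{\gtrrvf{\pr}}$ nor $\linGs_{\lessrvf{\pr}}$, giving the reverse inclusion by contradiction. You instead split the equality component-wise, reduce the reject component to the accept component via the negation symmetry $\linGs_{\lessrvf{\pr}}=-\linGs_{\gtrrvf{\pr}}$, $\linGs_\lessdot=-\linGs_\gtrdot$, and then build the separating prevision explicitly: a Hahn--Banach extension, dominated by the sublinear functional $\sup$, of the functional $\lambda\cdot f+c\mapsto\lambda\cdot\alpha+c$ on $\reals\cdot f+\reals$ with $\alpha\in[\inf f,\min\{0,\sup f\}]$, yielding a coherent $\pr$ with $\pr{f}=\alpha\leq0$. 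What this buys is self-containedness---your extension argument is essentially the proof of the very fact the paper delegates to Walley---and a slightly weaker requirement ($\pr{f}\leq0$ suffices for your decomposition, where the paper needs $\pr{f}=0$ to kill both components simultaneously); choosing $\alpha=0$ when $0\in[\inf f,\sup f]$ would in fact recover the cited result. The price is length, plus the same implicit restriction the paper's citation carries, namely that gambles are bounded so that $\sup$ is real-valued and the classical Hahn--Banach theorem applies; and your degenerate case of constant $f$ is harmless, since then $\alpha$ is forced to be that constant and your functional is still well defined.
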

\noindent
Going beyond its literal meaning, the question leads us to consider which models~$\bgM$ are compatible as background models with the set~$\prs$ of all coherent previsions on~$\linGs$.
To wit, which~$\bgM$ are such that for any~$\pr$ in~$\prs$, $\otherM_\pr\reckUnion\bgM$ can still be put into a one-to-one correspondence with~$\pr$?
The remainder of this section is devoted to providing a cogent answer to this question.

There is a one-to-one relation between~$\pr$ and~$\otherM_\pr$ because of the one-to-one relationship with its defining set of favourable gambles $(\otherM_\pr)_\fav=\linGs_{\gtrrvf{\pr}}$.
However, from our discussion on the two interpretations and their impact
we know that there are other models---$\Adelim{\linGs_{\eqrvf{\pr}}\union\linGs_{\gtrrvf{\pr}}}{\linGs_{\lessrvf{\pr}}}$ in particular---that can be associated with~$\pr$.
The model~$\otherM_\pr$ turns out to be the least resolved such model:
\begin{proposition}\label{prop:pr-models-disjoint}
  For any pair of coherent previsions $\pr$ and $\otherpr$ in $\prs$, we have that
  $
    \ncMs_{\otherM_\pr} \intersection \ncMs_{\otherM_\otherpr}
    \neq \emptyset
  $
  if and only if $\pr=\otherpr$.
  Or, in words: a model is uniquely associated to a coherent prevision $\pr$ if and only if it dominates or is equal to $\ncMs_{\otherM_\pr}$.
\end{proposition}
\begin{corollary}\label{cor:pr-models-max}
  Given a coherent prevision~$\pr$, all maximal models in $\maxncMs_{\otherM_\pr}$ are uniquely associated to~$\pr$.
\end{corollary}
\noindent
So any coherent prevision~$\pr$ can be uniquely identified with some element of~$\ncMs_{\otherM_\pr}$.
This means that any~$\bgM$ in~$\ncMs_{\dotbgM}$ is a compatible background model if $\ncMs_{\otherM_\pr}\intersection\ncMs_\bgM$ is non-empty for all~$\pr$ in~$\prs$.
We can make this more concrete:
\begin{proposition}\label{prop:pr-bgM-outer}
  A background model $\bgM$ in $\ncMs_{\dotbgM}$ is compatible with any coherent prevision $\pr$ in~$\prs$, i.e., $\otherM_\pr$ respects~$\bgM$, if and only if $\bgM\subset\Adelim{\linGs_\geq}{\linGs_\leq}$.
\end{proposition}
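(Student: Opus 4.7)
The plan is to invoke Theorem~\ref{thm:dcAs-zerocrit} to recast ``$\otherM_\pr$ respects $\bgM$'' as the feasibility condition
\[ \phull(\linGs_{\gtrrvf{\pr}}\union\bgM_\acc) \intersection (\linGs_{\lessrvf{\pr}}\union\bgM_\rej) = \emptyset, \]
and to analyse it using the linearity of $\pr$ together with the bounds $\inf\leq\pr\leq\sup$. A preliminary observation is that $\bgM\in\ncMs$ forces $0\in\bgM_\acc$ (because $\bgM_\acc$ is a cone) and hence $0\notin\bgM_\rej$ by No Confusion; this is why, for $\bgM\in\ncMs_{\dotbgM}$, the strict inclusion in the statement is equivalent to the two componentwise inclusions $\bgM_\acc\subseteq\linGs_\geq$ and $\bgM_\rej\subseteq\linGs_\leq$.

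For the sufficiency direction, I would fix $\pr$ in $\prs$ and suppose, for contradiction, that some $g$ lies in both factors of the intersection. Since $\pr h\geq\inf h\geq 0$ for every $h\in\linGs_\geq\supseteq\bgM_\acc$ and $\pr h>0$ for every $h\in\linGs_{\gtrrvf{\pr}}$, linearity of $\pr$ gives $\pr g'\geq 0$ for all $g'\in\phull(\linGs_{\gtrrvf{\pr}}\union\bgM_\acc)$, which immediately rules out $g\in\linGs_{\lessrvf{\pr}}$. In the remaining case $g\in\bgM_\rej\subseteq\linGs_\leq$, the combined bounds $\pr g\geq 0$ and $\pr g\leq\sup g\leq 0$ force $\pr g=0$. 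Writing $g$ as a finite positive linear combination of gambles in $\linGs_{\gtrrvf{\pr}}\union\bgM_\acc$, the vanishing of $\pr g$ and the strict positivity of $\pr$ on $\linGs_{\gtrrvf{\pr}}$ force every $\linGs_{\gtrrvf{\pr}}$-coefficient to be zero; so $g$ is a positive combination of elements of $\bgM_\acc$. Deductive Closure of $\bgM$ then places $g\in\bgM_\acc$, contradicting $g\in\bgM_\rej$ and No Confusion of $\bgM$.

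For the necessity direction I would contrapose. If $f\in\bgM_\acc$ has $\inf f<0$, pick $\omega\in\pspace$ with $f(\omega)<0$ and take $\pr$ to be the Dirac prevision at $\omega$, which is a coherent linear prevision satisfying $\pr f<0$; then $f$ simultaneously lies in $\bgM_\acc\subseteq\phull(\linGs_{\gtrrvf{\pr}}\union\bgM_\acc)$ and in $\linGs_{\lessrvf{\pr}}\subseteq\linGs_{\lessrvf{\pr}}\union\bgM_\rej$, so $0=f-f$ witnesses failure of the feasibility condition. The symmetric case---some $h\in\bgM_\rej$ with $\sup h>0$---is handled by a Dirac prevision at a point where $h$ is strictly positive.

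The main obstacle is the decomposition step in the sufficiency direction: an offending $g$ would have to be a marginal gamble for $\pr$ (so $\pr g=0$) that is at the same time nonpositive and in $\bgM_\rej$, and one must exploit strict positivity of $\pr$ on $\linGs_{\gtrrvf{\pr}}$ to strip out those contributions before invoking Deductive Closure and No Confusion of $\bgM$. Everything else is a routine unpacking of the definitions of $\otherM_\pr$, $\phull$, and the prevision bounds.
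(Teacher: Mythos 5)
Your argument is correct in substance, but it follows a different route from the paper's for the sufficiency direction. The paper embeds $\otherM_\pr\union\bgM$ into the deductively closed assessment $\otherM_\pr\union\Adelim{\linGs_\geq}{\linGs_\leq}$, computes that its only confusing gamble is $0$ (using $\linGs_{\gtrrvf{\pr}}+\linGs_\geq\subseteq\linGs_{\gtrrvf{\pr}}$ and the pairwise disjointness coming from $\pr g\leq\sup g\leq0$ on $\linGs_\leq$), and then discards the origin from one component, exploiting that $\bgM$, being unconfused, misses $0$ on one side. You instead verify the zero-criterion of Theorem~\ref{thm:dcAs-zerocrit} directly: strict positivity of $\pr$ on $\linGs_{\gtrrvf{\pr}}$ plus nonnegativity on $\bgM_\acc\subseteq\linGs_\geq$ pins any offending gamble down to $\pr g=0$, and the posi-decomposition argument then pushes it into $\bgM_\acc\intersection\bgM_\rej$, contradicting No Confusion of $\bgM$ itself. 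Your version works with the general $\bgM$ throughout and needs no detour through the two maximal unconfused submodels of $\Adelim{\linGs_\geq}{\linGs_\leq}$; the paper's version trades your coefficient analysis for cone algebra. For necessity both proofs construct a prevision giving the offending gamble the wrong sign; your explicit Dirac previsions are more self-contained than the paper's appeal to Walley \S3.4.2, and they also cover the boundary case of an accepted $f\leq0$ with $\sup f=0$, which the paper's phrasing ``$\inf f<0<\sup f$'' glosses over.

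One side remark in your preliminary paragraph is wrong, though harmlessly so: $\bgM\in\ncMs$ does \emph{not} force $0\in\bgM_\acc$, because cones in this paper need not contain the origin ($\linGs_\gtrdot$ is a cone), and indeed $\Adelim{\linGs_\gtrdot}{\linGs_\lessdot\union\set{0}}$ is a model in $\ncMs_{\dotbgM}$ that rejects status quo. The conclusion you wanted---that the strict inclusion $\bgM\subset\Adelim{\linGs_\geq}{\linGs_\leq}$ is equivalent to the componentwise inclusions---still holds, but for a different reason: $\Adelim{\linGs_\geq}{\linGs_\leq}$ is confused at $0$ while $\bgM$ avoids confusion, so equality is impossible and any inclusion is automatically strict. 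With that one-line repair, your proof stands; note also that the rest of your argument never actually uses $0\in\bgM_\acc$, and the case $g=0$ in your decomposition step is covered because a nonempty positive combination yielding $0$ would place $0$ in $\bgM_\acc$ and hence create confusion in $\bgM$.
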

\noindent
So we find that the background model~$\bgM$ must satisfy $\dotbgM\subseteq\bgM\subset\Adelim{\linGs_\geq}{\linGs_\leq}$ for us to consider it compatible with the theory of coherent previsions.

Up until now, we have looked at previsions defined on the whole of~$\linGs$.
The procedure we used to generate an assessment---calling gambles uniformly dominating marginal gambles acceptable---can be used equally well for previsions defined on a subset~$\someGs$ of~$\linGs$.
The interesting issues that would arise when discussing such non-exhaustively specified linear previsions arise as well when treating coherent lower previsions, which give their user even more control over the commitments specified.
Therefore we move on to that topic.

\subsection{Lower Previsions}\label{sec:lowprevs}
In \citeauthor{Walley-1991}'s theory of coherent lower previsions, the agent's lower prevision $\lpr{f}$ for a gamble~$f$ is a real number seen as his supremum buying price for it \citep[\S2.3.1]{Walley-1991}.
The interpretation is that $\set{f-\lpr{f}}+\reals_>$ is a set of acceptable gambles, which means that he is willing to buy~$f$ for any price strictly lower than $\lpr{f}$ \citep[\S2.3.4]{Walley-1991}.
Again, $f-\lpr{f}$ is called the \emph{marginal gamble} associated with~$f$.
Upper previsions $\upr{f}$ are seen as infimum acceptable buying prices and because of this are conjugate to lower previsions, i.e., $\upr{f}=-\lpr\group{-f}$ \citep[\S2.3.5]{Walley-1991}; therefore it is sufficient to develop the theory in terms of lower previsions.
Linear previsions are lower previsions that are self-conjugate, i.e., that coincide with their conjugate upper prevision.

\citeauthor{Walley-1991}'s coherent lower previsions, when defined on a subset~$\someGs$ of~$\linGs$, can be characterised as follows: a lower prevision~$\lpr$ on~$\someGs$ is \emph{coherent} if and only if $\inf_{h\in\margs{\lpr}}\inf_{g\in\phull\margs{\lpr}}\sup\group{g-h}\geq0$ \citep[adapted from][\S2.5.1]{Walley-1991}, where we conveniently used the set of marginal gambles $\margs{\lpr}\defeq\cset{f-\lpr{f}}{f\in\someGs}$.
This is a weaker criterion than the one for linear previsions; however, it implies, among other things, that $\inf{f}\leq\lpr{f}\leq\sup{f}$ for any~$f$ in~$\someGs$ and that $\lpr$ is point-wise dominated by some linear prevision \citep[\S2.6.1 and \S3.3.3]{Walley-1991}.
So the results of the previous section concerning compatible background models~$\bgM$ in~$\ncMs$ are carried over; they must satisfy $\dotbgM\subseteq\bgM\subset\Adelim{\linGs_\geq}{\linGs_\leq}$, which we assume to be the case from here onward in this section.

So the assessment we associate with a lower prevision~$\lpr$ is~$\A_\lpr\defeq\Adelim{\margs{\lpr}+\reals_>}{\emptyset}$, resulting in a model $\M_\lpr\defeq\A_\lpr\reckunion\bgM$ by natural extension.
It is useful to have a criterion to determine when the assessment~$\A_\lpr$ is deductively closable and, if that is the case, an explicit expression for the natural extension:
\begin{theorem}\label{thm:lpr-model}
  The model $\M_\lpr$ corresponding to a lower prevision~$\lpr$ avoids confusion if and only if $\lpr$ \emph{avoids sure loss}, i.e., $\inf_{g\in\phull\margs{\lpr}}\sup{g}\geq0$.
  In that case
  $
    \M_\lpr = \Adelim{\phull\margs{\lpr}+\linGs_\gtrdot}
                     {\linGs_\lessdot-\phull\margs{\lpr}} \union \bgM.
  $
\end{theorem}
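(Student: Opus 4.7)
The plan is to unpack the natural extension $\M_\lpr=\ext{\Ms}\bigl(\ext{\Ds}(\A_\lpr\union\bgM)\bigr)$ and reduce the confusion-avoidance condition to an application of the deductive-closability criterion of Theorem~\ref{thm:dcAs-zerocrit}. The starting components are $(\A_\lpr\union\bgM)_\acc=(\margs{\lpr}+\reals_>)\union\bgM_\acc$ and $(\A_\lpr\union\bgM)_\rej=\bgM_\rej$.

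The crux is the pair of auxiliary identities
\[
  \reals_>+\bgM_\acc=\linGs_\gtrdot
  \quad\text{and}\quad
  \bgM_\rej-\linGs_\gtrdot=\linGs_\lessdot,
\]
both of which follow from the hypothesis $\dotbgM\subseteq\bgM\subset\Adelim{\linGs_\geq}{\linGs_\leq}$: the $\subseteq$ directions use the upper bounds $\bgM_\acc\subseteq\linGs_\geq$ and $\bgM_\rej\subseteq\linGs_\leq$ through pointwise infimum/supremum arguments, while the $\supseteq$ directions use the lower bounds $\linGs_\gtrdot\subseteq\bgM_\acc$ and $\linGs_\lessdot\subseteq\bgM_\rej$ via the decomposition $g=(\inf g/2)+(g-\inf g/2)$ and its sign-flipped analogue. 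Splitting a positive linear combination of elements of $(\margs{\lpr}+\reals_>)\union\bgM_\acc$ by which side of the union each contribution originates on, and then applying the first identity, yields $\phull\bigl((\margs{\lpr}+\reals_>)\union\bgM_\acc\bigr)=\bgM_\acc\union\bigl(\phull\margs{\lpr}+\linGs_\gtrdot\bigr)$.

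Feeding this into Theorem~\ref{thm:dcAs-zerocrit}, we need $0\notin\bgM_\rej-\bgM_\acc$ and $0\notin\bgM_\rej-\phull\margs{\lpr}-\linGs_\gtrdot$. The first holds by No Confusion for $\bgM$. The second, rewritten using the second identity as $0\notin\linGs_\lessdot-\phull\margs{\lpr}$, is equivalent to $\phull\margs{\lpr}\cap\linGs_\lessdot=\emptyset$, i.e., to $\inf_{g\in\phull\margs{\lpr}}\sup g\geq 0$; this is precisely the avoiding-sure-loss condition. Assuming it holds, Proposition~\ref{prop:reckoning-for-closed-noconf} delivers the rejected component of $\M_\lpr$ as $\shull{\bgM_\rej}\union\bigl(\shull{\bgM_\rej}-\bigl(\bgM_\acc\union(\phull\margs{\lpr}+\linGs_\gtrdot)\bigr)\bigr)$; Proposition~\ref{prop:model+hulls} applied to $\bgM$ collapses $\shull{\bgM_\rej}$ to $\bgM_\rej$, No Limbo on $\bgM$ absorbs $\bgM_\rej-\bgM_\acc$ into $\bgM_\rej$, and the second identity again turns $\bgM_\rej-\phull\margs{\lpr}-\linGs_\gtrdot$ into $\linGs_\lessdot-\phull\margs{\lpr}$. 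The result is $\M_\lpr=\Adelim{\phull\margs{\lpr}+\linGs_\gtrdot}{\linGs_\lessdot-\phull\margs{\lpr}}\union\bgM$, as claimed. The main obstacle is the careful establishment of the two auxiliary identities, since they are the step that translates the abstract background-model bounds into the concrete sets $\linGs_\gtrdot$ and $\linGs_\lessdot$ appearing in the theorem statement; once they are in hand, the rest is routine bookkeeping with intersection and Minkowski operations.
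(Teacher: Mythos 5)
Your proposal is correct and follows essentially the same route as the paper's proof: an explicit computation of $\ext{\Ds}(\A_\lpr\union\bgM)$ that collapses $\phull\group[\big]{(\margs{\lpr}+\reals_>)\union\bgM_\acc}$ to $\bgM_\acc\union\group{\phull\margs{\lpr}+\linGs_\gtrdot}$ via the bounds $\dotbgM\subseteq\bgM\subset\Adelim{\linGs_\geq}{\linGs_\leq}$, a reduction of the no-confusion condition to $\phull\margs{\lpr}\intersection\linGs_\lessdot=\emptyset$ (the paper does this through Lemma~\ref{lem:minkowski-intersection} rather than by quoting Theorem~\ref{thm:dcAs-zerocrit}, which amounts to the same thing), and then the reckoning extension to obtain the rejected component. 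Your two auxiliary identities are exactly the sandwich arguments the paper carries out inline, so there is nothing substantive to add.
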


As an aside, notice that $\A_\lpr$ is an assessment consisting purely of acceptability statements; so by taking~$\bgM$ in~$\ncMs_\zeroM$, we see that modelling uncertainty using previsions can be seen as working in the acceptability framework of Section~\ref{sec:acceptability}.
Moreover, notice---making abstraction of $\bgM$---that $\M_\lpr$ consists purely of favourability statements and that we could strengthen every acceptability statement in $\A_\lpr$ to a favourability statement with the resulting model still being equal to $\M_\lpr$.
So modelling uncertainty using previsions can also be seen as working in the favourability framework of Section~\ref{sec:favourability}.

We now know how to associate a model with a lower prevision.
It is also possible to move in the other direction: given a model~$\M$ in $\ncMs_{\dotbgM}$, we can derive a coherent lower prevision~$\lpr_\M$.
The translation rule is based on the interpretation of the lower prevision for a gamble~$f$ in~$\linGs$ as a supremum acceptable buying price:
\begin{equation}\label{eq:Mtolpr}
  \lpr_\M{f} \defeq \sup\cset{\alpha\in\reals}{f-\alpha\in\M_\acc}.
\end{equation}
What kind of prevision does the background model translate to?
\begin{proposition}\label{prop:bgM-to-vac}
  Given a background model~$\bgM$ in~$\ncMs$ such that $\dotbgM\subseteq\bgM\subset\Adelim{\linGs_\geq}{\linGs_\leq}$, then the corresponding lower prevision is \emph{vacuous}: $\lpr_\bgM=\inf$.
\end{proposition}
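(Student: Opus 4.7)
The plan is to sandwich the set $S_f \defeq \cset{\alpha\in\reals}{f-\alpha\in\bgM_\acc}$ between two sets whose supremum is manifestly $\inf f$, using the two inclusions provided by the hypothesis on the background model.

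First I would unfold the definition of~$\lpr_\bgM$ applied to an arbitrary gamble~$f$ in~$\linGs$, so that
\[
  \lpr_\bgM f = \sup S_f.
\]
From the hypothesis $\dotbgM\subseteq\bgM\subset\Adelim{\linGs_\geq}{\linGs_\leq}$, I extract the component inclusions of interest, namely
\[
  \linGs_\gtrdot\subseteq\bgM_\acc\subseteq\linGs_\geq.
\]

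Next I would derive the two bounds on $\lpr_\bgM f$. For the upper bound, if $\alpha\in S_f$, then $f-\alpha\in\bgM_\acc\subseteq\linGs_\geq$, hence $\inf(f-\alpha)=\inf f-\alpha\geq 0$, so $\alpha\leq\inf f$; therefore $S_f\subseteq(-\infty,\inf f]$, giving $\lpr_\bgM f\leq\inf f$. For the lower bound, if $\alpha<\inf f$, then $\inf(f-\alpha)>0$, so $f-\alpha\in\linGs_\gtrdot\subseteq\bgM_\acc$; hence $(-\infty,\inf f)\subseteq S_f$, giving $\lpr_\bgM f\geq\inf f$.

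Combining these two bounds yields $\lpr_\bgM f=\inf f$ for every $f$ in~$\linGs$, i.e.~$\lpr_\bgM=\inf$ as required. There is no genuine obstacle here: the argument is a direct sandwich using only the definition of $\lpr_\bgM$ and the two component inclusions of the hypothesis on~$\bgM$; in particular, deductive closure and the no-limbo structure of $\bgM$ play no role beyond what is already encoded in $\bgM_\acc\subseteq\linGs_\geq$.
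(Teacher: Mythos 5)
Your proof is correct and follows essentially the same route as the paper's: both unfold $\lpr_\bgM f=\sup\cset{\alpha\in\reals}{f-\alpha\in\bgM_\acc}$ and use the sandwich $\linGs_\gtrdot\subseteq\bgM_\acc\subseteq\linGs_\geq$, the paper compressing your two-sided bound into the remark that the supremum makes the distinction between $\geq$ and $\gtrdot$ moot. No gaps.
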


The rules for moving between models and lower previsions preserve the most resolve possible without adding 
\begin{wrapfigure}{r}{0pt}
  \(
  \begin{tikzpicture}[scale=1.5,baseline]
    \draw[->] (-1,0) coordinate (xl) -- (1,0) coordinate (xu);
    \draw[->] (0,-.7) coordinate (yl) -- (0,1) coordinate (yu);
    \coordinate (a1) at (-10:.7);
    \coordinate (a2) at (135:.7);
    \coordinate (r2) at (150:.7);
    \begin{pgfonlayer}{background}
      \draw[border] (0,0) -- (intersection of 0,0--a1 and xu--{xu|-yu}) coordinate (a1away);
      \draw[border] (0,0) -- (intersection of 0,0--a2 and yu--{xu|-yu}) coordinate (a2away);
      \fill[lightgray] (0,0) -- (a1away) |- (a2away) --cycle;
      \draw[border] (0,0) -- (intersection of 0,0--r2 and xl--{xl|-yl}) coordinate (r2away);
      \draw[exclborder] (0,0) -- (intersection of 0,0--a2 and yl--{xu|-yl}) coordinate (a2anotherway);
      \fill[gray] (0,0) -- (a2anotherway) |- (xl|-yl) |- (r2away) --cycle;
      \draw[favborder] (0,0) -- (a1away);
    \end{pgfonlayer}
    \node at (.5,.5) {$\M_\acc$};
    \node[white] at (-.5,-.4) {$\M_\rej$};
  \end{tikzpicture}
  \;\supseteq\;
  \begin{tikzpicture}[scale=1.5,baseline]
    \draw[->] (-1,0) coordinate (xl) -- (1,0) coordinate (xu);
    \draw[->] (0,-.7) coordinate (yl) -- (0,1) coordinate (yu);
    \coordinate (a1) at (-10:.7);
    \coordinate (a2) at (135:.7);
    \begin{pgfonlayer}{background}
      \draw[exclborder] (0,0) -- (intersection of 0,0--a1 and xu--{xu|-yu}) coordinate (a1away);
      \draw[exclborder] (0,0) -- (intersection of 0,0--a2 and yu--{xu|-yu}) coordinate (a2away);
      \fill[lightgray] (0,0) -- (a1away) |- (a2away) --cycle;
      \draw[exclborder] (0,0) -- (intersection of 0,0--a1 and xl--{xl|-yl}) coordinate (r2away);
      \draw[exclborder] (0,0) -- (intersection of 0,0--a2 and yl--{xu|-yl}) coordinate (a2anotherway);
      \fill[gray] (0,0) -- (a2anotherway) |- (xl|-yl) |- (r2away) --cycle;
    \end{pgfonlayer}
    \node at (.5,.5) {$(\M_{\lpr_\M})_\acc$};
    \node[white,inner sep=0pt] at (-.5,-.4) {$(\M_{\lpr_\M})_\rej$};
  \end{tikzpicture}
  \)
\end{wrapfigure}
any new statements; this is made explicit by the following result:
\begin{proposition}\label{prop:M-pr-M}
  Given a model~$\M$ in~$\ncMs_\bgM$, then
  \begin{enumerate}
    \item\label{item:prop:M-pr-M:coh}
      the corresponding lower prevision $\lpr_\M$ is coherent, and
    \item\label{item:prop:M-pr-M:commitloss}
      it never represents more, but may represent less resolve: $\M_{\lpr_\M}\subseteq\M$.
  \end{enumerate}
\end{proposition}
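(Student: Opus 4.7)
The plan is to handle parts~\ref{item:prop:M-pr-M:coh} and~\ref{item:prop:M-pr-M:commitloss} separately, each reducing to a careful use of Equation~\eqref{eq:Mtolpr} together with the axioms of the framework and the background bounds $\dotbgM\subseteq\bgM\subset\Adelim{\linGs_\geq}{\linGs_\leq}$ available throughout this section.

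For~\ref{item:prop:M-pr-M:coh} I would argue by contradiction against Walley's coherence criterion. Suppose there exist $f_0,f_1,\ldots,f_n\in\linGs$ and non-negative coefficients $\lambda_1,\ldots,\lambda_n$ with $\eta\defeq\inf\group[\big]{(f_0-\lpr_\M{f_0})-\sum_i\lambda_i(f_i-\lpr_\M{f_i})}>0$. Equation~\eqref{eq:Mtolpr} lets me pull gambles of the form $f_i-\lpr_\M{f_i}+\epsilon_i$ into $\M_\acc$ for each $\epsilon_i>0$; choosing the $\epsilon_i$ so that $\sum_i\lambda_i\epsilon_i=\eta/4$ and invoking Deductive Closure~\eqref{eq:deduction} gives a gamble $g\in\M_\acc$ uniformly dominated by $f_0-\lpr_\M{f_0}-\eta/2$. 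The difference lies in $\linGs_\gtrdot\subseteq\bgM_\acc\subseteq\M_\acc$ by the compatibility bounds, so Combination~\eqref{eq:combination} places $f_0-\lpr_\M{f_0}-\eta/2$ itself in $\M_\acc$, contradicting the definition of $\lpr_\M{f_0}$ as a supremum.

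For~\ref{item:prop:M-pr-M:commitloss}, coherence already justifies invoking Theorem~\ref{thm:lpr-model}, giving the explicit form $\M_{\lpr_\M}=\Adelim{\phull\margs{\lpr_\M}+\linGs_\gtrdot}{\linGs_\lessdot-\phull\margs{\lpr_\M}}\union\bgM$. Since $\bgM\subseteq\M$ is given, the task reduces to two component-wise inclusions. On the accept side, given $h=g+k$ with $g=\sum_j\mu_j h_j\in\phull\margs{\lpr_\M}$ and $k\in\linGs_\gtrdot$, I pick $\epsilon_j>0$ so small that $\sum_j\mu_j\epsilon_j<\inf k$: then $g+\sum_j\mu_j\epsilon_j\in\M_\acc$ by the same Deductive Closure trick as above, $k-\sum_j\mu_j\epsilon_j$ is uniformly positive and hence in $\linGs_\gtrdot\subseteq\M_\acc$, and their sum equals $h$. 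On the reject side, given $l-g$ with $l\in\linGs_\lessdot$ and $g\in\phull\margs{\lpr_\M}$, I pick $\delta>0$ small enough that both $l+\delta\in\linGs_\lessdot\subseteq\bgM_\rej\subseteq\M_\rej$ and $g+\delta\in\M_\acc$; then $l-g=(l+\delta)-(g+\delta)\in\M_\rej-\M_\acc\subseteq\M_\rej$, where the last inclusion is No Limbo~\eqref{eq:no-limbo} combined with $\shull{\M_\rej}=\M_\rej$ from Proposition~\ref{prop:model+hulls}\ref{item:prop:model+hulls:shullrej}.

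The main obstacle---and really the only subtle point---is the $\epsilon$-gymnastics around the supremum in Equation~\eqref{eq:Mtolpr}. Gambles in $\margs{\lpr_\M}$ or $\phull\margs{\lpr_\M}$ need not themselves lie in $\M_\acc$, so each invocation of Deductive Closure must be preceded by an appropriate small positive shift, and in part~\ref{item:prop:M-pr-M:commitloss} these shifts must be coordinated against the strict inequalities supplied by the $\linGs_\gtrdot$ and $\linGs_\lessdot$ factors coming from the background bounds. No genuinely deep step is required beyond this bookkeeping.
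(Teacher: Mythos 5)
Your proposal is correct. Part~(ii) is essentially the paper's own argument: it too invokes Theorem~\ref{thm:lpr-model} (legitimate once coherence, hence avoiding sure loss, is in hand), disposes of the $\bgM$ part via $\bgM\subseteq\M$, handles the accept component by small positive shifts that pull marginal gambles into $\M_\acc$ and absorb the remainder into $\linGs_\gtrdot\subseteq\M_\acc$, and handles the reject component via $\linGs_\lessdot\subseteq\bgM_\rej$ and $\M_\rej-\M_\acc\subseteq\M_\rej$; the paper merely phrases your pointwise $\delta$-step as the set-level chain $\linGs_\lessdot-\phull\margs{\lpr_\M}=\linGs_\lessdot-\linGs_\gtrdot-\phull\margs{\lpr_\M}\subseteq\bgM_\rej-\M_\acc\subseteq\M_\rej$. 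Where you genuinely diverge is part~(i): the paper never touches the supremum-type criterion of Section~\ref{sec:lowprevs}, but instead cites the linear-space characterisation of coherence (Lemma~\ref{lem:clp-on-lin}: accepting sure gains, positive homogeneity, superadditivity) and verifies the three conditions directly from $\bgM\subseteq\M$ with Proposition~\ref{prop:bgM-to-vac}, Positive Scaling~\eqref{eq:scaling}, and Combination~\eqref{eq:combination}, so no $\varepsilon$-bookkeeping is needed there; you refute the negation of the raw criterion by a contradiction with coordinated shifts, which is valid and has the merit of not leaning on the equivalence packaged in Lemma~\ref{lem:clp-on-lin}, at the cost of the $\varepsilon$-gymnastics you acknowledge. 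Two small points you should make explicit on your route: since Indifference to Status Quo is not assumed, $0$ need not lie in $\M_\acc$, so in the degenerate case where the positive combination is empty (all $\lambda_i=0$) your $g$ is the zero gamble and the claim $g\in\M_\acc$ can fail—but then $f_0-\lpr_\M{f_0}-\eta/2$ is itself uniformly positive and lies in $\linGs_\gtrdot\subseteq\M_\acc$ directly, so the contradiction survives; and both routes tacitly use that $\lpr_\M{f}$ is real, which follows from $\linGs_\gtrdot\subseteq\M_\acc$ (lower bound $\inf f$) and from No Confusion with $\linGs_\lessdot\subseteq\M_\rej$ (upper bound $\sup f$).
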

\noindent
This loss in resolve is illustrated above right for a model~$\M$ in~$\ncMs_{\Adelim{\linGs_\geq}{\linGs_<}}$:
The transition to a coherent lower prevision has erased the part of the set of rejected gambles that is not related to the accepted gambles by negation.
Furthermore, all border structure of both sets has also been erased; only open sets remain.

Now that we can move from lower previsions to models and back, we can derive some standard inference results from the theory of coherent lower previsions using the tools developed in this paper:
\begin{proposition}\label{prop:pr-M-pr}
  Given a non-empty set $\someGs\subseteq\linGs$ and a lower prevision $\lpr$ on~$\someGs$, then
  \begin{enumerate}
    \item\label{item:lprnatex}
      the \emph{natural extension} $\lpr_{\M_\lpr}$ is defined by $\lpr_{\M_\lpr}f=\sup_{g\in\phull\margs{\lpr}}\inf(f-g)$ for all~$f$ in~$\linGs$,
    \item\label{item:natexdomlpr}
      the lower prevision $\lpr$ is dominated by its natural extension: $\lpr_{\M_\lpr}\geq\lpr$ on~$\someGs$, and
    \item\label{item:lpreqnatex} the lower prevision $\lpr$ coincides with its natural extension, i.e., $\lpr_{\M_\lpr}=\lpr$ on~$\someGs$, if and only if $\lpr$~is coherent.
  \end{enumerate}
\end{proposition}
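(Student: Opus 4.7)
The plan is to use Theorem~\ref{thm:lpr-model} to unpack $(\M_\lpr)_\acc$ explicitly, apply Equation~\eqref{eq:Mtolpr}, and then match the result against the paper's coherence criterion $\inf_{h\in\margs{\lpr}}\inf_{g\in\phull\margs{\lpr}}\sup(g-h)\geq0$.

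For claim~\ref{item:lprnatex} in the well-behaved case where $\lpr$ avoids sure loss, Theorem~\ref{thm:lpr-model} gives $(\M_\lpr)_\acc=(\phull\margs{\lpr}+\linGs_\gtrdot)\cup\bgM_\acc$, and one checks that $f-\alpha\in\phull\margs{\lpr}+\linGs_\gtrdot$ if and only if $\inf(f-g)>\alpha$ for some $g\in\phull\margs{\lpr}$; the supremum in Equation~\eqref{eq:Mtolpr} then collapses to $\sup_{g\in\phull\margs{\lpr}}\inf(f-g)$, with the $\bgM_\acc\subseteq\linGs_\geq$ contribution already absorbed by the $g=0$ term. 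In the degenerate case where $\lpr$ does not avoid sure loss, $\M_\lpr=\top$ so the left-hand side is $+\infty$, and scaling any witness $g_0\in\phull\margs{\lpr}$ with $\sup g_0<0$ by arbitrarily large positives forces the right-hand side to $+\infty$ as well.

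For claim~\ref{item:natexdomlpr}, I substitute $g=f-\lpr f\in\margs{\lpr}\subseteq\phull\margs{\lpr}$ into the formula, so that $\inf(f-g)=\lpr f$ bounds the supremum from below. For claim~\ref{item:lpreqnatex}, the ``only if'' direction follows because $\lpr_{\M_\lpr}$ is coherent by Proposition~\ref{prop:M-pr-M}(\ref{item:prop:M-pr-M:coh}), and the paper's coherence criterion applied to the smaller marginal-gamble set $\margs{\lpr}\subseteq\margs{\lpr_{\M_\lpr}}$ is weaker, so $\lpr$ inherits coherence. For the converse, given a coherent $\lpr$ and $f\in\someGs$, I take $h=f-\lpr f\in\margs{\lpr}$ in the coherence criterion to conclude $\sup(g-h)\geq0$, i.e., $\inf(f-g)\leq\lpr f$ for every $g\in\phull\margs{\lpr}$; taking supremum over $g$ and combining with claim~\ref{item:natexdomlpr} yields equality.

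The only subtle point is the sure-loss edge case in claim~\ref{item:lprnatex}, where both sides must be shown to diverge in parallel; the rest is a direct unpacking of the definitions of $\ext{\Ms}$, $\ext{\Ds}$, and $\phull$.
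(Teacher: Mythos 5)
Your route for claim~\ref{item:lprnatex} is the paper's: unpack $(\M_\lpr)_\acc$ via Theorem~\ref{thm:lpr-model}, split the supremum in Equation~\eqref{eq:Mtolpr} over the union, and identify the $\phull\margs{\lpr}+\linGs_\gtrdot$ branch with $\sup_{g\in\phull\margs{\lpr}}\inf(f-g)$. The one step that fails as written is your claim that the $\bgM_\acc$ branch---which contributes exactly $\lpr_\bgM f=\inf f$ by Proposition~\ref{prop:bgM-to-vac}---is ``absorbed by the $g=0$ term'': in general $0\notin\phull\margs{\lpr}$ (take $\someGs=\set{f_0}$ with $\lpr f_0<\inf f_0$; then every element of the hull has strictly positive infimum), so there is no such term. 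The needed inequality $\sup_{g\in\phull\margs{\lpr}}\inf(f-g)\geq\inf f$ is still true, but it requires a limiting argument: since $\someGs\neq\emptyset$ you can pick $h\in\margs{\lpr}$ and let $\lambda$ tend to $0$ from above in $\inf(f-\lambda h)\geq\inf f-\lambda\sup h$. This is precisely the paper's closing remark that $0$ lies in the sup-norm closure of $\phull\margs{\lpr}$, and it is the only place where the non-emptiness of $\someGs$ is used---a hypothesis your argument never touches. Your separate treatment of the sure-loss case (both sides equal $+\infty$) is extra care the paper leaves implicit, and is fine.

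Claim~\ref{item:natexdomlpr} is handled exactly as in the paper ($g\defeq f-\lpr f\in\margs{\lpr}$). For claim~\ref{item:lpreqnatex}, your ``if'' direction is the paper's computation in disguise: with $h=f-\lpr f$, $\sup(g-h)\geq0$ rewrites to $\inf(f-g)\leq\lpr f$, and claim~\ref{item:natexdomlpr} gives equality. Your ``only if'' direction genuinely differs: the paper observes that, by claim~\ref{item:natexdomlpr}, equality is equivalent to $\sup_{f\in\someGs}\sup_{g\in\phull\margs{\lpr}}\inf\group[\big]{(f-\lpr f)-g}\leq0$, which is literally the coherence criterion, so both directions come out of one chain of equivalences; you instead invoke Proposition~\ref{prop:M-pr-M}\ref{item:prop:M-pr-M:coh} for coherence of $\lpr_{\M_\lpr}$ and use monotonicity of the criterion under $\margs{\lpr}\subseteq\margs{\lpr_{\M_\lpr}}$ (valid because $\lpr=\lpr_{\M_\lpr}$ on $\someGs$). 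That restriction argument is correct and conceptually appealing---restrictions of coherent lower previsions are coherent---but it routes through Lemma~\ref{lem:clp-on-lin}, whereas the paper's version is self-contained. Apart from the $g=0$ slip, the proposal is sound.
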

\noindent
For incoherent lower previsions~$\lpr$ on~$\someGs$ that avoid sure loss, we see that there are some gambles~$f$ such that $\lpr_{\M_\lpr}f>\lpr{f}$.
This is a consequence of the fact that the commitments encoded in~$\lpr$ have not been fully taken into account in the specification of gambles such as~$f$; they are in $\lpr_{\M_\lpr}$.

\section{An Application: Dealing with Symmetry}\label{sec:xch}
Consider a set~$\transfos$ of transformations of the possibility space $\pspace$.
With any gamble $f$ and any transformation~$T$ in~$\transfos$, there corresponds a transformed gamble $T^tf$ defined by $T^tf\defeq f\circ T$, so that $(T^tf)(\omega)=f(T\omega)$ for all~$\omega$ in~$\pspace$.
For it to have the properties one would expect of a set of transformations, we assume $\transfos$ with function composition~$\after$ to be a monoid, i.e.,
\begin{itemize}
  \item it is closed: $T'\after T''\in\transfos$,
  \item it is associative: $T'\after(T\after T'')=(T'\after T)\after T''$, and
  \item it has an identity element $T^*$ in $\transfos$: $T^*\after T=T\after T^*=T$,
\end{itemize}
for all transformations $T$, $T'$, and $T''$ in $\transfos$.

In the background, we assume all positive gambles ($\linGs_>$) to be favourable.
We also assume that there is some symmetry, represented by the monoid~$\transfos$, associated with the experiment, which leads the agent to be indifferent between any gamble~$f$ and its transformation~$T^tf$.
This gives rise to the background set of indifferent gambles
$
  \linGs_\transfos \defeq \lhull\cset{f-T^tf}{f\in\linGs\conj T\in\transfos}.
$
What is the background model corresponding to such a background assessment and when does it avoid confusion?
\begin{proposition}\label{prop:symbgM}
  The model 
  $
    \bgM_\transfos \defeq \Adelim{\linGs_\transfos}{\emptyset} \reckunion
                          \Adelim{\linGs_>}{\linGs_<}
                   = \Adelim{\linGs_\transfos\union(\linGs_\transfos+\linGs_>)}
                            {\linGs_\transfos+\linGs_<}
  $
  is a favour-indifference model without confusion and with indifference to status quo, i.e., $\bgM_\transfos\in\fif\ncMs_\zeroM$,  if and only if $f<0$ for no background indifferent gamble~$f$ in~$\linGs_\transfos$.\footnotemark
\end{proposition}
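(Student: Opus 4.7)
The plan is to unfold the definition of $\bgM_\transfos$ via the identity $\cls{\ncMs} = \ext{\Ms} \after \ext{\Ds}$ on $\dcAs$ and then invoke the key structural results. Combining Corollary~\ref{cor:respect-natext} with Theorem~\ref{thm:dcAs-zerocrit}, the reckoning union $\bgM_\transfos$ belongs to $\ncMs$ precisely when $0 \notin \linGs_< - \phull(\linGs_\transfos \cup \linGs_>)$. Because $\linGs_\transfos$ is a linear space and $\linGs_>$ a convex cone, $\phull(\linGs_\transfos \cup \linGs_>) = \linGs_\transfos \cup (\linGs_\transfos + \linGs_>)$, so this splits into (a) $\linGs_\transfos \cap \linGs_< = \emptyset$ and (b) $(\linGs_\transfos + \linGs_>) \cap \linGs_< = \emptyset$. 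A pointwise argument shows that (a) implies (b): if $g + k \in \linGs_<$ with $g \in \linGs_\transfos$ and $k \in \linGs_>$, then $g(\omega) < -k(\omega) \leq 0$ for all $\omega$, so $g$ would belong to $\linGs_\transfos \cap \linGs_<$, contradicting (a). Hence the stated condition exactly characterises absence of confusion.

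For the explicit form, I would push through the operators directly. Applying $\ext{\Ds}$ to $\Adelim{\linGs_\transfos \cup \linGs_>}{\linGs_<}$ yields $\Adelim{\linGs_\transfos \cup (\linGs_\transfos + \linGs_>)}{\linGs_<}$. Under the closability condition the rejected and accepted components are disjoint, so that applying $\ext{\Ms}$ amounts to adjoining $\shull{\linGs_<} \cup (\shull{\linGs_<} - [\linGs_\transfos \cup (\linGs_\transfos + \linGs_>)])$ to the rejected set. Using $\shull{\linGs_<} = \linGs_<$, the identity $\linGs_\transfos = -\linGs_\transfos$, and the pointwise inclusion $\linGs_< - \linGs_> \subseteq \linGs_<$, this set collapses to $\linGs_\transfos + \linGs_<$, matching the stated formula.

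Next, to verify membership in $\fif\ncMs_\zeroM$ I would check the structural conditions from Section~\ref{sec:favindiff}. Indifference to status quo follows from $0 \in \linGs_\transfos \subseteq (\bgM_\transfos)_\acc$. Condition~\eqref{eq:accfav-condition} is immediate from $-(\linGs_\transfos + \linGs_<) = \linGs_\transfos + \linGs_> \subseteq (\bgM_\transfos)_\acc$. For Condition~\eqref{eq:favindiff-condition}, a direct computation yields $(\bgM_\transfos)_\fav = \linGs_\transfos + \linGs_>$ and $(\bgM_\transfos)_\indiff = \linGs_\transfos$; the latter identity uses the closability condition to rule out the putative intersection $\linGs_\transfos \cap (\linGs_\transfos + \linGs_<)$, which, via $\linGs_\transfos = -\linGs_\transfos$, reduces to $\linGs_\transfos \cap \linGs_< = \emptyset$. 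Their union is exactly $(\bgM_\transfos)_\acc$.

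The main obstacle is not any deep step but the careful bookkeeping for the Minkowski sums and set-theoretic intersections that arise; the unifying trick is to combine $\linGs_\transfos = -\linGs_\transfos$ with the pointwise containments $\linGs_< - \linGs_> \subseteq \linGs_<$ and $\linGs_> - \linGs_< \subseteq \linGs_>$ so that every identity one needs reduces to the single hypothesis that $\linGs_\transfos$ contains no strictly negative gamble.
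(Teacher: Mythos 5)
Your argument is correct and reaches the stated conclusion, but it takes a lower-level route than the paper. The paper's proof is essentially three lines: it observes that $\Adelim{\linGs_\transfos}{\emptyset}\union\Adelim{\linGs_>}{\linGs_<}$ lies in $\fif\As$ with $\linGs_>\in\cones$ and $\emptyset\neq\linGs_\transfos\in\lineals$, then cites Theorem~\ref{thm:fifdcAs-zerocrit} to get the single closability condition $0\notin\linGs_>+\linGs_\transfos$ (equivalent to $\linGs_\transfos\intersection\linGs_<=\emptyset$) and Theorem~\ref{thm:favindiff} to get both the explicit form and membership in $\fif\ncMs_\zeroM$. You instead start from the general criterion of Theorem~\ref{thm:dcAs-zerocrit} and push $\ext{\Ds}$ and $\ext{\Ms}$ through by hand, thereby re-deriving for this special case exactly what the favour-indifference machinery packages abstractly: your identity $\phull(\linGs_\transfos\union\linGs_>)=\linGs_\transfos\union(\linGs_\transfos+\linGs_>)$ is Lemma~\ref{lem:favindiff-accphull}, the collapse of your two emptiness conditions into the single condition (a) is what the passage from Theorem~\ref{thm:dcAs-zerocrit} to Theorem~\ref{thm:fifdcAs-zerocrit} accomplishes, and your computation of the rejected set plus the verification of Conditions~\eqref{eq:accfav-condition} and~\eqref{eq:favindiff-condition} reproduces Theorem~\ref{thm:favindiff}. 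The paper's route buys brevity and reuse of the framework; yours buys a self-contained check that makes visible where the pointwise structure of $\linGs_>$ and $\linGs_<$ enters.

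One small repair is needed in your step showing that (a) implies (b): you write $g(\omega)<-k(\omega)\leq0$ for all $\omega$, but under the paper's conventions $\linGs_<$ is the set of gambles that are $\leq0$ pointwise and nonzero (strict pointwise negativity is $\linGs_\lessdot$), so from $g+k\in\linGs_<$ you only get $g\leq-k\leq0$ pointwise. To conclude $g\in\linGs_\transfos\intersection\linGs_<$ you must additionally rule out $g=0$, which is immediate since $g=0$ would give $k\in\linGs_>\intersection\linGs_<=\emptyset$; the same nonzero check is implicitly needed for your inclusion $\linGs_<-\linGs_>\subseteq\linGs_<$. These are one-line fixes and do not affect the structure of your proof.
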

\footnotetext{
  The condition of this proposition is closely related to the necessary and sufficient condition for the so-called left-amenability of the monoid $\transfos$---i.e., the existence of a $\transfos$-invariant linear prevision on $\linGs$---that $\sup{f}\geq0$ for all~$f$ in~$\linGs_\transfos$ \citep[cf.][]{Greenleaf-1969,Walley-1991}.
  The fact that our condition is slightly stronger should come as no surprise, as we have seen in Section~\ref{sec:linprevs} that previsions cannot express the distinction between $\inf{f}\geq0$ and $f>0$.
}

When working in a favourability framework using $\bgM_\transfos$ as the background model, the extension of assessments is governed by the following result:
\begin{corollary}[cf. Theorem~\ref{thm:ffdcAs-zerocrit}]\label{prop:ffsymdcAs-zerocrit}
  Given a set of favourable gambles $\A_\fav\subseteq\linGs$, then the favourability assessment $\A\defeq\Adelim{\A_\fav}{-\A_\fav}$ respects the background model $\bgM_\transfos$ if and only if\/ $0\notin\linGs_\transfos+\phull(\linGs_>\union\A_\fav)$.
  In that case, the natural extension $\M\defeq\A\reckunion\bgM_\transfos\in\fif\ncMs_\zeroM$ has defining components $\M_\indiff=\linGs_\transfos$ and $\M_\fav=\linGs_\transfos+\phull(\linGs_>\union\A_\fav)$, so the resulting model is $\M=\Adelim{\linGs_\transfos\union\M_\fav}{-\M_\fav}$.
\end{corollary}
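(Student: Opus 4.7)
The plan is to apply Theorem~\ref{thm:ffdcAs-zerocrit} to the background model $\bgM_\transfos$ supplied by Proposition~\ref{prop:symbgM}, and then simplify the resulting expressions. Under the stated hypothesis (no $f<0$ lies in $\linGs_\transfos$), Proposition~\ref{prop:symbgM} already guarantees $\bgM_\transfos\in\fif\ncMs_\zeroM$, so Theorem~\ref{thm:ffdcAs-zerocrit} is directly applicable and will deliver both the respect condition and an explicit description of the natural extension.

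First I would identify $(\bgM_\transfos)_\indiff$ and $(\bgM_\transfos)_\fav$ explicitly. Using the form $\bgM_\transfos=\Adelim{\linGs_\transfos\union(\linGs_\transfos+\linGs_>)}{\linGs_\transfos+\linGs_<}$ from Proposition~\ref{prop:symbgM}, the definitions $\bgM_\indiff=\bgM_\acc\intersection-\bgM_\acc$ and $\bgM_\fav=\bgM_\acc\intersection-\bgM_\rej$ from Section~\ref{sec:derived}, and the negation-invariance of the linear space $\linGs_\transfos$, a direct computation gives $(\bgM_\transfos)_\indiff=\linGs_\transfos$ and $(\bgM_\transfos)_\fav=\linGs_\transfos+\linGs_>$. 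The one subtlety is showing $\linGs_\transfos\intersection(\linGs_\transfos+\linGs_>)=\emptyset$, which prevents spurious elements from leaking between the indifferent and strictly favourable parts; it follows from the hypothesis, since any witness $g+h$ with $g\in\linGs_\transfos$, $h\in\linGs_>$ and $g+h\in\linGs_\transfos$ would force $h=(g+h)-g\in\linGs_\transfos$, and then $-h\in\linGs_\transfos$ with $-h<0$ gives a contradiction.

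Substituting into Theorem~\ref{thm:ffdcAs-zerocrit} then yields that $\A$ respects $\bgM_\transfos$ iff $0\notin\linGs_\transfos+\phull((\linGs_\transfos+\linGs_>)\union\A_\fav)$, and that the natural extension has $\M_\indiff=\linGs_\transfos$ and $\M_\fav=\linGs_\transfos+\phull((\linGs_\transfos+\linGs_>)\union\A_\fav)$. To match the statement, the remaining task is the set identity $\linGs_\transfos+\phull((\linGs_\transfos+\linGs_>)\union\A_\fav)=\linGs_\transfos+\phull(\linGs_>\union\A_\fav)$. The $\supseteq$ inclusion is immediate from $\linGs_>\subseteq\linGs_\transfos+\linGs_>$ (since $0\in\linGs_\transfos$). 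For $\subseteq$, I would write a typical element of $\phull((\linGs_\transfos+\linGs_>)\union\A_\fav)$ as $\sum_i\lambda_i(g_i+h_i)+\sum_j\mu_j b_j$ with $g_i\in\linGs_\transfos$, $h_i\in\linGs_>$, $b_j\in\A_\fav$ and positive coefficients, then rearrange it as $\bigl(\sum_i\lambda_i g_i\bigr)+\bigl(\sum_i\lambda_i h_i+\sum_j\mu_j b_j\bigr)$; the first summand lies in $\linGs_\transfos$ and is absorbed by the outer $\linGs_\transfos$, while the second lies in $\phull(\linGs_>\union\A_\fav)$. I do not expect any real obstacle: once the components of $\bgM_\transfos$ are pinned down, the argument is essentially bookkeeping plus the observation that $\linGs_\transfos$, being a linear space, absorbs its own Minkowski sums.
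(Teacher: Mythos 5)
Your proposal is correct and is essentially the intended argument: the paper gives no separate proof of this corollary, treating it as the specialization of Theorem~\ref{thm:ffdcAs-zerocrit} to $\bgM_\transfos$, whose defining components $\bgM_\indiff=\linGs_\transfos$ and $\bgM_\fav=\linGs_\transfos+\linGs_>$ are already delivered by Theorem~\ref{thm:favindiff} as invoked in the proof of Proposition~\ref{prop:symbgM}, followed by exactly the absorption identity $\linGs_\transfos+\phull\group[\big]{(\linGs_\transfos+\linGs_>)\union\A_\fav}=\linGs_\transfos+\phull(\linGs_>\union\A_\fav)$ that you verify. One small repair: the computation $(\bgM_\transfos)_\indiff=\linGs_\transfos$ actually hinges on $(\linGs_\transfos+\linGs_>)\intersection(\linGs_\transfos+\linGs_<)=\emptyset$ rather than on $\linGs_\transfos\intersection(\linGs_\transfos+\linGs_>)=\emptyset$, but this follows from the hypothesis of Proposition~\ref{prop:symbgM} by the same one-line witness argument you give (or is avoided altogether by citing Theorem~\ref{thm:favindiff} for the components).
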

\noindent
Furthermore, in this context, models can be characterised as follows:
\begin{corollary}[cf.~Theorem~\ref{thm:char-F}]\label{prop:char-Fsym}
  Given a set of favourable gambles $\M_\fav\subseteq\linGs$, then $\M\defeq\Adelim{\linGs_\transfos\union\M_\fav}{-\M_\fav}$ is a model, i.e., $\M\in\fif\ncMs_{\bgM_\transfos}$, if and only if
  \begin{enumerate}[label=\upshape{(F$\transfos$\arabic*)},leftmargin=*,widest=FT0]
    \item\label{item:Fsymbg}
      it favours background-favourable gambles: $\linGs_\transfos+\linGs_>\subseteq\M_\fav$,
    \item\label{item:Fsymzr} it does not favour status quo: $0\notin\M_\fav$,
    \item\label{item:Fsymcn}
      its favourable gambles form a cone: $\M_\fav\in\cones$, and
    \item\label{item:Fsymss}
      it favours sweetened deals: $\M_\fav+\linGs_\transfos\subseteq\M_\fav$.
  \end{enumerate}
\end{corollary}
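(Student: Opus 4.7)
The plan is to obtain this corollary as a direct specialization of Theorem~\ref{thm:char-F} to the background model $\bgM \defeq \bgM_\transfos$ of Proposition~\ref{prop:symbgM}. That proposition already does the work of showing $\bgM_\transfos \in \fif\ncMs_\zeroM$, so Theorem~\ref{thm:char-F} is immediately applicable, and what remains is to identify its two ingredients $\bgM_\indiff$ and $\bgM_\fav$ explicitly in terms of $\linGs_\transfos$ and $\linGs_>$, and substitute them into axioms \ref{item:Fbg}--\ref{item:Fss}.

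First I would compute $(\bgM_\transfos)_\indiff = (\bgM_\transfos)_\acc \intersection -(\bgM_\transfos)_\acc$. Starting from the explicit form $(\bgM_\transfos)_\acc = \linGs_\transfos \union (\linGs_\transfos + \linGs_>)$ given in Proposition~\ref{prop:symbgM} and using that $\linGs_\transfos$ is a linear space---so $-\linGs_\transfos = \linGs_\transfos$ and $-\linGs_> = \linGs_<$---I obtain $-(\bgM_\transfos)_\acc = \linGs_\transfos \union (\linGs_\transfos + \linGs_<)$. This intersection collapses to $\linGs_\transfos$ provided the three sets $\linGs_\transfos$, $\linGs_\transfos + \linGs_>$, and $\linGs_\transfos + \linGs_<$ are pairwise disjoint. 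Analogously, $-(\bgM_\transfos)_\rej = -(\linGs_\transfos + \linGs_<) = \linGs_\transfos + \linGs_>$, so the same disjointness gives $(\bgM_\transfos)_\fav = (\bgM_\transfos)_\acc \intersection -(\bgM_\transfos)_\rej = \linGs_\transfos + \linGs_>$.

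The main technical task is therefore establishing these pairwise disjointness claims, but each is a short argument powered by the hypothesis of Proposition~\ref{prop:symbgM}: any element in, say, $(\linGs_\transfos + \linGs_>) \intersection (\linGs_\transfos + \linGs_<)$ yields a decomposition $g + h = g' + h'$ with $g, g' \in \linGs_\transfos$, $h > 0$, and $h' < 0$, whence $g - g' = h' - h \in \linGs_\transfos$ is strictly negative, contradicting the assumption that no $f$ in $\linGs_\transfos$ satisfies $f < 0$. The remaining two disjointnesses ($\linGs_\transfos$ against each shifted cone) go through in exactly the same fashion. Once these are in hand, substituting $\bgM_\indiff = \linGs_\transfos$ and $\bgM_\fav = \linGs_\transfos + \linGs_>$ into Theorem~\ref{thm:char-F} converts \ref{item:Fbg} into \ref{item:Fsymbg} and \ref{item:Fss} into \ref{item:Fsymss}, while \ref{item:Fzr} and \ref{item:Fcn} already have the required form; since Theorem~\ref{thm:char-F} is a biconditional, both directions of the corollary follow at once.
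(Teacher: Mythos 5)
Your proposal is correct and matches the paper's intent: the corollary is meant as a direct specialization of Theorem~\ref{thm:char-F} to the background model $\bgM_\transfos$ of Proposition~\ref{prop:symbgM}, with $(\bgM_\transfos)_\indiff=\linGs_\transfos$ and $(\bgM_\transfos)_\fav=\linGs_\transfos+\linGs_>$, exactly as you substitute. One small simplification: the identification $(\bgM_\transfos)_\fav=(\bgM_\transfos)_\acc\intersection-(\bgM_\transfos)_\rej=\bigl(\linGs_\transfos\union(\linGs_\transfos+\linGs_>)\bigr)\intersection(\linGs_\transfos+\linGs_>)=\linGs_\transfos+\linGs_>$ needs no disjointness at all, and for $(\bgM_\transfos)_\indiff$ only the single disjointness $(\linGs_\transfos+\linGs_>)\intersection(\linGs_\transfos+\linGs_<)=\emptyset$ is required, which your argument from the no-negative-gamble condition establishes.
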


An interesting special case obtains when the set of transformations is in particular a finite group $\permuts$ of permutations of $\pspace$, in which case the condition of Proposition~\ref{prop:symbgM} always holds.
We give an overview of the important ideas and the conclusions that can be drawn; for details, see our earlier papers on exchangeability \citep{DeCooman-Quaeghebeur-2012-Kyburg,DeCooman-Quaeghebeur-2009-ISIPTA}.
The set $\pspace_\permuts\defeq\cset[\big]{\cset{\pi\omega}{\pi\in\permuts}}{\omega\in\pspace}$ of $\permuts$-invariant atoms is a partition of $\pspace$; these atoms are the smallest subsets of $\pspace$ that are invariant under all permutations $\pi$ in~$\permuts$.
Consequently, a gamble $f$ on $\pspace$ is invariant under all permutations in $\permuts$---or simply, $\permuts$-invariant---if and only if it is constant on this partition.
The linear subspace of all $\permuts$-invariant gambles is $\invars_\permuts\defeq\cset[\big]{f\in\linGs}{\group{\forall A\in\pspace_\permuts:\card{f(A)}=1}}$.

Consider the linear gamble transformation $\avg{\permuts}$ defined for all $f$ in $\linGs$ by $\avg{\permuts}f\defeq\sum_{\pi\in\permuts}\pi^tf/\card{\permuts}$,
then $\avg{\permuts}$ is a projection operator---meaning that $\avg{\permuts}\circ\avg{\permuts}=\avg{\permuts}$---satisfying $\avg{\permuts}\circ\pi^t=\pi^t\circ\avg{\permuts}=\avg{\permuts}$ for all $\pi$ in~$\permuts$.
This in turn (non-trivially) implies that its range is the set $\invars_\permuts$ of all $\permuts$-invariant gambles, and its kernel is the set $\linGs_\permuts$.
The projection operator $\avg{\permuts}$ allows for a simple representation result: if $\M_\fav$ satisfies \ref{item:Fsymbg}--\ref{item:Fsymss}, then $f\in\M_\fav$ if and only if $\avg{\permuts}f\in\M_\fav$.
So the set $\M_\fav$ is completely characterised by its projection $\avg{\permuts}\M_\fav$ on the lower-dimensional linear space $\invars_\permuts$.
(Cfr. our discussion of Proposition~\ref{prop:lineality}\ref{item:lineality-invariance}: $\D_\indiff$ here corresponds to $\linGs_\permuts$ and $\someGs$ here corresponds to~$\invars_\permuts$.)

\citet{DeCooman-Quaeghebeur-2012-Kyburg,DeCooman-Quaeghebeur-2009-ISIPTA} discuss the special case where $\pspace\defeq\mathcal{X}^n$ is the set of length-$n$ sequences of samples in a finite set~$\mathcal{X}$, and $\permuts$ is the group of all permutations of such sequences obtained by permuting their indices.
Stating indifference between a gamble on sequences and all those gambles related by sequence-index permutations corresponds to an exchangeability assumption, and the above-mentioned representation result is then a significant generalisation of \citeauthor{DeFinetti-1937}'s \cite{DeFinetti-1937} representation theorem for finite exchangeable sequences in terms of hypergeometric distributions---sampling from an urn without replacement: indeed, for any gamble $f$ the constant value of $\avg{\permuts}f$ on an invariant atom turns out to be the hypergeometric expectation of $f$ associated with that atom.
This result can be extended to infinite exchangeable sequences as well \citep{DeCooman-Quaeghebeur-2012-Kyburg,DeCooman-Quaeghebeur-2010-IPMU}.
The more general discussion above also includes the case of partial exchangeability.

\section{Conclusions}\label{sec:conclusions}
We started out this paper by claiming that our framework allows us to be more expressive and that it has a unifying character.
This is already apparent in the elicitation step; we can directly incorporate assessments of various natures: of course accept and reject statements, but also statements of indifference and favourability, and the preference relation counterparts of all these statements; even (imprecise) probabilistic statements pose no problem.
Naturally, all these types of statements can also be used on the output side.
Between input and output, we know how to transform mere assessments into models that satisfy a number of---according to our judgement, in many contexts reasonable---rationality requirements, or detect whether the assessments contain inconsistencies that make this impossible.

We hope that the basic theory of this paper will be a starting point for further research and numerous applications, both theoretical and practical.
For practical applications, we of course need computational tools that, given an assessment, produce inferences of the various types described above.
An algorithm that does the core computations needed (on finite possibility spaces) has recently been devised \citep{Quaeghebeur-2012-smps}, so that hurdle has in large part been taken.
Furthermore, conditional and marginal models must be defined, and rules for deriving and combining them must be formulated.
Again, we can expect that quite a bit of work for this has essentially been done already: much can be carried over from the literature on coherent sets of desirable gambles \citep[see, e.g.,][]{2013-Quaeghebeur-itip,DeCooman-Miranda-2011}, which itself builds on the much larger corpus on coherent lower previsions.

As regards comparisons with other frameworks in the literature: the approach using credal sets (sets of linear previsions) deserves attention.
We know that closed convex credal sets are equivalent to coherent lower previsions, but once closure is not required, we can deduce from polytope-theoretic duality properties that only a subclass of such models can be described within our framework \citep[cf.][]{2013-Quaeghebeur-itip}.
It would, however, be useful to know exactly what subclass of credal sets can be equivalently described using our models, so that we may also know what type of information they cannot represent.

On the technical side, investigating topological properties of the objects in our framework would be useful to able to deal with questions prompted by our simple illustrative examples, such as, e.g., `Is the interior of the set of acceptable gambles always favourable whenever the set of rejected gambles is not empty?'
Also, we have not put any restrictions on the possibility space in our exposition, but for finite possibility spaces it should be possible to formulate constructive counterparts for some proofs; e.g., so that we may construct maximal models dominating an assessment, instead of essentially just positing their existence.

To finish, a pair of generalisations of our framework present themselves:
\begin{itemize}
  \item
    The work of \citet{Artzner-etal-1999} on coherent risk measures was generalised by \citet{Follmer-Schied-2002} by weakening Combination~\eqref{eq:combination} plus Positive Scaling~\eqref{eq:scaling} to taking convex combinations---and put into an imprecise probability theory context by \citet{Pelessoni-Vicig-2005}.
    This can also be done with our framework; it would correspond to relaxing the linear utility assumption.
  \item
    Similarly, in some sense and aspects the work of \citet{Seidenfeld-Schervish-Kadane-2010} can be seen as generalising probability theory by working with disjunctions of probability measures.
    This can again also be done with our framework; it would correspond to considering sets of accept-reject models.
\end{itemize}

\section*{Acknowledgements}\addcontentsline{toc}{section}{Acknowledgements}
We wish to thank the reviewers of all versions of this paper, who provided many useful comments, and Teddy Seidenfeld for animated, focussing discussion.

\appendix
\newcommand{\secnumname}[1]{Section~\ref{#1} (\nameref{#1})}
\section*{Proofs}\label{app:proofs}\addcontentsline{toc}{section}{Proofs}
\subsection*{\secnumname{sec:no-confusion}}\addcontentsline{toc}{subsection}{\secnumname{sec:no-confusion}}
\begin{proof}[Proof of Proposition~\ref{prop:removing-confusion-from-assessments}]
  The three modified assessments avoid confusion by construction:
  \begin{align*}
    \Adelim{\A_\accnrej}{\A_\rej}_\confus
      = \A_\accnrej\intersection\A_\rej = \emptyset,
    &&
    \Adelim{\A_\acc}{\A_\rejnacc}_\confus
      = \A_\acc\intersection\A_\rejnacc = \emptyset,
    &&
    \Adelim{\A_\accnrej}{\A_\rejnacc}_\confus
      = \A_\accnrej\intersection\A_\rejnacc = \emptyset.
  \end{align*}
\end{proof}

\subsection*{\secnumname{sec:dedcls}}\addcontentsline{toc}{subsection}{\secnumname{sec:dedcls}}
\begin{proof}[Proof of Theorem~\ref{thm:dcAs-zerocrit}]
  By definition $\A\in\dcAs$ if and only if $\phull\A_\acc\intersection\A_\rej=\emptyset$, and this expression is equivalent to the one we need to prove by Lemma~\ref{lem:minkowski-intersection-basic}.
\end{proof}
\begin{lemma}\label{lem:minkowski-intersection-basic}
  Given $\someGs,\someGs'\subseteq\linGs$, then $0\notin\someGs+\someGs'$ is equivalent to $\someGs'\intersection-\someGs=\emptyset$.
\end{lemma}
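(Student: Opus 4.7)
The plan is to prove the lemma directly by a short chain of equivalences, unwinding the definitions of Minkowski sum and set negation. Since both sides are negative statements, it is cleanest to prove the contrapositive: $0\in\someGs+\someGs'$ if and only if $\someGs'\intersection-\someGs\neq\emptyset$.

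First I would expand the left-hand condition using the definition of Minkowski sum from \secnumname{sec:setup}: $0\in\someGs+\someGs'$ holds iff there exist $g\in\someGs$ and $g'\in\someGs'$ with $g+g'=0$. Rearranging the equation $g+g'=0$ as $g'=-g$, this is equivalent to the existence of some $g\in\someGs$ such that $-g\in\someGs'$, which in turn is equivalent to the existence of an element lying in both $\someGs'$ and $-\someGs$ (using the definition of negation of a set, again from \secnumname{sec:setup}). That last condition is exactly $\someGs'\intersection-\someGs\neq\emptyset$.

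There is no real obstacle here; the proof is a one-line unfolding of definitions, so the main thing to get right is writing the chain of iffs so that the role of the identity $g'=-g\iff g'\in-\someGs$ (for $g\in\someGs$) is transparent. Negating both sides then yields the statement of the lemma.
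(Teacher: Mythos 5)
Your proposal is correct and is essentially the same argument as the paper's: the paper also passes to the negated statements and unfolds the definition of Minkowski sum, writing $0=f+g$ with $f\in\someGs'$, $g\in\someGs$ as $f=-g\in\someGs'\intersection-\someGs$, just presented as two separate implications rather than your single chain of equivalences.
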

\begin{proof}
  We consider the negations:
  \begin{itemize}
    \item[$\neht$]
      Assume $0\in\someGs+\someGs'$, then there are $g$ in $\someGs$ and $f$ in $\someGs'$ such that $f+g=0$; so $f=-g\in\someGs'\intersection-\someGs$.
    \item[$\then$]
      Assume $\someGs'\intersection-\someGs\neq\emptyset$, then there is an $f$ in $\someGs'$ such that $f\in-\someGs$ or, equivalently, $-f\in\someGs$; so $0=f-f\in\someGs+\someGs'$.
    \qedhere
  \end{itemize}
\end{proof}

\begin{proof}[Proof of Proposition~\ref{prop:removing-confusion-from-deductions}]
  The two modified assessments avoid confusion by construction:
  \begin{align*}
    \Adelim{\D_\acc}{\D_\rejnacc}_\confus
      &= \D_\acc\intersection\D_\rejnacc = \emptyset, \\[1ex]
    \group[\big]{
      \ext{\Ds}\Adelim{\D_\accnrej}{\D_\rejnacc}
    }_\confus
      &= \Adelim{\phull\D_\accnrej}{\D_\rejnacc}_\confus
            &&\text{(def. $\ext{\Ds}$)} \\
      &= \phull\D_\accnrej \intersection \D_\rejnacc
      \subseteq \D_\acc\intersection \D_\rejnacc = \emptyset.
        &&\text{($\phull\D_\acc=\D_\acc$)}
  \end{align*}
  The former is deductively closed because only the set of rejected gambles has been changed and the latter is deductively closed by application of $\ext{\Ds}$.
\end{proof}

\begin{proof}[Proof of Proposition~\ref{prop:lineality}]
  We prove the claims one by one:
  \begin{itemize}
    \item[\ref{item:lineality-isq}]
      Because by definition $\D_\indiff\in\lineals$, i.e., is a linear space, it immediately follows that $0\in\D_\indiff$.
  \end{itemize}
  Because $\D_\acc\in\cones$, i.e., is a cone, $\D_\accnindiff\coloneqq\D_\acc\setminus\D_\indiff\subseteq\D_\acc$, and $\D_\indiff\subseteq\D_\acc$, we know that $\D_\accnindiff+\D_\indiff\subseteq\D_\acc$ and $\D_\accnindiff+\D_\accnindiff\subseteq\D_\acc$.
  \begin{itemize}
    \item[\ref{item:lineality-invariance}]
      Because $0\in\D_\indiff$ and $\D_\indiff\subseteq\D_\acc$, we know that $\D_\accnindiff\subseteq\D_\accnindiff+\D_\indiff\subseteq\D_\acc=\D_\accnindiff\union\D_\indiff$.
      Now assume ex absurdo that $\D_\accnindiff+\D_\indiff\nsubseteq\D_\accnindiff$, then
      $
        \group{\D_\accnindiff+\D_\indiff}\intersection\D_\indiff \neq \emptyset,
      $
      from which 
      $
        \D_\accnindiff\intersection\group{\D_\indiff-\D_\indiff} \neq \emptyset
      $
      follows by Lemma~\ref{lem:minkowski-intersection}.
      Because $\D_\indiff\in\lineals$, we have that $\D_\indiff-\D_\indiff=\D_\indiff$ and obtain the contradiction $\D_\accnindiff\intersection\D_\indiff \neq\emptyset$.
    \item[\ref{item:lineality-cone}]
      The set $\D_\accnindiff$ is a cone if it satisfies Positive Scaling~\eqref{eq:scaling} and Combination~\eqref{eq:combination}.
      \begin{itemize}
        \item[\eqref{eq:scaling}]
          Positive scaling is preserved when taking the set difference of the positively scaled sets $\D_\acc$ and $\D_\indiff$.
        \item[\eqref{eq:combination}]
          We know that $\D_\accnindiff\subseteq\D_\accnindiff+\D_\accnindiff\subseteq\D_\acc=\D_\accnindiff\union\D_\indiff$.
          Assume ex absurdo that combination does not hold, then $\D_\accnindiff+\D_\accnindiff\nsubseteq\D_\accnindiff$ or, equivalently,
          $
            \group{\D_\accnindiff+\D_\accnindiff}\intersection\D_\indiff \neq \emptyset,
          $
          from which 
          $
            \D_\accnindiff\intersection\group{\D_\indiff-\D_\accnindiff}\neq\emptyset
          $
          follows by Lemma~\ref{lem:minkowski-intersection}.
          Because of Claim~\ref{item:lineality-invariance} and because $\D_\indiff\in\lineals$, we have that
          $
            \D_\indiff-\D_\accnindiff
            = -\D_\indiff-\D_\accnindiff
            = -\D_\accnindiff
          $
          and so obtain the contradiction
          $
            \D_\accnindiff\intersection-\D_\accnindiff \neq \emptyset.
          $
          \qedhere
      \end{itemize}
  \end{itemize}
\end{proof}
\begin{lemma}\label{lem:minkowski-intersection}
  Given $\someGs,\someGs',\someGs''\subseteq\linGs$, then $(\someGs''+\someGs')\intersection\someGs=\emptyset$ is equivalent to $\someGs'\intersection(\someGs-\someGs'')=\emptyset$.
\end{lemma}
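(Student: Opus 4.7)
The plan is to prove the contrapositive, namely the equivalence of the two non-emptiness conditions, by a direct chasing of elements through the definition of Minkowski addition. This generalises Lemma~\ref{lem:minkowski-intersection-basic} (where $\someGs = \set{0}$) by allowing a translation against an arbitrary set~$\someGs$, so I expect essentially the same mechanics.

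First, I would show the forward direction of the contrapositive. Suppose $(\someGs'' + \someGs') \intersection \someGs \neq \emptyset$, pick a witness $g$ in this intersection, and unpack the Minkowski sum: there exist $g'' \in \someGs''$ and $g' \in \someGs'$ with $g = g'' + g'$. Rearranging, $g' = g - g''$, so $g'$ belongs simultaneously to $\someGs'$ and to $\someGs - \someGs''$, giving $\someGs' \intersection (\someGs - \someGs'') \neq \emptyset$. For the reverse direction, start from a witness $g' \in \someGs' \intersection (\someGs - \someGs'')$; then $g' = g - g''$ for some $g \in \someGs$ and $g'' \in \someGs''$, hence $g = g'' + g' \in (\someGs'' + \someGs') \intersection \someGs$.

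There is no real obstacle here; the statement is a one-step algebraic identity disguised as a lemma about intersections, and the whole argument is just bookkeeping with the definitions of $+$ and $-$ on sets. The reason to single it out as a named lemma is purely for reuse, as it has already been invoked (twice) in the proof of Proposition~\ref{prop:lineality} to convert between Minkowski-sum inclusions and intersection-based disjointness statements.
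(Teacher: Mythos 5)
Your proof is correct: the element-chase in both directions (rewrite $g=g''+g'$ as $g'=g-g''$ and back) is exactly the algebraic content of the statement, and your observation that Lemma~\ref{lem:minkowski-intersection-basic} is the special case $\someGs=\set{0}$ is accurate. The only difference from the paper is organisational: rather than arguing directly on elements, the paper derives the lemma by applying Lemma~\ref{lem:minkowski-intersection-basic} twice, pivoting through the intermediate condition $0\notin\someGs''+\someGs'-\someGs$ (once to trade the intersection with $\someGs$ for this membership statement, once to trade it back for the intersection with $\someGs-\someGs''$). The two routes rest on the same one-line rearrangement; yours is self-contained, while the paper's reuses the already-established basic lemma so that no new bookkeeping needs to be written out. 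Either is a complete proof.
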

\begin{proof}[Proof of Lemma~\ref{lem:minkowski-intersection}]
  We apply Lemma~\ref{lem:minkowski-intersection-basic} twice:
  $
    (\someGs''+\someGs') \intersection \someGs = \emptyset
    \iff
    0\notin\someGs''+\someGs'-\someGs
    \iff
    \someGs' \intersection (\someGs-\someGs'') = \emptyset.
  $
\end{proof}

\subsection*{\secnumname{sec:nolimbo}}\addcontentsline{toc}{subsection}{\secnumname{sec:nolimbo}}
\begin{proof}[Proof of Proposition~\ref{prop:limbo}]
  We prove the claims one by one:
  \begin{itemize}
    \item[\ref{item:limbo-condition}]
      First,
      $
        \phull\group{\D_\acc\union\set{f}}
        = \D_\acc\union\ray{f}\union(\D_\acc+\ray{f})
      $
      because $\D_\acc\in\cones$.
      So
      \begin{equation*}
        \group[\big]{\ext{\Ds}\Adelim{\D_\acc\union\set{f}}{\D_\rej}}_\confus
        = \group[\big]{\D_\acc\union\ray{f}\union(\D_\acc+\ray{f})} 
            \intersection \D_\rej
        = \D_\confus
          \union (\ray{f}\intersection\D_\rej)
          \union \group[\big]{(\D_\acc+\ray{f})\intersection\D_\rej}
      \end{equation*}
      and we have to find the conditions on $f$ under which $\ray{f}\intersection\D_\rej\subseteq\D_\confus$ and $(\D_\acc+\ray{f})\intersection\D_\rej\subseteq\D_\confus$ or, equivalently, under which $(\ray{f}\intersection\D_\rej)\setminus\D_\confus=\emptyset$ and
      $
        \group[\big]{(\D_\acc+\ray{f})\intersection\D_\rej} \setminus \D_\confus
        = \emptyset.
      $
      Using Lemma~\ref{lem:setdifference-is-intersection} these conditions become $\ray{f}\intersection\D_\rejnacc=\emptyset$ and
      $
        (\D_\acc+\ray{f})\intersection\D_\rejnacc = \emptyset,
      $
      as $\D_\rej\setminus\D_\confus=\D_\rej\setminus\D_\acc=\D_\rejnacc$.
      By Lemma~\ref{lem:minkowski-intersection}, the latter becomes
      $
        \ray{f}\intersection(\D_\rejnacc-\D_\acc) = \emptyset.
      $
      So both conditions can be combined into
      $
        \ray{f} \intersection
        \group[\big]{\D_\rejnacc\union(\D_\rejnacc-\D_\acc)} = \emptyset
      $
      or, equivalently,
      $
        f \notin \shull{\D_\rejnacc}\union\shull{\D_\rejnacc-\D_\acc}
      $
      using the definition of rays and scalar hulls.
      Applying Lemma~\ref{lem:shull-fun-addition} to the second scalar hull in the union finishes the proof: $\shull{\D_\rejnacc-\D_\acc}=\shull{\D_\rejnacc}-\D_\acc$.
    \item[\ref{item:limbo-disjoint}]
      It is claimed that
      $
        \shull{\D_\rejnacc}\intersection \D_\acc = \emptyset
      $
      and
      $
        (\shull{\D_\rejnacc} - \D_\acc)\intersection \D_\acc = \emptyset.
      $
      By Lemma~\ref{lem:minkowski-intersection} the latter expression is equivalent to
      $
        \shull{\D_\rejnacc}\intersection (\D_\acc+\D_\acc) = \emptyset,
      $
      which effectively coincides with the former because $\D_\acc+\D_\acc=\D_\acc\in\cones$.
      This former expression is proven using Lemma~\ref{lem:shull-fun-difference}:
      $
        \shull{\D_\rejnacc}\intersection \D_\acc = \shull{\D_\rej\setminus\D_\acc}\intersection \D_\acc =
        (\shull{\D_\rej}\setminus\D_\acc)\intersection \D_\acc = \emptyset.
      $
      \qedhere
  \end{itemize}
\end{proof}
\begin{lemma}\label{lem:setdifference-is-intersection}
  Given $\someGs,\someGs',\someGs''\subseteq\linGs$, then
  $
    (\someGs'\intersection\someGs'')\setminus\someGs
    = \someGs'\intersection(\someGs''\setminus\someGs).
  $
\end{lemma}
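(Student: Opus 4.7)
The plan is to verify this elementary set identity by pure element-chasing: the two sides differ only in the way parentheses group the defining conditions, so it will suffice to expand each side according to the definitions of intersection and set difference and note that the resulting logical formulae coincide up to associativity of conjunction.

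Concretely, I would pick an arbitrary $x\in\linGs$ and write
\begin{align*}
  x\in(\someGs'\intersection\someGs'')\setminus\someGs
  &\iff x\in\someGs'\intersection\someGs'' \conj x\notin\someGs \\
  &\iff (x\in\someGs' \conj x\in\someGs'') \conj x\notin\someGs \\
  &\iff x\in\someGs' \conj (x\in\someGs'' \conj x\notin\someGs) \\
  &\iff x\in\someGs' \conj x\in\someGs''\setminus\someGs \\
  &\iff x\in\someGs'\intersection(\someGs''\setminus\someGs),
\end{align*}
which establishes the claimed equality. No step requires more than the definitions of~$\intersection$ and~$\setminus$ together with the associativity of~$\conj$, so there is no real obstacle to anticipate; the lemma is purely a bookkeeping convenience used to rewrite the confusion-removal expressions in the proof of Proposition~\ref{prop:limbo}.
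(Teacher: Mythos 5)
Your proof is correct and is essentially the paper's own argument: the paper rewrites the set difference as intersection with the complement $\compl{\someGs}$ and invokes associativity of $\intersection$, which is exactly your element-wise associativity-of-$\conj$ computation in more compact form.
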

\begin{proof}[Proof of Lemma~\ref{lem:setdifference-is-intersection}]
  The identity's left-hand side is equal to
  $
    (\someGs'\intersection\someGs'')\intersection\compl{\someGs}
    = \someGs'\intersection(\someGs''\intersection\compl{\someGs}),
  $
  which is equal to its right-hand side.
\end{proof}
\begin{lemma}\label{lem:shull-fun-addition}
  Given $\someGs,\someGs'\subseteq\linGs$ such that $\shull{\someGs}=\someGs$, then $\shull{\someGs+\someGs'}=\someGs+\shull{\someGs'}$.
\end{lemma}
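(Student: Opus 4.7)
The plan is to prove both inclusions directly by unpacking the definitions of Minkowski sum and positive scalar hull, using the hypothesis $\shull{\someGs}=\someGs$ (i.e.\ $\someGs$ is closed under arbitrary positive scaling) as the key lever.

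For the inclusion $\shull{\someGs+\someGs'}\subseteq\someGs+\shull{\someGs'}$, I would take an arbitrary element, which by definition of $\shull{\cdot}$ has the form $\lambda(g+h)=\lambda g+\lambda h$ for some $\lambda>0$, $g\in\someGs$, and $h\in\someGs'$. The hypothesis $\shull{\someGs}=\someGs$ immediately gives $\lambda g\in\someGs$, and $\lambda h\in\ray{h}\subseteq\shull{\someGs'}$, so the element lies in $\someGs+\shull{\someGs'}$.

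For the reverse inclusion $\someGs+\shull{\someGs'}\subseteq\shull{\someGs+\someGs'}$, I would take an element of the form $g+\mu h$ with $g\in\someGs$, $\mu>0$, $h\in\someGs'$, and rewrite it as $\mu\cdot(\mu^{-1}g+h)$. Since $\mu^{-1}>0$ and $\shull{\someGs}=\someGs$, we have $\mu^{-1}g\in\someGs$, so $\mu^{-1}g+h\in\someGs+\someGs'$, and hence $\mu\cdot(\mu^{-1}g+h)\in\shull{\someGs+\someGs'}$.

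There is no real obstacle here: the statement is essentially the observation that rescaling a sum can be absorbed into one summand whenever that summand is itself scaling-invariant, and both directions are one-line manipulations. The only subtlety worth flagging in writing is that the proof uses $\lambda>0$ (so that $\lambda^{-1}$ exists and is also positive), which is exactly what the definition $\ray{f}=\cset{\lambda f}{\lambda\in\reals_{>0}}$ supplies; no degenerate $\lambda=0$ case can arise.
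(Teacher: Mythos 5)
Your proof is correct and follows essentially the same argument as the paper's: the first inclusion by distributing the scalar over the sum and absorbing $\lambda g$ into $\someGs$ via $\shull{\someGs}=\someGs$, and the reverse inclusion by rewriting $g+\mu h=\mu\cdot(\mu^{-1}g+h)$ with $\mu^{-1}g\in\someGs$. Nothing is missing.
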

 \begin{proof}[Proof of Lemma~\ref{lem:shull-fun-addition}]
  Any left-hand side element can be written as $\lambda\cdot(f+g)$, with $f\in\someGs$, $g\in\someGs'$, and $\lambda>0$.
  This can be rewritten as $\lambda\cdot f+\lambda\cdot g$.
  So because $\lambda\cdot f\in\someGs$, this implies $\shull{\someGs+\someGs'}\subseteq\someGs+\shull{\someGs'}$.
  Inclusion in the other direction follows from $f+\lambda\cdot g=\lambda\cdot(\lambda^{-1}\cdot f+g)$ and the fact that $\lambda^{-1}\cdot f\in\someGs$.
\end{proof}
\begin{lemma}\label{lem:shull-fun-difference}
  Given $\someGs,\someGs'\subseteq\linGs$ such that $\shull{\someGs}=\someGs$, then $\shull{\someGs'\setminus\someGs}=\shull{\someGs'}\setminus\someGs$.
\end{lemma}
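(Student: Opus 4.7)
The plan is to prove the two inclusions separately, and in each direction the key observation is that the hypothesis $\shull{\someGs}=\someGs$ means membership in $\someGs$ is invariant under multiplication by any positive scalar: $f\in\someGs$ if and only if $\lambda\cdot f\in\someGs$ for every $\lambda>0$.

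For the inclusion $\shull{\someGs'\setminus\someGs}\subseteq\shull{\someGs'}\setminus\someGs$, I would take an arbitrary element $x\in\shull{\someGs'\setminus\someGs}$, write it as $x=\lambda\cdot f$ for some $\lambda>0$ and $f\in\someGs'\setminus\someGs$, and check the two defining conditions of the right-hand side. Membership in $\shull{\someGs'}$ is immediate since $f\in\someGs'$. To show $x\notin\someGs$, I would argue by contradiction: if $\lambda\cdot f\in\someGs$, then applying positive-scalar invariance with scalar $\lambda^{-1}$ gives $f\in\someGs$, contradicting $f\in\someGs'\setminus\someGs$.

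For the reverse inclusion $\shull{\someGs'}\setminus\someGs\subseteq\shull{\someGs'\setminus\someGs}$, I would take $x\in\shull{\someGs'}\setminus\someGs$, write $x=\lambda\cdot f$ with $\lambda>0$ and $f\in\someGs'$, and verify that $f\in\someGs'\setminus\someGs$, so that $x=\lambda\cdot f\in\shull{\someGs'\setminus\someGs}$. The only non-immediate part is $f\notin\someGs$, which again follows by contradiction: $f\in\someGs$ would imply $\lambda\cdot f\in\someGs$ by positive-scalar invariance, contradicting $x\notin\someGs$.

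There is no real obstacle: the whole argument is a symmetric application of the invariance of $\someGs$ under positive scaling, used in one direction to pass from $\lambda\cdot f$ to $f$ and in the other to pass from $f$ to $\lambda\cdot f$. No case distinction on whether $f$ or $x$ is zero is needed since positive scalar hulls only involve strictly positive $\lambda$.
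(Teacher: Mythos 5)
Your proof is correct and follows essentially the same route as the paper's: both inclusions are handled by writing an element of the relevant scalar hull as $\lambda\cdot f$ with $\lambda>0$ and transferring membership or non-membership in $\someGs$ between $f$ and $\lambda\cdot f$ via the invariance $\shull{\someGs}=\someGs$. No gaps.
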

\begin{proof}[Proof of Lemma~\ref{lem:shull-fun-difference}]
  Any left-hand side element can be written as $\lambda\cdot f$, with $f\in\someGs'$, $f\notin\someGs$, and $\lambda>0$.
  But then also $\lambda\cdot f\notin\someGs$, for otherwise $f\in\someGs$ by $\shull{\someGs}=\someGs$.
  So $\lambda\cdot f$ is also a right-hand side element.
  Conversely, for any right-hand side element $f$, so with $f\in\shull{\someGs'}$ and $f\notin\someGs$, it holds for any $\lambda>0$ that $\lambda\cdot f\in\shull{\someGs'}$ and $\lambda\cdot f\notin\someGs$ by $\shull{\someGs}=\someGs$.
  So then $f=\lambda^{-1}\cdot(\lambda\cdot f)$ is also a left-hand side element.
\end{proof}

\begin{proof}[Proof of Proposition~\ref{prop:reckoning-for-closed-noconf}]
  The expression for the reckoning extension follows from the fact that $\D_\rejnacc=\D_\rej$ for $\D$ that avoid confusion and the fact that $\D_\rej\subseteq\shull{\D_\rej}$.
  No Confusion of $\ext{\Ms}\D$ follows from Corollary~\ref{cor:limbo-noconfusion}\ref{item:cor:limbo-noconfusion:disjoint}.
\end{proof}

\begin{proof}[Proof of Proposition~\ref{prop:model+hulls}]
  We prove the claims one by one:
  \begin{itemize}
    \item[\ref{item:prop:model+hulls:shullrej}]
      Because $\M$ avoids confusion, we infer from Proposition~\ref{prop:reckoning-for-closed-noconf} that
      $
        \shull{\M_\rej}\union(\shull{\M_\rej}-\M_\acc)=\M_\rej,
      $
      whence $\shull{\M_\rej}=\M_\rej$ and
      $
        \M_\rej - \M_\acc \subseteq \M_\rej
      $.
    \item[\ref{item:prop:model+hulls:favexpr}]
      Claim~\ref{item:prop:model+hulls:shullrej} tells us
      $
        \M_\fav = \M_\acc \intersection -\M_\rej
        \supseteq \group{\M_\acc-\M_\rej} \intersection \M_\acc
      $.
      The converse inequality and thus equality follows from the fact that $f\in\M_\fav$ implies that $f\in\M_\acc$ and $-f\in\M_\rej$, and therefore by positive scaling that $f/2\in\M_\acc$ and $-f/2\in\M_\rej$, so that $f\in\M_\acc-\M_\rej$.
    \item[\ref{item:prop:model+hulls:faviscone}]
      The set $\M_\fav=\group{\M_\acc-\M_\rej} \intersection \M_\acc$ is a cone if it satisfies Positive Scaling~\eqref{eq:scaling} and Combination~\eqref{eq:combination}.
      \begin{itemize}
        \item[\eqref{eq:scaling}]
          $
            \group{\M_\acc-\M_\rej} \intersection \M_\acc
            = \shull{\group{\M_\acc-\M_\rej} \intersection \M_\acc}
          $
          because all sets involved are positively scaled and because Minkowski addition and taking intersections preserves positive scaling.
        \item[\eqref{eq:combination}]
          Let $f_1$ and $f_2$ in $\M_\acc$ and $g_1$ and $g_2$ in $\M_\rej$ be such that $\set{f_1-g_1,f_2-g_2}\subseteq\M_\acc$, then
          $
            (f_1-g_1)+(f_2-g_2) = \group[\big]{f_1+(f_2-g_2)}-g_1
            \in \M_\acc-\M_\rej
          $
          because $g_1\in\M_\rej$ and $f_1+(f_2-g_2)\in\M_\acc$, which follows from $\M_\acc$ being a convex cone.
          \qedhere
      \end{itemize}
  \end{itemize}
\end{proof}

\begin{proof}[Proof of Corollary~\ref{cor:sweetened-deals}]
  We prove $\M_\fav\subseteq\M_\fav+\M_\acc\subseteq\M_\fav$, i.e., use sandwiching:
  \begin{align*}
    \M_\fav
    &= \M_\fav + \M_\fav
      &&\text{(Proposition~\ref{prop:model+hulls}\ref{item:prop:model+hulls:faviscone}: $\M_\fav\in\cones$)} \\
    &\subseteq \M_\fav + \M_\acc &&\text{($\M_\fav\subseteq\M_\acc$)} \\
    &= \group[\big]{\group{\M_\acc-\M_\rej} \intersection \M_\acc}+\M_\acc
      &&\text{(Proposition~\ref{prop:model+hulls}\ref{item:prop:model+hulls:favexpr})} \\
    &\subseteq (\M_\acc-\M_\rej+\M_\acc) \intersection (\M_\acc+\M_\acc) \\
    &= \group{\M_\acc-\M_\rej} \intersection \M_\acc
      &&\text{($\M_\acc\in\cones$ so $\M_\acc+\M_\acc=\M_\acc$)}\\
    &= \M_\fav.
      &&\text{(Proposition~\ref{prop:model+hulls}\ref{item:prop:model+hulls:favexpr})}
  \end{align*}
\end{proof}

\subsection*{\secnumname{sec:order}}\addcontentsline{toc}{subsection}{\secnumname{sec:order}}
\begin{proof}[Proof of Proposition~\ref{prop:intersection}]
  The claim requires us to show that No Confusion~\eqref{eq:no-confusion}, Deductive Closure~\eqref{eq:deduction}, and No Limbo~\eqref{eq:no-limbo} are preserved under non-empty intersections.
  \begin{itemize}
    \item[\eqref{eq:no-confusion}]
      Consider any non-empty family $\somencAs\subseteq\ncAs$; given $\A_\confus=\emptyset$ for all $\A\in\somencAs$, then
      \begin{equation*}\textstyle
        (\Intersection\somencAs)_\confus
        = (\Intersection\somencAs)_\acc
          \intersection
          (\Intersection\somencAs)_\rej
        = (\Intersection_{\A\in\somencAs}\A_\acc)
          \intersection
          (\Intersection_{\A\in\somencAs}\A_\rej)
        = \Intersection_{\A\in\somencAs}(\A_\acc\intersection\A_\rej)
        = \Intersection_{\A\in\somencAs}\A_\confus
        = \emptyset.
      \end{equation*}
    \item[\eqref{eq:deduction}]
      Deductive Closure is preserved because arbitrary intersections of convex cones are still convex cones and a deductively closed assessment is just required to have a convex cone as a set of acceptable gambles.
    \item[\eqref{eq:no-limbo}]
      Consider any non-empty family $\somencMs\subseteq\ncMs$, so with
      $
        \shull{\M_\rej}\union(\shull{\M_\rej}-\M_\acc)
        \subseteq \M_\rej
      $
      for all $\M$ in $\somencMs$; then we have
      \begin{align*}\textstyle
        \shull{(\Intersection\somencMs)_\rej} \union
          \group[\big]{
            \shull{(\Intersection\somencMs)_\rej}
            - (\Intersection\somencMs)_\acc
          }
        &\textstyle=
          \shull{\Intersection_{\M\in\somencMs}\M_\rej} \union
          \group[\big]{
            \shull{\Intersection_{\M\in\somencMs}\M_\rej}
            - \Intersection_{\M\in\somencMs}\M_\acc
          } \\
        &\textstyle\subseteq
          (\Intersection_{\M\in\somencMs}\shull{\M_\rej}) \union
          \group[\big]{
            (\Intersection_{\M\in\somencMs}\shull{\M_\rej})
            - (\Intersection_{\M\in\somencMs}\M_\acc)
          } \\
        &\textstyle\subseteq
          \Intersection_{\M\in\somencMs}
          \group[\big]{
            \shull{\M_\rej} \union (\shull{\M_\rej}-\M_\acc)
          }
        \subseteq \Intersection_{\M\in\somencMs}\M_\rej
        = (\Intersection\somencMs)_\rej.
      \end{align*}
      \qedhere
  \end{itemize}
\end{proof}

\begin{proof}[Proof of Proposition~\ref{prop:maximalncasss}]
  Apply Lemma~\ref{lem:maximalsubmaximal} with $\somemoreAs\defeq\ncAs$,
  $
    \someAs \defeq
      \cset{\Adelim{\someGs}{\linGs\setminus\someGs}}{\someGs\subseteq\linGs}
    \subseteq \ncAs
  $%
  ---for which $\maxsomeAs=\someAs$ holds by construction---, and $\otherA\defeq\otherD\union\Adelim{\otherD_\unres}{\emptyset}$.
  This gives
  $
    \maxncAs = \maxsomemoreAs
             = \somemoreAs\intersection\maxsomeAs
             = \ncAs \intersection \cset{\Adelim{\someGs}{\linGs\setminus\someGs}}{\someGs\subseteq\linGs}
             = \cset{\Adelim{\someGs}{\linGs\setminus\someGs}}{\someGs\subseteq\linGs}.
  $
\end{proof}
\begin{lemma}\label{lem:maximalsubmaximal}
  If for all assessments~$\otherD$ in some class~$\somemoreAs\subseteq\As$ there is another assessment~$\otherA$ in $\somemoreAs$ that dominates it ($\otherD\subseteq\otherA$) and is maximal in some fixed second class $\someAs\subseteq\As$---i.e., $\otherA\in\maxsomeAs$---, then $\maxsomemoreAs=\somemoreAs\intersection\maxsomeAs$: the maximal elements of $\somemoreAs$ are maximal elements of $\someAs$.
\end{lemma}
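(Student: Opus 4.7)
The plan is to prove set equality by establishing both inclusions, using only the hypothesis (every assessment in~$\somemoreAs$ is dominated by some element of $\somemoreAs\intersection\maxsomeAs$) together with the tautological fact $\maxsomeAs\subseteq\someAs$.

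First I would show $\maxsomemoreAs\subseteq\somemoreAs\intersection\maxsomeAs$. Pick any $\otherD\in\maxsomemoreAs$; in particular $\otherD\in\somemoreAs$. Apply the hypothesis to~$\otherD$ to obtain some $\otherA\in\somemoreAs\intersection\maxsomeAs$ with $\otherD\subseteq\otherA$. Since~$\otherA\in\somemoreAs$ and~$\otherD$ is maximal in~$\somemoreAs$, the inclusion $\otherD\subseteq\otherA$ cannot be strict, so $\otherD=\otherA\in\maxsomeAs$.

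Second I would show the reverse inclusion $\somemoreAs\intersection\maxsomeAs\subseteq\maxsomemoreAs$. Pick any $\otherA\in\somemoreAs\intersection\maxsomeAs$ and suppose, to verify maximality in~$\somemoreAs$, that $\otherA\subseteq\otherA'$ for some $\otherA'\in\somemoreAs$. Apply the hypothesis once more, now to~$\otherA'$, to get $\otherA''\in\somemoreAs\intersection\maxsomeAs$ with $\otherA'\subseteq\otherA''$; hence $\otherA\subseteq\otherA''$. Because $\otherA\in\maxsomeAs$ and $\otherA''\in\maxsomeAs\subseteq\someAs$, the definition of $\maxsomeAs$ forces $\otherA=\otherA''$, which in turn sandwiches $\otherA'$ between two copies of~$\otherA$, so $\otherA'=\otherA$.

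There is no real obstacle here: the argument is a routine two-direction set-inclusion proof. The only minor subtlety, which is worth flagging, is that in the second direction one must apply the hypothesis to the possibly dominating element~$\otherA'$ rather than assume~$\otherA'$ is itself in $\maxsomeAs$; this is exactly where the added strength of the hypothesis (that the dominating maximal element can be chosen \emph{inside} $\somemoreAs$) is used.
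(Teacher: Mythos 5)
Your proof is correct and follows essentially the same sandwiching argument as the paper: in each direction you invoke the hypothesis to produce a dominating element of $\somemoreAs\intersection\maxsomeAs$ and then use maximality (in $\somemoreAs$, respectively in $\someAs$) to collapse the inclusions to equalities. The only difference is cosmetic—in the second direction you apply the hypothesis to an arbitrary dominating element of $\somemoreAs$ rather than to one already maximal in $\somemoreAs$, which is, if anything, a slightly cleaner way to verify maximality.
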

\begin{proof}[Proof of Lemma~\ref{lem:maximalsubmaximal}]
  We prove 
  $
    \maxsomemoreAs
    \subseteq \somemoreAs\intersection\maxsomeAs \subseteq
    \maxsomemoreAs,
  $
  i.e., use sandwiching:
  \begin{description}
    \item[$\maxsomemoreAs\subseteq\somemoreAs\intersection\maxsomeAs$:]
      Consider $\otherD$ in $\maxsomemoreAs$, then there is a~$\otherA$ in~$\somemoreAs\intersection\maxsomeAs$ such that $\otherD\subseteq\otherA$ and therefore $\otherD=\otherA\in\somemoreAs\intersection\maxsomeAs$.
    \item[$\somemoreAs\intersection\maxsomeAs\subseteq\maxsomemoreAs$:]
      Consider $\A$ in $\somemoreAs\intersection\maxsomeAs$ and $\otherD$ in $\maxsomemoreAs$ such that $\A\subseteq\otherD$.
      Then there is a~$\otherA$ in~$\somemoreAs\intersection\maxsomeAs$ such that $\otherD\subseteq\otherA$ and hence $\A\subseteq\otherA$.
      Since $\A,\otherA\in\maxsomeAs$ we find $\A=\otherD=\otherA$ and therefore $\A\in\maxsomemoreAs$.
      \qedhere
  \end{description}
\end{proof}

\begin{proof}[Proof of Proposition~\ref{prop:maximalncmods}]
  First note that $\ncMs\subseteq\ncDs\subseteq\dcAs$.
  Now, because $\A\subseteq\ext{\Ds}\A\in\ncDs$ for all~$\A$ in~$\maxdcAs$, $\D\subseteq\ext{\Ms}\D\in\ncMs$ for all~$\D$ in~$\maxncDs$, we have that $\maxdcAs\subseteq\maxncDs\subseteq\maxncMs$.
  So we find that $\maxncMs=\maxncDs=\maxdcAs$.
  Next, apply Lemma~\ref{lem:maximalsubmaximal} with $\someAs\defeq\ncAs$, $\somemoreAs\defeq\ncDs$, and $\otherA\defeq\otherD\union\Adelim{\emptyset}{\otherD_\unres}$.
  This gives
  $
    \maxncDs = \maxsomemoreAs
             = \somemoreAs\intersection\maxsomeAs
             = \ncDs \intersection \maxncAs.
  $
  The form of the maximal models follows from Proposition~\ref{prop:maximalncasss} and Deductive Closure~\eqref{eq:deduction}.
\end{proof}

\begin{proof}[Proof of Proposition~\ref{prop:closure}]
  We prove the claims about $\cls{\someAs}$, defined for any assessment $\A$ in $\As$ by $\cls{\someAs}\A=\Intersection\someAs_\A$, in reverse order:
  \begin{itemize}
    \item[\ref{item:prop:closure:id}]
      If $\A\in\someAs$, then $\someAs_\A$ only includes $\A$ and assessments in $\someAs$ dominating it, so
      $
        \cls{\someAs}\A
        =\Intersection\someAs_\A
        = \A.
      $
      Also, $\cls{\someAs}\top=\top$.
      Furthermore, by the definition of an intersection structure, $\cls{\someAs}\A\in\someAs$ if $\someAs_\A\neq\emptyset$, which means $\cls{\someAs}\A\neq\A$ if $\A\notin\someAs\union\set{\top}$.
    \item[\ref{item:prop:closure:cls}]
      We need to show that $\cls{\someAs}$ satisfies the three closure operator properties:
      \begin{itemize}
        \item
          The extensive nature follows from the fact that $\someAs_\A$ only contains assessments dominating~$\A$.
        \item Idempotency is implied by Claim~\ref{item:prop:closure:id}.
        \item
          Consider an assessment $\otherA$ in $\As$ such that $\A\subseteq\otherA$, then the increasing nature follows from the fact that any assessment that dominates~$\otherA$ also dominates~$\A$, so that $\someAs_\otherA\subseteq\someAs_\A$.
          \qedhere
      \end{itemize}
  \end{itemize}
\end{proof}

\begin{proof}[Proof of Proposition~\ref{prop:cls-formulae}]
  First of all, $\ncMs\subseteq\ncDs\subseteq\dcAs\subseteq\ncAs$ implies that, point-wise, $\cls{\ncAs}\subseteq\cls{\dcAs}\subseteq\cls{\ncDs}\subseteq\cls{\ncMs}$.
  \begin{enumerate}
    \item[\ref{item:prop:cls-formulae:ncAs}]
      An assessment $\A$ in $\As\setminus\ncAs$ has no dominating assessments without confusion, so $\ncAs_\A=\emptyset$ and therefore $\cls{\ncAs}\A=\top$.
    \item[\ref{item:prop:cls-formulae:dcAs}]
      Given an assessment $\A$ in $\As\setminus\dcAs$, then for all assessments $\otherA\supset\A$ in $\As$ we have $\ext{\Ds}\otherA\supseteq\ext{\Ds}\A$, whereby there is also confusion in $\ext{\Ds}\otherA$ and hence $\otherA\notin\dcAs$.
      So $\dcAs_\A=\emptyset$ and therefore $\cls{\dcAs}\A=\top$.
      This carries over to $\ncDs$ and $\ncMs$ because, as seen, point-wise $\cls{\dcAs}\subseteq\cls{\ncDs}\subseteq\cls{\ncMs}$.
    \item[\ref{item:prop:cls-formulae:Ds}]
      By construction $\ext{\Ds}$ maps onto $\Ds$, so point-wise $\cls{\Ds}\subseteq\ext{\Ds}$.
      We actually have $\cls{\Ds}=\ext{\Ds}$ because the positive linear hull operator $\phull$ used by $\ext{\Ds}$ generates the \emph{smallest} convex cone encompassing its argument.
    \item[\ref{item:prop:cls-formulae:ncDs}]
      By definition $\ext{\Ds}$ maps $\dcAs$ onto $\ncDs$; so we have $\cls{\ncDs}\subseteq\ext{\Ds}$.
      Then $\cls{\ncDs}=\ext{\Ds}$ follows by the same argument as above.
    \item[\ref{item:prop:cls-formulae:ncMs}]
      Because $\cls{\ncDs}\subseteq\cls{\ncMs}$, we have $\cls{\ncMs}\after\cls{\ncDs}=\cls{\ncMs}$ by idempotency.
      So $\cls{\ncMs}=\cls{\ncMs}\after\ext{\Ds}$ on $\dcAs$ and we need to show that $\cls{\ncMs}=\ext{\Ms}$ on $\ncDs$.
      By Proposition~\ref{prop:reckoning-for-closed-noconf} we have that $\cls{\ncMs}\subseteq\ext{\Ms}$.
      We actually have $\cls{\ncMs}=\ext{\Ms}$ because $\ext{\Ms}$ only rejects the gambles in limbo, and by definition of a model all these have to be rejected.
      \qedhere
  \end{enumerate}
\end{proof}

\subsection*{\secnumname{sec:dominating-models}}\addcontentsline{toc}{subsection}{\secnumname{sec:dominating-models}}
\begin{proof}[Proof of Theorem~\ref{thm:dedclosable-nonemptymax}]
  We have already observed that $\maxncMs_\A=\emptyset$ if $\A\notin\dcAs$.
  We furthermore know from Proposition~\ref{prop:maximalncmods} that $\maxncDs_\A=\maxncMs_\A$.
  Assume that $\A\in\dcAs$, then $\D\defeq\ext{\Ds}\A$ is a deductively closed assessment and therefore $\ncDs_\D=\ncDs_\A\neq\emptyset$.
  So we need to prove that the poset $(\ncDs_\D,\subseteq)$ has maximal elements; $\D\union\Adelim{\emptyset}{\D_\unres}$ is one (cf. Proposition~\ref{prop:maximalncmods}).
\end{proof}

\begin{proof}[Proof of Proposition~\ref{prop:inf-by-max}]
  Let $\M\defeq\cls{\ncMs}\A$; we then know that $\maxncMs_\M=\maxncMs_\A$.
  So it is sufficient to prove for all $\M\in\ncMs$ that $\M=\Intersection\maxncMs_\M$.
  We prove both directions of the equality separately:
  \begin{itemize}
    \item[$\subseteq$]
      The definition of $\maxncMs_\M$ tells us that $\M\subseteq\Intersection\maxncMs_\M$.
    \item[$\supseteq$]
      Assume ex absurdo that $\M\nsupseteq\Intersection\maxncMs_\M$, so $\M_\acc\nsupseteq\group{\Intersection\aff{\maxncMs}_\M}_\acc$ or $\M_\rej\nsupseteq\group{\Intersection\aff{\maxncMs}_\M}_\rej$; then there is some~$f$ in~$\M_\unres$ such that $f\in(\Intersection\ncMs_\M)_\acc$ or $f\in(\Intersection\ncMs_\M)_\rej$.
      In the former case, let $\otherM\defeq\M\reckunion\Adelim{\emptyset}{\set{f}}$, in the latter case, let $\otherM\defeq\M\reckunion\Adelim{\set{f}}{\emptyset}$.
      Using Lemma~\ref{lem:ext-with-unres}, we know that $\otherM\in\ncMs$.
      Then $\M\subseteq\otherM$, but $\otherM$ is not dominated by any element of~$\maxncMs_\M$, a contradiction.
      \qedhere
  \end{itemize}
\end{proof}
\begin{lemma}\label{lem:ext-with-unres}
  Given~$\M$ in~$\ncMs$ and a gamble~$f$ in~$\M_\unres$, then
  $
    \set[\big]{\M\reckunion\Adelim{\emptyset}{\set{f}},
               \M\reckunion\Adelim{\set{f}}{\emptyset}} \subseteq \ncMs.
  $
\end{lemma}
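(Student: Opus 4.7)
The plan is to reduce both claims to showing that the combined assessments $\M\union\Adelim{\emptyset}{\set{f}}$ and $\M\union\Adelim{\set{f}}{\emptyset}$ lie in $\dcAs$; then Proposition~\ref{prop:cls-formulae}\ref{item:prop:cls-formulae:ncMs} (together with $\reckunion = \cls{\ncMs}\after\Union$) tells us that $\cls{\ncMs}$ applied to each of them equals $\ext{\Ms}\after\ext{\Ds}$ applied to it, and the result lies in $\ncMs$ (which is precisely the content of Corollary~\ref{cor:dedclostomod}).

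For the first assessment, $\M\union\Adelim{\emptyset}{\set{f}} = \Adelim{\M_\acc}{\M_\rej\union\set{f}}$, only the rejected component changes. Since $\M_\acc$ was already a cone, $\ext{\Ds}$ leaves the acceptable component untouched, and the rejected component remains $\M_\rej\union\set{f}$. Avoiding confusion amounts to checking that $\M_\acc\intersection\group{\M_\rej\union\set{f}}=\emptyset$: the intersection with $\M_\rej$ is empty since $\M\in\ncMs\subseteq\ncAs$, and $f\notin\M_\acc$ because $f\in\M_\unres$. So the assessment lies in $\ncDs\subseteq\dcAs$.

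For the second assessment, $\M\union\Adelim{\set{f}}{\emptyset}$ adds $f$ to the acceptable gambles. Here I invoke Proposition~\ref{prop:limbo}\ref{item:limbo-condition} applied to the deductively closed assessment $\M$: adding $f$ to $\M_\acc$ and then closing creates no new confusion precisely when $f\notin\shull{\M_\rejnacc}\union(\shull{\M_\rejnacc}-\M_\acc)$. Because $\M$ has no confusion we have $\M_\rejnacc=\M_\rej$, and the No Limbo axiom~\eqref{eq:no-limbo} satisfied by~$\M$ yields $\shull{\M_\rej}\union(\shull{\M_\rej}-\M_\acc)\subseteq\M_\rej$. Since $f\in\M_\unres$ implies $f\notin\M_\rej$, the set-containment forces $f\notin\shull{\M_\rej}\union(\shull{\M_\rej}-\M_\acc)$, so the condition of Proposition~\ref{prop:limbo}\ref{item:limbo-condition} holds and confusion is not created; hence this assessment also lies in $\dcAs$, completing the proof.

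The only mild subtlety is the second case, where one must recognise that No Limbo already encodes exactly the set of gambles whose acceptance would spoil unconfusedness—so unresolved gambles are, by definition of $\ncMs$, safe to accept.
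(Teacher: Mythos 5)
Your proposal is correct and follows essentially the same route as the paper's own proof: reduce both cases to deductive closability and then appeal to Proposition~\ref{prop:cls-formulae}\ref{item:prop:cls-formulae:ncMs} (the paper cites it directly, you add Corollary~\ref{cor:dedclostomod}), handling the first case by noting only the rejected component changes and the second via the limbo condition. Your use of Proposition~\ref{prop:limbo}\ref{item:limbo-condition} together with $\M_\rejnacc=\M_\rej$ is just the unrolled form of the paper's appeal to Corollary~\ref{cor:limbo-noconfusion}\ref{item:cor:limbo-noconfusion:condition}, with the same key observation that No Limbo plus $f\in\M_\unres$ forces $f\notin\shull{\M_\rej}\union(\shull{\M_\rej}-\M_\acc)$.
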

\begin{proof}[Proof of Lemma~\ref{lem:ext-with-unres}]
  We just need to show that
  $
    \set[\big]{\M\dedunion\Adelim{\emptyset}{\set{f}},
               \M\dedunion\Adelim{\set{f}}{\emptyset}} \subseteq \ncDs
  $
  thanks to Proposition~\ref{prop:cls-formulae}\ref{item:prop:cls-formulae:ncMs}, i.e., because unconfused deductively closed assessments can always be extended to unconfused models.
  Now, $\M\union\Adelim{\emptyset}{\set{f}}$ already belongs to $\Ds$ because deductive closure only acts on the acceptable gambles of an assessment; it avoids confusion by choice of $f$.
  For $\M\dedunion\Adelim{\set{f}}{\emptyset}$ we can apply Corollary~\ref{cor:limbo-noconfusion}\ref{item:cor:limbo-noconfusion:condition}, for which the condition is trivially satisfied because $\M$, being a model, has no limbo.
\end{proof}

\subsection*{\secnumname{sec:background}}\addcontentsline{toc}{subsection}{\secnumname{sec:background}}
\begin{proof}[Proof of Theorem~\ref{thm:char-AR}]
  \ref{item:ARbg} is equivalent to $\M\in\As_\bgM$.
  \ref{item:ARcn} is equivalent to $\M\in\Ds$.
  At this point we know $\M\in\Ds_\bgM$ and therefore $0\in\D_\acc\in\cones$, whereby using Theorem~\ref{thm:dcAs-zerocrit} the condition for No Confusion~\eqref{eq:no-confusion} becomes 
  $
    0 \notin \M_\rej-\M_\acc \subseteq \shull{\M_\rej}-\M_\acc.
  $
  \ref{item:ARss} makes this condition implied by $0\notin\M_\rej$, i.e., \ref{item:ARzr}, so $\M\in\ncAs$.
  Now we know $\M\in\ncDs_\bgM\subseteq\ncDs_\zeroM$, whereby using Corollary~\ref{cor:limbo-noconfusion+statusquo} the condition for No Confusion~\eqref{eq:no-confusion} becomes $\shull{\M_\rej}-\M_\acc\subseteq\M_\rej$, i.e., \ref{item:ARss}, completing the proof, as then $\M\in\ncDs_\bgM\intersection\Ms=\ncMs_\bgM$.
\end{proof}

\subsection*{\secnumname{sec:relations}}\addcontentsline{toc}{subsection}{\secnumname{sec:relations}}
\begin{proof}[Proof of Theorem~\ref{thm:char-AD}]
  We have to show equivalence under Definition~\eqref{eq:gamble-rel-def} of \ref{item:AD-accrefl}--\ref{item:AD-rejmixindep} and \ref{item:ARbg}--\ref{item:ARss} of Theorem~\ref{thm:char-AR} with $\bgM\defeq\zeroM$.
  \begin{itemize}
    \item[\ref{item:AD-accrefl}]
      is \eqref{eq:gamble-rel-def}-equivalent to $0\in\M_\acc$, which in turn is equivalent to $\zeroM\subseteq\M_\acc$, i.e., \ref{item:ARbg}.
    \item[\ref{item:AD-rejirrefl}]
      is \eqref{eq:gamble-rel-def}-equivalent to $0\notin\M_\rej$, i.e., \ref{item:ARzr}.
    \item[\ref{item:AD-acctrans}]
      is \eqref{eq:gamble-rel-def}-equivalent to $f-g\in\M_\acc\conj g-h\in\M_\acc\then f-h\in\M_\acc$, or, using gambles $f'\defeq f-g$ and $h'\defeq g-h$, to $f'\in\M_\acc\conj h'\in\M_\acc\then f'+h'\in\M_\acc$; because this must effectively hold for all $f'$ and $g'$ in $\linGs$, we can rewrite it as $\M_\acc+\M_\acc\subseteq\M_\acc$, Combination~\eqref{eq:combination}.
    \item[\ref{item:AD-accmixindep}]
      is \eqref{eq:gamble-rel-def}-equivalent to Positive Scaling~\eqref{eq:scaling}:
      \begin{itemize}
        \item[$\then$]
          \ref{item:AD-accmixindep} \eqref{eq:gamble-rel-def}-implies $f-g\in\M_\acc\iff\mu\cdot(f-g)\in\M_\acc$, or, using gambles $f'\defeq f-g$ and $f'\defeq (f-g)/\mu$, $f'\in\M_\acc\iff\lambda\cdot f'\in\M_\acc$ for $\lambda\in\set{\mu,1/\mu}$, and therefore \eqref{eq:scaling};
        \item[$\neht$]
          \eqref{eq:scaling} \eqref{eq:gamble-rel-def}-implies \ref{item:AD-accmixindep} because $\mu\cdot(f-g)=\group[\big]{\mu\cdot f+(1-\mu)\cdot h}-\group[\big]{\mu\cdot g+(1-\mu)\cdot h}$.
      \end{itemize}
  \end{itemize}
  \noindent
  Together, Combination~\eqref{eq:combination} and Positive Scaling~\eqref{eq:scaling} are equivalent to \ref{item:ARcn}, and therefore so are \ref{item:AD-acctrans} and \ref{item:AD-accmixindep}.
  \begin{itemize}
    \item[\ref{item:AD-rejmixindep}]
      is \eqref{eq:gamble-rel-def}-equivalent to $\shull{\M_\rej}\subseteq\M_\rej$ by the same reasoning used to show equivalence of \ref{item:AD-accmixindep} and Scaling~\eqref{eq:scaling}.
    \item[\ref{item:AD-mixtrans}]
      is \eqref{eq:gamble-rel-def}-equivalent to $f-g\in\M_\rej\conj h-g\in\M_\acc\then f-h\in\M_\rej$, or, using gambles $f'\defeq f-g$ and $h'\defeq h-g$, to $f'\in\M_\rej\conj h'\in\M_\acc\then f'-h'\in\M_\rej$; because this must effectively hold for all $f'$ and $g'$ in $\linGs$, we can rewrite it as $\M_\rej-\M_\acc\subseteq\M_\rej$.
  \end{itemize}
  \noindent
  Together, $\shull{\M_\rej}\subseteq\M_\rej$ and $\M_\rej-\M_\acc\subseteq\M_\rej$ are equivalent to $\shull{\M_\rej}-\M_\acc\subseteq\M_\rej$, i.e., \ref{item:ARss}.
\end{proof}

\begin{proof}[Proof of Proposition~\ref{prop:char-ADbgM}]
  By definition $\M$ is $\bgM$-coherent if it coincides with its natural extension.
  Because $\M\in\ncMs_\zeroM$, this means we should have $\bgM\subseteq\M$, i.e., \ref{item:ARbg}.
  So for all $f$ and $g$ in~$\linGs$ we have $f\sacc g \iff f-g\in\bgM_\sacc\subseteq\M_\acc$ and thus $f\acc g$.
  Similarly, for all $f$ and $g$ in~$\linGs$ we have $f\srej g \iff f-g\in\bgM_\srej\subseteq\M_\rej$ and thus $f\rej g$.
\end{proof}

\subsection*{\secnumname{sec:accept-favour}}\addcontentsline{toc}{subsection}{\secnumname{sec:accept-favour}}
\begin{proof}[Proof of Theorem~\ref{thm:affdcAs-zerocrit}]
  The result follows from Theorem~\ref{thm:dcAs-zerocrit} because Condition~\eqref{eq:accfav-condition} implies $-\A_\rej=\A_\fav$.
\end{proof}

\begin{proof}[Proof of Proposition~\ref{prop:accfav-intersection}]
  We need to show that Condition~\eqref{eq:accfav-condition} is preserved under arbitrary non-empty intersections:
  For any non-empty family $\somemoreAs\subseteq\aff{\someAs}$, we have that
  \begin{equation*}\textstyle
    -(\Intersection\somemoreAs)_\rej
    =\Intersection_{\A\in\somemoreAs}-\A_\rej
    \subseteq\Intersection_{\A\in\somemoreAs}\A_\acc
    = (\Intersection\somemoreAs)_\acc,
  \end{equation*}
  so $\Intersection\somemoreAs$ satisfies Condition~\eqref{eq:accfav-condition}.
\end{proof}

\begin{proof}[Proof of Proposition~\ref{prop:accfav-maximalncasss}]
  For the first claim, apply Lemma~\ref{lem:maximalsubmaximal} with $\someAs\defeq\ncAs$, $\somemoreAs\defeq\aff\ncAs$, and $\otherA\defeq\otherD\union\Adelim{\otherD_\unres}{\emptyset}$ to prove the first equality.
  The second equality follows from Proposition~\ref{prop:maximalncasss} and Condition~\eqref{eq:accfav-condition}.
  For the second claim, by Proposition~\ref{prop:maximalncasss} either $0\in\A_\acc$ or $0\in\A_\rej$.
  To preserve No Confusion under Condition~\eqref{eq:accfav-condition} the latter is disallowed.
\end{proof}

\begin{proof}[Proof of Proposition~\ref{prop:accfav-maximalncmods}]
  First note that $\aff\ncMs\subseteq\aff\ncDs\subseteq\affdcAs$.
  Now, because $\A\subseteq\ext{\Ds}\A\in\aff\ncDs$ for all~$\A$ in~$\aff\maxdcAs$ and $\D\subseteq\ext{\Ms}\D\in\aff\ncMs$ for all~$\D$ in~$\aff\maxncDs$ by Lemma~\ref{lem:exts-pres-aff}, we have that $\aff\maxncMs\subseteq\aff\maxncDs\subseteq\aff\maxdcAs$.
  So we find that $\aff\maxncMs=\aff\maxncDs=\aff\maxdcAs$.

  For the expression of $\aff\maxncMs$, the equality $\aff\maxncAs\intersection\aff\ncDs=\aff\maxncDs$ follows from Lemma~\ref{lem:maximalsubmaximal} with $\someAs\defeq\aff\ncAs$, $\somemoreAs\defeq\aff\ncDs$, and
  $
    \otherA \defeq {\Adelim{\someGs}{\linGs\setminus\someGs}}
            \supseteq \Adelim{\otherD_\acc}{\phull\otherD_\rej}
  $,
  with $\someGs$ in~$\cones$ such that $\otherA\in\aff\maxncAs\intersection\aff\ncDs$ indeed and whose existence is guaranteed by Lemma~\ref{lem:kakutani-for-cones}, taking
  $
    \Adelim{\someGs'}{\someGs''}
    \defeq \Adelim{\otherD_\acc}{\phull\otherD_\rej}.
  $
  (The Axiom of Choice is assumed for infinite-dimensional~$\linGs$.)
  The necessity of the given form of the maximal elements follows from Propositions~\ref{prop:maximalncmods} and~\ref{prop:accfav-maximalncasss}; its sufficiency follows from the existence of appropriate cones~$\someGs$ guaranteed by Lemma~\ref{lem:kakutani-for-cones}.
\end{proof}
\begin{lemma}\label{lem:exts-pres-aff}
  $\ext{\Ds}$ and $\ext{\Ms}$ preserve Condition~\eqref{eq:accfav-condition}.
\end{lemma}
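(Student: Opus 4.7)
The plan is to verify Condition~\eqref{eq:accfav-condition}, i.e., $-\A_\rej\subseteq\A_\acc$, for the output of each extension operator, starting from an input that already satisfies it. The verification for $\ext{\Ds}$ is essentially immediate, and the one for $\ext{\Ms}$ reduces to checking the three parts of its rejection component separately, using the fact that $\ext{\Ms}$ is applied to deductively closed assessments (so that $\D_\acc$ is a convex cone, and in particular closed under positive scaling and Minkowski self-addition).

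First, for $\ext{\Ds}$, suppose $\A\in\aff{\As}$, so $-\A_\rej\subseteq\A_\acc$. Since $\ext{\Ds}\A=\Adelim{\phull\A_\acc}{\A_\rej}$ and $\A_\acc\subseteq\phull\A_\acc$, we get $-(\ext{\Ds}\A)_\rej = -\A_\rej \subseteq \A_\acc \subseteq \phull\A_\acc = (\ext{\Ds}\A)_\acc$, as desired.

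Next, for $\ext{\Ms}$, suppose $\D\in\Ds$ with $-\D_\rej\subseteq\D_\acc$. Recalling that $\ext{\Ms}\D=\Adelim[\big]{\D_\acc}{\D_\rej\union\shull{\D_\rejnacc}\union(\shull{\D_\rejnacc}-\D_\acc)}$, the claim $-(\ext{\Ms}\D)_\rej\subseteq\D_\acc$ splits into three inclusions:
\begin{enumerate}
  \item $-\D_\rej\subseteq\D_\acc$, which holds by assumption.
  \item $-\shull{\D_\rejnacc}\subseteq\D_\acc$: here I would note that $\D_\rejnacc\subseteq\D_\rej$, so $-\D_\rejnacc\subseteq-\D_\rej\subseteq\D_\acc$; then, since $\D_\acc\in\cones$ is closed under positive scaling (Deductive Closure), taking positive scalar hulls preserves inclusion into $\D_\acc$, giving $-\shull{\D_\rejnacc}=\shull{-\D_\rejnacc}\subseteq\shull{\D_\acc}=\D_\acc$.
  \item $-(\shull{\D_\rejnacc}-\D_\acc)=-\shull{\D_\rejnacc}+\D_\acc\subseteq\D_\acc+\D_\acc=\D_\acc$, using the previous inclusion and Combination (again from Deductive Closure).
\end{enumerate}
Taking the union of these three inclusions yields $-(\ext{\Ms}\D)_\rej\subseteq\D_\acc=(\ext{\Ms}\D)_\acc$.

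There is no real obstacle: the only subtlety is remembering that $\ext{\Ms}$ is defined on~$\Ds$, so that $\D_\acc$ is a convex cone, which is exactly what is needed to push the inclusion $-\D_\rej\subseteq\D_\acc$ through both the scalar-hull and Minkowski-difference constructions appearing in the reckoning extension.
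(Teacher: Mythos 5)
Your proof is correct and follows essentially the same route as the paper: the $\ext{\Ds}$ case is verbatim the same, and for $\ext{\Ms}$ you use exactly the paper's ingredients, namely $-\D_\rej\subseteq\D_\acc$ pushed through the scalar hull and Minkowski difference via $\shull{\D_\acc}=\D_\acc$ and $\D_\acc+\D_\acc\subseteq\D_\acc$. If anything, your version is slightly more explicit, since you treat the $\D_\rej$ part of the reckoning extension's rejection component as a separate (trivial) case, whereas the paper absorbs it into a single inclusion chain starting from $-\shull{\D_\rej}$.
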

\begin{proof}[Proof of Lemma~\ref{lem:exts-pres-aff}]
  We give a proof for each extension operator separately:
  \begin{itemize}
    \item[$\ext{\Ds}$:]
      Given $\A$ in~$\aff{\As}$, so with $-\A_\rej \subseteq \A_\acc$, then
      $
        -(\ext{\Ds}\A)_\rej
        = -\A_\rej \subseteq \A_\acc \subseteq \phull\A_\acc
        = (\ext{\Ds}\A)_\acc.
      $
    \item[$\ext{\Ms}$:]
      Given $\D$ in~$\aff{\Ds}$, so with 
      $
        -\shull{\D_\rejnacc} \subseteq -\shull{\D_\rej}
        \subseteq \shull{\D_\acc} \subseteq \D_\acc,
      $
      then
      \begin{equation*}
        -(\ext{\Ms}\D)_\rej
        = -\group[\big]{
             \shull{\D_\rejnacc}\union(\shull{\D_\rejnacc}-\D_\acc)
           }
        \subseteq \D_\acc\union(\D_\acc+\D_\acc)
        \subseteq \D_\acc
        = (\ext{\Ms}\D)_\acc,
      \end{equation*}
      where we also used $\D_\acc\in\cones$, i.e., that
      $
        \shull{\D_\acc} \subseteq \D_\acc
        \conj
        \D_\acc+\D_\acc \subseteq \D_\acc.
      $
      \qedhere
  \end{itemize}
\end{proof}
\begin{lemma}\label{lem:kakutani-for-cones}
  For all cones $\someGs'$ and $\someGs''$ in~$\cones$ such that $\someGs'\intersection\someGs''=\emptyset$ and $0\notin\someGs''$ there is a cone~$\someGs$ in~$\cones$ such that $\someGs'\subseteq\someGs$ and $\someGs''\subseteq\linGs\setminus\someGs\subseteq-\someGs$.
  (The Axiom of Choice is assumed for infinite-dimensional~$\linGs$.)
\end{lemma}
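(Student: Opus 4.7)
The plan is to use Zorn's Lemma to produce a maximal convex cone $\someGs$ that contains $\someGs'$ and avoids $\someGs''$, and then to show that this maximality forces $\linGs\setminus\someGs\subseteq-\someGs$.

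Concretely, I would introduce the family $\mathfrak{K}$ of all $K\in\cones$ satisfying $\someGs'\subseteq K$ and $K\intersection\someGs''=\emptyset$, ordered by set inclusion. This family is non-empty (as $\someGs'\in\mathfrak{K}$), and the union over any chain in $\mathfrak{K}$ is again a convex cone (directed unions of cones are cones), still contains $\someGs'$, and is still disjoint from $\someGs''$. Zorn's Lemma therefore supplies a maximal element $\someGs\in\mathfrak{K}$.

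The heart of the proof is showing $\linGs\setminus\someGs\subseteq-\someGs$. Fix $f\notin\someGs$ and assume, for a contradiction, that also $-f\notin\someGs$. By the maximality of $\someGs$, neither $\phull(\someGs\union\set{f})$ nor $\phull(\someGs\union\set{-f})$ lies in $\mathfrak{K}$; since both still contain $\someGs'$, each must intersect $\someGs''$. Using the decomposition $\phull(\someGs\union\set{f})=\someGs\union\ray{f}\union(\someGs+\ray{f})$ (valid because $\someGs\in\cones$, as already used in the proof of Proposition~\ref{prop:limbo}), together with $\someGs\intersection\someGs''=\emptyset$, these intersections yield witnesses $h=g+\lambda f\in\someGs''$ and $h'=g'-\mu f\in\someGs''$ with $\lambda,\mu>0$ and $g,g'\in\someGs\union\set{0}$. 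The positive combination $\mu h+\lambda h'=\mu g+\lambda g'$ then belongs to $\someGs''$ (since $\someGs''\in\cones$) and simultaneously to $\someGs\union\set{0}$, contradicting both $0\notin\someGs''$ and $\someGs\intersection\someGs''=\emptyset$. Hence $-f\in\someGs$, as required.

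The main obstacle is the case bookkeeping, and in particular the subcase $g=g'=0$, in which the combination collapses to $0$; this is precisely where the hypothesis $0\notin\someGs''$ is indispensable, separately from the disjointness $\someGs'\intersection\someGs''=\emptyset$. No topological input on $\linGs$ is required: apart from Zorn's Lemma (i.e., the Axiom of Choice, needed only for infinite-dimensional $\linGs$), the argument is purely algebraic and relies only on the cone properties of $\someGs$, $\someGs''$, and the positive-linear-hull decomposition.
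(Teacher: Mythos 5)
Your proof is correct, but it takes a genuinely different route from the paper. The paper disposes of this lemma with a citation: it invokes the Kakutani separation property for convex cones as established by \citet[Corollary~2]{Hammer-1955}, and obtains $\linGs\setminus\someGs\subseteq-\someGs$ from the fact that the complement $\linGs\setminus\someGs$ is an intersection of semi-spaces (maximal blunt convex cones). You instead reprove the separation result from scratch: Zorn's Lemma applied to the family of cones $K\in\cones$ with $\someGs'\subseteq K$ and $K\intersection\someGs''=\emptyset$ (chains have cone unions as upper bounds), followed by the maximality argument in which both $\phull(\someGs\union\set{f})$ and $\phull(\someGs\union\set{-f})$ must meet $\someGs''$, the decomposition $\phull(\someGs\union\set{f})=\someGs\union\ray{f}\union(\someGs+\ray{f})$ produces witnesses $g+\lambda f$ and $g'-\mu f$ in $\someGs''$, and the positive combination $\mu g+\lambda g'$ lands in $\someGs''\intersection(\someGs\union\set{0})$, contradicting either disjointness or $0\notin\someGs''$; your explicit treatment of the collapsed case $g=g'=0$ is exactly where the bluntness hypothesis is needed, and the argument correctly yields $-f\in\someGs$ for every $f\notin\someGs$. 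What each approach buys: the paper's citation is shorter and places the lemma in the literature on semi-spaces, whereas your argument is self-contained, purely algebraic, reuses a decomposition already present in the proof of Proposition~\ref{prop:limbo}, and makes transparent precisely where the Axiom of Choice and the hypothesis $0\notin\someGs''$ enter--essentially giving an in-house proof of the Hammer/Kakutani-type separation in the special case the paper needs.
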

\begin{proof}[Proof of Lemma~\ref{lem:kakutani-for-cones}]
  This directly follows from the Kakutani separation property as proven by \citet[Corollary~2]{Hammer-1955}, where $\linGs\setminus\someGs\subseteq-\someGs$ is a consequence of $\linGs\setminus\someGs$ being an intersection of \emph{semi-spaces}---i.e., maximal blunt convex cones.
\end{proof}

\begin{proof}[Proof of Proposition~\ref{prop:accfav-cls}]
  Propositions~\ref{prop:accfav-intersection} and~\ref{prop:closure} imply $\cls{\aff\someAs}$ is a closure operator.
  Now consider Proposition~\ref{prop:cls-formulae}: counterparts for~\ref{item:prop:cls-formulae:ncAs} and~\ref{item:prop:cls-formulae:dcAs} follow because $\top$ satisfies Condition~\eqref{eq:accfav-condition}; counterparts for~\ref{item:prop:cls-formulae:Ds}--\ref{item:prop:cls-formulae:ncMs} follow by Lemma~\ref{lem:exts-pres-aff}.
\end{proof}

\begin{proof}[Proof of Theorem~\ref{thm:accfav-dedclosable-nonemptymax}]
  We prove both sides of the equivalence separately:
  \begin{itemize}
    \item[$\neht$]
      If $\A\notin\aff{\dcAs}$, then $\A\notin\dcAs$ and therefore $\maxncMs_\A=\emptyset$ by Theorem~\ref{thm:dedclosable-nonemptymax}.
      Because $\aff\maxncMs\subseteq\maxncMs$, this implies that $\aff{\maxncMs}_\A=\emptyset$.
    \item[$\then$]
      First, we infer from Proposition~\ref{prop:accfav-maximalncmods} that $\aff{\maxncDs}_\A=\aff{\maxncMs}_\A$.
      Now, assume that $\A\in\aff{\dcAs}$, then we infer from Proposition~\ref{prop:accfav-cls} that $\D\defeq\cls{\ncDs}\A=\cls{\aff{\ncDs}}\A\in\aff{\ncDs}$, and therefore $\aff{\ncDs}_\D=\aff{\ncDs}_\A\neq\emptyset$.
      So we need to show that the poset $(\aff{\ncDs}_\D,\subseteq)$ has maximal elements: Proposition~\ref{prop:accfav-maximalncmods} gives their form and applying Lemma~\ref{lem:kakutani-for-cones} with
      $
        \Adelim{\someGs'}{\someGs''} \defeq \Adelim{\D_\acc}{\phull\D_\rej}
      $
      proves their existence.
      \qedhere
  \end{itemize}
\end{proof}

\begin{proof}[Proof of Proposition~\ref{prop:accfav-inf-by-max}]
  The first equality follows from Proposition~\ref{prop:accfav-cls}.
  Let $\M\defeq\cls{\ncMs}\A$; we then know that $\M\in\aff{\ncMs}_\zeroM$ and $\aff{\maxncMs}_\M=\aff{\maxncMs}_\A$.
  So it is sufficient to prove for all $\M\in\aff{\ncMs}_\zeroM$ that $\M=\Intersection\aff{\maxncMs}_\M$.
  We prove both directions of this equality separately:
  \begin{itemize}
    \item[$\subseteq$]
      The definition of $\aff{\maxncMs}_\M$ tells us that $\M\subseteq\Intersection\aff{\maxncMs}_\M$.
    \item[$\supseteq$]
      Assume ex absurdo that $\M\nsupseteq\Intersection\aff{\maxncMs}_\M$, so $\M_\acc\nsupseteq\group{\Intersection\aff{\maxncMs}_\M}_\acc$ or $\M_\fav\nsupseteq\group{\Intersection\aff{\maxncMs}_\M}_\fav$; then there is some~$f$ in~$\M_\unres$ such that either $f\in(\Intersection\aff{\ncMs}_\M)_\acc$ or $-f\in(\Intersection\aff{\ncMs}_\M)_\fav$.
      In the former case, let $\otherM\defeq\M\reckunion\Adelim{\set{-f}}{\set{f}}$, in the latter case, let $\otherM\defeq\M\reckunion\Adelim{\set{f}}{\emptyset}$.
      Using Lemma~\ref{lem:accfav-ext-with-unres}, we know that $\otherM\in\aff{\ncMs}_\zeroM$.
      Then $\M\subseteq\otherM$, but $\otherM$ is not dominated by any element of~$\aff{\maxncMs}_\M$, a contradiction.
      \qedhere
  \end{itemize}
\end{proof}
\begin{lemma}\label{lem:accfav-ext-with-unres}
  Given~$\M$ in~$\aff{\ncMs}_\zeroM$ and $f$ in~$\M_\unres$, then $\set{\M\reckunion\Adelim{\set{f}}{\emptyset},\M\reckunion\Adelim{\set{-f}}{\set{f}}}\subseteq\aff{\ncMs}_\zeroM$.
\end{lemma}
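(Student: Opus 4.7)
The plan is to handle both reckoning unions by showing that the plain unions $\A_1 \defeq \M \union \Adelim{\set{f}}{\emptyset}$ and $\A_2 \defeq \M \union \Adelim{\set{-f}}{\set{f}}$ lie in $\aff{\dcAs}\!\vphantom{\dcAs}_\zeroM$, and then invoking Lemma~\ref{lem:exts-pres-aff} so that the closure operator $\cls{\ncMs} = \ext{\Ms}\after\ext{\Ds}$ (cf.~Proposition~\ref{prop:cls-formulae}\ref{item:prop:cls-formulae:ncMs}) produces a member of $\aff{\ncMs}$. Indifference to status quo is free for both cases, since $0\in\M_\acc\subseteq(\A_i)_\acc$ and neither $\ext{\Ds}$ nor $\ext{\Ms}$ removes acceptable gambles. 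The accept-favour Condition~\eqref{eq:accfav-condition} is also immediate at the assessment level: $-(\A_1)_\rej=-\M_\rej\subseteq\M_\acc\subseteq(\A_1)_\acc$ and $-(\A_2)_\rej=-\M_\rej\union\set{-f}\subseteq\M_\acc\union\set{-f}=(\A_2)_\acc$.

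The real work is verifying that $\A_1,\A_2\in\dcAs$. For $\A_1$ this is covered verbatim by Lemma~\ref{lem:ext-with-unres}. For $\A_2$ the task is to show that $\phull(\M_\acc\union\set{-f})\intersection(\M_\rej\union\set{f})=\emptyset$. Because $\M_\acc$ is already a convex cone, the positive hull expands as $\M_\acc\union\ray{-f}\union(\M_\acc+\ray{-f})$, yielding six pairwise intersections to rule out. Two are trivial: $\M_\acc\intersection\M_\rej=\emptyset$ by No Confusion for $\M$, and $\M_\acc\intersection\set{f}=\emptyset$ since $f\in\M_\unres$. The remaining four all reduce to the contradiction $f\in\M_\acc$ via the same mechanism---the AF-condition $-\M_\rej\subseteq\M_\acc$ combined with $\M_\acc\in\cones$. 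Specifically: if $-\lambda f\in\M_\rej$ with $\lambda>0$, then $\lambda f\in\M_\acc$ and hence $f\in\M_\acc$ by positive scaling; if $g-\lambda f\in\M_\rej$ with $g\in\M_\acc,\lambda>0$, then $\lambda f - g\in\M_\acc$ so $\lambda f = (\lambda f - g) + g\in\M_\acc + \M_\acc = \M_\acc$; if $-\lambda f = f$ or $g-\lambda f=f$ with $\lambda>0$, rearranging together with the cone property of $\M_\acc$ again forces $f\in\M_\acc$ (using $f\neq0$, which holds because $0\in\M_\acc$ and $f\in\M_\unres$).

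I expect this chain of contradictions for the $\A_2$-case to be the only substantive obstacle; once $\A_2\in\dcAs$ is established, Proposition~\ref{prop:cls-formulae}\ref{item:prop:cls-formulae:ncMs} gives $\A_2\reckunion\bot=\ext{\Ms}(\ext{\Ds}\A_2)\in\ncMs$, Lemma~\ref{lem:exts-pres-aff} upgrades this to $\aff{\ncMs}$, and the preservation of $0\in\M_\acc$ completes the argument for both reckoning unions being in $\aff{\ncMs}_\zeroM$.
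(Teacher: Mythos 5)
Your proposal is correct and follows essentially the same route as the paper's proof: reduce to showing the underlying assessments are deductively closable, avoid confusion and satisfy Condition~\eqref{eq:accfav-condition} (so that Lemma~\ref{lem:exts-pres-aff} and Proposition~\ref{prop:cls-formulae}\ref{item:prop:cls-formulae:ncMs} lift the conclusion to $\aff{\ncMs}_\zeroM$), dispatch the first union via Lemma~\ref{lem:ext-with-unres}, and for the second expand $\phull(\M_\acc\union\set{-f})$ into $\M_\acc\union-\ray{f}\union(\M_\acc-\ray{f})$ and check the same six empty intersections. The only cosmetic difference is that where the paper reshuffles the Minkowski differences with Lemma~\ref{lem:minkowski-intersection} and invokes Proposition~\ref{prop:model+hulls}\ref{item:prop:model+hulls:shullrej}, you chase elements directly using $-\M_\rej\subseteq\M_\acc$ and $\M_\acc\in\cones$, which is equally valid.
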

\begin{proof}[Proof of Lemma~\ref{lem:accfav-ext-with-unres}]
  Thanks to Propositions~\ref{prop:cls-formulae} and~\ref{prop:accfav-cls}, it suffices to check that $\M\reckunion\Adelim{\set{f}}{\emptyset}$ and $\M\reckunion\Adelim{\set{-f}}{\set{f}}$ satisfy Condition~\eqref{eq:accfav-condition}---which follows from Lemma~\ref{lem:exts-pres-aff}---and avoid confusion---which for $\M\reckunion\Adelim{\set{f}}{\emptyset}$ follows from Lemma~\ref{lem:ext-with-unres}.
  So we are finished once we verify that $\M\reckunion\Adelim{\set{-f}}{\set{f}}$ avoids confusion.
  We just need to show that $\M\dedunion\Adelim{\set{-f}}{\set{f}}$ avoids confusion thanks to Propositions~\ref{prop:cls-formulae}\ref{item:prop:cls-formulae:ncMs} and~\ref{prop:accfav-cls}, i.e., because unconfused deductively closed accept-favour assessments can always be extended to unconfused accept-favour models.
  Its set of confusing gambles is $(\M_\rej\union\set{f})\intersection\phull(\M_\acc\union\set{-f})$, of which the second factor is equal to $\M_\acc\union-\ray{f}\union\group{\M_\acc-\ray{f}}$, so by distributivity, we must check that six intersections are empty:
  \begin{enumerate}
    \item $\M_\rej\intersection\M_\acc=\emptyset$ because~$\M$ avoids confusion,
    \item\label{item2:proof:lem:accfav-ext-with-unres}
      $\M_\rej\intersection-\ray{f}=\emptyset$ because $f\notin\M_\acc\supseteq-\M_\rej$,
    \item
      $\M_\rej\intersection\group{\M_\acc-\ray{f}}=\emptyset$ is by Lemma~\ref{lem:minkowski-intersection} equivalent to $(\M_\rej-\M_\acc)\intersection-\ray{f}=\emptyset$, which reduces to~\ref{item2:proof:lem:accfav-ext-with-unres} by Proposition~\ref{prop:model+hulls}\ref{item:prop:model+hulls:shullrej}.
    \item\label{item4:proof:lem:accfav-ext-with-unres}
      $\set{f}\intersection\M_\acc=\emptyset$ because $f\notin\M_\acc$,
    \item
      $\set{f}\intersection-\ray{f}=\emptyset$ because $f\neq0$ for $f$ in~$\M_\unres$,
    \item
      $\set{f}\intersection\group{\M_\acc-\ray{f}}=\emptyset$ is by Lemma~\ref{lem:minkowski-intersection} equivalent to $(\set{f}+\ray{f})\intersection\M_\acc=\emptyset$, which reduces to~\ref{item4:proof:lem:accfav-ext-with-unres}.
      \qedhere
  \end{enumerate}
\end{proof}

\begin{proof}[Proof of Theorem~\ref{thm:char-AF}]
  Essentially, the conditions of Theorem~\ref{thm:char-AR} are repeated.
  But we know that $\M_\fav\in\cones$ by Proposition~\ref{prop:model+hulls}\ref{item:prop:model+hulls:faviscone}, so we can add that to \ref{item:ARcn} to form \ref{item:AFcn}.
  Also, because $\M_\fav=-\M_\rej$ by $\M\in\aff\As$, we can modify \ref{item:ARzr} to $0\notin\M_\fav$, i.e., \ref{item:AFzr}, and \ref{item:ARss} to $\M_\acc+\shull{\M_\fav}\subseteq\M_\fav$, which is equivalent to \ref{item:AFss}, as $\M_\fav\in\cones$ implies $\shull{\M_\fav}=\M_\fav$.
\end{proof}

\subsection*{\secnumname{sec:favindiff}}\addcontentsline{toc}{subsection}{\secnumname{sec:favindiff}}
\begin{proof}[Proof of Theorem~\ref{thm:fifdcAs-zerocrit}]
  Starting from Theorem~\ref{thm:affdcAs-zerocrit}, we can write the condition as ${0\notin\A_\fav+\phull\A_\acc}$, using Condition~\eqref{eq:accfav-condition}.
  We split the proof in two cases:
  \begin{description}
    \item[$\A_\indiff=\emptyset$:]
      The condition's right-hand side is equal to $\A_\fav+\phull\A_\fav=\phull\A_\fav=\phull\A_\fav+\lhull\A_\indiff$.
    \item[$\A_\indiff\neq\emptyset$:]
      We infer from Lemma~\ref{lem:favindiff-accphull} that  the right-hand side is now equal to 
      $
        \group{\A_\fav+\lhull\A_\indiff}
        \union \group{\A_\fav+\phull\A_\fav+\lhull\A_\indiff},
      $
      and the condition is again equivalent to the one stated because $\A_\fav+\phull\A_\fav=\phull\A_\fav$.
      \qedhere
  \end{description}
\end{proof}
\begin{lemma}\label{lem:favindiff-accphull}
  Given~$\A$ in~$\fif\As$ with $\A_\indiff\neq\emptyset$, then $\phull\A_\acc=\lhull\A_\indiff \union\group{\phull\A_\fav+\lhull\A_\indiff}$.
\end{lemma}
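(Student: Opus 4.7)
The plan is to start by showing that, under the hypothesis $\A_\indiff\neq\emptyset$, the positive linear hull and linear span of $\A_\indiff$ coincide: $\phull\A_\indiff=\lhull\A_\indiff$. The key observation is that $\A_\indiff=\A_\acc\intersection-\A_\acc=-\A_\indiff$ is symmetric under negation, so for any $f\in\A_\indiff$ also $-f\in\A_\indiff$, whence $\ray{f}+\ray{-f}=\reals\cdot f\subseteq\phull\A_\indiff$ and in particular $0\in\phull\A_\indiff$. Because $\phull\A_\indiff$ is a convex cone closed under negation, it is a linear space containing $\A_\indiff$, so it must coincide with the smallest such, namely $\lhull\A_\indiff$. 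Rewriting $\A_\acc=\A_\fav\union\A_\indiff$ via Condition~\eqref{eq:favindiff-condition}, the claim reduces to $\phull(\A_\fav\union\A_\indiff)=\phull\A_\indiff\union(\phull\A_\fav+\phull\A_\indiff)$.

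For the $\supseteq$ direction I would simply invoke that $\phull\A_\acc$ is a convex cone that contains both $\A_\fav$ and $\A_\indiff$ and is therefore closed under Minkowski addition: hence $\phull\A_\indiff\subseteq\phull\A_\acc$ and $\phull\A_\fav+\phull\A_\indiff\subseteq\phull\A_\acc+\phull\A_\acc=\phull\A_\acc$.

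For the $\subseteq$ direction I would unfold the definition of $\phull$: any $h\in\phull(\A_\fav\union\A_\indiff)$ has the form $h=\sum_{g\in\someGs''}\lambda_g g$ for some finite $\someGs''\subseteq\A_\fav\union\A_\indiff$ and positive coefficients $\lambda_g$. Partition $\someGs''$ as $\someGs''_f\defeq\someGs''\intersection\A_\fav$ and $\someGs''_i\defeq\someGs''\setminus\A_\fav\subseteq\A_\indiff$, and write $h=h_f+h_i$ correspondingly. If $\someGs''_f=\emptyset$, then $h=h_i\in\phull\A_\indiff=\lhull\A_\indiff$; otherwise $h_f\in\phull\A_\fav$ while $h_i$ lies in $\phull\A_\indiff$ (or equals $0$ if $\someGs''_i=\emptyset$), and since $0\in\lhull\A_\indiff$ by non-emptiness of $\A_\indiff$, in either subcase $h_i\in\lhull\A_\indiff$ and thus $h\in\phull\A_\fav+\lhull\A_\indiff$.

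The main obstacle is nothing deep but rather careful bookkeeping of the degenerate cases --- the empty-sum case and the case where either $\someGs''_f$ or $\someGs''_i$ is empty --- which is precisely where the hypothesis $\A_\indiff\neq\emptyset$ earns its keep by placing $0$ inside $\lhull\A_\indiff$ and thereby collapsing what would otherwise be a three-term union into the two-term expression stated.
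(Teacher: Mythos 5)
Your proof is correct and follows essentially the same route as the paper's: rewrite $\A_\acc=\A_\fav\union\A_\indiff$ via Condition~\eqref{eq:favindiff-condition}, use $\phull\A_\indiff=\lhull\A_\indiff$, and absorb the stand-alone $\phull\A_\fav$ term because $0\in\lhull\A_\indiff$. The only difference is that you verify by element-chasing the posi-of-a-union decomposition and the identity $\phull\A_\indiff=\lhull\A_\indiff$, which the paper simply asserts, so your write-up is a more detailed version of the same argument.
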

\begin{proof}[Proof of Lemma~\ref{lem:favindiff-accphull}]
  We calculate $\phull\A_\acc$ explicitly:
  \begin{align*}
    \phull\A_\acc &= \phull(\A_\fav\union\A_\indiff)
                              &&\text{(Condition~\eqref{eq:favindiff-condition})} \\
                         &= \phull\A_\fav \union \phull\A_\indiff
                                \union \group{\phull\A_\fav+\phull\A_\indiff} \\
                         &= \phull\A_\fav \union \lhull\A_\indiff
                                \union \group{\phull\A_\fav+\lhull\A_\indiff} 
                               &&\text{($\phull\A_\indiff=\lhull\A_\indiff$)} \\
                         &= \lhull\A_\indiff \union \group{\phull\A_\fav+\lhull\A_\indiff}.
                               &&\text{($\A_\indiff\neq\emptyset$, so $0\in\lhull\A_\indiff$)}
  \end{align*}
\end{proof}

\begin{proof}[Proof of Theorem~\ref{thm:favindiff}]
  Note that $0\in\M_\indiff\neq\emptyset$.
  We are going to calculate $\M$ explicitly and obtain an expression that functions as a proof for all claims:
  \begin{align*}
    \M = \cls{\ncMs}\A &= \ext{\Ms}\group{\ext{\Ds}\Adelim{\A_\acc}{-\A_\fav}}
        &&\text{(Proposition~\ref{prop:cls-formulae})}\\
       &= \ext{\Ms}\Adelim{\phull\A_\acc}{-\A_\fav}
        &&\text{(def. $\ext{\Ds}$)}\\
       &= \Adelim[\big]{\phull\A_\acc}
                       {
                          -\group[\big]{
                             \shull{\A_\fav}\union(\shull{\A_\fav}+\phull\A_\acc)
                           }
                       }
        &&\text{(Proposition~\ref{prop:reckoning-for-closed-noconf})}\\
       &= \Adelim[\big]{\lhull\A_\indiff\union\group{\phull\A_\fav+\lhull\A_\indiff}}
            {-\group{\phull\A_\fav+\lhull\A_\indiff}}.
        &&\text{(Lemma~\ref{lem:favindiff-accphull})}
  \end{align*}
\end{proof}

\begin{proof}[Proof of Theorem~\ref{thm:char-FI}]
  To the conditions of Theorem~\ref{thm:char-AF}, Condition~\eqref{eq:favindiff-condition} must be added; this is done by applying Theorem~\ref{thm:favindiff} with $\A\defeq\M$, which leads to the changes from \ref{item:AFcn}~and~\ref{item:AFss} to \ref{item:FIcn}~and~\ref{item:FIss}.
\end{proof}

\subsection*{\secnumname{sec:favourability}}\addcontentsline{toc}{subsection}{\secnumname{sec:favourability}}
\begin{proof}[Proof of Theorem~\ref{thm:ffdcAs-zerocrit}]
  Apply Theorems~\ref{thm:fifdcAs-zerocrit} and~\ref{thm:favindiff} to the assessment
  $
    \A\union\bgM = \Adelim[\big]{
      \A_\fav\union\bgM_\fav\union\bgM_\indiff
    }{
      -(\A_\fav\union\bgM_\fav)
    }.
  $
  The claims then follow by realising that $\lhull\bgM_\indiff=\bgM_\indiff\neq\emptyset$ and, for the final equality, using the fact that Conditions~\eqref{eq:accfav-condition} and~\eqref{eq:favindiff-condition} hold, as $\M\in\fif\ncMs_\zeroM$.
\end{proof}

\begin{proof}[Proof of Theorem~\ref{thm:char-F}]
  We start from Theorem~\ref{thm:char-FI}; \ref{item:Fzr} and \ref{item:Fcn} are identical to \ref{item:FIzr} and \ref{item:FIcn}.
  Because $\M_\indiff=\bgM_\indiff$ by construction, \ref{item:Fss} is equivalent to \ref{item:FIss}.
  Furthermore, if $\bgM_\fav\subseteq\M_\fav$, then $\bgM=\Adelim{\bgM_\indiff\union\bgM_\fav}{-\bgM_\fav}\subseteq\M$ and we can see that the converse also holds, so \ref{item:Fbg} is equivalent to \ref{item:FIbg}.
\end{proof}

\subsection*{\secnumname{sec:acceptability}}\addcontentsline{toc}{subsection}{\secnumname{sec:acceptability}}
\begin{proof}[Proof of Theorem~\ref{thm:afdcAs-zerocrit}]
  The first claim follows by applying Theorem~\ref{thm:dcAs-zerocrit} to the assessment $\A\union\bgM=\Adelim{\bgM_\acc\union\A_\acc}{\bgM_\rej}$.
  For the second claim, we are going to calculate $\M$ explicitly:
  \begin{align*}
    \M &= \ext{\Ms}\group{\ext{\Ds}\Adelim{\bgM_\acc\union\A_\acc}{\bgM_\rej}}
        &&\text{(def. $\cls{\ncMs}$)}\\
       &= \ext{\Ms}\Adelim{\phull(\bgM_\acc\union\A_\acc)}{\bgM_\rej}
        &&\text{(def. $\ext{\Ds}$)}\\
       &= \Adelim[\big]{\phull(\bgM_\acc\union\A_\acc)}
                       {\bgM_\rej-\phull(\bgM_\acc\union\A_\acc)}.
        &&\text{(def. $\ext{\Ms}$, Corollary~\ref{cor:limbo-noconfusion+statusquo})}
  \end{align*}
\end{proof}

\begin{proof}[Proof of Theorem~\ref{thm:char-A}]
  The changes between Theorem~\ref{thm:char-AR} and this theorem are due to the fact that $\M_\rej=\bgM_\rej-\M_\acc$:
  \ref{item:ARbg} reduces to \ref{item:Abg} as $\bgM_\rej\subseteq\M_\rej$ because $0\in\M_\acc$, \ref{item:ARzr} reduces to \ref{item:Azr} by simple substitution, and \ref{item:ARss} can be omitted because
  $
    \shull{\M_\rej}-\M_\acc = \shull{\bgM_\rej-\M_\acc}-\M_\acc
                                         = \bgM_\rej-\M_\acc
                                         = \M_\rej,
  $
  where we used the facts that $\M_\acc\in\cones$ and $\shull{\bgM_\rej}=\bgM_\rej$.
\end{proof}

\subsection*{\secnumname{sec:linprevs}}\addcontentsline{toc}{subsection}{\secnumname{sec:linprevs}}
\begin{proof}[Proof of Proposition~\ref{prop:pr-bgM-inner}]
  We prove both sides of the equality
  $
    \dotbgM = \bgM \defeq \Intersection_{\pr\in\prs}\otherM_\pr
  $
  separately:
  \begin{itemize}
    \item[$\subseteq$]
      By definition of $\linGs_{\gtrrvf{\pr}}$ and $\linGs_{\lessrvf{\pr}}$, we know that $\dotbgM\subseteq\Adelim{\linGs_{\gtrrvf{\pr}}}{\linGs_{\lessrvf{\pr}}}=\otherM_\pr$ for any linear prevision~$\pr$ in~$\prs$.
      So we have that $\dotbgM \subseteq \bgM$.
    \item[$\supseteq$]
      Assume, ex absurdo, that equality does not hold and $\dotbgM \nsupseteq \bgM$.
      Then there is some gamble~$f$ in $\bgM_\acc\setminus\linGs_\gtrdot$ or $\bgM_\rej\setminus\linGs_\lessdot$.
      But for any such $f\notin\linGs_\gtrdot\union\linGs_\lessdot$ there also exists a linear prevision~$\pr$ in~$\prs$ such that $\pr{f}=0$---i.e., $f\in\linGs_{\eqrvf{\pr}}$---\citep[\S3.4.2]{Walley-1991}, whereby $f\notin\linGs_{\gtrrvf{\pr}}\union\linGs_{\lessrvf{\pr}}$, contradicting the assumption.
      \qedhere
  \end{itemize}
\end{proof}

\begin{proof}[Proof of Proposition~\ref{prop:pr-models-disjoint}]
  We need to prove that
  $
    \ncMs_{\otherM_\pr} \intersection \ncMs_{\otherM_\otherpr}
    = \emptyset
  $
  if $\pr\neq\otherpr$, or, equivalently, if $\linGs_{\eqrvf{\pr}}\neq\linGs_{\eqrvf{\otherpr}}$.
  Assume, ex absurdo, that there is some model~$\M$ in the intersection, which means that $\otherM_\pr\union\otherM_\otherpr\subseteq\M$.
  Lemma~\ref{lem:pr-models-intersect} tells us we can choose some $f$ in $\linGs_{\eqrvf{\pr}}\intersection\linGs_{\lessrvf{\otherpr}}$ and $g$ in $\linGs_{\gtrrvf{\pr}}\intersection\linGs_{\eqrvf{\otherpr}}$.
  Then
  \begin{equation*}
    f+g \in (\linGs_{\eqrvf{\pr}}+\linGs_{\gtrrvf{\pr}})
                \intersection (\linGs_{\eqrvf{\otherpr}}+\linGs_{\lessrvf{\otherpr}})
           = \linGs_{\gtrrvf{\pr}}\intersection\linGs_{\lessrvf{\otherpr}}
           = (\otherM_\pr)_\acc\intersection(\otherM_\otherpr)_\rej
           \subseteq \M_\confus,
  \end{equation*}
  which means that there is confusion in~$\M$, a contradiction.
\end{proof}
\begin{lemma}\label{lem:pr-models-intersect}
  Given two non-identical coherent previsions $\pr$ and $\otherpr$ in $\prs$, i.e., such that $\pr\neq\otherpr$, then the marginal gambles corresponding to $\pr$ intersect the non-marginal gambles of $\otherpr$:
  $
    \linGs_{\eqrvf{\pr}}\intersection\linGs_{\lessrvf{\otherpr}}
    = -(\linGs_{\eqrvf{\pr}}\intersection\linGs_{\gtrrvf{\otherpr}})
    \neq \emptyset.
  $
\end{lemma}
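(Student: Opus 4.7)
My plan is to split the claim into (i) the equality between the two sets and (ii) the non-emptiness assertion, each handled by direct linear-algebra arguments exploiting that coherent previsions defined on all of $\linGs$ are real linear functionals dominated pointwise between $\inf$ and $\sup$.

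For (i), I would observe that for any gamble $f$ in $\linGs$, linearity of $\pr$ gives $\pr(-f)=-\pr(f)$ and likewise $\otherpr(-f)=-\otherpr(f)$. Hence $f\in\linGs_{\eqrvf{\pr}}$ if and only if $-f\in\linGs_{\eqrvf{\pr}}$, while $f\in\linGs_{\lessrvf{\otherpr}}$ if and only if $-f\in\linGs_{\gtrrvf{\otherpr}}$. Applying negation termwise thus sends $\linGs_{\eqrvf{\pr}}\intersection\linGs_{\lessrvf{\otherpr}}$ bijectively onto $\linGs_{\eqrvf{\pr}}\intersection\linGs_{\gtrrvf{\otherpr}}$, which is exactly the stated equality.

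For (ii), the key input is that $\pr\neq\otherpr$ means these linear functionals disagree on some gamble: pick $h$ in $\linGs$ with $\pr(h)\neq\otherpr(h)$. Since $\linGs$ is assumed to contain the constants and $\pr(\alpha)=\alpha=\otherpr(\alpha)$ for all real $\alpha$ (as follows from $\inf\alpha\leq\pr(\alpha)\leq\sup\alpha$), the recentered gamble $f\defeq h-\pr(h)$ lies in $\linGs_{\eqrvf{\pr}}$ by linearity, yet $\otherpr(f)=\otherpr(h)-\pr(h)\neq0$. If $\otherpr(f)<0$ then $f$ already witnesses non-emptiness of $\linGs_{\eqrvf{\pr}}\intersection\linGs_{\lessrvf{\otherpr}}$; if $\otherpr(f)>0$ then by part (i) we use $-f$ instead, which sits in $\linGs_{\eqrvf{\pr}}\intersection\linGs_{\lessrvf{\otherpr}}$.

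There is no real obstacle here, since nothing more sophisticated than the linearity of coherent previsions on all of $\linGs$ is needed; the only mild subtlety is remembering to invoke $\pr(1)=1=\otherpr(1)$ (or equivalently the $\inf\leq\pr\leq\sup$ bound on constants) when recentering $h$ so that the candidate gamble lands in the kernel $\linGs_{\eqrvf{\pr}}$.
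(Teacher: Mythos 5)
Your proposal is correct and follows essentially the same route as the paper: the equality via linearity (negation fixes the kernel of $\pr$ and swaps the strictly positive and strictly negative sets of $\otherpr$), and non-emptiness by recentering a gamble on which the two previsions disagree to $f-\pr{f}\in\linGs_{\eqrvf{\pr}}$ with $\otherpr(f-\pr{f})\neq0$, negating if needed. The only difference is cosmetic: you make explicit the fact that previsions act as the identity on constants, which the paper's proof uses implicitly.
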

\begin{proof}[Proof of Lemma~\ref{lem:pr-models-intersect}]
  The equality follows by linearity of $\pr$ and $\otherpr$, i.e., because $\linGs_{\eqrvf{\pr}}=-\linGs_{\eqrvf{\pr}}$ and $\linGs_{\lessrvf{\otherpr}}=-\linGs_{\gtrrvf{\otherpr}}$.
  For the inequality, take some~$f$ in~$\linGs$ such that $\pr{f}\neq\otherpr{f}$.
  Then again by linearity of $\pr$ and $\otherpr$ we have that $\pr(f-\pr{f})=-\pr(\pr{f}-f)=0$, whence $f-\pr{f}\in\linGs_{\eqrvf{\pr}}$, and $\otherpr(f-\pr{f})=-\otherpr(\pr{f}-f)\neq0$, whence $f-\pr{f}\in\linGs_{\lessrvf{\otherpr}}\union\linGs_{\gtrrvf{\otherpr}}$.
\end{proof}

\begin{proof}[Proof of Proposition~\ref{prop:pr-bgM-outer}]
  We have to prove that
  $
    \otherM_\pr\union\bgM\in\dcAs
    \iff
    \bgM\subset\Adelim{\linGs_\geq}{\linGs_\leq}
  $
  for all $\pr$ in~$\prs$; we do this separately for each direction:
  \begin{itemize}
    \item[$\neht$]
      There are two models in $\ncMs_{\dotbgM}$ beneath $\Adelim{\linGs_\geq}{\linGs_\leq}$ that are maximal:
      $
        \Adelim{\linGs_\geq}{\linGs_<}
        = \Adelim{\linGs_\geq}{\linGs_\leq}\setminus\Adelim{\emptyset}{\set{0}}
      $
      and
      $
        \Adelim{\linGs_>}{\linGs_\leq}
        = \Adelim{\linGs_\geq}{\linGs_\leq}\setminus\Adelim{\set{0}}{\emptyset}.
      $
      So we provide a proof by showing that $\D\defeq\otherM_\pr\union\Adelim{\linGs_\geq}{\linGs_\leq}\in\Ds$ with $\D_\confus=\set{0}$, i.e., $\otherM_\pr\union\bgM$ is included in a deductively closed assessment without confusion (a cone or positively scaled set with the origin removed is still a cone or positively scaled set, respectively).
      The first statement follows from
      \begin{equation*}
        \phull\D_\acc = \phull\group{\linGs_{\gtrrvf{\pr}}\union\linGs_\geq}
        = \linGs_{\gtrrvf{\pr}} \union \linGs_\geq
                                \union (\linGs_{\gtrrvf{\pr}}+\linGs_\geq)
        = \linGs_{\gtrrvf{\pr}} \union \linGs_\geq = \D_\acc,
      \end{equation*}
      where we used the fact that $\linGs_{\gtrrvf{\pr}}$ and $\linGs_\geq$ are cones.
      The second statement follows from
      \begin{align*}
        \D_\confus
        &= (\linGs_{\gtrrvf{\pr}}\union\linGs_\geq)
            \intersection (\linGs_{\lessrvf{\pr}}\union\linGs_\leq) \\
        &= (\linGs_{\gtrrvf{\pr}}\intersection\linGs_{\lessrvf{\pr}})
            \union (\linGs_{\gtrrvf{\pr}}\intersection\linGs_\leq)
            \union (\linGs_\geq\intersection\linGs_{\lessrvf{\pr}})
            \union (\linGs_\geq\intersection\linGs_\leq)
        = \emptyset \union \emptyset  \union \emptyset \union \set{0}
        = \set{0},
      \end{align*}
      because
      $
        \linGs_{\gtrrvf{\pr}}\intersection\linGs_\leq
        = -(\linGs_\geq\intersection\linGs_{\lessrvf{\pr}}) = \emptyset,
      $
      as $\pr{g}\leq\sup{g}\leq0$ for $g$ in $\linGs_\leq$ and $\pr{f}>0$ for $f$ in $\linGs_{\gtrrvf{\pr}}$.
    \item[$\then$]
      Assume, ex absurdo, that $\bgM\nsubset\Adelim{\linGs_\geq}{\linGs_\leq}$, then there is a gamble $f$ in $\linGs$ such that $f\notin\linGs_\geq\union\linGs_\leq$, so such that $\inf{f}<0<\sup{f}$.
      To create a contradiction, i.e., confusion, choose $\pr$ such that $\pr{f}<0$ in case $f\in\bgM_\acc$ and such that $\pr{f}>0$ in case $f\in\bgM_\rej$.
      Such a choice is always possible \citep[see, e.g.,][\S3.4.2]{Walley-1991}.
      \qedhere
  \end{itemize}
\end{proof}

\subsection*{\secnumname{sec:lowprevs}}\addcontentsline{toc}{subsection}{\secnumname{sec:lowprevs}}
\begin{proof}[Proof of Theorem~\ref{thm:lpr-model}]
  $\smash[b]{\M_\lpr}$ avoids confusion if and only if $\A_\lpr$ respects $\bgM$, or, formally, if and only if $\in\dcAs\A_\lpr\union\bgM$.
  For this, it is sufficient that the extension $\D\defeq\ext{\Ds}(\smash[b]{\A_\lpr}\union\bgM)$ avoids confusion.
  Its components are:
  \begin{align*}
    \D_\acc
      &= \phull\group[\big]{\group{\margs{\lpr}+\reals_>}\union\bgM_\acc} \\
      &= \phull\group{\margs{\lpr}+\reals_>} \union \bgM_\acc \union
                \group[\big]{\phull\group{\margs{\lpr}+\reals_>}+\bgM_\acc} 
          &&\text{($\bgM_\acc\in\cones$)} \\
      &= \bgM_\acc \union \group{\linGs_\gtrdot+\phull\margs{\lpr}},
          &&\text{(%
              $
                \linGs_\gtrdot = \linGs_\gtrdot+\reals_>
                \subseteq \bgM_\acc\union\set{0}+\reals_> \subseteq
                \linGs_\geq+\reals_> = \linGs_\gtrdot
              $)}\\
    \D_\rej &= \bgM_\rej.
  \end{align*}
  Then $\D$ avoids confusion if and only if
  \begin{align*}
    \emptyset
    = \D_\confus
    = \group[\big]{\bgM_\acc\union\group{\linGs_\gtrdot+\phull\margs{\lpr}}}
            \intersection \bgM_\rej
    &= \group{\linGs_\gtrdot+\phull\margs{\lpr}} \intersection \bgM_\rej
            &&\text{($\bgM_\acc\intersection\bgM_\rej=\emptyset$)} \\
    &= \phull\margs{\lpr} \intersection (\bgM_\rej-\linGs_\gtrdot)
            &&\text{(Lemma~\ref{lem:minkowski-intersection})} \\
    &= \phull\margs{\lpr}\intersection \linGs_\lessdot,
            &&\text{(%
              $
                \linGs_\lessdot = \linGs_\lessdot-\linGs_\gtrdot
                \subseteq \bgM_\rej-\linGs_\gtrdot \subseteq
                \linGs_\leq-\linGs_\gtrdot = \linGs_\lessdot
              $
            )}
  \end{align*}
  which is equivalent to the condition of the proposition.

  Now, $\M_\lpr \defeq \ext{\Ms}\D$ if $\D_\confus=\emptyset$ and has components
  \begin{align*}
    (\M_\lpr)_\acc &= \bgM_\acc \union \group{\linGs_\gtrdot+\phull\margs{\lpr}}, \\
    (\M_\lpr)_\rej
      &= \shull{\D_\rej} \union (\shull{\D_\rej}-\D_\acc)
      = \bgM_\rej \union (\bgM_\rej-\D_\acc)
      = \bgM_\rej \union \group{\bgM_\rej-\bgM_\acc}
                            \union \group{\bgM_\rej-\linGs_\gtrdot-\phull\margs{\lpr}}
      = \bgM_\rej \union \group{\linGs_\lessdot-\phull\margs{\lpr}},
  \end{align*}
  which proves the expression given.
\end{proof}

\begin{proof}[Proof of Proposition~\ref{prop:bgM-to-vac}]
  For any~$f$ in~$\linGs$, we have that
  \[
      \lpr_\bgM{f} = \sup\cset{\alpha\in\reals}{f-\alpha\in\bgM_\acc}
                   = \sup\cset{\alpha\in\reals}{f-\alpha\gtrdot0}
                   = \sup\cset{\alpha\in\reals}{\alpha\lessdot f}
                   = \inf{f},
  \]
  where the second equality follows from the fact that $\sup$ makes the distinction between $\geq$ and $\gtrdot$ moot.
\end{proof}

\begin{proof}[Proof of Proposition~\ref{prop:M-pr-M}]
  We prove both claims separately:
  \begin{itemize}
    \item[\ref{item:prop:M-pr-M:coh}]
      As $\lpr_\M$ is defined on the linear space $\linGs$, we can use the conditions detailed in Lemma~\ref{lem:clp-on-lin} to prove coherence:
      \begin{itemize}[label=\textbullet]
        \item Accepting sure gains:
          \begin{equation*}
            \lpr_\M{f} = \sup\cset{\alpha\in\reals}{f-\alpha\in\M_\acc}
                              \geq \sup\cset{\alpha\in\reals}{f-\alpha\in\bgM_\acc}
                              = \inf{f},
          \end{equation*}
          by $\bgM\subseteq\M$ and Proposition~\ref{prop:bgM-to-vac}.
        \item Positive homogeneity:
          \begin{align*}
            \lpr_\M(\lambda\cdot f)
            &= \sup\cset{\alpha\in\reals}{\lambda\cdot f-\alpha\in\M_\acc} \\
            &= \lambda\cdot\sup\cset{\beta\in\reals}{\lambda\cdot(f-\beta)\in\M_\acc}
            = \lambda\cdot\sup\cset{\beta\in\reals}{f-\beta\in\M_\acc}
            = \lambda\cdot\lpr_\M{f},
          \end{align*}
          by Positive Scaling~\eqref{eq:scaling}.
        \item Superadditivity:
          \begin{align*}
            \lpr_\M(f+g)
              &= \sup\cset{\alpha\in\reals}{f+g-\alpha\in\M_\acc} \\
              &= \sup\cset{\beta+\gamma}
                                  {f-\beta+g-\gamma\in\M_\acc \conj \beta,\gamma\in\reals} \\
              &\geq \sup\cset{\beta\in\reals}{f-\beta\in\M_\acc}
                      + \sup\cset{\gamma\in\reals}{g-\gamma\in\M_\acc}
              = \lpr_\M{f}+\lpr_\M{g},
          \end{align*}
          by Combination~\eqref{eq:combination}.
      \end{itemize}
    \item[\ref{item:prop:M-pr-M:commitloss}]
      We prove the inclusion separately for the the accept and reject components:
      \begin{itemize}
        \item[$\acc$]
          Theorem~\ref{thm:lpr-model} tells us that
          $
            (\M_{\lpr_\M})_\acc
            = \bgM_\acc\union\group[\big]{\phull\margs{\lpr_\M}+\linGs_\gtrdot}.
          $
          So because $\bgM_\acc\subseteq\M_\acc$ we need to prove that $\phull\margs{\lpr_\M}+\linGs_\gtrdot\subseteq\M_\acc$ or, as $\linGs_\gtrdot$ and $\M_\acc$ are cones, that $\margs{\lpr_\M}+\linGs_\gtrdot\subseteq\M_\acc$.
          Any left-hand side element can be written as $f-\lpr_\M{f}+h=f-\sup\cset{\alpha\in\reals}{f-\alpha\in\M_\acc}+h$, with $f\in\linGs$ and $h\in\linGs_\gtrdot$.
          Furthermore, we can always write $h=\varepsilon+h'$, with $\varepsilon\in\reals_>$ and $h'\in\linGs_\gtrdot$.
          Then the proof is complete because $f-\lpr{f}+\varepsilon\in\M_\acc$ and $\linGs_\gtrdot\subseteq\M_\acc$ so that $\M_\acc+\linGs_\gtrdot\subseteq\M_\acc$.
        \item[$\rej$] We give an explicit derivation:
          \begin{align*}
            (\M_{\lpr_\M})_\rej
            &= \bgM_\rej\union\group[\big]{\linGs_\lessdot-\phull\margs{\lpr_\M}}
              &&\text{(Theorem~\ref{thm:lpr-model})} \\
            &= \bgM_\rej
                  \union \group[\big]{\linGs_\lessdot-\linGs_\gtrdot-\phull\margs{\lpr_\M}} 
              &&\text{($\linGs_\lessdot\in\cones$, $\linGs_\gtrdot=-\linGs_\lessdot$)}\\
            &\subseteq \bgM_\rej\union\group{\bgM_\rej-\M_\acc}
              &&\text{(%
                  $\linGs_\lessdot\subseteq\bgM_\rej$,
                  $
                    \phull\margs{\lpr_\M}+\linGs_\gtrdot\subseteq(\M_{\lpr_\M})_\acc\subseteq\M_\acc
                  $
                 )} \\
            &\subseteq \M_\rej. &&\text{(No Limbo~\eqref{eq:no-limbo})}
          \end{align*}
      \end{itemize}
  \end{itemize}
\end{proof}
\begin{lemma}[{\cite[\S2.3.3]{Walley-1991}}]\label{lem:clp-on-lin}
  A lower prevision $\lpr$ defined on a linear space $\linGs$ is coherent if and only if it satisfies
  \begin{itemize}
    \item Accepting sure gains: $\lpr{f}\geq\inf{f}$,
    \item Positive homogeneity: $\lpr(\lambda\cdot f)=\lambda\cdot\lpr{f}$, and
    \item Superadditivity: $\lpr(f+g)\geq\lpr{f}+\lpr{g}$
  \end{itemize}
  for all scaling factors $\lambda$ in~$\reals_\geq$ and gambles $f$ and $g$ in $\linGs$.
\end{lemma}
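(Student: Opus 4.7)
My plan is to unfold the coherence criterion—which by the paper's definition requires $\sup(g - h) \geq 0$ for every $h \in \margs{\lpr}$ and every $g \in \phull\margs{\lpr}$—and show it is equivalent to the three stated properties when the domain $\linGs$ is a linear space. For the forward direction I would instantiate $h$ and $g$ cleverly to isolate each property in turn; for the reverse direction I would assemble the general coherence inequality out of the three properties.

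Taking the forward direction first, I would derive accepting sure gains by fixing $h = f - \lpr{f}$ and $g = \mu(f - \lpr{f}) \in \phull\margs{\lpr}$ with $\mu \in (0, 1)$: the difference is $g - h = (\mu - 1)(f - \lpr{f})$, so $\sup(g - h) = (1 - \mu)(\lpr{f} - \inf{f})$, whose non-negativity yields $\lpr{f} \geq \inf{f}$. For positive homogeneity with $\lambda > 0$, the pair $h = \lambda f - \lpr(\lambda f) \in \margs{\lpr}$ and $g = \lambda(f - \lpr{f}) \in \phull\margs{\lpr}$ yields the constant difference $\lpr(\lambda f) - \lambda\lpr{f}$, whence $\lpr(\lambda f) \geq \lambda\lpr{f}$; swapping to $h = f - \lpr{f}$ and $g = (1/\lambda)(\lambda f - \lpr(\lambda f))$ gives the reverse inequality, and the case $\lambda = 0$ will follow by applying the $\lambda = 2$ instance to $f = 0$. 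For superadditivity I would take $h = (f + g) - \lpr(f + g)$ and $g' = (f - \lpr{f}) + (g - \lpr{g})$, whose constant difference $\lpr(f + g) - \lpr{f} - \lpr{g}$ then gives $\lpr(f + g) \geq \lpr{f} + \lpr{g}$.

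For the reverse direction, I would consider arbitrary $h = f_0 - \lpr{f_0}$ and $g = \sum_i \lambda_i(f_i - \lpr{f_i})$ with $\lambda_i > 0$, and set $F \defeq \sum_i \lambda_i f_i - f_0$. Noting that $g - h$ differs from $F$ by the constant $\lpr{f_0} - \sum_i \lambda_i \lpr{f_i}$, the coherence inequality $\sup(g - h) \geq 0$ reduces to proving $\sum_i \lambda_i \lpr{f_i} - \lpr{f_0} \leq \sup F$. Positive homogeneity and superadditivity together give $\sum_i \lambda_i \lpr{f_i} = \sum_i \lpr(\lambda_i f_i) \leq \lpr(F + f_0)$; applying superadditivity to the decomposition $f_0 = (F + f_0) + (-F)$ yields $\lpr{f_0} \geq \lpr(F + f_0) + \lpr(-F)$, i.e., $\lpr(F + f_0) \leq \lpr{f_0} - \lpr(-F)$; and accepting sure gains applied to $-F$ bounds $-\lpr(-F) \leq \sup F$. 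Chaining these three bounds closes the argument.

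I expect the main obstacle to be the reverse direction, where superadditivity needs to be applied in two complementary ways: first in its natural form to bound $\sum_i \lpr(\lambda_i f_i)$ from above by $\lpr(F + f_0)$, and then applied to the telescoping $f_0 = (F + f_0) + (-F)$ to introduce $-\lpr(-F)$, which can finally be bounded above by $\sup F$ via accepting sure gains. The conceptual trick is to resist searching for a nonexistent subadditivity of $\lpr$ and instead route the bound through the conjugate upper prevision $-\lpr(-F)$, whose $\sup$-bound is the only available way to convert $\lpr$-bounds into the $\sup$-bound demanded by coherence.
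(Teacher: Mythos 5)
Your proof is correct, but there is nothing in the paper to compare it against: the paper states this lemma purely as a citation of Walley's axiomatisation of coherence on a linear space [§2.3.3] and offers no proof, so what you have produced is a self-contained verification that the paper delegates to the literature. Your argument correctly works from the coherence criterion the paper does state in Section~\ref{sec:lowprevs}, namely that $\sup(g-h)\geq0$ for every $h\in\margs{\lpr}$ and every $g\in\phull\margs{\lpr}$: in the forward direction each of your instantiations makes $g-h$ either a negative multiple of a marginal gamble (accepting sure gains) or a constant (both homogeneity inequalities and superadditivity), and your $\lambda=0$ remark disposes of $\lpr0=0$; in the reverse direction the chain $\sum_i\lambda_i\lpr{f_i}\leq\lpr(F+f_0)\leq\lpr{f_0}-\lpr(-F)\leq\lpr{f_0}+\sup F$ is exactly where linearity of the domain is needed (so that $F+f_0$ and $-F$ lie in $\linGs$), and it is the standard route---essentially Walley's own---through the conjugate value $-\lpr(-F)$ rather than a nonexistent subadditivity. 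Two small points worth making explicit in a write-up: superadditivity over finitely many summands requires a one-line induction, and if the empty positive combination $g=0$ is admitted in $\phull\margs{\lpr}$, the corresponding requirement $\sup(\lpr{f_0}-f_0)\geq0$ is again just accepting sure gains, so no generality is lost.
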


\begin{proof}[Proof of Proposition~\ref{prop:pr-M-pr}]
  We prove the different claims sequentially:
  \begin{itemize}
    \item[\ref{item:lprnatex}] We give an explicit derivation:
      \begin{align*}
        \smash[b]{\lpr_{\M_\lpr}}f
          &= \sup\cset{\alpha\in\reals}{f-\alpha\in(\M_\lpr)_\acc}
            &&\text{(Equation~\eqref{eq:Mtolpr})}\\
          &= \sup \cset{\alpha\in\reals}
                        {f-\alpha \in
                          (\phull\margs{\lpr}+\linGs_\gtrdot)\union\bgM_\acc}
            &&\text{(Theorem~\ref{thm:lpr-model})}\\
          &= \max\set[\big]{
                \sup\cset{\alpha\in\reals}
                        {f-\alpha\gtrdot g \conj g\in\phull\margs{\lpr}},
                \lpr_\bgM{f}
              }
            &&\text{(def. $\sup$, Equation~\eqref{eq:Mtolpr})}\\
          &= \max\set{\textstyle\sup_{g\in\phull\margs{\lpr}}\inf(f-g), \inf{f}}
            &&\text{(def. $\sup$, Proposition~\ref{prop:bgM-to-vac})}\\
          &= \textstyle\sup_{g\in\phull\margs{\lpr}}\inf(f-g),
            &&\text{%
                ($\margs{\lpr}\neq\emptyset$, so $0\in\cls{}(\phull\margs{\lpr})$,
                 def. $\sup$)
              }
      \end{align*}
      where `$\cls{}$' denotes closure in the supremum-norm topology.
    \item[\ref{item:natexdomlpr}]
      This claim follows from Claim \ref{item:lprnatex} because $g\defeq f-\lpr{f}\in\margs{\lpr}\subseteq\phull\margs{\lpr}$ for $f$ in~$\someGs$.
    \item[\ref{item:lpreqnatex}]
      Because of Claim~\ref{item:natexdomlpr}, $\lpr_{\M_\lpr}=\lpr$ on~$\someGs$ if and only if $\forall f\in\someGs:\smash[b]{\lpr_{\M_\lpr}}f\leq\lpr{f}$, which, by Claim~\ref{item:lprnatex}, is equivalent to 
      $
        \sup_{f\in\someGs}\sup_{g\in\phull\margs{\lpr}}\inf\group[\big]{(f-\lpr{f})-g}
        \leq 0,
      $
      which in turn is equivalent to the coherence condition by definition of~$\margs{\lpr}$.
      \qedhere
  \end{itemize}
\end{proof}

\subsection*{\secnumname{sec:xch}}\addcontentsline{toc}{subsection}{\secnumname{sec:xch}}
\begin{proof}[Proof of Proposition~\ref{prop:symbgM}]
  We first need to realise that $\Adelim{\linGs_\transfos}{\emptyset}\union\Adelim{\linGs_>}{\linGs_<}\in\fif\As$, $\linGs_>\in\cones$, and $\emptyset\neq\linGs_\transfos\in\lineals$.
  Then Theorem~\ref{thm:fifdcAs-zerocrit} tells us its reckoning extension has No Confusion~\eqref{eq:no-confusion} if and only if $0\notin\linGs_>+\linGs_\transfos$, which is equivalent to the given condition.
  The expression of this extension then follows from Theorem~\ref{thm:favindiff}.
\end{proof}


\end{document}